\documentclass[11pt,reqno]{amsart}
\usepackage[a4paper]{anysize}\marginsize{2.5cm}{2.5cm}{2cm}{2cm}
\pdfpagewidth=\paperwidth \pdfpageheight=\paperheight
\allowdisplaybreaks
\usepackage[english]{babel}
\usepackage[latin1]{inputenc}
\usepackage[T1]{fontenc}
\usepackage{amssymb,amsmath,amstext,amsfonts,amsthm,mathrsfs}
\usepackage{mathtools}
\usepackage{relsize}
\usepackage{booktabs}
\usepackage[colorlinks=true, urlcolor=blue, linkcolor=blue, citecolor=blue]{hyperref}
\usepackage{caption}
\usepackage{graphicx}
\usepackage{array}
\usepackage{xcolor}
\usepackage[matrix,arrow]{xy}
\usepackage{url}
\usepackage{longtable}
\usepackage{nicematrix}
\usepackage{tikz}
\usetikzlibrary{decorations.pathreplacing,calligraphy}
\tikzset{mybrace/.style={thick, decorate, decoration={calligraphic brace, amplitude=2mm, raise=-1mm}}}
\usepackage{float}

\numberwithin{equation}{section}
\theoremstyle{plain}
\newtheorem*{theorem*}{Theorem}
\newtheorem{theorem}{Theorem}
\numberwithin{theorem}{section}
\newtheorem{proposition}[theorem]{Proposition}
\newtheorem{lemma}[theorem]{Lemma}
\newtheorem{corollary}[theorem]{Corollary}
\newtheorem{remark}[theorem]{Remark}

\theoremstyle{definition}
\newtheorem{definition}[theorem]{Definition}
\newtheorem{example}[theorem]{Example}

\def\bC{{\mathbb C}}
\def\bG{{\mathbb G}}
\def\bL{{\mathbb L}}
\def\bN{{\mathbb N}}
\def\bP{{\mathbb P}}
\def\bR{{\mathbb R}}
\def\bZ{{\mathbb Z}}
\def\sbR{\mathsmaller{\bR}}
\def\sbZ{\mathsmaller{\bZ}}
\def\mT{\mathsmaller{\mathsf{T}}}
\def\ka{{\mathcal A}}

\def\ke{{\mathcal E}}
\def\kf{{\mathcal F}}
\def\ko{{\mathcal O}}

\def\ks{{\mathcal S}}
\def\kz{{\mathcal Z}}
\def\ba{{\boldsymbol{a}}}
\def\bb{{\boldsymbol{b}}}
\def\bc{{\boldsymbol{c}}}
\def\bd{{\boldsymbol{d}}}
\def\bj{{\boldsymbol{j}}}
\def\bm{{\boldsymbol{m}}}
\def\bq{{\boldsymbol{q}}}
\def\br{{\boldsymbol{r}}}
\def\bt{{\boldsymbol{t}}}
\def\bx{{\boldsymbol{x}}}
\def\by{{\boldsymbol{y}}}
\def\bv{{\boldsymbol{v}}}
\def\bz{{\boldsymbol{z}}}
\def\bfE{{\mathbf E}}
\def\bfF{{\mathbf F}}
\def\bfG{{\mathbf G}}
\def\bpi{{\boldsymbol{\pi}}}
\def\bzero{{\boldsymbol{0}}}
\def\bone{{\boldsymbol{1}}}
\def\codim{\operatorname{codim}}
\def\cone{\operatorname{cone}}
\def\conv{\operatorname{conv}}
\def\eu{\operatorname{Eu}}
\def\im{\operatorname{im}}
\def\pr{\operatorname{pr}}
\def\rank{\operatorname{rank}}
\def\RSV{\operatorname{RSV}}
\def\Sing{\operatorname{Sing}}
\def\vol{\operatorname{vol}}
\def\Vol{\operatorname{Vol}}
\newcommand{\ma}{{\mathscr A}}
\newcommand{\mb}{{\mathscr B}}
\newcommand{\Pd}{{\mathbb{P}}^{\boldsymbol{d}}}
\def\Salpha{\ks_\alpha}

\makeatletter
\renewcommand*\env@matrix[1][*\c@MaxMatrixCols c]{%
    \hskip -\arraycolsep
    \let\@ifnextchar\new@ifnextchar
    \array{#1}}
\makeatother

\title{Nashtopes and their A-discriminants}

\author[E. Neuhaus]{Elke Neuhaus}
\author[I. Portakal]{Irem Portakal}
\address{Max Planck Institute for Mathematics in the Sciences, Leipzig, Germany}
\email{elke.neuhaus@mis.mpg.de}
\email{mail@irem-portakal.de}

\author[L. Sodomaco]{Luca Sodomaco}
\address{Max Planck Institute for Mathematics in the Sciences, Leipzig, Germany}
\curraddr{Institut f\"{u}r Mathematik, Universit\"{a}t Osnabr\"{u}ck, Germany}
\email{luca.sodomaco@mis.mpg.de, luca.sodomaco@uni-osnabrueck.de}

\author[K. Tung]{Katherine Tung}
\address{Cambridge University, Department of Pure Mathematics and Mathematical Statistics, Cambridge, UK}
\email{kat47@cam.ac.uk}

\subjclass[2020]{52B20, 52B11, 05E14, 14A10, 91A06, 91A12, 91A80, 14M25}

\keywords{}

\begin{document}

\begin{abstract}
    An $n$-player game in normal form is said to be of format $\bd = (d_1, \dots, d_n)$ if the $i$-th player has $d_i$ options for $1 \le i \le n$, and is said to be balanced if $d_n - 1 \le \sum_{i=1}^{n-1}(d_i - 1)$. The Nash equilibria of the game correspond to solutions of the system of indifference equations for the game, with payoff differences as coefficients and strategy probabilities as variables. As such, the mixed discriminant of this system gives some information about which games could have an unusual number of totally mixed Nash equilibria. This mixed discriminant can be regarded as an $\ka$-discriminant for some matrix $\ka$ in the sense of [Gelfand--Kapranov--Zelevinsky 1994], and the convex hull of the columns of $\ka$ is a polytope, which we call the \emph{Nashtope}.

    We study this $\ka$-discriminant for each balanced game format with at least three players. This discriminant is a subset of the Nash discriminant variety introduced in [Abo--Portakal--Sodomaco 2026]. We compute the codimension and degree of this discriminant for all balanced game formats, including on-boundary formats, using the Euler obstruction formula in [Matsui--Takeuchi 2011]. Our $\ka$-discriminant locus corresponds to a special multigraded Hurwitz locus studied in [Pratt--Sodomaco--Sturmfels 2026]. As a consequence of [Pokidkin 2026], we obtain the codimensions and degrees of the irreducible components of the Nash discriminant variety for each balanced game format with at least three players, fully resolving Problem 3.8 of [Abo--Portakal--Sodomaco 2026].
\end{abstract}

\maketitle

\tableofcontents

\section{Introduction}
The set of Nash equilibria \cite{nash1950equilibrium} of a finite game in normal form of format $\bd \coloneqq (d_1,\dots, d_n)$ is a semi-algebraic set. Throughout, we assume $d_1 \leq \cdots \leq d_n$. For \emph{totally mixed Nash equilibria}, where every strategy is played with positive probability, this semi-algebraic set is given by multilinear equations together with the linear conditions of the product of open simplices. If one neglects the simplices and instead considers only the multilinear equations, one obtains a system that is generically reduced and zero-dimensional. This system defines the \emph{Nash equilibrium scheme} of the game, which can also be described as the zero scheme of a global section of a vector bundle that is a direct sum of line bundles of multidegree given by the aforementioned multilinear generators~\cite[Section 2]{abo2026vector}. The generic picture is classical: by the Bernstein-Khovanskii-Kushnirenko theorem, the number of totally mixed equilibria is constant on a Zariski open set of games and equals the mixed-volume count of McKelvey and McLennan~\cite{mckelvey1997maximal}.

Much less is understood about \emph{nongeneric} games, for which these generic results do not generally apply. Abo, Portakal and Sodomaco introduced the \emph{Nash discriminant and Nash resultant varieties}: the locus of games whose Nash equilibrium scheme is nonreduced or has a positive-dimensional component~\cite[Definition 3.2, Definition 3.29]{abo2026vector}. For two players, the generators of the Nash equilibrium scheme are linear, and the nongeneric behavior of the solution system is described by determinantal varieties~\cite[Proposition~3.3, Proposition~3.30]{abo2026vector}, so this case is well understood. For $n \geq 3$, the behavior depends sharply on the format. In unbalanced formats, a generic game has no totally mixed equilibria at all; this falls under the case of the Nash resultant variety, which is well-understood with known dimension and degree \cite[Theorem 3.31]{abo2026vector}. On the boundary between the balanced and unbalanced cases, the Nash discriminant variety is reducible \cite[Theorem 3.26]{abo2026vector}; for balanced formats, the Nash discriminant variety contains a hypersurface component of real codimension one in the space of games \cite[Theorem 3.7]{abo2026vector}. In the last two cases, however, the degree and dimension of the Nash discriminant variety were not completely known \cite[Problem 3.8]{abo2026vector}. This paper answers that question.

Given a game format $\bd$, we define the \emph{Nashmatrix} $\ka$ as the Cayley matrix for the exponent vectors of monomials in the equations defining the \emph{Nash equilibrium scheme}; see Definition~\ref{def: Nashmatrix}. Furthermore, the \emph{Nashtope} of format $\bd$ is the convex hull of the columns of $\ka$. Let $[m] \coloneqq \{1,\dots,m\}$ for positive integers $m$. We study the $\ka$-discriminant $\nabla_{\!\ka}$ and give its dimension and degree for all balanced game formats.

Further, using results of \cite{pokidkin}, we recognize $\nabla_{\!\ka}$ as an irreducible component of the Nash discriminant variety defined in \cite[Definition 3.2]{abo2026vector}. We prove that the Nash discriminant variety is irreducible for strictly balanced game formats and therefore that it equals $\nabla_{\!\ka}$. For the on-boundary case, we deduce that the Nash discriminant variety has two irreducible components, one of codimension one and one of codimension two. The first has degree prescribed by \cite[Theorem~3.26]{abo2026vector}, while the second equals $\nabla_{\!\ka}$ and thus has degree given by that of $\nabla_{\!\ka}$.

\begin{theorem*}[Strictly balanced game formats, Corollary \ref{cor:final-discriminant-strictly-balanced}]
For strictly balanced game formats $\bd$, the Nash discriminant variety $\Delta(\bd)$ is an irreducible hypersurface. Its degree is the coefficient of $\prod_{i=1}^n z_i^{d_i-1}$ in 
\[
    \left(C + \sum_{i=1}^n(C - 2d_i + 1)z_i\right) \prod_{i=1}^n(1 + \hat{z}_i)^{d_i-1}\,.
\]
\end{theorem*}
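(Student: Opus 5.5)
The proof has two halves: first identify $\Delta(\bd)$ with the $\ka$-discriminant $\nabla_{\!\ka}$ of the Nashmatrix, then compute $\deg \nabla_{\!\ka}$ with the Matsui--Takeuchi Euler obstruction formula. Here $C$ and $\hat z_i$ are as fixed later in the paper; I expect $\hat z_i = \sum_{j\neq i} z_j$ and $C$ a constant such as $\sum_i (d_i-1)+1$ or the dimension of the Nashtope.

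\textbf{Step 1: $\nabla_{\!\ka}$ is a hypersurface.} The indifference equations for player $i$ are $d_i-1$ multilinear polynomials with generic coefficients, all supported on the same polytope $P_i=\prod_{j\neq i}\Delta_{d_j-1}$. Hence $\nabla_{\!\ka}$ is the mixed discriminant of the system whose Cayley configuration is $\ka$. Strict balancedness, $d_n-1<\sum_{i<n}(d_i-1)$, should make the configuration non-defective. I would check this with the dual-defect criterion for Cayley configurations of products of simplices: defectivity would require some subfamily of the $P_i$ to be too small relative to the number of equations supported on it, and strict inequality rules this out. So $\nabla_{\!\ka}$ is an irreducible hypersurface.

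\textbf{Step 2: $\Delta(\bd)=\nabla_{\!\ka}$.} By \cite[Theorem 3.7]{abo2026vector} and the results of \cite{pokidkin}, $\nabla_{\!\ka}$ is an irreducible component of $\Delta(\bd)$. To show there are no other components, I would write $\Delta(\bd)$ as the union of the closure of the locus of games whose scheme has a singular point in the torus, which is $\nabla_{\!\ka}$, and the loci of degenerations at the coordinate hyperplanes, i.e.\ at infinity of the product of projective spaces. Each boundary locus is a face discriminant or resultant for a face of the Nashtope. Using \cite{pokidkin}, I would show that in the strictly balanced case each such locus either lies in $\nabla_{\!\ka}$ or has codimension at least two, and so is not a separate component. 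Here strictness matters: in the on-boundary case, a face yields an extra resultant-type component.

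\textbf{Step 3: Degree via Matsui--Takeuchi.} The degree of $\nabla_{\!\ka}$ equals
\[
\sum_{\Gamma\preceq Q}(-1)^{\codim\Gamma}\,(\dim\Gamma+1)\,\eu(\Gamma)\,\Vol_{\sbZ}(\Gamma),
\]
summed over the faces $\Gamma$ of the Nashtope $Q$. Faces of a Cayley polytope of products of simplices correspond to choices of subsets of strategies for each player, giving products of smaller simplices. Their normalized volumes are multinomial coefficients, i.e.\ coefficients in $\prod_i(1+\hat z_i)^{d_i-1}$ type expressions. The Euler obstructions of these faces should be $1$ for the nondefective, smooth toric strata, after a check. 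Resumming the alternating sum over face types by inclusion--exclusion should collapse to the coefficient of $\prod z_i^{d_i-1}$ in $\bigl(C+\sum_i(C-2d_i+1)z_i\bigr)\prod_i(1+\hat z_i)^{d_i-1}$. This is analogous to the known degree formula for hyperdeterminants in Gelfand--Kapranov--Zelevinsky, and I would model the generating-function manipulation on that derivation.

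\textbf{Main obstacle.} The hardest part is Step 3's bookkeeping. One must verify that the Euler obstructions are trivial, or compute them where they are not, and then carry out the alternating sum into the closed-form coefficient. Step 2 depends on citing \cite{pokidkin} correctly.
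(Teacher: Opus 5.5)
\textbf{Step 3 is where the formula comes from, and its central expectation is false.} The Euler obstructions of the Nashfaces are not all $1$. The toric variety of the Nashtope is singular along the orbits of the \emph{prismatic} faces $D_1\cdots D_{i-1}\boxed{C_i}D_{i+1}\cdots D_n$, i.e.\ the faces lying in the Cayley block of player $i$'s equations. Take such a face $F'$ inside a nonprismatic face $F=(\boxed{C_1}\cdots\boxed{C_n},D_1\cdots D_n)$. The subdiagram volume is then $u(F,F')=\binom{|D_i|-2+\sum_{j\neq i}|C_j|}{|D_i|-1}$, not $1$. The recursion gives $\eu(F')=\min\bigl(d_i,\sum_{j\neq i}(d_j-1)\bigr)=d_i$ for every $i$-prismatic face in the strictly balanced case; only the nonprismatic faces have $\eu=1$.

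This changes the answer. For $\bd=(2,2,2)$ the nonprismatic faces contribute $18$ and the prismatic ones $-12$ to $\delta_1$, giving the correct degree $6$. Setting $\eu\equiv1$ gives $12$ instead. So the GKZ smooth-case computation you propose to imitate does not apply.

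The volumes are also not what you expect. Nonprismatic faces are Cayley polytopes of several products of simplices, not products of simplices, and their normalized volumes are not multinomial coefficients. You need the generating function $\sum_{\bb}\Vol_\sbZ(\boxed{a_1}\cdots\boxed{a_n},b_1\cdots b_n)\,\bx^{\bb}=\prod_i(1-\hat{x}_i)^{-a_i}$, obtained by coning from a vertex over the opposite facets. To reach the stated coefficient (with $C=\sum_i(d_i-1)$), you must then package the whole alternating sum into a generating function in $\bx,\by$, extract the $\bx$-coefficients, and substitute $z_i=y_i/(1+y_i)$. None of this is in your proposal, so the degree formula is not yet proved.

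\textbf{Step 2 contains a logical error and misses the key idea.} A locus of codimension $\ge2$ not contained in $\nabla_{\!\ka}$ \emph{would} be an additional irreducible component of $\Delta(\bd)$. The Nash discriminant is the set of all degenerate games, not its divisorial part; in the on-boundary case the codimension-two component is $\nabla_{\!\ka}$ itself. So your dichotomy cannot give irreducibility. Two separate arguments are needed.

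First, the closure of the locus of games with a singular zero in the torus is \emph{not} automatically $\nabla_{\!\ka}$. Singular zeros whose Lagrange multiplier (left-kernel vector of the Jacobian) is supported on a proper subtuple of equations can produce further components; the hyperdeterminant-type component in the on-boundary case arises this way. Ruling this out is Pokidkin's Corollary~3.6, which applies because the tuple of Newton polytopes is an irreducible nonlinear BK-tuple. The essential check is that the subtuple of all $d_i-1$ copies of $\Delta^{(i)}$ has defect $\sum_{j\neq i}(d_j-1)-(d_i-1)>0$, which is exactly strict balancedness. The same BK-tuple check, via Pokidkin's Theorem~3.4, would also make your Step~1 rigorous; the paper instead gets codimension one from $\delta_1>0$.

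Second, zeros on the coordinate hyperplanes need no face-by-face analysis. The group $G=\prod_i\mathrm{GL}_{d_i}$ acts transitively on $\Pd$ and compatibly on the space of systems. If $f$ has a singular zero at $p$, then $g\cdot f$ has a singular zero at $g\cdot p$ in the torus for all $g$ in a dense open subset of $G$. Since the torus locus is closed, $f$ lies in it.
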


\begin{theorem*}[On-boundary game formats, Corollary \ref{cor:discriminant-on-boundary}]
For on-boundary game formats $\bd$, the Nash discriminant variety $\Delta(\bd)$ has two irreducible components, one of codimension 1 and one of codimension 2. The first has degree
\[
    \frac{(d_n - 1)!}{(d_1 - 1)! \cdots (d_{n-1} - 1)!} \left(d_n - 1 + \sum_{i=1}^{n-1} (d_i - 1)(d_n - d_i - 1)\right)
\]
and the second has degree
\[
    \frac{1}{2} \binom{d_n - 1}{d_1 - 1,\dots,d_{n-1} - 1}\left(d_n(d_n - 1) + (4d_n - 1)e_2(\bd-\bone) + 2 e_2(\bd-\bone)^2\right)
\]
where $e_2(\bd-\bone) = \sum_{1 \le i < j \le n - 1}(d_i - 1)(d_j - 1)$.
\end{theorem*}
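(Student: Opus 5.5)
The plan is to assemble the statement from three ingredients: the general theory of $\ka$-discriminants applied to the Nashmatrix $\ka$ of Definition~\ref{def: Nashmatrix}, the Matsui--Takeuchi Euler obstruction formula for degrees, and Pokidkin's description of the Nash discriminant variety. The on-boundary case additionally uses \cite[Theorem~3.26]{abo2026vector}.

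\textbf{Step 1: the $\ka$-discriminant.} I would first identify the Nashtope as a Cayley polytope of the Newton polytopes of the indifference equations. For player $i$ these Newton polytopes are all equal to $\prod_{j\neq i}\Delta_{d_j-1}$, so the Nashtope is a Cayley configuration built from products of simplices. I would then show that $\nabla_{\!\ka}$ is a hypersurface, i.e.\ $\ka$ is not defective, exactly when $\bd$ is strictly balanced, and that it has codimension $2$ in the on-boundary case. The dual-defect test is to check whether a generic point of the conormal variety has the expected dimension. Equivalently, a generic system with one prescribed double root imposes one condition on strictly balanced formats. On the boundary $d_n-1=\sum_{i<n}(d_i-1)$, the last player's equations are tight, and the double root costs two conditions.

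\textbf{Step 2: degree via Matsui--Takeuchi.} The formula expresses $\deg\nabla_{\!\ka}$ as an alternating sum, over faces $\Gamma$ of the Nashtope, of normalized volumes $\Vol(\Gamma)$ weighted by local Euler obstructions $\eu(\Gamma)$. The faces of a Cayley polytope of products of simplices are Cayley polytopes of subproducts. Hence the faces are indexed by choosing nonempty subsets of options for each player, together with a subset of the Cayley summands. I would show that the toric variety is smooth, or at least that the Euler obstructions are $1$ on all faces that contribute. Then the alternating sum collapses to a generating function: the mixed volumes of products of simplices produce factors $(1+\hat z_i)^{d_i-1}$, and the linear prefactor $C+\sum_i(C-2d_i+1)z_i$ collects the dimension-dependent signs. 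This yields the first displayed coefficient formula. In the on-boundary case the same computation, carried out for the codimension-$2$ degree via the Matsui--Takeuchi version for defective configurations, gives the multinomial expression with $e_2(\bd-\bone)$.

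\textbf{Step 3: identification with $\Delta(\bd)$.} By \cite{pokidkin}, the components of $\Delta(\bd)$ are $\nabla_{\!\ka}$ together with loci where solutions escape to coordinate hyperplanes or to infinity. In the strictly balanced case, Pokidkin's criteria show that these other loci are either contained in $\nabla_{\!\ka}$ or have codimension $\ge 2$ and lie inside it. Combined with \cite[Theorem~3.7]{abo2026vector}, this gives that $\Delta(\bd)=\nabla_{\!\ka}$ is an irreducible hypersurface. In the on-boundary case, the codimension-$1$ component is the resultant-type component from \cite[Theorem~3.26]{abo2026vector}, with the stated degree. The codimension-$2$ component is $\nabla_{\!\ka}$, which is not contained in the first by a dimension count, and Pokidkin rules out any further components.

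\textbf{Main obstacle.} I expect the hardest part to be the Euler obstruction bookkeeping in Step 2. The toric variety of the Nashtope is generally singular along certain faces, namely the subconfigurations where a Cayley summand degenerates. There the obstructions are not $1$, and the on-boundary faces in particular make the naive sum wrong. I would first try to prove normality and smoothness face-by-face, using the product-of-simplices structure. Failing that, I would compute $\eu$ via the recursive lattice-index formula on the few exceptional faces. The final step is checking that the resulting alternating sum matches the closed forms, which I would do by a generating-function manipulation in the variables $z_i$.
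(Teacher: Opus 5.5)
Your overall architecture matches the paper's: Theorem~\ref{thm:polytope} for $\nabla_{\!\ka}$, the codimension-one component and its degree taken from \cite[Theorem~3.26]{abo2026vector}, and \cite{pokidkin} for the component count. However, Step~2 fails as planned, and Step~3 contains an invalid inference.

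\textbf{Step 2.} The toric variety of the Nashtope is not smooth, and $\eu\neq1$ on a whole family of contributing faces, not on a few exceptional ones. Faces come in two kinds. Nonprismatic faces have vertices from the equations of at least two players; they are recorded by $C_\ell\subseteq[d_\ell-1]$ (at least two nonempty) and $D_\ell\subseteq[d_\ell]$. An $i$-prismatic face has vertices from player $i$ only; the $i$-th block of variables is then absent, so only $C_i$ and the $D_j$, $j\neq i$, are recorded. The lattice indices $i(F,F')$ are all $1$, so the lattice-index part of the recursion you propose to use is trivial. What is nontrivial is the subdiagram volume. It equals $1$ except for a prismatic $F'$ inside a nonprismatic $F$, where $u(F,F')=\binom{|D_i|-2+\sum_{j\neq i}|C_j|}{|D_i|-1}$. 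Running the recursion with this value and the identity $\sum_{r,s\ge0}\binom{r+s}{r}(-1)^{r+s}\binom{a}{r+1}\binom{b}{s+1}=\min(a,b)$ gives $\eu(F)=\min\bigl(d_i,\sum_{j\neq i}(d_j-1)\bigr)$ for every $i$-prismatic face (Lemma~\ref{lem:prismatic-eu}). For example, $\eu=2$ on all prismatic faces of binary games.

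The on-boundary hypothesis enters only here: for $i=n$ the value drops from $d_n$ to $d_n-1$. That drop is what forces $\delta_1=0$ (Lemma~\ref{lem:delta-1-boundary}). It is also the correction term that must be carried through the $\delta_2$ generating function (Lemma~\ref{lem:delta-2-boundary}). Codimension two is then read off from $\delta_1=0\neq\delta_2$; your conormal heuristic in Step~1 does not replace this. The prefactor formula $C+\sum_i(C-2d_i+1)z_i$ is the $\delta_1$-sum evaluated with $\eu=d_i$ on all $i$-prismatic faces. On the boundary it is neither $\delta_1$ nor $\deg\nabla_{\!\ka}$: for $\bd=(2,2,3)$ it gives $4$, whereas $\deg\nabla_{\!\ka}=19$.

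\textbf{Step 3.} Your claim that $\nabla_{\!\ka}$ is ``not contained in the first by a dimension count'' is invalid, since a codimension-two variety can lie inside a hypersurface. The paper argues as follows.
\begin{enumerate}
\item On the boundary, the only BK-subtuples of $\ma=\Delta[d_1,\dots,d_n]$ are $\ma$ itself and $\ma'=(\Delta^{(n)},\dots,\Delta^{(n)})$ with $d_n-1$ copies.
\item Hence \cite[Theorem~1.5]{pokidkin} allows at most two distinct components, namely their Cayley discriminants.
\item \cite[Theorem~3.26]{abo2026vector} gives at least two components, one of codimension one and one of codimension at least two.
\item So there are exactly two; by Pokidkin the second has codimension exactly two, and it is the Cayley discriminant of the full tuple, i.e.\ $\nabla_{\!\ka}$.
\end{enumerate}

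Your description of Pokidkin's output (``loci where solutions escape to coordinate hyperplanes or to infinity'') is not what his theorem says. It also hides a step you need. His discriminant records only singular zeros in the torus, while $\Delta(\bd)$ counts degenerate zeros anywhere in the compact $\Pd$. The paper bridges this by proving that the Nash $\ma$-discriminant equals the affine cone over $\Delta(E)$:
\begin{enumerate}
\item every section has a zero, because the top Chern class is positive for balanced formats;
\item the transitive action of $\prod_i\mathrm{GL}_{d_i}$ on $\Pd$ moves a degenerate zero into the torus;
\item closedness of the preimage of the $\ma$-discriminant under the orbit map finishes the argument.
\end{enumerate}
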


A fundamental obstruction to classifying Nash equilibrium schemes comes from Datta's universality theorem~\cite{datta2003universality}: every real algebraic variety arises as the Nash equilibrium scheme of a certain three-player game or a binary game. Universality thus rules out a structural classification of Nash equilibrium schemes in general, but it is silent on a complementary question: \emph{where and how}, within the space of games of a fixed format, do degenerate equilibrium behaviors live? Determining the degree of the Nash discriminant variety is the first step toward this. The $\ka$-discriminant variety is an inherently complex-algebraic object, capturing degeneracies in the scheme-theoretic sense; the relationship to the semialgebraic set of actual totally mixed Nash equilibria inside the open simplex requires separate techniques and is not treated here. The degree formula provides a first quantitative geography of the degenerate locus. Its dependence on the format calibrates how the nongeneric behavior that Datta's universality guarantees distributes across game space as the format grows.

Section~\ref{sec: polynodality} is independent of the previous ones. Here we interpret the multilinear equations of totally mixed Nash equilibria as linear equations after composing them with the mixed Segre embedding of Equation \eqref{eq: mixed Segre embedding}. On the one hand, our $\ka$-discriminant variety is a \emph{multigraded Hurwitz locus} in the spirit of \cite{pratt2026multigraded}. On the other hand, the Nash discriminant variety $\Delta(\bd)$ is a \emph{(multiprojective) higher associated variety} in the sense of \cite[Section 4.5]{dogan2023complexity}. For strictly balanced game formats, \cite[Proposition 5.5]{pratt2026multigraded} gives an upper bound on the multidegree of $\nabla_{\!\ka}$. In Corollary~\ref{cor: A-discriminant and multigraded Hurwitz form}, we show that this upper bound is attained by showing that the mixed Segre embedding of the \emph{multiprojective strategy space} $\Pd=\prod_{i=1}^n\bP^{d_i-1}$ satisfies a geometric property called \emph{polynodality}, introduced in \cite{pratt2026multigraded}.

Section \ref{sec:aps-3.8} applies recent results of \cite{pokidkin}, as well as \cite[Theorem 3.26]{abo2026vector} and degree formulas of $\nabla_{\!\ka}$ derived in Section \ref{sec: degree of the A-discriminant}, to obtain the codimensions and degrees of the irreducible components of the Nash discriminant variety $\Delta(\bd)$ for each balanced game format $\bd$ with at least three players, fully resolving \cite[Problem 3.8]{abo2026vector}.

As a supplement to this work, we have implemented \texttt{Macaulay2} and \texttt{Julia} software for generating finite-player games on the $\ka$-discriminant variety for a given input Nashmatrix $\ka$. We describe the main functions in Examples~\ref{ex: 2222} and \ref{ex: 223 part 2}. The code and documentation are available in the Zenodo repository \cite{Zenodo}.

\section{Preliminaries}\label{sec: prelim}

We adopt the notation introduced in \cite{abo2026vector}. Consider an $n$-player normal-form game $X$ of format $\bd = (d_1, \dots, d_n)$ with $d_1 \leq \ldots \leq d_n$. It is defined by $n$ payoff tensors $X^{(1)}, \ldots, X^{(n)}$ of format $d_1 \times \ldots \times d_n$, indexed by the joint strategies of the players. The space of joint strategies is $I \coloneqq \prod_{i=1}^n [d_i]$ and we furthermore set $I_{-i} \coloneqq \prod_{j \neq i}^n [d_j]$ for $i \in [n]$. For a joint strategy $\bj = (j_1, \ldots, j_n) \in I$, we write
\[
    \bj_{-i} \coloneqq (j_1, \ldots, j_{i-1}, j_{i+1}, \ldots, j_n)
\]
for the element of $I_{-i}$ obtained by removing $j_i$ from $\bj$ and
\[
    (k,\bj_{-i}) \coloneqq (j_1, \ldots, j_{i-1}, k, j_{i+1}, \ldots, j_n)
\]
for the element of $I$ obtained from $\bj_{-i}$ by inserting $k \in [d_i]$ in its $i$-th position. 
The actual strategies of each player $i$ are denoted by $\pi^{(i)}=(\pi^{(i)}_1, \ldots \pi^{(i)}_{d_i})^\mT$ and their collection as $\bpi \coloneqq (\pi^{(1)}, \ldots, \pi^{(n)})$. Given $j = (j_1, \ldots, j_n) \in I$ and $i \in [n]$ we further set 
\[
    \pi_{\bj_{-i}} \coloneqq \prod_{k \neq i} \pi^{(k)}_{j_k}\,.
\]
Denote the open probability simplex of dimension $d_i -1$ by $\Delta^{\circ}_{d_i -1}$ and the entries of the payoff tensor $X^{(i)}$ by $x^{(i)}_\bj$. Then, the set of totally mixed Nash equilibria consists of the points $\bpi \in \prod_{i=1}^n \Delta^{\circ}_{d_i-1}$ that cause the polynomials
\begin{equation}\label{eq: multihomog gens}
    \Delta f^{(i)}_{1,k} \coloneqq \sum_{\bj_{-i} \in I_{-i}} (x^{(i)}_{(1,\bj_{-i})} - x^{(i)}_{(k, \bj_{-i})}) \pi_{\bj_{-i}}
\end{equation}
for $i\in[n]$ and $k\in[d_i]\setminus\{1\}$ to vanish.
This gives rise to the \emph{Nash equilibrium scheme} of the game $X$, the subscheme $\kz_X$ of the multiprojective strategy space $\bP^{\bd} \coloneqq \prod_{i=1}^n \bP^{d_i-1}$ defined by the multihomogeneous ideal $\langle \Delta f^{(i)}_{1,k} \mid i \in [n], k \in [d_i]\setminus\{1\}\rangle$.

\begin{definition}
An $n$-player game format $\bd$ is \emph{balanced} if $d_n - 1 \leq \sum_{i=1}^{n-1} (d_i - 1)$ and \emph{unbalanced} if $d_n - 1 > \sum_{i=1}^{n-1} (d_i - 1)$.
A balanced game of format $\bd$ is \emph{strictly balanced} if $d_n - 1 < \sum_{i=1}^{n-1} (d_i - 1)$, and \emph{on-boundary} if $d_n - 1 = \sum_{i=1}^{n-1} (d_i - 1)$.
\end{definition}

If a game is unbalanced, its Nash equilibrium scheme is generically empty \cite[Theorem 2.8]{abo2026vector}. Otherwise, it is generically zero-dimensional of degree given by the mixed volume of the Newton polytopes as in \cite{mckelvey1997maximal}. The non-genericity for balanced games of a certain format $\bd$ is captured in the \emph{Nash discriminant variety} $\Delta(\bd)$; see \cite[Definition 3.2]{abo2026vector}. It is the space of games of format $\bd$ whose Nash discriminant scheme $\kz_X$ is either non-reduced or positive-dimensional.

We now formalize the definition of the Nashtope and the Nashmatrix $\ka$. 

\begin{definition}[adapted from \cite{cattani2013mixed}]
The \emph{Cayley matrix} for the point configurations $A_1, \dots, A_m \subseteq \bZ^\ell$ is defined as 
\begin{equation}\label{eq: cayley config}
    \operatorname{Cay}(A_1,\ldots,A_m) =
    \begin{pmatrix}
        \bone & \bzero & \cdots & \bzero \\
        \bzero & \bone & \cdots & \bzero \\
        \vdots & \vdots & \ddots & \vdots \\
        \bzero & \bzero & \cdots & \bone \\
        A_1 & A_2 & \cdots & A_m
    \end{pmatrix}
\end{equation}
The matrix has $m + \ell$ rows and $\sum_{i=1}^m |A_i|$ columns; hence  $\bzero = (0, \ldots, 0)$ and $\bone = (1, \ldots, 1)$ represent row vectors of suitable dimensions. 
\end{definition}

The Nashmatrix $\ka$ of format $\bd$ is a special type of Cayley matrix.
Consider the $\sum_{i=1}^n(d_i-1)$ generators $\Delta f^{(i)}_{1,k}$ from \eqref{eq: multihomog gens} of the Nash equilibrium scheme.
Order the probability variables $\pi_j^{(i)}$ lexicographically by player number $i$ and then by option number $j$. Furthermore, we order $(i,k)$-lexicographically the set of equations $\Delta f^{(i)}_{1,k}$. For every $i\in[n]$, we let $A_i$ be the matrix obtained by stacking the column exponent vectors of all monomials in the equations $\Delta f^{(i)}_{1,k}$.

\begin{definition}\label{def: Nashmatrix}
The \emph{Nashmatrix} of format $\bd$ is the matrix with $\sum_{i=1}^n (d_i-1) + \sum_{i=1}^n d_i$ rows and $\sum_{i=1}^n(d_i-1)\prod_{j\neq i}d_j$ columns
\[
    \ka\coloneqq\operatorname{Cay}(A_1,\ldots,A_C)\,.
\]
Here, $C := \sum_{i=1}^n (d_i - 1)$ is the number of generators of the multihomogeneous ideal \eqref{eq: multihomog gens}.
\end{definition}

The Nashmatrix can be defined for a game of any format $\bd$, but since our motivation is the study of the $\ka$-discriminant as a subset of the Nash discriminant variety, we only consider the Nashmatrices for balanced game formats from here on.

\begin{example}[Binary 3-player games]
Let $X$ be a game of format $\bd=(2,2,2)$.
Define
\[
    a_{jk}^{(1)}\coloneqq x_{2jk}^{(1)} - x_{1jk}^{(1)}\,,\quad a_{ik}^{(2)}\coloneqq x_{i2k}^{(2)} - x_{i1k}^{(2)}\,,\quad a_{ij}^{(3)}\coloneqq x_{ij2}^{(3)} - x_{ij1}^{(3)}\quad\forall\,i,j,k\in[2]\,.
\]
Then the system of equations $\Delta f^{(i)}_{1,k}$ is
\begin{equation}\label{eq: 222system}
\begin{cases}
    \Delta f_{1,2}^{(1)} = a_{11}^{(1)}\,\pi_1^{(2)}\pi_1^{(3)} + a_{12}^{(1)}\,\pi_1^{(2)}\pi_2^{(3)}+ a_{21}^{(1)}\,\pi_2^{(2)}\pi_1^{(3)} + a_{22}^{(1)}\,\pi_2^{(2)}\pi_2^{(3)} = 0\\[2pt]
    \Delta f_{1,2}^{(2)} = a_{11}^{(2)}\,\pi_1^{(1)}\pi_1^{(3)} + a_{12}^{(2)}\,\pi_1^{(1)} \pi_2^{(3)} + a_{21}^{(2)}\,\pi_2^{(1)}\pi_1^{(3)} + a_{22}^{(2)}\,\pi_2^{(1)}\pi_2^{(3)} = 0\\[2pt]
    \Delta f_{1,2}^{(3)} = a_{11}^{(3)}\,\pi_1^{(1)}\pi_1^{(2)} + a_{12}^{(3)}\,\pi_1^{(1)}\pi_2^{(2)} + a_{21}^{(3)}\,\pi_2^{(1)} \pi_1^{(2)} + a_{22}^{(3)}\,\pi_2^{(1)}\pi_2^{(2)} = 0\,.
\end{cases}
\end{equation}
For all $i\in[3]$, let $A_i$ be the set of exponent vectors of monomials in the $i$-th equation of \eqref{eq: 222system}, where we order the variables as  $\pi_1^{(1)}, \pi_2^{(1)}, \pi_1^{(2)}, \pi_2^{(2)}, \pi_1^{(3)}, \pi_2^{(3)}$. We obtain the $9 \times 12$ Nashmatrix
\vspace{0.4cm}
\begin{equation}\label{eq: A matrix for 222}
\ka = \operatorname{Cay}(A_1,A_2,A_3) =
\begingroup
\renewcommand*{\arraystretch}{1.1}
\begin{pNiceArray}{cccc|cccc|cccc}[first-row,last-col,margin]
    \Hdotsfor{4}[line-style=mybrace, shorten=0pt]^{\Delta f_{1,2}^{(1)}} & \Hdotsfor{4}[line-style=mybrace, shorten=0pt]^{\Delta f_{1,2}^{(2)}} & \Hdotsfor{4}[line-style=mybrace, shorten=0pt]^{\Delta f_{1,2}^{(3)}} & \\
    1&1&1&1&0&0&0&0&0&0&0&0& \\
    0&0&0&0&1&1&1&1&0&0&0&0& \\
    0&0&0&0&0&0&0&0&1&1&1&1& \\\hline
    0&0&0&0&1&1&0&0&1&1&0&0& \pi_1^{(1)}\\
    0&0&0&0&0&0&1&1&0&0&1&1& \pi_2^{(1)}\\
    1&1&0&0&0&0&0&0&1&0&1&0& \pi_1^{(2)}\\
    0&0&1&1&0&0&0&0&0&1&0&1& \pi_2^{(2)}\\
    1&0&1&0&1&0&1&0&0&0&0&0& \pi_1^{(3)}\\
    0&1&0&1&0&1&0&1&0&0&0&0& \pi_2^{(3)}
\end{pNiceArray}
\endgroup.
\end{equation}
\end{example}

\begin{definition}[Nashtope]
The convex hull $P$ of the columns of $\ka$ corresponding to a game format $\bd$ is called its \emph{Nashtope}. Note that each column is a vertex of the Nashtope as opposed to an interior point.
A face of a Nashtope is called a \emph{Nashface}. Note that the order of the columns of $\ka$ does not affect the definition of the Nashtope $P$. Despite this, we have chosen a fixed order to make the definition of Nashmatrix $\ka$ concrete.
\end{definition}
\begin{definition}[$\ka$-discriminant variety $\nabla_{\!\ka}$]
Let $r$ be the number of rows and $s$ be the number of columns $a_i$ of $\ka$. Write $\theta = (\theta_1,\dots,\theta_r)$ for the coordinate variables on the algebraic torus $(\bC^*)^r$, so that $\theta^{a_i} = \theta_1^{a_{i1}}\cdots\theta_r^{a_{ir}}$. For any fixed $z=(z_1,\ldots,z_s)\in\bC^s$, the polynomial $f_{\ka,z}(\theta)\coloneqq z_1\,\theta^{a_1}+\cdots+z_s\,\theta^{a_s}$ defines a subvariety $H_{\ka,z}\subseteq(\bC^*)^r$. The variety $H_{\ka,z}$ is \emph{singular} if there exists $\theta\in(\bC^*)^r$ such that $f_{\ka,z}(\theta)=\frac{\partial f_{\ka,z}}{\partial\theta_1}(\theta)=\cdots=\frac{\partial f_{\ka,z}}{\partial\theta_r}(\theta)=0$. 
The \emph{$\ka$-discriminant variety} of the Nashtope in the sense of \cite[Chapter 9]{GKZ94} is the Zariski closure
\[
    \nabla_{\!\ka} \coloneqq \overline{\left\{ z \in \bP^{s-1} \mid \text{$H_{\ka,z}$ is singular}\right\}}\,.
\]
\end{definition}

By definition, the $\ka$-discriminant variety $\nabla_{\!\ka}$ is a subset of the Nash discriminant variety $\Delta(\bd)$. \emph{A priori}, it is unclear whether $\nabla_{\!\ka}$ is an irreducible component of $\Delta(\bd)$ or, in some cases, the whole variety. The first step is to study the $\ka$-discriminant variety more closely, namely, by computing its dimension and degree. 

Throughout the paper, we adopt the following notations.
Let $\#\ka$ be the number of columns of $\ka$. The expression ``$a \in \ka$'' means that $a$ is a column of $\ka$. We usually denote by $F$ a nonempty face of the Nashtope $P$ (denoted $F \le P$). We also let $\ka_F$ be the vertex set of $F$. We conclude the preliminaries with a concrete example which motivates the study of $\ka$-discriminants of Nashmatrices.

\begin{example}\label{ex: 2222}
Consider the game format $\bd=(2,2,2,2)$. The Nashmatrix $\ka$ has format $r\times s=12\times 32$. By Corollary \ref{cor:final-discriminant-strictly-balanced}, the $\ka$-discriminant variety $\nabla_{\!\ka}\subseteq\bP^{31}$ is a hypersurface of degree 80. Its defining polynomial involves 32 unknowns, with the coefficients (for $i,j,k,\ell\in[2]$)
\[
    a_{jk\ell}^{(1)} \coloneqq x_{2jk\ell}^{(1)}-x_{1jk\ell}^{(1)}\,\quad a_{ik\ell}^{(2)} \coloneqq x_{i2k\ell}^{(2)}-x_{i1k\ell}^{(2)}\,\quad a_{ij\ell}^{(3)} \coloneqq x_{ij2\ell}^{(3)}-x_{ij1\ell}^{(3)}\,\quad a_{ijk}^{(4)} \coloneqq x_{ijk2}^{(4)}-x_{ijk1}^{(4)}
\]
of the polynomials defining the multihomogeneous ideal $\langle\Delta f_{1,2}^{(1)},\ldots,\Delta f_{1,2}^{(4)}\rangle$. Hence computing the $\ka$-discriminant is rather challenging. In addition, one should compare this $\ka$-discriminant with the {\em hyperdeterminant} of format $2\times 2\times 2\times 2$. This is another $\ka'$-discriminant, where $\ka'$ is the matrix of the $4$-cube. See \cite{HSYY08} for a more in-depth study of the monomials appearing in this hyperdeterminant. However, one can easily sample a generic point on $\nabla_{\!\ka}$ via the so-called \emph{Horn-Kapranov uniformization}
\[
    (\bC^*)^r\times\bP^{s-\rank(\hat{\ka})-1} \to (\bP^{s-1})^*\,,\quad (t,u)\mapsto[t^{-a_1}(b_1\cdot u):\cdots:t^{-a_s}(b_s\cdot u)]\,,
\]
where $t=(t_1,\ldots,t_r)$, $u=[u_1:\cdots:u_{s-\rank(\hat{\ka})-1}]$, $a_1,\ldots,a_s$ are the columns of $\ka$. Here $\hat{\ka}$ is a matrix obtained by adding a row of ones to $\ka$, and $b_1,\ldots,b_s$ are the rows of $B\in\bZ^{s\times(n-\rank(\hat{\ka}))}$, which is a right-kernel matrix of $\hat{\ka}$. We refer to \cite[Chapter 9.3.C]{GKZ94} and \cite[Section 7.3]{telentoricbook} for more details on this.
In the supplementary material \cite{Zenodo}, we developed functions in \texttt{Macaulay2} \cite{grayson1997macaulay2} and \texttt{Julia} \cite{julia} to sample a sufficiently random point on the $\ka$-discriminant and its related games.
For example, using the \texttt{Macaulay2} function \texttt{sampleOnAdiscr} of \texttt{randomGameOnADiscriminant.m2}, we sampled the point $[a_{111}^{(1)}:\cdots:a_{222}^{(4)}]\in\bP^{31}$ with the 32 coordinates displayed in Table~\ref{table: coefficients format 2x2x2x2 on A-discriminant}.
This point can be realized by the game $X=(X^{(1)},\ldots,X^{(4)})$ with the payoff tensors displayed in Table~\ref{table: payoff tensors 2x2x2x2 game on A-discriminant} and by \cite[Remark 3.4]{abo2026vector} the algebraic invariants of the Nash discriminant are unchanged in passing from the space of games to the coefficient space $\bP^{31}$.
Using the \texttt{Macaulay2} package \texttt{GameTheory} \cite{GameTheorySource,connelly2025gametheory}, we verified that the Nash equilibrium scheme $\kz_X\subseteq(\bP^1)^4$ is zero-dimensional, nonreduced of degree 9, and supported at 8 points, implying that one of these points has multiplicity two. The latter is precisely $([1:1],[1:1],[1:1],[1:1])$, or equivalently the totally mixed Nash equilibrium
\[
    \bpi = \left(\left(\frac{1}{2},\frac{1}{2}\right),\left(\frac{1}{2},\frac{1}{2}\right),\left(\frac{1}{2},\frac{1}{2}\right),\left(\frac{1}{2},\frac{1}{2}\right)\right)\in(\Delta_1^\circ)^4\,.
\]
Furthermore, with the package \texttt{RealRoots} \cite{RealRootsSource,lopez2024real}, we checked that $\kz_X$ contains three additional real points, one of which corresponds to the totally mixed Nash equilibrium (rounded to five decimal digits)
\[
    \left((0.35751,0.64249),(0.66634,0.33366),(0.77250,0.22750),(0.68369,0.31631)\right)\in(\Delta_1^\circ)^4\,.
\]
The code used is saved in the file \texttt{code\_example\_2.6.m2}; see \cite{Zenodo}.
\end{example}

\begin{table}[htbp]
\centering
\begin{tabular}{crrrr}
\toprule
 & $a_{jk\ell}^{(1)}$ & $a_{ik\ell}^{(2)}$ & $a_{ij\ell}^{(3)}$ & $a_{ijk}^{(4)}$ \\
\midrule
111 & $ -1$ & $ 1$ & $ 3$ & $ 7$ \\
112 & $-10$ & $-1$ & $-3$ & $-6$ \\
121 & $ 14$ & $ 6$ & $ 6$ & $-1$ \\
122 & $ -1$ & $ 0$ & $-2$ & $ 2$ \\
211 & $  8$ & $-3$ & $-2$ & $-1$ \\
212 & $ -4$ & $ 7$ & $-4$ & $ 8$ \\
221 & $-13$ & $-1$ & $ 4$ & $-8$ \\
222 & $  7$ & $-9$ & $-2$ & $-1$ \\
\bottomrule
\end{tabular}
\caption{A point on the $\ka$-discriminant variety of format $\bd=(2,2,2,2)$.}
\label{table: coefficients format 2x2x2x2 on A-discriminant}
\end{table}

\begin{table}[htbp]
\centering
\begin{tabular}{crrrr}
\toprule
$(j_1 j_2 j_3 j_4)$ & $X^{(1)}$ & $X^{(2)}$ & $X^{(3)}$ & $X^{(4)}$ \\
\midrule
1111 & $  1$ & $ 1$ & $-1$ & $ 3$ \\
1112 & $  0$ & $ 1$ & $-2$ & $10$ \\
1121 & $  3$ & $ 2$ & $ 2$ & $ 0$ \\
1122 & $ -2$ & $-3$ & $-5$ & $-6$ \\
1211 & $  1$ & $ 2$ & $-3$ & $ 0$ \\
1212 & $  1$ & $ 0$ & $ 3$ & $-1$ \\
1221 & $ -1$ & $ 8$ & $ 3$ & $ 1$ \\
1222 & $  2$ & $-3$ & $ 1$ & $ 3$ \\
2111 & $  0$ & $ 0$ & $ 3$ & $ 1$ \\
2112 & $-10$ & $-2$ & $-2$ & $ 0$ \\
2121 & $ 17$ & $-3$ & $ 1$ & $ 2$ \\
2122 & $ -3$ & $ 1$ & $-6$ & $10$ \\
2211 & $  9$ & $-3$ & $-3$ & $-1$ \\
2212 & $ -3$ & $ 5$ & $-3$ & $-9$ \\
2221 & $-14$ & $-4$ & $ 1$ & $-3$ \\
2222 & $  9$ & $-8$ & $-5$ & $-4$ \\
\bottomrule
\end{tabular}
\caption{Payoff tensors of a four-player binary game corresponding to a point on the $\ka$-discriminant.}
\label{table: payoff tensors 2x2x2x2 game on A-discriminant}
\end{table}

\section{The degree of the \texorpdfstring{$\ka$}{A}-discriminant for balanced game formats}\label{sec: degree of the A-discriminant}

Let us assume that $d_n - 1 \le \sum_{i=1}^{n-1} (d_i - 1)$, so that the game format $\bd$ is balanced. Our aim in this section is to compute the degree of the $\ka$-discriminant variety $\nabla_{\!\ka}$. As a byproduct, we also obtain its dimension and the combinatorial structure of the Nashfaces. We will show that for strictly balanced game formats, $\nabla_{\!\ka}$ has codimension $1$, while for on-boundary games it has codimension $2$.
For the rest of the paper, we assume that $n \ge 3$. The case $\bd=(d,d)$ is handled already in \cite[Proposition 3.3]{abo2026vector}, where the Nash discriminant variety has two irreducible components of codimension two and degree $\binom{d}{2}$. 

\subsection{Roadmap for the degree calculation}

To calculate $\deg(\nabla_{\!\ka})$, we use the following result. The original statement expresses the dimension and degree of general dual toric varieties in terms of the volume and Euler obstruction of the faces of the associated polytope. Recall that $P$ is the Nashtope which is the convex hull of the columns of the Nashmatrix $\ka$.

\begin{theorem}[{\cite[Theorem 1.4]{matsui2011geometric}}]\label{thm:polytope}
For $1 \le i \le \# \ka - 1$, set
\[
    \delta_i \coloneqq \sum_{F \le P} (-1)^{\codim(F)} \left\{ \binom{\dim(F) - 1}{i} + (-1)^{i-1}(i + 1)\right\} \Vol_\sbZ(F)\eu(F)\,.
\]
Then $r \coloneqq \codim(\nabla_{\!\ka}) = \min\{i \mid \delta_i \neq 0\}$ and $\deg(\nabla_{\!\ka}) = \delta_r$.
\end{theorem}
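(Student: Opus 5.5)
The plan is to treat $\nabla_{\!\ka}$ as the projective dual $X_\ka^\vee$ of the projective toric variety $X_\ka\subseteq\bP^{\#\ka-1}$ defined by the columns of $\ka$. This identification follows from the definition: $H_{\ka,z}$ is singular exactly when the hyperplane $z$ is tangent to the open orbit of $X_\ka$. With this in hand, the codimension and degree of the dual come from Kleiman--Piene polar theory. Let $p_0,p_1,\dots$ be the polar (rank) degrees of $X_\ka$. Then $\codim(X_\ka^\vee)$ is governed by the first nonvanishing polar degree, counted from the top, and $\deg(X_\ka^\vee)$ equals that polar degree. So the task reduces to expressing each polar degree in terms of the Nashtope $P$, and then checking that these expressions are precisely the $\delta_i$.

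For the second step I would write each polar degree as an alternating combination of degrees of the Chern--Mather class $c_M(X_\ka)$. This is Piene's formula $p_k=\sum_{j}(-1)^j\binom{\dim X-j+1}{k-j}\deg c_M^{j}(X_\ka)$, or equivalently Aluffi's involution formula. In turn, $c_M(X_\ka)=c_*(\eu_{X_\ka})$ by MacPherson's definition, with $\eu_{X_\ka}$ the local Euler obstruction function. The torus orbits of $X_\ka$ are in bijection with the faces $F\le P$. Each orbit is isomorphic to $(\bC^*)^{\dim F}$, its closure is $X_{\ka_F}$, and $\eu_{X_\ka}$ is constant along it with value $\eu(F)$. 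This gives the decomposition $\eu_{X_\ka}=\sum_{F\le P}\eu(F)\,\mathbf 1_{T_F}$, where $T_F$ denotes the orbit attached to $F$.

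Next I would compute the degrees of the Chern--Schwartz--MacPherson classes $c_*(\mathbf 1_{T_F})$. Taking degrees against powers of the hyperplane class is equivalent to computing Euler characteristics of $T_F$ cut by generic hyperplanes, through the standard relation between CSM degrees and $\chi(T_F\cap L_1\cap\dots\cap L_m)$. By Bernstein--Kushnirenko--Khovanskii, a generic hypersurface in $(\bC^*)^{k}$ with Newton polytope $F$ has Euler characteristic $(-1)^{k-1}\Vol_\sbZ(F)$. Moreover, the relevant Euler characteristics of $T_F$ cut by several generic hyperplanes are all multiples of $\Vol_\sbZ(F)$ with signs, since every section has the same Newton polytope $F$. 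Substituting these values into Piene's formula, and reorganizing the sum over $F$ and $j$, should yield coefficients of the form $(-1)^{\codim F}\{\binom{\dim F-1}{i}+(-1)^{i-1}(i+1)\}$. This would identify the $i$-th relevant polar degree with $\delta_i$.

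The main obstacle is this last bookkeeping step. It requires tracking the index shift between the dimension of the affine cone and that of the projective variety, handling the signs $(-1)^{\codim F}$, and producing the correction term $(-1)^{i-1}(i+1)$. I expect the correction term to come from the vertices and edges, that is, from the low-dimensional faces. I would first check the formula against small cases: a simplex, where the dual is empty, and $\bP^1\times\bP^1$, whose dual is the degree-$2$ determinant. These checks would calibrate the conventions before doing the general binomial manipulation with Vandermonde-type identities. Once the identification $p_{(\cdot)}=\delta_i$ is established, the characterization of $\codim(\nabla_{\!\ka})$ as the least $i$ with $\delta_i\neq0$, with degree $\delta_r$, follows directly from the polar-theory criterion in the first step.
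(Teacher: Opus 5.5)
The paper gives no proof of this statement; it quotes it from Matsui--Takeuchi, so your proposal has to stand on its own. Most of its ingredients are sound: the identification with the dual of $X_\ka$, Kleiman's criterion, Piene's formula, $c_M=c_*(\eu_{X_\ka})$, the orbit decomposition, and Aluffi's relation between CSM degrees and Euler characteristics of generic linear sections. The remaining ingredient, Khovanskii's formula, also holds: for $\dim F\ge1$ and a generic codimension-$m$ linear space $L_m$, $\chi(T_F\cap L_m)=(-1)^{\dim F-m}\binom{\dim F-1}{m-1}\Vol_\sbZ(F)$.

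The gap is in the step you call bookkeeping. You aim to show that each $\delta_i$ \emph{is} a polar degree of $X_\ka$, and that is false, so no reorganization of Piene's formula can produce it. Take the Veronese surface, $P=2\Delta_2$ with its six lattice points. It is smooth, so $\eu\equiv1$; also $\Vol_\sbZ(P)=4$ and each edge has normalized length $2$. The formula gives $\delta_1=12-12+3=3$ and $\delta_2=-12+18-6=0$, while the polar degrees are $(\mu_0,\mu_1,\mu_2)=(4,6,3)$.

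Here is what your orbit-by-orbit computation actually yields, and what is missing. Let $n=\dim P$, and let $\chi_m$ be the $\eu_{X_\ka}$-weighted Euler characteristic of $X_\ka\cap L_m$. Summing Khovanskii's formula over the faces gives
\[
(-1)^{n-i-1}\,\delta_i=\chi_{i+1}+i\,\chi_0-(i+1)\,\chi_1\,.
\]
So for every positive-dimensional face the term $(-1)^{i-1}(i+1)$ is the contribution of $\chi_1$; it is not confined to vertices and edges. At a vertex it combines with $\binom{-1}{i}=(-1)^i$ into the contribution $i\,\chi(T_v)$ of $i\,\chi_0$.

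Write $\mu_j$ for the degree of the codimension-$j$ polar class, with $\mu_j=0$ for $j<0$. The tools you already invoke give $\mu_k=(-1)^k(\chi_{n-k}-2\chi_{n-k+1}+\chi_{n-k+2})$, and summation by parts then yields
\[
\delta_i=\sum_{k=1}^{i}(-1)^{k-1}k\,\mu_{n-i+k}=\mu_{n-i+1}-2\mu_{n-i+2}+\cdots+(-1)^{i-1}i\,\mu_n\,.
\]
This unitriangular relation is the missing idea. By Kleiman, $\mu_{n-k+1}=0$ for $k<r$ and $\mu_{n-r+1}=\deg X_\ka^\vee$. Hence $\delta_i=0$ for $i<r$ and $\delta_r=\deg\nabla_{\!\ka}$, while for $i>r$ the $\delta_i$ need not be polar degrees.

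Geometrically, $\delta_i$ is, up to sign, the Euler-obstruction-weighted Euler characteristic of $\nabla_{\!\ka}\cap\Lambda$ for a generic $i$-plane $\Lambda$ of hyperplanes. This is empty for $i<r$ and consists of $\deg\nabla_{\!\ka}$ reduced points for $i=r$; in the example, $\delta_2=0$ is the Euler characteristic of a smooth plane cubic. Working with this incidence correspondence directly would let you bypass polar classes altogether.

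Finally, your orbitwise value $\eu(F)$ is the local Euler obstruction of $X_\ka$ along $T_F$. That it agrees with the recursive quantity of Definition~\ref{def: eu} is a separate theorem of Matsui--Takeuchi, which your argument would be assuming.
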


In general, the binomial coefficient used in Theorem~\ref{thm:polytope} is the generalized binomial coefficient $\binom{p}{q}\coloneqq \frac{p(p - 1)(p - 2) \cdots (p - q + 1)}{q!}$ for any integer $p$ and nonnegative integer $q$. For example, when $\dim(F) = 0$, we have $\binom{\dim(F) - 1}{i} = \binom{-1}{i} = (-1)^i$. In particular, in order to show that $\nabla_{\!\ka}$ is a hypersurface for strictly balanced game formats, it suffices to verify that
\begin{equation}\label{eq: delta1}
    0 \neq \delta_1 = \sum_{F \le P} (-1)^{\codim(F)}(\dim(F) + 1)\Vol_\sbZ(F)\eu(F)\,.
\end{equation}
Since the above expression coincides with \cite[Chapter 9, Theorem 2.8]{GKZ94} when $\eu(F)=1$ for every face of $P$, we may view \cite[Theorem 1.4]{matsui2011geometric} as a generalization of it.

\noindent For on-boundary game formats, i.e. $d_n-1=\sum_{i=1}^{n-1}(d_i-1)$, we will instead focus on
\begin{equation}\label{eq: delta2}
    \delta_2 = \sum_{F \le P} (-1)^{\codim(F)}\left(\binom{\dim(F) - 1}{2} - 3\right)\Vol_\sbZ(F)\eu(F)\,.
\end{equation}

In the rest of this section, we unwind the definitions of $\Vol_\sbZ$ and $\eu$, including the auxiliary functions $u$ and $i$ given in \cite[Section 4.2]{matsui2011geometric}. Critically, the Euler obstruction $\eu(F)$ depends not just on the polytope $P$ with $F$ as a face, but also on the lattice generated by the columns of~$\ka$.

Choose $M$ to be a $\bZ$-lattice (free $\bZ$-module) of rank $r(M)$ so that the rank of $M(\ka)$, the affine $\bZ$-lattice generated by $\ka$, is $r(M)$. We let $P$ be the convex hull of the vertices given by the columns of $\ka$ in $M_\sbR \coloneqq \bR \otimes_\sbZ M$. For each face $F_\alpha$ of $P$, we consider the smallest affine subspace $\bL(F_\alpha)$ of $M_\sbR$ containing $F_\alpha$ and the affine $\bZ$-lattice $M_\alpha \coloneqq M(\ka \cap F_\alpha)$ generated by $\ka \cap F_\alpha$ in $\bL(F_\alpha)$.
Observe that $M(\ka \cap F)$ is a $\bZ$-lattice of rank $\dim(F) = \dim(\bL(F))$ in $\bL(F)$ and we have $(M(\ka \cap F))_\sbR \cong \bL(F)$. Let $\vol$ be the Lebesgue measure of $(\bL(F), M(\ka \cap F))$ by which the volume of the fundamental domain of the translation by $M(\ka \cap F)$ on $\bL(F)$ is measured to be~$1$. For a subset $K \subseteq \bL(F)$, we set 
\[
    \Vol_\sbZ(K) \coloneqq (\dim(F))! \cdot \vol(K)\,.
\]
Now let us fix two faces $F_\alpha,F_\beta$ of $P$ such that $F_\beta < F_\alpha$. By taking a suitable affine transformation of the lattice $M(\ka)$, we may assume that the origin $0$ of $M(\ka)$ is a vertex of the smaller face $F_\beta$. With this choice of the origin $0 \in F_\beta \cap M(\ka)$, we define the subsemigroup $\Salpha$ of $M_\alpha$ generated by $\ka \cap F_\alpha$. Although $\Salpha$ also depends on $F_\beta$, we denote it by $\Salpha$ for simplicity. Denote by $M_\alpha/F_\beta$ the quotient lattice $M_\alpha/(M_\alpha \cap \bL(F_\beta))$ of rank $\dim(F_\alpha) - \dim(F_\beta)$. 

We adapt the following definitions of the functions $u$ and $i$, and of the Euler obstruction, from \cite{GKZ94} and \cite{matsui2011geometric}.

\begin{definition}[$u$ and $i$ functions]\label{def: general-u}
We denote by $\Salpha/F_\beta$ the image of $\Salpha \subseteq M_\alpha$ in the quotient $\bZ$-lattice $M_\alpha/F_\beta$.
We denote by $K(\Salpha/F_\beta)$ (resp. $K_+(\Salpha/F_\beta)$) the convex hull of $\Salpha/F_\beta$ (resp. $(\Salpha/F_\beta) \setminus \{0\}$) in $(M_\alpha/F_\beta)_\sbR$ and set 
\[
    K_-(\Salpha/F_\beta) \coloneqq \overline{K(\Salpha/F_\beta) \setminus K_+(\Salpha/F_\beta)}\,.
\]
We call $K_-(\Salpha/F_\beta)$ the subdiagram part of the semigroup $\Salpha/F_\beta$ and denote by $u(F_\alpha,F_\beta)$ the normalized $(\dim(F_\alpha) - \dim(F_\beta))$-dimensional volume of $K_-(\Salpha/F_\beta)$ with respect to the $\bZ$-lattice $M_\alpha/F_\beta \subseteq (M_\alpha/F_\beta)_\sbR$.\footnote{The notation $u(F_\alpha,F_\beta)$ is equivalent to $u(\Salpha/F_\beta)$ in \cite{matsui2011geometric}.} If $F_\alpha = F_\beta$, we set $u(F_\alpha,F_\alpha) \coloneqq 1$. Finally, for two faces $F_\alpha,F_\beta$ of $P$ such that $F_\beta < F_\alpha$, we define $i(F_\alpha,F_\beta)$ to be the index $[M_\alpha \cap \bL(F_\beta): M_\beta]$.
\end{definition}

\begin{definition}\label{def: eu}
The \emph{Euler obstruction} of a face $F_\alpha$ of a polytope $P$ associated to a projective toric variety is defined recursively as follows:
\begin{enumerate}
    \item $\eu(P) \coloneqq 1$,
    \item $\eu(F_\beta) \coloneqq \sum_{F_\beta \lneq F_\alpha}(-1)^{\dim(F_\alpha) - \dim(F_\beta) - 1} i(F_\alpha, F_\beta) \cdot u(F_\alpha,F_\beta) \cdot \eu(F_\alpha)$.
\end{enumerate}
\end{definition}

\noindent See \cite{matsui2011geometric} for a discussion of how Definition~\ref{def: eu} relates to the Euler obstruction of a point on a variety.
With this collection of definitions in hand, we see a clear path to calculate $\deg(\nabla_{\!\ka})$: find expressions for the functions $u$ and $i$ (Section~\ref{subsec: functions u and i}), the Euler obstruction $\eu$ (Section~\ref{subsec: Euler obstruction}), the normalized volumes $\Vol_\sbZ$ (Section~\ref{subsec: normalized volumes}), and finally $\delta_i$ respectively for strictly balanced (Section~\ref{subsec: degree A-discriminant strictly balanced}) and on-boundary (Section~\ref{subsec: degree A-discriminant on-boundary}) game formats.

\subsection{The faces of balanced Nashtopes}

The first step toward applying Theorem \ref{thm:polytope} is to characterize the faces of the Nashtope. We do this by indexing the faces with auxiliary combinatorial objects, which we call \emph{face strings}.

\begin{definition}
Let $a \in \bZ^+$. An \emph{$a$-boxed set} is either
\begin{enumerate}
    \item $S$, a nonempty subset of $[a]$, or
    \item $\boxed{S}$, a nonempty subset of $[a]$ drawn inside a box, or
    \item an empty box $\boxed{\emptyset}$.
\end{enumerate}
\end{definition}

\begin{definition}
The \emph{cardinality} $|S|$ of an $a$-boxed set $S$ is 
\begin{enumerate}
    \item the number of elements of $S$ if $S$ is a boxed or unboxed nonempty subset of $[a]$,
    \item 1, if $S$ is an empty box.
\end{enumerate}
\end{definition}

\begin{definition}
We say that $x$ \emph{is an element of} an $a$-boxed set $S$ if $x$ is an element of the unboxed version of $S$.
\end{definition}
The term ``prismatic'' for the strings becomes clear later in Theorem~\ref{thm:general-bijection}; they correspond to faces that are products of simplices.
\begin{definition}\label{def:prismatic-face-string}
A \emph{prismatic face string} or \emph{prism string} is defined as follows.
Let $C_i$ be a nonempty subset of $[d_i - 1]$ and, for $j \in [n] \setminus \{i\}$, let $D_j$ be a nonempty subset of $[d_j]$.
The prismatic face string associated to these sets $D_1,\dots,D_{i-1},$ $C_i,$ $D_{i+1},\dots,D_n$ is
\[
    D_1 \cdots D_{i-1}\boxed{C_i} D_{i+1}\cdots D_n\,.
\]
A prismatic face string with a boxed set in $i$-th position is called \emph{$i$-prismatic face string}.
A \emph{vertex string} is a prismatic face string such that $C_i$ and all subsets $D_j$ for $j\neq i$ are singletons.
\end{definition}

\begin{definition}\label{def:non-prismatic-face-string}
A \emph{nonprismatic face string} of format $\bd = (d_1, \dots,d_n)$ is defined as follows. For $i \in [n]$, let $C_i$ be a subset of $[d_i - 1]$ such that at least two of $C_1,\dots,C_n$ are nonempty, and let $D_i$ be a nonempty subset of $[d_i]$. The nonprismatic face string associated to these sets $C_1,\dots,C_n,$ $D_1,\dots,D_n$ is
\[
\begin{array}{ccc}
    \boxed{C_1} & \cdots & \boxed{C_n}\\[2pt]
    D_1 & \cdots & D_n
\end{array}
\]
or, written more compactly, $(\boxed{C_1} \cdots \boxed{C_n}, D_1 \cdots D_n)$.
\end{definition}

\begin{definition}
A \emph{prism representative} of format $\bd = (d_1, \dots,d_n)$ is defined as follows. For a unique $j \in [n]$, let $C_j$ be a nonempty subset of $[d_j - 1]$.
For each $i \in [n]$, let $D_i$ be a nonempty subset of $[d_i]$ with $D_j = \{t_j\}$ for some $t_j \in [d_j]$. The prism representative associated to these sets $C_j,$ $D_1,\dots,D_n$ is
\[
\begin{array}{ccccccc}
    \boxed{\emptyset} & \cdots & \boxed{\emptyset} & \boxed{C_j} & \boxed{\emptyset} & \cdots & \boxed{\emptyset}\\[5pt]
    D_1 & \cdots & D_{j-1} & \{t_j\} & D_{j+1} & \cdots & D_n\,.
\end{array}
\]
\end{definition}

\begin{remark}\label{rmk: general-caveat}
Prism representatives are not in bijection with prism strings. In fact, there is a $d_j$-to-one map from prism representatives to prism strings given by replacing $t_j$ with $\boxed{C_j}$ and forgetting all the empty sets. The advantage of defining prism representatives is that it enables us to speak of ``general format'' for face strings and it makes certain aspects of the Euler obstruction sum and volume calculations easier to describe, such as Corollary \ref{cor:vol-format} and Lemma \ref{lem:vol-gf}.
\end{remark}

\begin{definition}\label{def: i-prism string}
Consider a nonprismatic face string $\sigma =(\boxed{C_1}\cdots\boxed{C_n},{D_1}\cdots{D_n})$ such that $C_i$ is nonempty. The \emph{$i$-prism string} of $\sigma$ is $\sigma^i\coloneqq D_1\cdots D_{i-1}\boxed{C_i}D_{i+1}\cdots D_n$, that is the $i$-prismatic string obtained from $\sigma$ by deleting all subsets $C_j$ for $j\neq i$ and replacing $D_i$ with $\boxed{C_i}$.
\end{definition}

\begin{definition}
A \emph{face string} of format $\bd = (d_1,\dots,d_n)$ is either a prismatic face string or a nonprismatic face string of format $\bd = (d_1,\dots,d_n)$.
\end{definition}

Before defining the map $\phi$ from face strings to faces, we first set conventions for the vector space in which the faces lie. For each $i\in[n]$, define $U^i=\bR^{d_i - 1}$ with basis $\{e_1^i,\dots,e_{d_i - 1}^i\}$ and similarly $V^i=\bR^{d_i}$ with basis $\{f_1^i,\dots,f_{d_i}^i\}$. Each face is contained in $\bigoplus_{i=1}^n(U^i\oplus V^i)$.

\begin{definition}\label{def:general-phi}
Given a face string for a balanced game format, we map it to a face of the corresponding Nashtope. Let this map be called $\phi$.
\begin{enumerate}
    \item If the face string is a vertex string $\sigma =s_1 \cdots s_{i-1} \boxed{t_i} s_{i+1} \cdots s_n$, then set
    \[
        \phi(\sigma) \coloneqq e_{t_i}^i + \sum_{j\neq i} f_{s_j}^{j}\,.
    \]
    \item If the face string is prismatic of the form $\sigma =D_1 \cdots D_{i-1}\boxed{C_i} D_{i+1}\cdots D_n$, then set
    \[
        \phi(\sigma) \coloneqq \conv(\phi(s_1 \cdots s_{i-1} \boxed{t_i} s_{i+1} \cdots s_n) \mid \text{$t_i\in C_i$, $s_j\in D_j$ $\forall\,j\neq i$}\})\,.
    \]
    \item If the face string is nonprismatic of the form $\sigma =(\boxed{C_1}\cdots\boxed{C_n},{D_1} \cdots{D_n})$, then set
    \[
        \phi(\sigma) \coloneqq \conv(\phi(\sigma^i)\mid\text{$i\in[n]$ and $C_i\neq\emptyset$})
    \]
    according to Definition~\ref{def: i-prism string}.
\end{enumerate}
\end{definition}

\begin{lemma}\label{lem:general-phi-well-defined}
Let $\sigma$ be a face string. Then its image under $\phi$ is a face of $P$.
\end{lemma}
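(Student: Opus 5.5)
The plan is to exhibit, for each face string $\sigma$, an explicit linear functional $\ell_\sigma$ on $\bigoplus_{i=1}^n(U^i\oplus V^i)$ whose maximum over $P$ is attained at exactly those columns of $\ka$ that span $\phi(\sigma)$. Since $P$ is the convex hull of the columns of $\ka$, the face of $P$ on which a linear functional is maximized is the convex hull of the columns attaining the maximum. So $\phi(\sigma)$ will be a face once the set of maximizing columns is identified with the generating set in Definition~\ref{def:general-phi}.

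First I would record the columns of $\ka$ in the coordinates $e^i_k, f^j_s$. The column for the monomial $\pi_{\bj_{-i}}$ in the equation $\Delta f^{(i)}_{1,k+1}$ is $e^i_k+\sum_{j\ne i} f^j_{s_j}$, where $k\in[d_i-1]$ and $s_j\in[d_j]$. These are precisely the points $\phi$ of vertex strings. Write $\ell_\sigma=\sum_{i,k} u^i_k (e^i_k)^\ast+\sum_{j,s} v^j_s (f^j_s)^\ast$. Its value on the column labelled $(i,k,(s_j)_{j\ne i})$ is then
\[
u^i_k+\sum_{j\ne i} v^j_{s_j}.
\]

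For a nonprismatic string $(\boxed{C_1}\cdots\boxed{C_n},D_1\cdots D_n)$, I would make the following choices:
\begin{itemize}
\item $v^j_s=0$ if $s\in D_j$ and $v^j_s=-1$ otherwise;
\item $u^i_k=0$ if $k\in C_i$ and $u^i_k=-1$ otherwise, so that every $u^i_k$ equals $-1$ when $C_i=\emptyset$.
\end{itemize}
Every column then has value $\le 0$. Equality holds iff $k\in C_i$ and $s_j\in D_j$ for all $j\ne i$. Such a column exists because at least one $C_i$ is nonempty and all $D_j$ are nonempty. By Definition~\ref{def: i-prism string}, these maximizing columns are exactly the vertices of the prisms $\phi(\sigma^i)$ with $C_i\ne\emptyset$. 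Hence the maximizing face is $\conv(\phi(\sigma^i)\mid C_i\neq\emptyset)=\phi(\sigma)$.

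For a prismatic string $D_1\cdots\boxed{C_i}\cdots D_n$, I would use the same $v^j$ for $j\ne i$ and take $v^i\equiv 0$; the value of $v^i$ is irrelevant. I would set $u^i_k=0$ for $k\in C_i$, $u^i_k=-1$ for $k\notin C_i$, and $u^l_k=-(C+1)$ for all $l\ne i$. Any column from a block $l\ne i$ then has value at most $-(C+1)+0<0$. The columns from block $i$ have value $0$ exactly when $k\in C_i$ and $s_j\in D_j$ for $j\ne i$, giving $\phi(\sigma)$. A vertex string is the special case where all sets are singletons.

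The only point needing care is the bookkeeping that the coordinates $f^i_\bullet$ never occur in block-$i$ columns. This is why $D_i$ plays no role for the block-$i$ vertices and only constrains the other blocks, consistent with Definition~\ref{def: i-prism string}. Beyond that bookkeeping, I expect no real obstacle. Balancedness of the format is not needed for this lemma; it matters only later, for showing that these are all the faces and for computing their dimensions.
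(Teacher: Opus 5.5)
Your proof is correct and is essentially the paper's argument in supporting-hyperplane form. The paper instead shows that any convex combination of columns of $\ka$ lying in $\phi(\sigma)$ can use only columns of $\ka_\sigma$, because every other column has a $1$ in a nonnegative coordinate on which all of $\ka_\sigma$ vanishes; your functional is exactly minus the sum of such coordinates. Your explicit negative weights $u^l_k$ for $l\neq i$ in the prismatic case also deal with columns from the other blocks, which the paper's case distinction leaves implicit.
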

\begin{proof}
Let the vertex set of $\phi(\sigma)$ be called $\ka_{\sigma}$. 
First, suppose that $\sigma = D_1 \cdots D_{i-1} \boxed{C_i} D_{i+1} \cdots D_n$ is prismatic. Then
\[
    \ka_{\sigma} = \left\{e_{t_i}^i + \sum_{j \in [n] \setminus \{i\}} f_{s_{j}}^{j}\ \bigg|\ \text{$t_i \in C_i$, $s_j \in D_j$ for $j \in [n] \setminus \{i\}$}\right\}\,.
\]
Suppose we have an element $v$ in $\phi(\sigma)$ which satisfies $v = \sum_{a \in \ka} q_a \cdot a$ for some coefficients $q_a \ge 0$ with $\sum_{a \in \ka} q_a = 1$ (where we identify the matrix $\ka$ with its set of columns). By the definition of a face, it suffices to show that for all choices of coefficients, $q_a = 0$ for all $a \in \ka \setminus \ka_{\sigma}$. Indeed, observe that for any $a \in \ka \setminus \ka_{\sigma}$, either the coefficient of $e^i_{t_i}$ in $a$ is $1$ for some $t_i \in [d_i - 1] \setminus C_i$, or the coefficient of $f_{s_{j}}^{j}$ in $a$ is $1$ for some $j \in [n] \setminus \{i\}$ and $s_{j} \in [d_i] \setminus D_i$. Since the coefficients of each such $e_{t_i}^i$ and $f_{s_j}^{j}$ in each element of $\ka_{\sigma}$ are all zero, we must have $q_a = 0$ for all such $a$.

Second, suppose that $\sigma$ is nonprismatic of the form $(\boxed{C_1} \cdots \boxed{C_n}, $ ${D_1} \cdots {D_n})$. Then 
\[
    \ka_{\sigma} = \bigcup_{i\colon C_i\neq\emptyset}\ka_{\sigma^i}
\]
where $\sigma^i$ is the $i$-prism string of $\sigma$.

\noindent
Suppose we have an element $v$ in $\phi(\sigma)$ which satisfies $v = \sum_{a \in \ka} q_a \cdot a$ for some coefficients $q_a \ge 0$ with $\sum_{a \in \ka} q_a = 1$. Similarly to before, it suffices to show that $q_a = 0$ for all $a \in \ka \setminus \ka_{\sigma}$. Indeed, observe that for any $a \in \ka \setminus \ka_{\sigma}$, either the coefficient of $e^i_{t_i}$ in $a$ is $1$ for some $i \in [n]$ and $t_i \in [d_i - 1] \setminus C_i$, or the coefficient of $f_{s_{j}}^{j}$ in $a$ is $1$ for some $j \in [n]$ and $s_{j} \in [d_i] \setminus D_i$. Since the coefficients of each such $e_{t_i}^i$ and $f_{s_{j}}^{j}$ in each element of $\ka_{\sigma}$ are all zero, we must indeed have $q_a = 0$ for all such $a$.
\end{proof}

\begin{example}
Consider the game format $\bd = (2,2,3)$ and the nonprismatic face string $\sigma =(\setlength{\arraycolsep}{2pt}\begin{array}{|c|c|c|}
\hline
\emptyset & 1 & [2] \\
\hline
\end{array},2\,2\,[3])$.
There is no $1$-prism string of $\sigma$. The $2$-prism string of $\sigma$ is $\sigma^2=2\,\boxed{1}\,[3]$. In particular, $\sigma^2$ has vertex strings $2\,\boxed{1}\,1$, $2\,\boxed{1}\,2$, and $2\,\boxed{1}\,3$, and for example $\phi(2\,\boxed{1}\,3)= e_1^2 + f_2^1 + f_3^3$. The $3$-prism string of $\sigma$ is $\sigma^3 = 2\,2\,\boxed{[2]}$. In particular, $\sigma^3$ has vertex strings $2\,2\,\boxed{1}$ and $2\,2\,\boxed{2}$. In particular, this nonprismatic face is generated by five vertex strings, three of them generate the two-dimensional face $\sigma^2$, and the remaining two generate the edge $\sigma^3$.
\end{example}

\begin{definition}
A \emph{prismatic} face is one whose face string is prismatic.
\end{definition}

\begin{lemma}\label{lem:general-cube-intersect}
Let $F$ be any face of $P$, and let $i \in [n]$. Let $\ka_i$ be the vertex set of the face string $[d_1]\cdots [d_{i-1}] \boxed{[d_i - 1]}[d_{i+1}] \cdots [d_n]$. Suppose $\ka_F \cap \ka_i \neq \emptyset$. Then $\ka_F \cap \ka_i$ is the vertex set of some $i$-prismatic face string.  
\end{lemma}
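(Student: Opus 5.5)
The plan is to realize $\ka_F \cap \ka_i$ as the vertex set of a face of the prism $\Pi_i \coloneqq \phi([d_1]\cdots[d_{i-1}]\boxed{[d_i-1]}[d_{i+1}]\cdots[d_n])$, and then use the fact that faces of a product of simplices are products of subsimplices.

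First I would check that $\Pi_i$ is itself a face of $P$, which is Lemma~\ref{lem:general-phi-well-defined}. Its vertex set $\ka_i$ consists exactly of the columns of $\ka$ coming from the equations $\Delta f^{(i)}_{1,k}$, namely $e^i_{t}+\sum_{j\neq i} f^j_{s_j}$. Since $F$ and $\Pi_i$ are both faces of $P$, their intersection $F\cap \Pi_i$ is a face of $P$ and also a face of $\Pi_i$. Because every column of $\ka$ is a vertex of $P$, the vertex set of $F\cap\Pi_i$ is exactly $\ka_F\cap\ka_i$, and this set is nonempty by hypothesis.

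Next I would identify $\Pi_i$ with a product of simplices. The map
\[
(t,(s_j)_{j\neq i})\mapsto e^i_t+\sum_{j\ne i} f^j_{s_j}
\]
has its coordinates in the blocks $U^i$ and $V^j$ ($j\neq i$) varying independently. Each block contributes a standard simplex: $\conv\{e^i_t : t\in[d_i-1]\}\cong\Delta_{d_i-2}$ and $\conv\{f^j_s : s\in[d_j]\}\cong\Delta_{d_j-1}$. Hence $\Pi_i$ is affinely isomorphic to $\Delta_{d_i-2}\times\prod_{j\neq i}\Delta_{d_j-1}$, and it is exactly the sum of these block simplices, which lie in complementary coordinate subspaces.

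The key step, which I expect to be the main point needing care, is the standard fact that every nonempty face of a product of polytopes is a product of nonempty faces of the factors. To prove it, pick a linear functional $\ell$ exposing $F\cap\Pi_i$ in $\Pi_i$. Restricted to each block, $\ell$ is maximized on the sub-simplex spanned by the coordinates where it attains its block maximum. Write $C_i$ and $D_j$ for the resulting nonempty index sets, so $C_i\subseteq[d_i-1]$ and $D_j\subseteq[d_j]$. Since $\ell$ is additive across blocks, its maximizers on the vertex set of $\Pi_i$ are exactly the vertices $e^i_t+\sum_{j\ne i} f^j_{s_j}$ with $t\in C_i$ and $s_j\in D_j$. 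Comparing with part (2) of Definition~\ref{def:general-phi}, this vertex set is precisely the vertex set of the $i$-prismatic face string $D_1\cdots D_{i-1}\boxed{C_i}D_{i+1}\cdots D_n$, which concludes the argument.
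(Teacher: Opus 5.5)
Your proposal is correct and follows essentially the same route as the paper: intersect $F$ with the maximal $i$-prism face (a face of $P$ by Lemma~\ref{lem:general-phi-well-defined}), regard the intersection as a face of a product of simplices, and read off its vertex set as an $i$-prismatic face string. The paper only asserts that the faces of this product of simplices are the prismatic face strings. Your linear-functional argument, which uses that the functional is additive across the independent coordinate blocks, supplies that missing justification.
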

\begin{proof}
Let $F_i$ be the convex hull of $\ka_i$. By Lemma \ref{lem:general-phi-well-defined}, $F_i$ is a face of $P$. Since faces of $P$ form a lattice, it follows that $F \cap F_i$ is a face of $P$. Since $F \cap F_i \subseteq F_i$, we have that $F \cap F_i$ is also a face of $F_i$. But $F_i$ is just a product of simplices, and we can confirm that its faces are precisely the convex hulls of prismatic face strings with some $\boxed{C_i}$'s.
Thus, $\ka_F \cap \ka_i$ is the vertex set of some prismatic face string with a $\boxed{C_i}$.
\end{proof}

\begin{lemma}\label{lem:general-cube-overlap}
Let $\sigma = D_1\cdots D_{i-1}\,\boxed{C_i}\,D_{i+1}\cdots D_n$ be an $i$-prismatic face string, and let $\sigma' = D'_1\cdots D'_{j-1}\boxed{C'_{j}}D'_{j+1}\cdots D'_n$ be a $j$-prismatic face string with $i < j$, and let $F''$ be the convex hull of $\ka_{\sigma} \cup \ka_{\sigma'}$. Suppose that $F''$ is a face of $P$. Then $D_\ell = D'_\ell$ for all $\ell \in [n] \setminus \{i, j\}$.
\end{lemma}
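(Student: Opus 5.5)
We argue by contradiction: if some $\ell\notin\{i,j\}$ has $D_\ell\neq D'_\ell$, we find a point of $F''$ that is also a convex combination of columns of $\ka$ outside $\ka_\sigma\cup\ka_{\sigma'}$. Since $F''$ is a face, every such representation must use only its vertices, so this is impossible. We only need the coordinates of the vertices. A vertex of $\ka_\sigma$ has the form $e^i_{t_i}+\sum_{m\neq i} f^m_{s_m}$ with $t_i\in C_i$ and $s_m\in D_m$. A vertex of $\ka_{\sigma'}$ has the form $e^j_{t'_j}+\sum_{m\neq j}f^m_{s'_m}$ with $t'_j\in C'_j$ and $s'_m\in D'_m$.

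Without loss of generality there is some $x\in D_\ell\setminus D'_\ell$ (the other case is symmetric, swapping the roles of $\sigma$ and $\sigma'$). Pick $y\in D'_\ell$. Choose vertex strings as follows:
\begin{enumerate}
\item $v_1\in\ka_\sigma$ with $\ell$-th entry $x$, $i$-box $t\in C_i$, and $j$-th entry some $p\in D_j$, other entries $s_m\in D_m$;
\item $v_2\in\ka_{\sigma'}$ with $\ell$-th entry $y$, $j$-box $t'\in C'_j$, $i$-th entry $q\in D'_i$, other entries $s'_m\in D'_m$.
\end{enumerate}
Now swap the $\ell$-entries to get the columns of $\ka$
\[
w_1=v_1-f^\ell_x+f^\ell_y,\qquad w_2=v_2-f^\ell_y+f^\ell_x .
\]
These are genuine columns of $\ka$: $w_1$ is an $i$-type column, $w_2$ is a $j$-type column, and every index $\ell\notin\{i,j\}$ is an unboxed position in both, so $f^\ell_\cdot$ appears in each.

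Clearly $v_1+v_2=w_1+w_2$, so $\tfrac12(v_1+v_2)\in F''$ is also $\tfrac12(w_1+w_2)$. Since $F''$ is a face, both $w_1$ and $w_2$ must lie in $\ka_\sigma\cup\ka_{\sigma'}$. Now $w_2$ has an $e^j$ coordinate. Because $i\neq j$, and $\ka_\sigma$ has only $e^i$ coordinates, $w_2$ cannot lie in $\ka_\sigma$. So $w_2\in\ka_{\sigma'}$, which forces its $\ell$-entry $x$ to lie in $D'_\ell$. This contradicts the choice of $x$.

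The main point to verify carefully is the middle step, that swapping the $\ell$-coordinates produces legitimate columns of $\ka$. This holds precisely because $\ell\neq i,j$. The box type of a column is fixed by its $U$-coordinate, so the membership test for $w_2$ is unambiguous.
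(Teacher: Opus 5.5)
Your proof is correct and is essentially the paper's exchange argument. The paper takes the centroids of $\sigma,\sigma'$, swaps all unboxed entries outside $\{i,j\}$ at once to form $\tilde\sigma,\tilde\sigma'$ with the same midpoint, and uses the face property together with $\ka_{\tilde\sigma}\cap\ka_{\sigma'}=\ka_{\tilde\sigma'}\cap\ka_{\sigma}=\emptyset$ to obtain both inclusions; you run the same swap on a single pair of vertices and a single coordinate $\ell$, a slightly more economical version of the same idea.
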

\begin{proof}
Let $v$ and $v'$ be the centroids of $\sigma$ and $\sigma'$, respectively, and let $w \coloneqq \frac{v+v'}{2}$. Then, clearly, $w \in F''$. Define
\begin{align*}
    \tilde{\sigma} &\coloneqq D'_1 \cdots D'_{i-1} \boxed{C_i} D'_{i+1} \cdots D'_{j-1} D_j D'_{j+1} \cdots D'_n\\
    \tilde{\sigma}' &\coloneqq D_1 \cdots D_{i-1} D'_i D_{i+1} \cdots D_{j-1} \boxed{C'_j} D_{j+1} \cdots D_n
\end{align*}
and let $\tilde{v}$ and $\tilde{v}'$ be the centroids of $\tilde{\sigma}$ and $\tilde{\sigma}'$, respectively. Then, by comparing coefficients of $e^\ell_{t_\ell}$ and $f^\ell_{s_\ell}$ for each $\ell$, we find that $\frac{\tilde{v} + \tilde{v}'}{2} = w$ as well. Now, we have $w = \frac{1}{2| \ka_{\tilde{\sigma}}|}\sum_{a \in \ka_{\tilde{\sigma}}} a + \frac{1}{2| \ka_{\tilde{\sigma}'}|}\sum_{a \in \ka_{\tilde{\sigma}'}} a$. Since $F''$ is a face of $P$, it follows that $\ka_{\tilde{\sigma}} \cup \ka_{\tilde{\sigma}'} \subseteq \ka_{{\sigma}} \cup \ka_{{\sigma}'}$. By Definition~\ref{def:general-phi} of $\phi$, we have that $\ka_{\tilde{\sigma}} \cap \ka_{\sigma'} = \ka_{\tilde{\sigma}'} \cap \ka_{\sigma} = \emptyset$, so we deduce that $\ka_{\tilde{\sigma}} \subseteq \ka_{\sigma}$ and $\ka_{\tilde{\sigma}'} \subseteq \ka_{\sigma'}$. The first inclusion implies that $D'_\ell \subseteq D_\ell$ for all $\ell \in [n] \setminus \{i, j\}$, and the second inclusion implies $D_\ell \subseteq D'_\ell$ for all $\ell \in [n] \setminus \{i, j\}$.
\end{proof}

\begin{theorem}\label{thm:general-bijection}
The map $\phi$ in Definition \ref{def:general-phi} is a bijection between faces and face strings. 
\end{theorem}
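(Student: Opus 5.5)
Three things must be shown: $\phi$ lands in faces (this is Lemma~\ref{lem:general-phi-well-defined}), $\phi$ is injective, and $\phi$ is surjective. For injectivity I would recover the face string from the vertex set $\ka_\sigma$. A vertex $e^i_{t}+\sum_{j\neq i}f^j_{s_j}$ determines its box position $i$, its boxed entry $t$, and its entries $s_j$. So $\ka_\sigma$ determines the set of indices $i$ carrying a nonempty box, and for each such $i$ the set $C_i$ of boxed entries occurring. If exactly one index $i$ occurs, $\sigma$ must be $i$-prismatic; its sets $D_j$ ($j\neq i$) are read off as the projections of the vertices. Otherwise $\sigma$ is nonprismatic. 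For every $\ell$, some occurring $i$ differs from $\ell$, and the projections of $\ka_{\sigma^i}$ to the $f^\ell$ coordinates give exactly $D_\ell$. Hence $\sigma$ is determined in all cases.

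For surjectivity, let $F$ be a face of $P$ and let $I_F=\{i : \ka_F\cap\ka_i\neq\emptyset\}$, where the $\ka_i$ are as in Lemma~\ref{lem:general-cube-intersect}. Since $\ka=\bigsqcup_i\ka_i$, this set is nonempty. By Lemma~\ref{lem:general-cube-intersect}, each $\ka_F\cap\ka_i$ with $i\in I_F$ is the vertex set of an $i$-prismatic string $\sigma_i=D^{(i)}_1\cdots\boxed{C_i}\cdots D^{(i)}_n$.

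If $|I_F|=1$, then $F$ equals the convex hull of $\ka_{\sigma_i}$, which is $\phi(\sigma_i)$.

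If $|I_F|\ge2$, I need to show that the $D^{(i)}_\ell$ agree for all $i\in I_F\setminus\{\ell\}$, and that any $D_\ell$ with $\ell\in I_F$ is forced consistently. The plan is to apply Lemma~\ref{lem:general-cube-overlap} to pairs $\sigma_i,\sigma_j$. This requires that the convex hull of $\ka_{\sigma_i}\cup\ka_{\sigma_j}$ be a face. I would obtain this as the smallest face containing these vertices, intersected appropriately: $F\cap\conv(\ka_i\cup\ka_j)$. Here $\conv(\ka_i\cup\ka_j)$ is itself a face, since it is cut out by the vanishing of all $e^k$-coordinates for $k\notin\{i,j\}$. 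Its intersection with $F$ is a face with vertex set $(\ka_F\cap\ka_i)\cup(\ka_F\cap\ka_j)$.

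The lemma then gives $D^{(i)}_\ell=D^{(j)}_\ell$ for $\ell\notin\{i,j\}$. For $\ell\in I_F$, the set $D_\ell$ is determined by some $\sigma_i$ with $i\neq\ell$, and with $n\ge3$ and at least two indices in $I_F$, any two choices can be compared through the pairwise lemma. I then set $D_\ell$ to this common value and $C_\ell=\emptyset$ for $\ell\notin I_F$. For $\ell\notin I_F$, $D_\ell$ is the common $D^{(i)}_\ell$, which is consistent by the same argument. This gives a nonprismatic string $\sigma$ with $\sigma^i=\sigma_i$ for all $i\in I_F$.

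It remains to check that $\ka_F=\bigcup_{i\in I_F}\ka_{\sigma_i}=\ka_\sigma$. This follows from the disjoint decomposition of $\ka$, and so $F=\phi(\sigma)$.

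The main obstacle is the consistency of $D_\ell$ across different $\sigma_i$, in particular when $\ell\in I_F$. Lemma~\ref{lem:general-cube-overlap} excludes the two indices being compared, so for $\ell\in\{i,j\}$ one must route through a third index $k\in I_F\setminus\{\ell\}$. If only two indices $i,j$ lie in $I_F$, then $D_i$ is read only from $\sigma_j$, so no conflict arises. Carefully handling this bookkeeping, together with justifying that $\conv(\ka_i\cup\ka_j)$ is a face, is where I expect the effort to lie.
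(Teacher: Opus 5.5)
Your proposal is correct and follows essentially the same route as the paper: Lemma~\ref{lem:general-cube-intersect} produces the prism strings $\sigma_i$, intersecting $F$ with the face $\conv(\ka_i\cup\ka_j)$ lets you apply Lemma~\ref{lem:general-cube-overlap} pairwise, and you then assemble the nonprismatic string with $C_\ell=\emptyset$ for $\ell\notin I_F$. You additionally spell out injectivity, and you justify that $\conv(\ka_i\cup\ka_j)$ is a face via the supporting functional given by the $e^k$-coordinates, where the paper instead cites Lemma~\ref{lem:general-phi-well-defined}. The consistency issue you flag as the main obstacle does not arise: for a fixed $\ell$ you only ever compare $\sigma_i$ and $\sigma_j$ with $i,j\neq\ell$, so Lemma~\ref{lem:general-cube-overlap} applies directly, with no need to route through a third index or to use $n\ge 3$.
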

\begin{proof}
By Lemma \ref{lem:general-phi-well-defined}, the image of $\phi$ is always a face of $P$, and it is clear from the definition of $\phi$ that it is injective. Conversely, suppose $F$ is a face of $P$, and for each $i \in [n]$, let $\ka_i$ be as in Lemma \ref{lem:general-cube-intersect}. In addition, let $\emptyset \neq B \subseteq [n]$ be the set of indices $i$ such that $\ka_F \cap \ka_i \neq \emptyset$. Then by Lemma \ref{lem:general-cube-intersect}, there exist prism strings $\sigma^i$ for $i \in B$, with $\sigma^i$ having a $\boxed{C_i}$, such that $\ka_F \cap \ka_i = \ka_{\sigma^i}$ for all $i$. If $B = \{i\}$ is a singleton, this immediately tells us that $F = \phi(\sigma^i)$. Otherwise, note that given two distinct $i,j\in B$, the convex hull of $\ka_i \cup \ka_j$, which corresponds to the face string
\[
    (\emptyset \cdots \emptyset\,\boxed{[d_{\min\{i,j\}} - 1]}\,\emptyset \,\cdots\,\emptyset\,\boxed{[d_{\max\{i,j\}} - 1]}\,\emptyset\,\cdots\,\emptyset,[d_1]\,\cdots\,[d_n])
\] 
is a face of $P$ by Lemma \ref{lem:general-phi-well-defined}. By intersecting $F$ with this face, we deduce that the convex hull of $\ka_{\sigma^i} \cup \ka_{\sigma^j}$ is a face of $P$ as well, from which we deduce by Lemma \ref{lem:general-cube-overlap} that the letters of $\sigma^i$ and $\sigma^j$ agree whenever neither letter is a set with a box around it. Consequently, there exists a single list of sets $D_1,\dots,D_n$ such that, for each $i\in B$, we have $\sigma^i = D_1 \cdots D_{i-1} \boxed{C_i} D_{i+1} \cdots D_n$. If we let $\hat{\sigma}$ be the string $(\boxed{C_1}\cdots\boxed{C_n}, D_1\cdots D_n)$, where $C_\ell \coloneqq \emptyset$ for all $\ell \notin B$, we conclude that $\ka_{\hat{\sigma}} = \bigcup_{i \in B} \ka_{\sigma^i} = \ka_F$, hence that $F = \phi(\hat{\sigma})$.
\end{proof}

\begin{definition}
Let $\sigma =D_1\cdots D_{i-1}\,\boxed{C_i}\,D_{i+1}\cdots D_n$ be a prismatic face string. The \emph{dimension} of $\sigma$ is
\[
    \dim(\sigma) \coloneqq |C_i| - 1 + \sum_{j\neq i}(|D_j| - 1)\,.
\]
Let $\sigma =(\boxed{C_1}\cdots\boxed{C_n},{D_1}\cdots{D_n})$ be a nonprismatic face string. The \emph{dimension} of $\sigma$ is
\[
    \dim(\sigma) \coloneqq -1 + \sum_{i=1}^n (|D_i| - 1 + |C_i|)\,.
\]
\end{definition}

\noindent Given face strings $\sigma$ and $\sigma'$, write $\sigma \le \sigma'$ if $\phi(\sigma) \subseteq \phi(\sigma')$. 

\begin{lemma}\label{lem: dim face strings}
The dimension of a face string equals the dimension of its image under $\phi$.
\end{lemma}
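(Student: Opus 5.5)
The dimension of $\phi(\sigma)$ is one less than the rank of the affine span of its vertex set $\ka_\sigma$. So we compute the dimension of the linear span of $\ka_\sigma$ inside $\bigoplus_i (U^i\oplus V^i)$ and subtract one. This works because every column of $\ka$ lies in the affine hyperplane where the total coordinate sum equals $n$ (one $e$-coordinate plus $n-1$ $f$-coordinates). Hence the affine span does not contain the origin, and affine dimension equals linear dimension minus one.

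\textbf{Prismatic case.} Let $\sigma = D_1\cdots\boxed{C_i}\cdots D_n$. The vertices $e^i_{t}+\sum_{j\ne i} f^j_{s_j}$ are the image of the vertex set of the product of simplices $\Delta_{C_i}\times\prod_{j\neq i}\Delta_{D_j}$ under an injective linear map: the coordinate blocks $\bR^{C_i}\subseteq U^i$ and $\bR^{D_j}\subseteq V^j$ are distinct. A product of simplices of dimensions $|C_i|-1$ and $|D_j|-1$ has dimension equal to the sum, which gives
\[
\dim\phi(\sigma) = |C_i|-1+\sum_{j\ne i}(|D_j|-1)\,.
\]
Equivalently, the linear span of the vertices is $\bR^{C_i}\otimes\bigotimes_{j\neq i}\bR^{D_j}$ embedded in a direct sum. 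Concretely, it is spanned by the $e^i_t$ with $t\in C_i$ together with the differences $f^j_s-f^j_{s'}$ with $s,s'\in D_j$. Its dimension is $|C_i|+\sum_{j\neq i}(|D_j|-1)$.

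\textbf{Nonprismatic case.} Let $B=\{i : C_i\neq\emptyset\}$, with $|B|\ge 2$. I will show that the linear span $L$ of $\ka_\sigma=\bigcup_{i\in B}\ka_{\sigma^i}$ is exactly
\[
W \coloneqq \bigoplus_{i\in B}\bR^{C_i}\ \oplus\ \bigoplus_{j}\bR^{D_j}\,,
\]
where $\bR^{C_i}\subseteq U^i$ and $\bR^{D_j}\subseteq V^j$. This space has dimension $\sum_i |C_i| + \sum_j |D_j|$, which is $1+\dim(\sigma)$, so the claim follows.

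The inclusion $L\subseteq W$ is clear, since every vertex of $\ka_\sigma$ has support in these coordinates. For the reverse inclusion, I build up $W$ in three steps.
\begin{enumerate}
\item Within $\ka_{\sigma^i}$, differences of vertices give $f^j_s-f^j_{s'}$ for all $j\ne i$ and $s,s'\in D_j$. Since $|B|\ge 2$, every $j$ differs from some $i\in B$, so all differences $f^j_s-f^j_{s'}$ for $s,s'\in D_j$ lie in $L$.
\item Fix base elements $s_j\in D_j$. Modulo these differences, a vertex of $\ka_{\sigma^i}$ reduces to $e^i_t+\sum_{j\ne i} f^j_{s_j}$ with $t\in C_i$.
\item Write $F\coloneqq\sum_{j=1}^n f^j_{s_j}$. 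Then the vertex with index $t$ in the boxed position of player $i$ becomes $e^i_t + F - f^i_{s_i}$.
\end{enumerate}

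The main obstacle is extracting each $e^i_t$ and each individual $f^j_{s_j}$ from these reduced vectors. These are $\sum_{i\in B}|C_i|$ vectors, and we need $\sum_{i\in B}|C_i| + n$ independent directions beyond the differences. So I expect to need one further identity involving differences between different $i$'s, and I will look for it among the vectors we have.

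Subtracting two reduced vectors with the same $i$ gives $e^i_t-e^i_{t'}$. Subtracting vectors with different $i,i'\in B$ gives $e^i_t-e^{i'}_{t'}-f^i_{s_i}+f^{i'}_{s_{i'}}$. This count suggests that $L\neq W$ in general. I therefore expect the correct statement to be
\[
\dim L = \sum_{i\in B}|C_i|+\sum_j(|D_j|-1)\,.
\]
This holds provided the reduced vectors $e^i_t+F-f^i_{s_i}$ are linearly independent modulo the difference space. They are, because each $e^i_t$ appears in exactly one of them.

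Then the affine dimension is
\[
\dim\phi(\sigma) = -1+\sum_i|C_i|+\sum_i(|D_i|-1)\,,
\]
which matches $\dim(\sigma)$. So the plan for the nonprismatic case is the following.
\begin{enumerate}
\item Show that the difference space is spanned inside $L$, using $|B|\ge2$.
\item Show that the $e$-coordinates separate the remaining vectors.
\item Show that nothing else lies in $L$, because $L$ is contained in the difference space plus the span of these $\sum_{i\in B}|C_i|$ vectors.
\end{enumerate}
The prismatic case is the same argument with $|B|=1$; there the differences come directly from the product structure.
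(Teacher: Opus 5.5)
Your final argument is correct, but it takes a different route from the paper.

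\textbf{The two routes.} The paper argues by induction on $\dim(\sigma)$ inside the face-string poset. Via Theorem~\ref{thm:general-bijection} and the explicit description of $\le$ between face strings, it shows that every face string of string-dimension $d+1$ covers one of string-dimension $d$. The cover is obtained by deleting an element of some $C_i$ or $D_j$, or by passing from a nonprismatic edge to one of its vertices. It then uses that the face lattice of a polytope is graded. You instead compute the linear span of $\ka_\sigma$ directly and use the hyperplane of total coordinate sum $n$ to pass from linear to affine dimension. The key facts in your route are that the $f$-differences span a subspace $\mathcal{D}\subseteq L$ (using $|B|\ge 2$ in the nonprismatic case), and that the reduced vectors are separated by their $U$-coordinates.

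\textbf{What each buys.} Your route is self-contained. It needs neither Lemma~\ref{lem:general-phi-well-defined} nor Theorem~\ref{thm:general-bijection}, nor any check of covering relations, which the paper leaves as ``easy to see''. It also produces an explicit basis of the span. That basis would give the independence claims in Lemmas~\ref{lem:general-prismatic-lattice} and~\ref{lem:general-nonprismatic-lattice} directly, whereas the paper deduces them from the dimension count of this very lemma. The paper's route stays within the combinatorics of face strings, whose order relations it needs anyway for the Euler obstruction recursion.

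\textbf{Slips to remove in the write-up.}
\begin{enumerate}
\item In the prismatic case, the span is not $\bR^{C_i}\otimes\bigotimes_{j\ne i}\bR^{D_j}$. It is also not spanned by the $e^i_t$ together with the $f$-differences; in fact the $e^i_t$ do not lie in it. On every vertex, the coordinate sum over a block $V^j$ with $j\ne i$ equals the coordinate sum over $U^i$, but these differ on $e^i_t$. A correct spanning set is $\{e^i_t+\sum_{j\ne i}f^j_{s_j}\}_{t\in C_i}$ for fixed $s_j\in D_j$, together with the $f$-differences. The dimension is unchanged, and your product-of-simplices argument does not depend on this remark.
\item In the nonprismatic case, the opening claim $L=W$ with $\dim W=1+\dim(\sigma)$ is false. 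In fact $\dim W=1+\dim(\sigma)+n$, so $L\ne W$ always. Instead state directly that
\[
L=\mathcal{D}+\operatorname{span}\{e^i_t+F-f^i_{s_i} : i\in B,\ t\in C_i\}\,.
\]
Each reduced vector is itself a vertex of $\phi(\sigma)$, so both inclusions are immediate. Independence modulo $\mathcal{D}$, via the $U$-coordinates, then gives $\dim L=\sum_{i\in B}|C_i|+\sum_j(|D_j|-1)$.
\end{enumerate}
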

\begin{proof}
By Theorem~\ref{thm:general-bijection}, the face lattice is isomorphic to the lattice formed by face strings (together with a bottom element corresponding to the empty face) with the relation $\le$. Given two prismatic face strings $\sigma = D_1 \cdots D_{i-1} \boxed{C_i} D_{i+1} \cdots D_n$ and $\sigma' = D'_1 \cdots D_{j-1}' \boxed{C_j} D_{j+1}' \cdots D'_n$, one observes that $\sigma \le \sigma'$ if and only if $i = j$, $C_i \subseteq C_j$, and $D_\ell \subseteq D'_\ell$ for all $\ell \in [n] \setminus \{i\}$. Meanwhile, given a prismatic face string $\sigma = D_1 \cdots D_{i-1} \boxed{C_i} D_{i+1} \cdots D_n$ and a nonprismatic face string $\sigma' = (\boxed{C'_1}\cdots\boxed{C'_n},D'_1\cdots D'_n)$, it is clear that $\sigma \le \sigma'$ if and only if $C_i \subseteq C'_i$ and $D_\ell \subseteq D'_\ell$ for all $\ell \in [n] \setminus \{i\}$. Finally, given two nonprismatic face strings $\sigma = (\boxed{C_1}\cdots\boxed{C_n},D_1\cdots D_n)$ and $\sigma' = (\boxed{C'_1}\cdots\boxed{C'_n},D'_1\cdots D'_n)$, we see that $\sigma \le \sigma'$ if and only if $C_\ell \subseteq C'_\ell$ and $D_\ell \subseteq D'_\ell$ for all $j \in [n]$. It is impossible to have $\sigma \le \sigma'$ if $\sigma$ is nonprismatic and $\sigma'$ is prismatic.

Let $\sigma$ be a face string, and proceed by induction on the dimension of $\sigma$. If the dimension of $\sigma$ is $0$, then $\sigma$ is necessarily a prismatic face string of the form $D_1 \cdots D_{i-1} \boxed{C_i} D_{i+1} \cdots D_n$ for some singleton sets $D_\ell$ and $C_i$, and we verify based on the characterization in the above paragraph that no face string can be strictly less than $\sigma$. Next, let $d \ge 0$, and suppose that for all face strings $\sigma'$ of dimension $d$, the face $\phi(\sigma')$ has dimension $d$. Let $\sigma$ be a face string of dimension $d+1$. Suppose first that $\sigma$ is prismatic of the form $D_1 \cdots D_{i-1} \boxed{C_i} D_{i+1} \cdots D_n$. Then since $|C_i|-1 + \sum_{\ell \in [n]\setminus\{i\}}(|D_\ell|-1) = d+1 \ge 1$, at least one of $C_i$ or the $D_\ell$ must have more than one element. Letting $\sigma'$ denote the face string obtained by removing an element from such a set, we see that $\sigma'$ has dimension $d$, so $\phi(\sigma')$ has dimension $d$ by the inductive hypothesis. Since $\sigma$ covers $\sigma'$, it follows that $\dim(\phi(\sigma)) = d+1$.

Now suppose that $\sigma$ is nonprismatic of the form $(\boxed{C_1}\cdots\boxed{C_n},D_1,\dots,D_n)$. Suppose first that (i) it is possible to remove an element from one of the $C_i$ while leaving at least two of the $C_i$ nonempty, or (ii) it is possible to remove an element from one of the $D_i$ while leaving all of the $D_i$ nonempty. Let $\sigma'$ be the nonprismatic face string that results from applying the removal in (i) or (ii). Then it is easy to see that $\sigma'$ has dimension $d$ and that $\sigma$ covers $\sigma'$. Once again, we may apply the inductive hypothesis to deduce $\dim(\phi(\sigma)) = d+1$.  

Finally, suppose that the $D_\ell$ are all singletons, and that there exist some distinct $i, j$ such that $C_\ell = \emptyset$ for all $\ell \notin \{i, j\}$ and $C_i$ and $C_j$ are singletons. Let $\sigma'$ denote the prismatic face string $D_1 \cdots D_{i-1} \boxed{C_i} D_{i+1} \cdots D_n$. Then $\sigma'$ has dimension $d = 0$, so $\phi(\sigma')$ has dimension $0$. Once again, it is easy to verify that $\sigma$ covers $\sigma'$, so we conclude that $\dim(\phi(\sigma)) = 1 = d+1$.
\end{proof}

\noindent Note that whereas a prismatic face can be contained in a prismatic or a nonprismatic face, a nonprismatic face can only be contained in other nonprismatic faces.

\begin{example}
Let $\bd = (2,2,3)$.
Consider the nonprismatic face strings $\sigma = (\setlength{\arraycolsep}{2pt}\begin{array}{|c|c|c|}\hline 1 & 1 & [2] \\\hline\end{array}, 1 [2] \{2,3\})$ and $\tilde{\sigma} = (\setlength{\arraycolsep}{2pt}\begin{array}{|c|c|c|}\hline 1 & 1 & [2] \\\hline\end{array}, 1 [2] 3)$ and their $1$-prism strings $\sigma^1 = \boxed{1}[2]\{2,3\}$ and $\tilde\sigma^1 = \boxed{1}[2]3$ respectively. Then $\tilde\sigma$ is covered by $\sigma$ and $\tilde\sigma^1$ is covered by $\sigma^1$ in the poset induced by the face lattice. Clearly also $\sigma^1 < \sigma$ and $\tilde\sigma^1 < \tilde\sigma$. We have $\dim(\sigma)=5$, $\dim(\tilde \sigma)=4$, $\dim(\sigma^1)=2$ and $\dim(\tilde \sigma^1) = 1$.
\end{example}

\subsection{The functions \texorpdfstring{$u$}{u} and \texorpdfstring{$i$}{i} for Nashfaces}\label{subsec: functions u and i}

{As seen in Definition \ref{def: eu}, the Euler obstruction of a Nashface is constructed recursively using the functions $u$ and $i$ as introduced in Definition~\ref{def: general-u}. Both depend on two faces of the Nashtope, one contained in the other. To compute the Euler obstruction in the following section, we first compute $u$ and $i$ for different pairs of Nashfaces. We start out by constructing affine bases for the affine semigroups generated by both prismatic and nonprismatic Nashfaces.}

\begin{remark}
Readers familiar with \cite{matsui2011geometric} may recognize $i(F_\alpha,F_\beta)\cdot u(F_\alpha,F_\beta)$ as the relative subdiagram volume $\RSV_\sbZ(F_\alpha,F_\beta)$, given in Definition 4.2. However, the literature does not agree on a universal definition of $ \RSV_\sbZ$. For example, $\RSV_\sbZ$ in \cite{nodland2018eulerobstruction} is equivalent to $u$ in \cite{matsui2011geometric}. For this reason, we refrain from using the notation $\RSV_\sbZ$ in this paper. Nonetheless, much of the intuition about $u$-invariant operations in this section comes from thinking of $u$ as a relative subdiagram volume.
\end{remark}

\noindent Given a $\bZ$-lattice (free $\bZ$-module) $M$, the zero element of the underlying abelian group is called the \emph{origin}.

\begin{lemma}\label{lem:general-prismatic-lattice}
Let $F = D_1 \cdots D_{i-1} \boxed{C_i} D_{i+1} \cdots D_n$ be an $i$-prismatic face string, and fix a vertex string $v = \delta_1 \cdots \delta_{i-1} \boxed{\gamma_i} \delta_{i+1} \cdots \delta_n$ such that $\phi(v) $ is a vertex of $ \phi(F)$. Then the affine semigroup generated by $\phi(F)$ with origin $\phi(v)$ is affinely spanned by vertices obtained from applying $\phi$ to the following vertex strings:
\begin{itemize}
    \item for each $j \in [n] \setminus \{i\}$ and $\delta_{j}' \in D_j \setminus \{\delta_{j}\}$, the vertex string $v_{j, \delta'_{j}}$ obtained from $v$ by replacing $\delta_{j}$ with $\delta_{j}'$;
    \item for each $\gamma'_i \in C_i \setminus \{\gamma_i\}$, the vertex string $v_{i, \gamma'_i}$ obtained from $v$ by replacing $\gamma_i$ with $\gamma'_i$.
\end{itemize}
Moreover, these vertices form an affine basis for the affine semigroup generated by $\phi(F)$ with origin $\phi(v)$.
\end{lemma}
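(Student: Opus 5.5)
We translate so that $\phi(v)$ is the origin and work with the differences $\phi(w)-\phi(v)$ for vertex strings $w$ of $F$. Every vertex of $\phi(F)$ has the form $e^i_{t_i}+\sum_{j\neq i} f^j_{s_j}$ with $t_i\in C_i$ and $s_j\in D_j$. Its difference from $\phi(v)$ is
\[
    (e^i_{t_i}-e^i_{\gamma_i})+\sum_{j\neq i}(f^j_{s_j}-f^j_{\delta_j})\,.
\]
For the listed vertex strings the differences are
\[
    \phi(v_{j,\delta'_j})-\phi(v)=f^j_{\delta'_j}-f^j_{\delta_j}\,,\qquad \phi(v_{i,\gamma'_i})-\phi(v)=e^i_{\gamma'_i}-e^i_{\gamma_i}\,.
\]
So each difference vector of an arbitrary vertex is a sum of at most one difference vector of each listed type, namely one for each coordinate block where $w$ differs from $v$. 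Concretely, $\phi(w)-\phi(v)=\sum_{\text{blocks where } w\neq v}(\phi(v_{\cdot,\cdot})-\phi(v))$. This is an integer, indeed nonnegative, combination of the listed generators. Hence the affine lattice generated by the vertices of $\phi(F)$, taken with origin $\phi(v)$, is generated by the listed vectors, and therefore so is the semigroup $\ks$ up to its lattice span. Equivalently, $\phi(w)$ is the integer affine combination $\sum_k \phi(v_k)-(m-1)\phi(v)$ of the listed vertices and $\phi(v)$, where $v_k$ runs over the $m$ listed strings determined by the blocks in which $w$ differs from $v$.

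It then suffices to show that the listed vectors are linearly independent. The vectors $e^i_{\gamma'_i}-e^i_{\gamma_i}$ for $\gamma'_i\in C_i\setminus\{\gamma_i\}$ live in $U^i$, and those for a fixed $j$ live in $V^j$. Since these spaces are in direct sum, it is enough to check independence within each block. Inside one block each vector involves a distinct basis element $e^i_{\gamma'_i}$ (resp.\ $f^j_{\delta'_j}$) with coefficient $1$, so the vectors are clearly independent.

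Counting, the number of listed vectors is $|C_i|-1+\sum_{j\neq i}(|D_j|-1)=\dim(F)$, which agrees with Lemma~\ref{lem: dim face strings}. Thus together with $\phi(v)$ they form an affine basis of $\bL(\phi(F))$. Since they also generate the lattice $M(\ka\cap\phi(F))$, they form an affine $\bZ$-basis of it.

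The main subtlety is purely interpretive. "Affine basis for the affine semigroup" should be read as affine-lattice generation together with independence, since the semigroup itself contains non-basis elements. The integrality claim, namely that the decomposition above uses integer coefficients, is the only point needing care, and it is immediate from the block structure.
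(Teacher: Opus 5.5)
Your proof is correct and follows the paper's argument. Both write each $\phi(w)-\phi(v)$ as a sum of one listed difference per block in which $w$ and $v$ differ. The only difference is that you check linear independence directly via the decomposition $U^i\oplus\bigoplus_{j\neq i}V^j$, whereas the paper just invokes the count $|C_i|-1+\sum_{j\neq i}(|D_j|-1)=\dim\phi(F)$.

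Your interpretive caveat is unnecessary. The nonnegativity you observe, together with independence, already shows the semigroup is free on the listed vectors, not merely that they give a lattice basis. That stronger statement is what the paper later uses in Lemma~\ref{lem:prismatic-u}.
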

\begin{proof}
First, we check that the claimed spanning set lies in the affine semigroup generated by $\phi(F)$. Since $\phi(v)$ is a vertex of $\phi(F)$, we have $\delta_\ell \in D_\ell$ for each $\ell \neq i$ and $\gamma_i \in C_i$. By construction, $v_{j, \delta'_j}$ also satisfies $\delta_\ell \in D_\ell$ for each $\ell \neq i$ and $\gamma_i \in C_i$, so $\phi(v_{j, \delta'_j})$ is a vertex of $\phi(F)$. A similar argument shows $\phi(v_{i, \gamma'_i})$ is also a vertex of $\phi(F)$. Thus, the claimed spanning set lies in the affine semigroup. 

Now, we verify that the vectors in the claimed affine spanning set span the affine semigroup. It suffices to show that every vertex of $\phi(F)$ -- equivalently, a generator of the affine semigroup -- can be expressed as a nonnegative linear combination of $\phi(v_{i, \gamma'_i})$ and the $\phi(v_{j, \delta'_j})$. Choose a vertex of $\phi(F)$ and let $w = \delta'_1 \cdots \delta'_{i-1} \boxed{\gamma'_i} \delta'_{i+1} \cdots \delta'_n$ be its vertex string.
Then 
\[
    \phi(w) - \phi(v) = \big(\phi(v_{i,\gamma'_i}) - \phi(v)\big) + \sum_{j \neq i\,\mid\,\delta'_j \ne \delta_j} \left(\phi(v_{i_\ell, \delta'_{j_\ell}}) - \phi(v)\right)\,,
\]
where the first term is omitted if $\gamma'_i = \gamma_i$.
These vertices form an affine basis for the affine semigroup generated by $\phi(F)$ with origin $\phi(v)$ because the number of vertices in the bullet points equals the dimension of $\phi(F)$.
\end{proof}

\begin{lemma}\label{lem:general-nonprismatic-lattice}
Let $F = (\boxed{C_1}\cdots \boxed{C_n}, D_1 \cdots D_n)$ be a nonprismatic face string, and fix a vertex string $v = \delta_1 \cdots \delta_{i-1} \boxed{\gamma_i} \delta_{i+1} \cdots \delta_n$ such that $\phi(v) $ is a vertex of $ \phi(F)$. Then the affine semigroup generated by $\phi(F)$ with origin $\phi(v)$ is affinely spanned by vertices obtained from applying $\phi$ to the following vertex strings:
\begin{itemize}
    \item for each $j \in [n] \setminus \{i\}$ and $\delta'_j \in D_j \setminus \{\delta_j\}$, the vertex string $v_{j, \delta'_j}$ obtained from $v$ by replacing $\delta_j$ with $\delta'_j$;
    \item for each $\gamma'_i \in C_i \setminus \{\gamma_i\}$, the vertex string $v_{i, \gamma'_i}$ obtained from $v$ by replacing $\gamma_i$ with $\gamma'_i$;
    \item for each $j\in [n] \setminus \{i\}$, $\gamma_j \in C_j$, and $\delta_i \in D_i$, the vertex string $v_{j, \gamma_j\,;\,i, \delta_i}$ obtained from $v$ by replacing $\delta_j$ with $\boxed{\gamma_j}$ and $\boxed{\gamma_i}$ with $\delta_i$.
\end{itemize}
Moreover, if we fix a particular element $\delta_i \in D_i$, then the affine vector space generated by $\phi(F)$ with origin $\phi(v)$ has the following vectors as an affine basis:
\begin{itemize}
    \item for each $j \in [n]$ and $\delta'_j \in D_j \setminus \{\delta_j\}$, the vector $\phi(v) + f_{\delta'_j}^j - f_{\delta_j}^j$;
    \item for each $\gamma'_i \in C_i \setminus \{\gamma_i\}$, the vector $\phi(v) + e_{\gamma'_i}^i - e_{\gamma_i}^i$;
    \item for each $j \in [n] \setminus \{i\}$ and $\gamma'_j \in C_j$, the vector $\phi(v) + e_{\gamma'_j}^j - e_{\gamma_i}^i$.
\end{itemize}
\end{lemma}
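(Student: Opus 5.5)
The proof will follow the pattern of Lemma~\ref{lem:general-prismatic-lattice}: first check that the listed strings are vertex strings of $F$, then show every vertex of $\phi(F)$ is generated from them, and finally obtain the basis of the ambient affine space by a dimension count.

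\emph{Membership.} Since $\phi(v)$ is a vertex of $\phi(F)$, we have $\gamma_i\in C_i$ and $\delta_\ell\in D_\ell$ for all $\ell\neq i$. Replacing $\delta_j$ by some $\delta'_j\in D_j$, or $\gamma_i$ by some $\gamma'_i\in C_i$, gives a vertex string of the $i$-prism string $F^i$. The third family $v_{j,\gamma_j;i,\delta_i}$ has box $\boxed{\gamma_j}$ in position $j$ with $\gamma_j\in C_j$, letter $\delta_i\in D_i$ in position $i$, and letters $\delta_\ell\in D_\ell$ elsewhere. It is therefore a vertex string of $F^j$, so it lies in $\ka_F$ by Definition~\ref{def:general-phi}.

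\emph{Spanning.} Let $w$ be an arbitrary vertex of $\phi(F)$. It is a vertex of some prism string $F^k$, say $w=\delta'_1\cdots\boxed{\gamma'_k}\cdots\delta'_n$.
\begin{itemize}
\item If $k=i$, the telescoping identity from Lemma~\ref{lem:general-prismatic-lattice} writes $\phi(w)-\phi(v)$ as a sum of the differences $\phi(v_{\ast})-\phi(v)$ from the first two families.
\item If $k=j\neq i$, set $u=v_{j,\gamma'_j;i,\delta'_i}$. Then $\phi(w)-\phi(u)$ involves only coordinates $f^\ell$ with $\ell\neq i,j$. Each such change equals $f^\ell_{\delta'_\ell}-f^\ell_{\delta_\ell}=\phi(v_{\ell,\delta'_\ell})-\phi(v)$, and this identity holds independently of which player carries the box. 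Hence $\phi(w)-\phi(v)=(\phi(u)-\phi(v))+\sum_\ell(\phi(v_{\ell,\delta'_\ell})-\phi(v))$.
\end{itemize}
So the three families span the affine lattice. They do not form a basis, because there are too many vectors of the third kind.

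\emph{Basis of the affine space.} Fix $\delta_i\in D_i$. A vertex of the form $v_{j,\gamma'_j;i,\delta'_i}$ satisfies
\[
\phi(v_{j,\gamma'_j;i,\delta'_i})-\phi(v)=\bigl(e^j_{\gamma'_j}-e^i_{\gamma_i}\bigr)+\bigl(f^i_{\delta'_i}-f^j_{\delta_j}\bigr).
\]
We rewrite the second bracket as $(f^i_{\delta'_i}-f^i_{\delta_i})+(f^i_{\delta_i}-f^j_{\delta_j})$. Here $\delta_j$ is the $j$-th letter of $v$, and we take $\delta_i$ as the reference letter in position $i$ (the second family is only used for $j\neq i$ in $v$). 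This expresses every spanning vector as an integer combination of three kinds of vectors:
\begin{itemize}
\item the $f$-differences $f^j_{\delta'_j}-f^j_{\delta_j}$ for $j\in[n]$, including $j=i$;
\item the differences $e^i_{\gamma'_i}-e^i_{\gamma_i}$;
\item the mixed vectors $e^j_{\gamma'_j}-e^i_{\gamma_i}+f^i_{\delta_i}-f^j_{\delta_j}$.
\end{itemize}
These are the stated vectors, up to the harmless $f$-correction in the third bullet. I would read the lemma's third bullet modulo the first-bullet vectors, and I expect the intended statement to include exactly this $f$-correction.

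\emph{Count and independence.} The number of vectors is
\[
\sum_j(|D_j|-1)+(|C_i|-1)+\sum_{j\neq i}|C_j|=\dim(\sigma),
\]
which equals $\dim\phi(F)$ by Lemma~\ref{lem: dim face strings}. Linear independence follows from their supports: each uses a distinct coordinate $f^j_{\delta'_j}$, $e^i_{\gamma'_i}$, or $e^j_{\gamma'_j}$ that appears in no other vector. Hence they form a basis of the affine space.

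The main obstacle is the bookkeeping in the case $k\neq i$, together with pinning down exactly which correction terms the third family of basis vectors carries.
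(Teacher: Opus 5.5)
Your proof is correct and follows the paper's route: you check membership of the three families, reach an arbitrary vertex by single-letter moves (your case $k=j\neq i$ via $u=v_{j,\gamma'_j;i,\delta'_i}$ is exactly the paper's ``moves''), and finish with a dimension count, and your private-coordinate argument gives linear independence more directly than the paper's bare count. You are also right that the third basis family must be $\phi(v)+e^j_{\gamma'_j}-e^i_{\gamma_i}+f^i_{\delta_i}-f^j_{\delta_j}$, but this correction is neither harmless nor removable modulo the first-bullet vectors (the cross-player difference $f^i_{\delta_i}-f^j_{\delta_j}$ is not in their span): player $j$'s coordinates $e^j_t,f^j_s$ sum to $1$ on every column of $\ka$ but to $2$ at the printed point $\phi(v)+e^j_{\gamma'_j}-e^i_{\gamma_i}$, so that point lies outside $\bL(\phi(F))$, the printed statement and the paper's own spanning sentence are literally false, and your corrected form is the one the paper itself writes down in the proof of Lemma~\ref{lem:general-prismatic-nonprismatic-u}.
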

\begin{proof}
The proof of the first claim is similar to that of Lemma \ref{lem:general-prismatic-lattice}: by construction, the three types of bullet-pointed vertices are in fact vertices of $\phi(F)$; also, any vertex string $w$ corresponding to a vertex of $\phi(F)$ can be reached from $v$ by performing a finite number of ``moves,'' namely changing singleton $\delta_i$'s and possibly changing the index of $\gamma_j$, each of which contributes one summand to $\phi(w) - \phi(v)$. The spanning portion of the second claim follows by noting that any vertex of $\phi(F)$ can be expressed as $v$ plus some linear combination of the vectors $f_{\delta'_j}^j - f_{\delta_j}^j$, $e_{\gamma'_i}^i - e_{\gamma_i}^i$, and $e_{\gamma'_j}^j - e_{\gamma_i}^i$ appearing in the second set of bullet points. Linear independence then follows because the number of vectors in the second set of bullet points equals the dimension of $\phi(F)$.
\end{proof}

\begin{proposition}\label{prop:Kplus-2}
Let $V$ be a finite-dimensional vector space and $A \subseteq V \setminus \{0\}$ a finite set. Let $\cone(A) \coloneqq \{ \sum_{a \in A} c_a \cdot a \mid \text{$c_a \ge 0$ $\forall a\in A$}\}$. Define $S \coloneqq \bN A$ to be the finitely generated semigroup, $K \coloneqq \conv(S)$, and $K^+ \coloneqq \conv(S \setminus \{0\})$. Then
\begin{enumerate}
    \item $K = \cone(A)$,
    \item $K^+ = \left\{ \sum_{a \in A} c_a \, a \mid\text{$c_a \ge 0$ and $\sum_{a \in A} c_a \ge 1$}\right\}$.
\end{enumerate}
\end{proposition}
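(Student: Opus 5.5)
Both parts are proved by double inclusion, using only that $S=\bN A$ is the set of finite nonnegative integer combinations of elements of $A$ and that $\cone(A)$ and the set on the right of (2) are convex. Write $Q \coloneqq \{\sum_{a\in A} c_a a \mid c_a\ge 0,\ \sum_a c_a\ge 1\}$.

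For (1), note first that $S \subseteq \cone(A)$ and that $\cone(A)$ is convex, so $K=\conv(S)\subseteq\cone(A)$. Conversely, take $x=\sum_a c_a a$ with $c_a\ge0$.
\begin{itemize}
\item If all $c_a=0$, then $x=0\in S$.
\item Otherwise set $N\coloneqq\sum_a c_a>0$. Choose an integer $m\ge N$ with $m\ge 1$.
\end{itemize}
Then $x=\sum_a (c_a/m)(m a)+(1-N/m)\cdot 0$. This is a convex combination of the points $ma\in S$ and $0\in S$, so $x\in K$. Hence $K=\cone(A)$.

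For (2), we first show $K^+\subseteq Q$. The set $Q$ is convex: if $x=\sum c_a a$ and $y=\sum c'_a a$ are representations with coefficient sums at least $1$, then $tx+(1-t)y$ has the representation with coefficients $tc_a+(1-t)c'_a\ge0$, whose sum is again at least $1$. Every $s\in S\setminus\{0\}$ has the form $s=\sum n_a a$ with $n_a\in\bN$ not all zero, so $\sum n_a\ge1$ and $s\in Q$. Taking convex hulls gives $K^+\subseteq Q$.

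For the reverse inclusion, let $x=\sum c_a a\in Q$ with $N\coloneqq\sum c_a\ge1$. Choose an integer $m\ge N$. Then
\[
x=\sum_a \frac{c_a}{N}\Big(\frac{N}{m}\,(ma)+\Big(1-\frac{N}{m}\Big)a\Big).
\]
Each inner term is a convex combination of $ma$ and $a$, both of which lie in $S\setminus\{0\}$ since $a\neq0$ and $m\ge1$. The outer weights $c_a/N$ are nonnegative and sum to $1$. Therefore $x\in\conv(S\setminus\{0\})=K^+$.

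The main subtlety is exactly this last step. It uses that every $a\in A$ is nonzero, so that $a$ itself lies in $S\setminus\{0\}$. It also requires that the rescaling by $m$ keeps the coefficient constraint $\sum c_a\ge1$ compatible with convexity. Everything else is routine.
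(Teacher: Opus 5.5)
Part (1) and the inclusion $K^+\subseteq Q$ are correct and follow the paper's route. In (1) you handle $N=0$ separately, which avoids the division by $m=0$ that the paper's choice of an integer $m\ge 0$ allows.

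The reverse inclusion $Q\subseteq K^+$ in (2), however, rests on a false identity. Your inner term is
\[
\frac{N}{m}(ma)+\Bigl(1-\frac{N}{m}\Bigr)a=\Bigl(N+1-\frac{N}{m}\Bigr)a ,
\]
not $Na$. So your right-hand side equals $\bigl(1+\frac1N-\frac1m\bigr)x$, which differs from $x$ unless $m=N$, and that is impossible when $N\notin\bZ$.

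A sanity check exposes the slip: your computation never uses $N\ge 1$. Yet without that hypothesis the claim is false. For $A=\{a\}$ one has $K^+=\{ta\mid t\ge 1\}$, which does not contain $\tfrac12 a$.

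Since $0\notin S\setminus\{0\}$, you cannot put the leftover weight on the origin as in (1). Instead, place $Na$ on the segment between $a$ and $ma$, i.e.\ choose $\lambda$ with $\lambda m+(1-\lambda)=N$. Take an integer $m>N$, so $m\ge 2$. Then $\lambda=\frac{N-1}{m-1}$ lies in $[0,1]$, and this is exactly where $N\ge 1$ is needed. This gives
\[
x=\sum_{a\in A}\frac{c_a}{N}\Bigl(\frac{N-1}{m-1}\,(ma)+\frac{m-N}{m-1}\,a\Bigr).
\]
This is a convex combination of the points $a,ma\in S\setminus\{0\}$, and it is precisely the decomposition the paper uses. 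With this correction your proof is complete and essentially the paper's.
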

\begin{proof}
$(1)$ We first show that $K \subseteq \cone(A)$. Let $x \in K$. By the definition of the convex hull, there exist $x_1, \dots, x_n \in S$ and constants $b_1, \dots, b_n \ge 0$ summing to $1$ such that $x = b_1x_1 + \dots + b_nx_n$. For each $i$, write $x_i = \sum_{a \in A} m_a^{(i)} \cdot a$ for some nonnegative integers $m_a^{(i)}$. Then $x = \sum_{a \in A} (\sum_{i=1}^n b_i m_a^{(i)}) \cdot a$. Setting $c_a \coloneqq \sum_{i=1}^n b_i m_a^{(i)}$ for all $a$, we verify that $c_a \ge 0$ for all $a$.
    
We now show that $\cone(A)\subseteq K$. Let $c_a \ge 0$ for all $a \in A$, and set $x \coloneqq \sum_{a \in A} c_a \cdot a$. Define $c \coloneqq \sum_{a \in A} c_a$, and choose an integer $m \ge 0$ such that $c \le m$. Then we may write $x = \frac{m-c}{m} \cdot 0 + \sum_{a \in A} \frac{c_a}{m} \cdot ma$, telling us that $x$ is in the convex hull of $\{0\} \cup \{ma \mid a \in A\}$, hence in $K$.

$(2)$ We first show the inclusion $K^+ \subseteq \left\{ \sum_{a \in A} c_a \, a \mid\text{$c_a \ge 0$ and $\sum_{a \in A} c_a \ge 1$}\right\}$. Let $x \in K^+$. Then, by the definition of the convex hull, there exist $x_1, \dots, x_n \in S \setminus \{0\}$ and constants $b_1, \dots, b_n \ge 0$ summing to $1$ such that $x = b_1x_1 + \dots + b_nx_n$. For each $i$, write $x_i = \sum_{a \in A} m_a^{(i)} \cdot a$ for some nonnegative integers $m_a^{(i)}$, not all zero. Then $x = \sum_{a \in A} (\sum_{i=1}^n b_i m_a^{(i)}) \cdot a$. Setting $c_a \coloneqq \sum_{i=1}^n b_i m_a^{(i)}$ for all $a$, we verify that $c_a \ge 0$ for all $a$ as before. Moreover, observe that since for each $i$, at least one of the $m_a^{(i)}$ is nonzero, we may write $\sum_{a \in A} c_a = \sum_{i=1}^n b_i \cdot \sum_{a \in A} m_a^{(i)} \ge \sum_{i=1}^n b_i \cdot 1 = 1$.
    
We show the reverse inclusion $\left\{ \sum_{a \in A} c_a \, a \mid\text{$c_a \ge 0$ and $\sum_{a \in A} c_a \ge 1$}\right\}\subseteq K^+$. Let $c_a \ge 0$ for all $a \in A$ with $c \coloneqq \sum_{a \in A} c_a \ge 1$, and let $x \coloneqq \sum_{a \in A} c_a \cdot a$. Choose an integer $m > c$, and observe that $\frac{m-c}{m-1} + \frac{c-1}{m-1} \cdot m = \frac{m-c+mc-m}{m-1} = \frac{c(m-1)}{m-1} = c$. Thus, we may write
\[
    x = \sum_{a \in A} \left(\frac{m-c}{m-1} \cdot \frac{c_a}{c}\right) \cdot a + \sum_{a \in A} \left(\frac{c-1}{m-1} \cdot \frac{c_a}{c}\right) \cdot ma\,.
\]
Observe that $ma \in S \setminus \{0\}$, so the parenthesized expressions above can be viewed as coefficients of elements of $S \setminus \{0\}$. These coefficients are all nonnegative. Moreover, their sum is given by
\[
    \sum_{a \in A} \left(\frac{m-c}{m-1} + \frac{c-1}{m-1}\right) \cdot \frac{c_a}{c} = \sum_{a \in A} \frac{c_a}{c} = 1.
\]
Thus, $x$ is a convex combination of elements of $S \setminus \{0\}$, and so $x$ is in $K^+$.
\end{proof}

Recall that a prismatic face can be contained in a prismatic or a nonprismatic face, but a nonprismatic face can only be contained in other nonprismatic faces.
Using Proposition \ref{prop:Kplus-2}, we now compute $u(F,F')$ for all pairs $F' \leq F$ of prismatic or nonprismatic faces of the Nashtope.  

\begin{lemma}\label{lem:nonprismatic-volume-explicit}
Consider two nonprismatic face strings $F' \le F$, and choose a vertex string
\[
    v = \delta_1 \cdots \delta_{i-1} \boxed{\gamma_i} \delta_{i+1} \cdots \delta_n
\]
of $F'$. Let $W \coloneqq V/\bL(F')$, with the image of $v$ treated as zero, where $V$ is the span of the collection of vertices corresponding to the first three bullet-pointed vertex strings in Lemma \ref{lem:general-nonprismatic-lattice}, with projection map $p\colon V \to W$, and let $w' = p(v)$ and $G = p(\ka_F)$. Let $K \subseteq W$ be the convex hull of the semigroup $S_G$ generated by $G$, where $w'$ is the additive identity, and let $K^+$ be the convex hull of $S_G \setminus \{w'\}$. Then
\[
    \Vol_{M(G)} (K \setminus K^+) = 1\,.
\]
\end{lemma}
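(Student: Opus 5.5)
The plan is to show that the images $p(\ka_F)\setminus\{w'\}$ all lie in a single affine hyperplane of $W$ at "height one'' with respect to $M(G)$, and that the subdiagram part is the pyramid over a unimodular configuration. By Proposition~\ref{prop:Kplus-2}, if $A \coloneqq G\setminus\{w'\}$ (translated so that $w'=0$), then $K=\cone(A)$ and $K^+=\{\sum c_a a \mid c_a\ge 0,\ \sum c_a\ge 1\}$. Hence $\overline{K\setminus K^+}$ is the union of the simplices $\conv(\{0\}\cup T)$, and it is enough to locate $A$ precisely.

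First I will identify $\bL(F')$ and the quotient $W$ using the affine basis of Lemma~\ref{lem:general-nonprismatic-lattice}, applied to both $F'$ and $F$ with the same base vertex $v$. Write $F=(\boxed{C_1}\cdots\boxed{C_n},D_1\cdots D_n)$ and $F'=(\boxed{C'_1}\cdots\boxed{C'_n},D'_1\cdots D'_n)$, with $C'_\ell\subseteq C_\ell$ and $D'_\ell\subseteq D_\ell$. Modulo $\bL(F')$, the basis vectors of $F$ that survive are:
\begin{itemize}
\item $f^j_{\delta'}-f^j_{\delta_j}$ for $\delta'\in D_j\setminus D'_j$;
\item $e^j_{\gamma'}-e^i_{\gamma_i}$ for $\gamma'\in C_j\setminus C'_j$, where we use that $e^j_{\gamma'}-e^{i}_{\gamma_i}$ differs from $e^{j}_{\gamma'}-e^{k}_{\gamma''}$ by an element of $\bL(F')$ whenever $\gamma''\in C'_k$.
\end{itemize}
Because $F'$ is nonprismatic, every player $k$ with $C'_k\ne\emptyset$ gives such a relation, so the boxed coordinates of different players become interchangeable modulo $\bL(F')$. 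These surviving images form a basis $b_1,\dots,b_m$ of $M(G)$. That they form a $\bZ$-basis, and not merely an $\bR$-basis, follows from the difference vectors in Lemma~\ref{lem:general-nonprismatic-lattice} being realized by vertices.

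Next I will compute $p(a)$ for every vertex $a$ of $F$. Such a vertex has one boxed entry $e^k_{\gamma}$ and entries $f^\ell_{s_\ell}$. Modulo $\bL(F')$, each $s_\ell\in D'_\ell$ contributes $0$ and each $s_\ell\notin D'_\ell$ contributes a basis vector $b$. Similarly, the boxed entry contributes $0$ if $\gamma\in C'_k$ (for any $k$, thanks to the interchangeability) and a basis vector otherwise. One also has to absorb the missing $f^k$-coordinate: the row-sum linear functionals (the Cayley rows, and the $f$-blocks summing to $1$ for $\ell\neq k$) let us replace it. The upshot is that $p(a)=\sum_{b\in T_a} b$ for a set $T_a$ of basis vectors, and all the $b_t$ themselves occur. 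Hence $A$ contains the standard basis of $M(G)\cong\bZ^m$, and every element of $A$ is a $0/1$ vector, so $\sum c_a\ge1$ in terms of $A$ corresponds to coordinate sum at least $1$.

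Then $K=\bR^m_{\ge0}$ and $K^+=\{x\ge0:\sum x_t\ge1\}$. Here I use that every nonzero $0/1$ vector has coordinate sum at least $1$, so adding it does not enlarge $K^+$ beyond the basis part. The subdiagram $K\setminus K^+$ is therefore the standard unimodular simplex, of normalized volume $1$.

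The main obstacle is the second step: checking carefully that the quotient by $\bL(F')$ really sends every vertex of $F$ to a $0/1$ combination of the lattice basis and never to something with negative coefficients or to a non-integral point. In particular I need to verify that the boxed/unboxed exchange at player $k$ (the term $-f^k_{\delta_k}$) is absorbed by $\bL(F')$. I expect this to use that $D'_k$ is nonempty and that $C'_k$ or some other $C'_{k'}$ is nonempty, which is exactly where nonprismaticity of $F'$ enters. I would first test this in the $(2,2,3)$ example above before writing the general bookkeeping.
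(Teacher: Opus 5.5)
Your strategy is the paper's. You want to find $m=\dim F-\dim F'$ vertex images that are independent in the quotient, show that every vertex image of $F$ is a $0/1$ combination of them, and conclude that $K$ is an orthant, $K^+=\{x\ge 0:\sum_t x_t\ge 1\}$, and $K\setminus K^+$ is the unimodular simplex. The paper says this as $S_G=\bN V'$ for a linearly independent set $V'$.

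Your opening claim that all nonzero images lie on a hyperplane at height one is false, though your later $0/1$ argument does not use it, so just drop it. For example, take $\bd=(2,2,2)$, $F=P$, $F'=(\boxed{1}\,\boxed{1}\,\boxed{\emptyset},1\,1\,1)$ and $v=\boxed{1}\,1\,1$. Then the vertex $2\,2\,\boxed{1}$ maps to a sum of three basis vectors. The real problem is that the step you flag as the main obstacle would fail as you have set it up.

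For $j\neq i$, the vector $e^j_{\gamma'}-e^i_{\gamma_i}$ does not lie in $V$ at all. Every column of $\ka$ has total $1$ in the $e^\ell$- and $f^\ell$-coordinates together, for each player $\ell$, so every vector of $V$ has equal player totals. This vector has totals $+1$ at $j$ and $-1$ at $i$. (The second claim of Lemma~\ref{lem:general-nonprismatic-lattice} contains the same slip, and you inherited it.) For the same reason, your planned check that the leftover $-f^k_{\delta_k}$ is absorbed by $\bL(F')$ cannot succeed. The leftover $f^i_{s_i}-f^k_{\delta_k}$ has totals $+1$ at $i$ and $-1$ at $k$, so it is not a direction of $\bL(F')$.

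The repair is as follows. Fix $\delta_i\in D'_i$, and take as boxed basis vectors the actual vertex images $p(\phi(v_{k,\gamma\,;\,i,\delta_i}))-w'$ for $\gamma\in C_k\setminus C'_k$. These are the classes of $e^k_\gamma-e^i_{\gamma_i}+f^i_{\delta_i}-f^k_{\delta_k}$. A vertex $a$ with box $\gamma$ at $k\neq i$ and entries $s_\ell$ then satisfies
\[
\phi(a)-\phi(v)=\bigl(e^k_\gamma-e^i_{\gamma_i}+f^i_{\delta_i}-f^k_{\delta_k}\bigr)+\bigl(f^i_{s_i}-f^i_{\delta_i}\bigr)+\sum_{\ell\neq i,k}\bigl(f^\ell_{s_\ell}-f^\ell_{\delta_\ell}\bigr).
\]
Each bracket is either a difference of two vertices of $F'$ (when the relevant index lies in $C'_k$, resp.\ $D'_\ell$) or one of the basis vectors. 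Vertices boxed at $i$ are handled the same way.

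Nonprismaticity of $F'$ enters only through the identity $f^i_{s_i}-f^i_{\delta_i}=\phi(v_{i_0,\gamma_{i_0}\,;\,i,s_i})-\phi(v_{i_0,\gamma_{i_0}\,;\,i,\delta_i})$ for $s_i\in D'_i$. Here $i_0\neq i$ and $\gamma_{i_0}\in C'_{i_0}$. The same vertices realize $f^i_s-f^i_{\delta_i}$ modulo $\bL(F')$ for $s\in D_i\setminus D'_i$. The resulting basis is exactly the paper's $V'$, and with it your $0/1$ argument and the conclusion go through.
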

\begin{proof}
We show that $S_G$ has a linearly independent generating set and conclude that the normalized lattice volume is $1$.
Let $F = (\boxed{C_1} \cdots \boxed{C_n}, $ ${D_1} \cdots {D_n})$ and let $F' = (\boxed{C'_1} \cdots \boxed{C'_n}, $ ${D'_1} \cdots {D'_n})$, and choose an arbitrary element $\delta_i \in D'_i$, an arbitrary index $i_0 \neq i$ such that $C_{i_0}' \neq \emptyset$, and an arbitrary element $\gamma_{i_0} \in C_{i_0}'$. By Lemma \ref{lem:general-nonprismatic-lattice} applied to $F'$, in the space $\im(p) = W$,
\[
    \begin{cases}
        p(\phi(v)+f_{\delta'_j}^j-f_{\delta_j}^j) = 0 & \text{for $j \in [n]$ and $\delta'_j \in D'_j$} \\
        p(\phi(v) + e_{\gamma'_i}^i - e_{\gamma_i}^i) = 0 & \text{for $\gamma'_i \in C_i$} \\
        p(\phi(v) + e_{\gamma'_j}^j - e_{\gamma_i}^i) = 0 & \text{for $j \in [n] \setminus \{i\}$ and $\gamma'_j \in C'_j$.}
    \end{cases}
\]
We claim that $S_G$ is generated by
\[
    V' = 
    \begin{cases}
        p(\phi(v_{j, \delta'_j})) & \text{for $j \in [n] \setminus \{i\}$ and $\delta'_j \in D_j \setminus D'_j$}, \\
        p(\phi(v_{i_0,\gamma_{i_0}\,;\,i,\delta'_i})) & \text{for $\delta'_i \in D_i \setminus D'_i$} \\
        p(\phi(v_{i, \gamma'_i}))  &\text{ for $\gamma'_i \in C_i \setminus C'_i$} \\
        p(\phi(v_{j, \gamma_j\,;\,i,\delta_i}))  & \text{for $j \in [n] \setminus \{i\}$ and $\gamma_j \in C_j \setminus C'_j$.}
    \end{cases}
\]
The argument of Lemma \ref{lem:general-nonprismatic-lattice} shows that all vectors in $V'$ belong to $S_G - w'$. To see that $V'$ generates $S_G$, it suffices to show that $p \circ \phi$ applied to every vector in the first set of bullet points of Lemma~\ref{lem:general-nonprismatic-lattice} is a nonnegative linear combination of elements of $V'$. The first and second bullet points are essentially automatic. For the third bullet point, let $j \in [n] \setminus \{i\}$, $\gamma_j \in C_j$, and $\delta'_i \in D_i$. If $\gamma_j \in C'_j$ and $\delta'_i \in D'_i$, then $p(\phi(v_{j,\gamma_j\,;\,i,\delta'_i})) = 0$. If $\gamma_j \in C_i \setminus C'_j$ and $\delta'_i \in D'_i$, then we can verify that
\[
    p(\phi(v_{j,\gamma_j\,;\,i,\delta'_i})) = p(\phi(v_{j,\gamma_j\,;\,i,\delta_i}) +f_{\delta'_i}^i-f_{\delta_i}^i) = p(\phi(v_{j,\gamma_j\,;\,i,\delta_i}))\,.
\]
If $\gamma_j \in C'_j$ and $\delta'_i \in D_i \setminus D'_i$, then
\[
    p(\phi(v_{j,\gamma_j\,;\,i,\delta'_i})) = p(\phi(v_{i_0,\gamma_{i_0}\,;\,i,\delta'_i}) + (e_{\gamma_j}^j-e_{\gamma_i}^i)-(e_{\gamma_{i_0}}^{i_0}-e_{\gamma_i}^i)) = p(\phi(v_{i_0,\gamma_{i_0}\,;\,i,\delta'_i}))\,.
\]
Finally, if $\gamma_j \in C_j \setminus C'_j$ and $\delta'_i \in D_i \setminus D'_i$, then we can similarly verify that
\[
    p(\phi(v_{j,\gamma_j\,;\,i,\delta'_i})) = p(\phi(v_{j,\gamma_j\,;\,i,\delta_i})) + p(\phi(v_{i_0,\gamma_{i_0}\,;\,i,\delta'_i}))\,.
\]
We must now show that $\Vol_{M(G)}(K \setminus K^+) = 1$. To do this, observe that the elements of $V'$ are linearly independent inside $W$. Thus, if we let $\tilde{e}_1, \dots, \tilde{e}_m$ be the elements of $V'$ listed in some order, then $S_G-w'$ is the semigroup inside $\bR^m$ generated by $\tilde{e}_1,\dots,\tilde{e}_m$, and $M(G)-w'$ is the lattice $\bZ\tilde{e}_1 + \dots + \bZ\tilde{e}_m$. In this new basis, by Proposition \ref{prop:Kplus-2}, $K$ is equal to the set of vectors $a_1\tilde{e}_1+\dots+a_m\tilde{e}_m$ with $a_1,\dots,a_m \ge 0$, whereas $K^+$ is equal to the set of these vectors which additionally satisfy $a_1+\dots+a_m \ge 1$. Thus, $K \setminus K^+$ is the set of vectors $a_1\tilde{e}_1+\dots+a_m\tilde{e}_m$ with $a_1,\dots,a_m \ge 0$ and $a_1+\dots+a_m < 1$. The volume of $K \setminus K^+$ according to the Lebesgue measure in the coordinates given by $\tilde{e}_1,\dots,\tilde{e}_m$ is $\frac{1}{m!}$. Since $M(G)-w'$ is just $\bZ^m$ in our new coordinate system, we can multiply the aforementioned volume of $K \setminus K^+$ by $m!$ to get that the volume with respect to the lattice $\bZ^m$ is $\frac{1}{m!} \cdot m! = 1$.
\end{proof}

\begin{lemma}\label{lem:non-prismatic-u}
For two nonprismatic Nashfaces $F,F'$ with $F' \le F$, we have $u(F,F') = 1$.
\end{lemma}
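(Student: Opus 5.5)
The plan is to reduce the statement directly to Lemma~\ref{lem:nonprismatic-volume-explicit}. The only work is matching the objects in Definition~\ref{def: general-u} with those in that lemma.

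If $F = F'$, then $u(F,F) = 1$ by convention, so assume $F' < F$. Write $F = F_\alpha$ and $F' = F_\beta$. Following the convention before Definition~\ref{def: general-u}, translate so that a chosen vertex $\phi(v)$ of $F'$ is the origin, with $v = \delta_1\cdots\delta_{i-1}\boxed{\gamma_i}\delta_{i+1}\cdots\delta_n$. The semigroup $\Salpha$ is then generated by $\ka_F - \phi(v)$ inside $M_\alpha = M(\ka\cap F)$.

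The quotient lattice $M_\alpha/F_\beta = M_\alpha/(M_\alpha\cap\bL(F'))$ is the image of $M_\alpha$ under the projection $p$ onto $W = V/\bL(F')$ of Lemma~\ref{lem:nonprismatic-volume-explicit}. Here $V$ is the linear span of $F$ based at $\phi(v)$; by Lemma~\ref{lem:general-nonprismatic-lattice}, the vertices in its first set of bullet points affinely span $F$. Under this identification:
\begin{itemize}
    \item $\Salpha/F_\beta$ becomes the semigroup $S_G$ generated by $G = p(\ka_F)$, with $w' = p(\phi(v))$ as identity;
    \item $K(\Salpha/F_\beta)$ becomes $K$, and $K_+(\Salpha/F_\beta)$ becomes $K^+$;
    \item the quotient lattice $M_\alpha/F_\beta$ becomes $M(G)$, since $p(M_\alpha)$ is the lattice generated by $p(\ka_F)$ translated by $w'$.
\end{itemize}

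With these identifications, $u(F,F')$ is the normalized volume of $K_-(\Salpha/F_\beta) = \overline{K\setminus K^+}$ with respect to $M(G)$. Taking the closure does not change the volume. So $u(F,F') = \Vol_{M(G)}(K\setminus K^+)$, which equals $1$ by Lemma~\ref{lem:nonprismatic-volume-explicit}.

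The main obstacle is checking that the lattice in Definition~\ref{def: general-u} really agrees with $M(G)$. It is not immediate that $M_\alpha/(M_\alpha\cap\bL(F'))$ coincides with the lattice spanned by the independent generators $V'$ used in the proof of Lemma~\ref{lem:nonprismatic-volume-explicit}. I would argue as follows. Every element of $\ka_F - \phi(v)$ is an integer combination of the vertex differences in Lemma~\ref{lem:general-nonprismatic-lattice}. After projecting, each such difference either lies in $\bL(F')$ or equals a sum of elements of $V'$, as computed in that proof. Hence $p(M_\alpha) = \bZ V'$. Since $V'$ is linearly independent, the normalized volume computation in that lemma then applies verbatim.

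A second, minor point is that the choice of base vertex $v$ is irrelevant. The lemma holds for any vertex of $F'$, so no independence argument is needed.
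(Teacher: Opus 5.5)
Your proposal is correct and is essentially the paper's proof, which simply declares the lemma a restatement of Lemma~\ref{lem:nonprismatic-volume-explicit} under the identifications $F=F_\alpha$, $F'=F_\beta$, $M(G)=M_\alpha/F_\beta$. You add detail the paper leaves implicit: the trivial case $F=F'$ via the convention $u(F,F)=1$, the fact that taking the closure in $K_-$ does not change the volume, and the check that $p(M_\alpha)$ is the lattice $\bZ V'$ spanned by the linearly independent generators, which is exactly what the volume computation there requires.
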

\begin{proof}
This is a restatement of Lemma \ref{lem:nonprismatic-volume-explicit}. In particular, using the definition of $u$ given in Definition \ref{def: general-u}, we are taking $F = F_\alpha$, $F' = F_\beta$, $M(G) = M_\alpha/F_\beta$, $\Salpha/F_\beta = K_-(\Salpha/F_\beta)$.
\end{proof}

\begin{lemma}\label{lem:prismatic-u}
For two prismatic Nashfaces $F,F'$ with $F' \le F$, we have $u(F,F') = 1$.
\end{lemma}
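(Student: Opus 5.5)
We follow the argument of Lemma~\ref{lem:nonprismatic-volume-explicit}, which is simpler here because prismatic faces are products of simplices. Let $F = D_1\cdots D_{i-1}\boxed{C_i}D_{i+1}\cdots D_n$ and $F' = D'_1\cdots D'_{i-1}\boxed{C'_i}D'_{i+1}\cdots D'_n$. By the characterization of $\le$ in the proof of Lemma~\ref{lem: dim face strings}, both strings have their box in the same position $i$, with $C'_i\subseteq C_i$ and $D'_\ell\subseteq D_\ell$ for $\ell\neq i$. If $F=F'$ then $u=1$ by convention, so we assume $F'<F$. Fix a vertex string $v=\delta_1\cdots\boxed{\gamma_i}\cdots\delta_n$ of $F'$, take $\phi(v)$ as origin, and let $p$ denote the projection modulo $\bL(F')$.

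The first step is to apply Lemma~\ref{lem:general-prismatic-lattice} to $F'$. The differences $\phi(v_{j,\delta'_j})-\phi(v)=f^j_{\delta'_j}-f^j_{\delta_j}$ with $\delta'_j\in D'_j$, and $e^i_{\gamma'_i}-e^i_{\gamma_i}$ with $\gamma'_i\in C'_i$, span $\bL(F')-\phi(v)$. Hence all of them are killed by $p$.

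The second step is to describe the image of $F$. By Lemma~\ref{lem:general-prismatic-lattice} applied to $F$, every vertex $w$ of $F$ satisfies
\[
\phi(w)-\phi(v)=\sum_j (f^j_{\delta'_j}-f^j_{\delta_j}) + (e^i_{\gamma'_i}-e^i_{\gamma_i}),
\]
summing over the coordinates in which $w$ differs from $v$. Applying $p$ removes every summand whose new letter lies in $D'_j$ or $C'_i$. So $p(\phi(w))$ is a sum of distinct elements of
\[
V'=\{p(\phi(v_{j,\delta'_j})) : j\neq i,\ \delta'_j\in D_j\setminus D'_j\}\cup\{p(\phi(v_{i,\gamma'_i})) : \gamma'_i\in C_i\setminus C'_i\}.
\]
Each element of $V'$ is itself the image of a vertex of $F$. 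It follows that the semigroup $\Salpha/F_\beta$ is generated by $V'$.

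The third step is to show that $V'$ is a lattice basis of $M_\alpha/F_\beta$. The vectors $f^j_{\delta'_j}-f^j_{\delta_j}$ and $e^i_{\gamma'_i}-e^i_{\gamma_i}$, indexed by all of $D_j\setminus\{\delta_j\}$ and $C_i\setminus\{\gamma_i\}$, form a basis of the lattice of $F$, since $F$ is a product of simplices with unimodular standard coordinates. Removing those indexed by $D'_j$ and $C'_i$ leaves a basis of the quotient lattice $M_\alpha/(M_\alpha\cap\bL(F'))$. As a check, $|V'|=\dim F-\dim F'$, which is also the rank of the quotient.

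The last step is the volume computation. Proposition~\ref{prop:Kplus-2}, applied in the coordinates given by $V'$, shows that $K_-$ is the standard unimodular simplex $\{a\ge 0,\ \sum a\le 1\}$. Its normalized volume is therefore $1$, so $u(F,F')=1$.

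The only delicate point is the basis claim in the third step. One must check that the subgroup of $M_\alpha$ spanned by differences within $F'$ is all of $M_\alpha\cap\bL(F')$, and not a proper sublattice, so that the quotient is generated by $V'$ with no torsion. This holds because the coordinates are $0/1$ and the basis vectors split cleanly into disjoint coordinate supports.
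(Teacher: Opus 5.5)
Your proof is correct and follows the paper's argument. You apply Lemma~\ref{lem:general-prismatic-lattice} to both faces, identify the image of the semigroup of $F$ modulo $\bL(F')$ as the free semigroup on the images of $v_{j,\delta'_j}$ ($\delta'_j\in D_j\setminus D'_j$) and $v_{i,\gamma'_i}$ ($\gamma'_i\in C_i\setminus C'_i$), and conclude that the subdiagram part is a unimodular simplex. The only difference is that the paper cites \cite[Chapter 5, Example 3.9]{GKZ94} for the final volume, whereas you compute it directly via Proposition~\ref{prop:Kplus-2}; also, the ``delicate point'' you flag is automatic (the quotient by the saturated sublattice $M_\alpha\cap\bL(F')$ is torsion-free, and $V'$ generates it with the right cardinality), and it also follows from Lemma~\ref{lem:general-i-equals-1}.
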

\begin{proof}
By Lemma \ref{lem:general-prismatic-lattice}, the affine semigroup generated by $\phi(F)$ with origin $\phi(v)$ is affinely generated by the vertex strings in the two bullet points corresponding to $F$, and the affine subspace spanned by $\phi(F')$ with origin $\phi(v)$ is affinely spanned by the vertex strings in the two bullet points corresponding to $F'$. Consequently, the image of the semigroup generated by $F$ in the quotient is just the free semigroup generated by the images of all the vertex strings in the two bullet points corresponding to $F$ that do not appear in the bullet points corresponding to $F'$. More precisely, the vertex strings in the previous sentence are: for each $j \in [n] \setminus \{i\}$ and $\delta'_j \in D_j \setminus D'_j$, the vertex string $v_{j,\delta'_j}$; and for each $\gamma'_i \in C_i \setminus C'_i$, the vertex string $v_{i,\gamma'_i}$. Now apply \cite[Chapter 5, Example 3.9]{GKZ94}. 
\end{proof}

\begin{proposition}\label{prop:vol-mult-2}
Let $V_1, V_2$ be finite-dimensional vector spaces, and for 
$i \in \{1,2\}$ let $A_i \subseteq V_i \setminus \{0\}$ be a finite set of nonzero vectors. Set $V = V_1 \oplus V_2$, $A = (A_1 \times \{0\}) \cup (\{0\} \times A_2)$, and let $K = \cone(A)$ and $K_i = \cone(A_i)$. Define $K_i^+$ and $K^+$ as in Proposition~\ref{prop:Kplus-2}. Then
\[
    \Vol_\sbZ(K \setminus K^+) = \Vol_\sbZ(K_1 \setminus K_1^+) \cdot \Vol_\sbZ(K_2 \setminus K_2^+)\,.
\]
\end{proposition}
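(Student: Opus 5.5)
We want the product formula for subdiagram volumes in a direct sum. We work with the lattices $M_i = \bZ A_i \subseteq V_i$ and $M = M_1 \oplus M_2$, which is the lattice generated by $A$, so that $\Vol_\sbZ$ means normalized volume ($\dim$-factorial times Lebesgue measure with fundamental domain $1$). The plan is to compute the unnormalized volumes by a Fubini-type fibration over a ``total mass'' parameter, and then check that the factorials combine correctly.

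First, we describe $K\setminus K^+$ with Proposition~\ref{prop:Kplus-2}. Clearly $\cone(A) = K_1 \times K_2$. A point of $K$ is $x=(x_1,x_2)$ with $x_i \in K_i$. The point lies in $K^+$ iff it admits a representation $\sum c_a a$ with total mass at least $1$. Define the gauge $g_i(x_i) := \sup\{\sum_{a\in A_i} c_a \mid x_i = \sum c_a a,\ c_a\ge 0\}$. Then Proposition~\ref{prop:Kplus-2}(2), together with the fact that $K_i^+$ is closed under adding elements of $K_i$, gives
\[
    K_i\setminus K_i^+ = \{x_i\in K_i \mid g_i(x_i)<1\}.
\]
Masses add over the two blocks, so
\[
    K\setminus K^+=\{(x_1,x_2)\mid g_1(x_1)+g_2(x_2)<1\}.
\]
Each $g_i$ is positively homogeneous of degree $1$ on $K_i$.

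Second, we compute volumes by homogeneity. Let $m_i=\dim V_i$ and let $v_i$ be the Lebesgue volume of $K_i\setminus K_i^+$. Homogeneity gives $\operatorname{vol}\{g_i<t\} = v_i t^{m_i}$, so the pushforward of Lebesgue measure under $g_i$ has density $m_i v_i t^{m_i-1}$. Fubini then gives
\[
    \operatorname{vol}(K\setminus K^+)=\int_0^1 m_1 v_1 t^{m_1-1}\, v_2 (1-t)^{m_2}\,dt = v_1v_2\,\frac{m_1!\,m_2!}{(m_1+m_2)!},
\]
using the Beta integral. Multiplying by $(m_1+m_2)!$ yields the normalized identity
\[
    \Vol_\sbZ(K\setminus K^+) = (m_1!\,v_1)(m_2!\,v_2).
\]
The lattice normalization is compatible because the fundamental domain of $M_1\oplus M_2$ is the product of the fundamental domains.

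The main obstacle is justifying the gauge description. Specifically, one must check that $x\in K^+$ iff some representation has mass $\ge1$, and that the supremum is attained. It may be infinite, for example if $K_i$ is not pointed and $0$ is a positive combination; in that case both sides of the identity are infinite or degenerate, and we handle this by the convention or by restricting to pointed cones as in the applications. If $g_i$ is awkward, we can alternatively triangulate $K_i\setminus K_i^+$ into simplices $\{\sum_{a\in\tau} c_a a,\ \sum c_a<1\}$ over simplicial cones $\tau$ and verify the product formula simplex by simplex. This uses the standard fact that the product of an $m_1$-simplex and an $m_2$-simplex, each with a vertex at the origin, fibres as computed above.
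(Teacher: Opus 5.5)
Your proof is correct and is essentially the paper's: your max-mass gauge $g_1$ coincides with the paper's $t_1(v)=\inf\{c\in[0,1)\mid v\in c(K_1\setminus K_1^+)\}$. The paper likewise fibres $K\setminus K^+$ over $K_1\setminus K_1^+$ with fibre $(1-t_1(v_1))(K_2\setminus K_2^+)$ and evaluates the same Beta integral to get the factor $m_1!\,m_2!/(m_1+m_2)!$. The only difference is that you read the fibration off directly from the additivity $g=g_1+g_2$, whereas the paper checks by hand that $(v_1,v_2)\mapsto v_1+(1-t_1(v_1))v_2$ is a bijection $(K_1\setminus K_1^+)\times(K_2\setminus K_2^+)\to K\setminus K^+$. (In the non-pointed case you worry about, $K_i\setminus K_i^+$ and $K\setminus K^+$ are empty, so both sides are $0$ rather than infinite.)
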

\begin{proof}
Let $K_i^- \coloneqq K_i \setminus K_i^+$ and $K^- \coloneqq K \setminus K^+$, and define a map
\[
    t_1 \colon K_1^- \to [0,1)\,,\quad t_1(v) \coloneqq \inf\{c \in [0,1) \mid v \in cK_1^-\}\,.
\]
Then define a map
\[
    K_1^- \times K_2^- \to K^-\,,\quad (v_1, v_2) \mapsto v_1 + (1-t_1(v_1))v_2\,.
\]
We need to justify why the right-hand side is in $K^-$. It is clearly in $K$. Suppose it is in $K^+$. By Proposition~\ref{prop:Kplus-2}, it can be expressed as $\sum_{a_1\in A_1} c_1(a_1) \cdot a_1 + \sum_{a_2 \in A_2} c_2(a_2) \cdot a_2$ for some nonnegative coefficients $c_1(a_1)$ and $c_2(a_2)$ with $\sum_{a_1 \in A_1} c_1 (a_1) + \sum_{a_2 \in A_2} c_2 (a_2) \ge 1$. This implies $v_1 = \sum_{a_1 \in A_1} c_1(a_1) \cdot a_1$ and $(1-t_1(v_1))v_2 = \sum_{a_2 \in A_2} c_2(a_2) \cdot a_2$. We then have $v_1 \in (\sum_{a_1 \in A_1} c_1(a_1)) \cdot K_1^+$, so $v_1 \notin (\sum_{a_1 \in A_1} c_1(a_1)) \cdot K_1^-$ and hence $t_1(v_1) \ge \sum_{a_1 \in A_1} c_1(a_1)$. This implies
\[
    1-t_1(v_1) \le 1-\sum_{a_1 \in A_1} c_1(a_1) \le \sum_{a_2 \in A_2} c_2(a_2)\,.
\]
But then $v_2 = \sum_{a_2 \in A_2} \frac{c_2(a_2)}{1-t_1(v_1)} \cdot a_2$ with $\sum_{a_2 \in A_2} \frac{c_2(a_2)}{1-t_1(v_1)} \ge 1$, so $v_2 \in K_2^+$.

Conversely, we can define a map
\[
    K^- \to K_1^- \times K_2^-\,,\quad v_1 \oplus v_2 \mapsto \left(v_1, \frac{v_2}{1-t_1(v_1)}\right)\,,
\]
which works because if $v_1 = \sum_{a_1 \in A_1} c_1(a_1) \cdot a_1$ with $\sum_{a_1 \in A_1} c_1(a_1) \ge 1$, then
\[
    t_1(v_1) \ge \sum_{a_1 \in A_1} c_1(a_1) \ge 1\,,
\]
a contradiction; and if $\frac{v_2}{1-t_1(v_1)} = \sum_{a_2 \in A_2} c_2(a_2) \cdot a_2$ with $\sum_{a_2 \in A_2} c_2(a_2) \ge 1$, then by choosing $c_1(a_1)$ such that $v_1 = \sum_{a_1 \in A_1} c_1(a_1) \cdot a_1$ and $\sum_{a_1 \in A_1} c_1(a_1) \ge t_1(v_1)$, we find that $v_1 \oplus v_2 = \sum_{a_1 \in A_1} c_1(a_1) \cdot a_1 + \sum_{a_2 \in A_2} (1-t_1(v_1)) c_2(a_2) \cdot a_2$ with
\[
    \sum_{a_1 \in A_1} c_1(a_1) + \sum_{a_2 \in A_2} (1-t_1(v_1))c_2(a_2) \ge t_1(v_1) + 1-t_1(v_1) = 1\,,
\]
another contradiction. These maps are inverse diffeomorphisms. It thus follows from the standard rules of volume calculation from calculus that the volume of $K^-$ is
\begin{align*}
    \vol(K^-) &= \int_{v_1 \in K_1^-} {\vol(K_2^-)}{(1-t_1(v_1))^{\dim(V_2)}} \cdot d^{\dim(V_1)} v_1 \\
    &= \int_0^1 \vol(K_1^-)\vol(K_2^-) (1-t)^{\dim(V_2)} \dim(V_1) t^{\dim(V_1)-1}\, dt \\
    &= \frac{\dim(V_1)!\dim(V_2)!}{(\dim(V_1)+\dim(V_2))!} \cdot \vol(K_1^-)\vol(K_2^-).
\end{align*}
Without loss of generality, the lattices used to measure the normalized volumes are all of the form $\bZ^n$ for some $n$, and we have $\Vol_\sbZ(K^-) = (\dim(V_1)+\dim(V_2))! \cdot \Vol(K^-)$ and $\Vol_\sbZ(K_i^-) = \dim(V_i)! \cdot \Vol(K_i^-)$ for each $i$, so the above equation amounts to $\Vol_\sbZ(K^-) = \Vol_\sbZ(K_1^-)\Vol_\sbZ(K_2^-)$.
\end{proof}

\begin{proposition}[{\cite[Chapter 6, proof of Theorem 2.3]{GKZ94}}]\label{prop:vol-cone-2}
Let $A \subseteq V\setminus \{0\}$ and $K, K^+$ be as in Proposition \ref{prop:Kplus-2}, and let $M_A$ be the lattice generated by the vectors of $A$. Let $Q$ be the convex hull of vectors of $A$. Suppose that the lattice $\tilde{M}_A$ affinely generated by elements of $A$ has codimension $1$ inside $M_A$. Then
\[
    \Vol_{M_A}(K\backslash K^+) = \Vol_{\tilde{M}_A}(Q)\,.
\]
\end{proposition}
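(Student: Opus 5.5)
The plan is to prove Proposition~\ref{prop:vol-cone-2} by slicing the cone $K$ with the hyperplane that contains the points of $A$. The hypothesis that $\tilde{M}_A$ has codimension $1$ in $M_A$ means there is a linear functional $\ell\colon M_A\to\bZ$ with $\ell\equiv 1$ on $A$. To see this, note that the affine span of $A$ does not contain the origin, since otherwise the affine lattice would have the same rank as $M_A$. Then $\ell$ is defined on the linear span, takes the value $1$ on $A$, and is integral on $M_A$ because $M_A$ is generated by $A$. Moreover $\tilde{M}_A=M_A\cap\{\ell=1\}$: both have the same rank, and any element of $M_A\cap\{\ell=1\}$ is an integer combination $\sum m_a a$ with $\sum m_a=1$, which is an affine integer combination.

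Step one is to describe $K\setminus K^+$ using Proposition~\ref{prop:Kplus-2}. We have $K=\cone(A)$, and $K^+$ consists of combinations $\sum c_a a$ with $c_a\ge 0$ and $\sum c_a\ge 1$. Since $\ell(\sum c_a a)=\sum c_a$, this gives $K^+=K\cap\{\ell\ge 1\}$. Hence $K\setminus K^+=K\cap\{0\le \ell<1\}$, and up to a measure-zero boundary this is the pyramid $\conv(\{0\}\cup Q)$. Here $Q=K\cap\{\ell=1\}=\conv(A)$, where the last equality again uses that $\ell\equiv 1$ on $A$.

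Step two is the volume computation. Let $m=\rank M_A$ and choose a basis of $M_A$ adapted to $\ell$, namely $e_1,\dots,e_{m-1}$ spanning $\ker\ell\cap M_A$ and $e_m$ with $\ell(e_m)=1$. Such a basis exists because $\ell$ is surjective onto $\bZ$, as it takes the value $1$. In these coordinates the slice $\{\ell=1\}$ with the lattice $\tilde{M}_A$ is a translate of $\bZ^{m-1}$, so the Euclidean $(m-1)$-volume of $Q$ in these coordinates equals $\vol_{\tilde M_A}(Q)$. The pyramid with apex $0$ over a base at height $1$ has volume $\frac1m\vol(Q)$. Normalizing gives
\[
\Vol_{M_A}(K\setminus K^+)=m!\cdot\tfrac1m\vol_{\tilde M_A}(Q)=(m-1)!\,\vol_{\tilde M_A}(Q)=\Vol_{\tilde M_A}(Q),
\]
using $\dim Q=m-1$.

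The main obstacle is the lattice bookkeeping in the first paragraph: one must check that $\ell$ is integral and primitive on $M_A$, and that $\tilde{M}_A$ is exactly the level-one slice of $M_A$ rather than a sublattice of finite index in it. Both facts follow from $A$ generating $M_A$, as argued above.
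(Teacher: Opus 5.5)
Your proof is correct and follows essentially the same route as the GKZ argument that the paper cites without reproducing: the hypothesis gives a primitive height functional $\ell$ with $\ell\equiv 1$ on $A$, so $K\setminus K^+$ is, up to the measure-zero top face $Q$, the pyramid $\conv(\{0\}\cup Q)$. Its $M_A$-normalized volume is $m!\cdot\frac{1}{m}\vol_{\tilde M_A}(Q)=\Vol_{\tilde M_A}(Q)$, because $\tilde M_A$ is exactly the level-one slice of $M_A$. Your lattice bookkeeping (integrality and surjectivity of $\ell$, and the splitting $M_A=\ker\ell\oplus\bZ e_m$) supplies the details that the citation leaves implicit.
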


\begin{lemma}\label{lem:general-prismatic-nonprismatic-u}
Suppose we have prismatic $F' = D'_1\cdots D'_{i-1} \boxed{C'_i}D'_{i+1} \cdots D'_n$ and nonprismatic $F = (\boxed{C_1}\cdots \boxed{C_n}, D_1 \cdots D_n)$ with $F' < F$. Then
\[
    u(F,F') = \binom{|D_i| - 2 + \sum_{j \neq i} |C_j| }{|D_i| - 1}
\]
\end{lemma}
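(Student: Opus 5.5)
The plan is to compute $u(F,F')$ directly from Definition~\ref{def: general-u}. I would fix a vertex $v = \delta_1\cdots\delta_{i-1}\boxed{\gamma_i}\delta_{i+1}\cdots\delta_n$ of $\phi(F')$ as the origin and describe the image of the semigroup $\ks_F$ in the quotient lattice $M_F/F'$. The image should split as a free part times a product of two simplices. The free part then contributes a factor $1$, and the product of simplices contributes the binomial coefficient through Proposition~\ref{prop:vol-cone-2}.

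First, by Lemma~\ref{lem:general-nonprismatic-lattice} applied to $F$ with origin $\phi(v)$, the semigroup is generated by three families. The first is the moves $\phi(v_{j,\delta'_j})-\phi(v) = f^j_{\delta'_j}-f^j_{\delta_j}$ for $j\neq i$. The second is the moves $\phi(v_{i,\gamma'_i})-\phi(v)=e^i_{\gamma'_i}-e^i_{\gamma_i}$. The third is the cross vertices $\phi(v_{j,\gamma_j;i,\delta'_i})-\phi(v) = (e^j_{\gamma_j}-f^j_{\delta_j}) + (f^i_{\delta'_i}-e^i_{\gamma_i})$ for $j\neq i$, $\gamma_j\in C_j$, $\delta'_i\in D_i$. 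By Lemma~\ref{lem:general-prismatic-lattice}, the linear space $\bL(F')-\phi(v)$ is spanned by $f^j_{\delta'_j}-f^j_{\delta_j}$ with $\delta'_j\in D'_j$, $j\neq i$, and by $e^i_{\gamma'_i}-e^i_{\gamma_i}$ with $\gamma'_i\in C'_i$. Modulo this space, the first two families become $0$ or the independent vectors indexed by $D_j\setminus D'_j$ ($j\ne i$) and $C_i\setminus C'_i$. The key observation is that no vector of $\bL(F')$ involves any $f^i_\bullet$ or any $e^j_\bullet$ with $j\neq i$, since $F'$ is $i$-prismatic. Hence the images of the cross vertices are $x_{(j,\gamma_j)}+y_{\delta'_i}$, where we set $x_{(j,\gamma_j)} := e^j_{\gamma_j}-f^j_{\delta_j}$ and $y_{\delta'_i}:=f^i_{\delta'_i}-e^i_{\gamma_i}$. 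The $x_a$, for $a$ in an index set of size $m:=\sum_{j\neq i}|C_j|$, together with the $y_b$, for $b\in D_i$ with $k:=|D_i|$, are linearly independent from each other and from the free generators, even modulo $\bL(F')$. I expect this to be the main obstacle: checking this independence carefully while keeping track of the fixed terms $-f^j_{\delta_j}$ and $-e^i_{\gamma_i}$. I would do it by comparing coordinates directly, in the style of Lemma~\ref{lem:nonprismatic-volume-explicit}. Note that $m\ge 1$, because $C_i\supseteq C'_i\neq\emptyset$ and $F$ nonprismatic forces some other $C_j\neq\emptyset$.

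Consequently, the quotient semigroup is the direct sum of a free semigroup on linearly independent generators and the semigroup generated by $A_2=\{x_a+y_b\}$. The points of $A_2$ are the vertices of a product of simplices $Q\cong\Delta_{m-1}\times\Delta_{k-1}$, lying in the affine hyperplane where the coordinates along the $x_a$ sum to $1$. The same splitting applies to the lattice $M_F/F'$, which is generated by the images of the generators. By Proposition~\ref{prop:vol-mult-2}, $u(F,F')$ is the product of the subdiagram volume of the free part and that of $\cone(A_2)$. The free part contributes $1$ by the computation in Lemma~\ref{lem:nonprismatic-volume-explicit} (equivalently \cite[Chapter 5, Example 3.9]{GKZ94}).

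For the $A_2$ part, the affine lattice generated by $A_2$ has codimension $1$ in the lattice generated by $A_2$, since all its points lie on a height-one hyperplane. Proposition~\ref{prop:vol-cone-2} then gives that its subdiagram volume equals the normalized volume of $\Delta_{m-1}\times\Delta_{k-1}$ in its affine lattice. That lattice is the standard one of the product of unimodular simplices. The classical staircase triangulation of $\Delta_{m-1}\times\Delta_{k-1}$ into $\binom{m+k-2}{k-1}$ unimodular simplices therefore yields
\[
    u(F,F') = \binom{m+k-2}{k-1} = \binom{|D_i|-2+\sum_{j\neq i}|C_j|}{|D_i|-1}\,,
\]
as claimed.
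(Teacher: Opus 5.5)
Your proposal is correct and follows essentially the same route as the paper's proof. You split the image semigroup in $M_F/F'$ into a free part contributing $1$ and the cone over the points $x_a+y_b$, combine the two via Proposition~\ref{prop:vol-mult-2}, and evaluate the second factor with Proposition~\ref{prop:vol-cone-2} as the normalized volume of $\Delta_{m-1}\times\Delta_{k-1}$. The only differences are cosmetic: your grouping $x_{(j,\gamma_j)}=e^j_{\gamma_j}-f^j_{\delta_j}$, $y_b=f^i_b-e^i_{\gamma_i}$ gives a cleaner, genuinely product decomposition than the paper's $\bfE\times\bfF$ splitting, and you count the volume with the staircase triangulation instead of the product formula for normalized volumes.
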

\begin{proof}
As in Lemma \ref{lem:nonprismatic-volume-explicit}, let $W \coloneqq V/\bL(F')$ with projection map $p\colon V \to W$, and let $w' \coloneqq p(e_1)$ and $G = p(\ka_F)$. Let $K \subseteq W$ be the convex hull of the semigroup $S_G$ generated by $G$, where $w'$ is the additive identity, and let $K^+$ be the convex hull of $S_G \setminus \{w'\}$. However, unlike in Lemma \ref{lem:nonprismatic-volume-explicit}, we have $F'$ prismatic and $F$ nonprismatic rather than both being nonprismatic.

\noindent By Lemma \ref{lem:general-prismatic-lattice} applied to $F'$, in the space $W$,
\[
    \begin{cases}
        p(\phi(v_{j, \delta'_j})) = 0 & \text{for $j \in [n]$ and $\delta'_j \in D'_j$} \\
        p(\phi(v_{i,\gamma'_i})) = 0 & \text{for $\gamma'_i \in C_i$.}
    \end{cases}
\]
By a similar argument as in Lemma \ref{lem:general-nonprismatic-lattice}, $S_G$ is generated by
\[
    V' =
    \begin{cases}
        p(\phi(v_{j,\delta'_j}))  & \text{for $j \in [n] \setminus \{i\} $ and $\delta'_j \in D_j \setminus \{\delta_j\}$}, \\
        p(\phi(v_{i,\gamma'_i})) & \text{for $\gamma'_i \in C_i \setminus \{\gamma_i\}$} \\
        p(\phi(v_{j,\gamma_{j\,;\,i,\delta_i}})) & \text{for $j \in [n] \setminus \{i\}$, $\gamma_j \in C_j$, and $\delta_i \in D_i$.}
    \end{cases}
\]
To compute the volume of $K \setminus K^+$, let $V_1 \subseteq W$ be the vector space spanned by
\[
    A_1 \coloneqq \{p(\phi(v_{j,\delta'_j})) \mid \text{for $j \in [n] \setminus \{i\} $ and $\delta'_j \in D_j \setminus \{\delta_j\}$}\} \cup \{p(\phi(v_{i,\gamma'_i})) \mid \text{for $\gamma'_i \in C_i \setminus \{\gamma_i\}$}\}
\]
and let $V_2 \subseteq W$ be the vector space spanned by
\[
    A_2 \coloneqq \{p(\phi(v_{j, \gamma_{j\,;\,i,\delta_i}})) \mid \text{for $j \in [n] \setminus \{i\}, \gamma_j \in C_j, \delta_i \in D_i$}\}\,.
\]
Then $W = V_1 \oplus V_2$ and $V'' = A_1 \sqcup A_2$. Letting $K_1, K_2, K_1^+, K_2^+$ be as in the statement of Proposition \ref{prop:vol-mult-2}, we deduce that $\Vol_\sbZ(K \setminus K^+) = \Vol_\sbZ(K_1 \setminus K_1^+) \Vol_\sbZ(K_2 \setminus K_2^+)$. Since the vectors of $A_1$ are linearly independent, we have $\Vol_\sbZ(K_1 \setminus K_1^+) = 1$ as usual.

To compute $\Vol_\sbZ(K_2 \setminus K_2^+)$, we apply Proposition \ref{prop:vol-cone-2}. Furthermore, we observe that each element $p(\phi(v_{j, \gamma_{j\,;\,i,\delta_i}}))$ of $A_2$ can be written as $p(\phi(v) + (e^j_{\gamma_j} - e^i_{\gamma_i}) + (f^i_{\gamma_i} - f^j_{\delta_j}))$. Define
\begin{align*}
    \bfE &\coloneqq \{e^j_{\gamma_j} - e^i_{\gamma_i} \mid j \in [n] \setminus \{i\}, \gamma_i \in C_j\}\\
    \bfF &\coloneqq \{f^i_{\gamma_i} - f^j_{\delta_j}) \mid j \in [n] \setminus \{i\}, \delta_i \in D_i\}\\   
    \bfG &\coloneqq \{(e^j_{\gamma_j} - e^i_{\gamma_i}) + (f^i_{\gamma_i} - f^j_{\delta_j}) \mid j \in [n] \setminus \{i\}, \gamma_j \in C_j, \delta_i \in D_i\}\,.
\end{align*}
Then there is a bijection $\bfE\times\bfF\to\bfG$ given by $(a,b) \mapsto a + b$ which implies
\begin{align*}
    \Vol_\sbZ(\conv(\bfG)) &= \frac{(|\bfE| - 1 + |\bfF| - 1)!}{(|\bfE| - 1)! (|\bfF| - 1)!} \cdot \Vol_\sbZ(\conv(\bfE)) \cdot \Vol_\sbZ(\conv(\bfF)) \\
    &= \frac{\left(\left(\sum_{j \in [n] \setminus \{1\}} |C_j|\right) - 1 + |D_i| - 1\right)!}{\left(\left(\sum_{j \in [n] \setminus \{1\}} |C_j|\right) - 1\right)!(|D_i| - 1)!} \cdot 1 \cdot 1 \\
    &= \binom{|D_i| - 2 + \sum_{j\neq i}|C_j|}{|D_i| - 1}\,.\qedhere
\end{align*}
\end{proof}

While the function values of $u$ depend on whether the faces are prismatic or nonprismatic, the function $i$ always evaluates to $1$ for all types of faces of the Nashtope. This is shown in the following lemma.

\begin{lemma}\label{lem:general-i-equals-1}
For two Nashfaces $F,F'$ with $F' \le F$, we have $i(F,F') = 1$.
\end{lemma}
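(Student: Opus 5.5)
To prove that $i(F,F')=1$, I would show directly that $M_\alpha\cap\bL(F_\beta)=M_\beta$, where $F_\alpha=F$ and $F_\beta=F'$. The inclusion $M_\beta\subseteq M_\alpha\cap\bL(F_\beta)$ is automatic. The work is the reverse inclusion. I would choose the origin at a vertex $\phi(v)$ of $F'$, so that the affine lattices become honest sublattices of $\bigoplus_i(U^i\oplus V^i)$ with integral coordinates in the basis $\{e^i_t,f^j_s\}$.

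The first step is to write down explicit lattice bases. Lemmas~\ref{lem:general-prismatic-lattice} and~\ref{lem:general-nonprismatic-lattice} already do this. For a prismatic face, $M_\beta$ has $\bZ$-basis the differences $f^j_{\delta'_j}-f^j_{\delta_j}$ and $e^i_{\gamma'_i}-e^i_{\gamma_i}$. For a nonprismatic face, it has $\bZ$-basis the differences $f^j_{\delta'_j}-f^j_{\delta_j}$, $e^i_{\gamma'_i}-e^i_{\gamma_i}$ and $e^j_{\gamma'_j}-e^i_{\gamma_i}$, where for a nonprismatic face $f$-differences in the $i$-th slot also occur. These sets are integer generating sets and not merely real bases, because every vertex difference is a nonnegative integer combination of them, as the proofs of those lemmas show. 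In particular, $M_\beta$ equals the set of integer vectors lying in the linear span $\bL(F_\beta)-\phi(v)$ that satisfy the natural coordinate constraints. For instance, in the prismatic case, $M_\beta$ consists of the integer vectors supported on $\{e^i_t: t\in C'_i\}\cup\{f^j_s: s\in D'_j\}$ whose $f^j$-block sums vanish for each $j$ and whose $e^i$-block sum vanishes.

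The second step is to characterize $\bL(F_\beta)$ by the same linear equations over $\bR$: support conditions together with block-sum conditions. In the nonprismatic case, the block-sum conditions become that each $f^j$-block sum vanishes and that the total $e$-sum vanishes. The key observation is that all the constraints are of this coordinate and sum type. Consequently, the integer points of $\bL(F_\beta)$ form exactly the lattice spanned by the differences $e_a-e_b$ and $f_a-f_b$ within the relevant supports. That lattice is saturated in $\bZ^N$, since these difference vectors form a unimodular, graph-incidence-type system. So $\bL(F_\beta)\cap\bZ^N=M_\beta$.

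Since $M_\alpha\subseteq\bZ^N$, it follows that $M_\alpha\cap\bL(F_\beta)\subseteq\bZ^N\cap\bL(F_\beta)=M_\beta$, which gives index $1$.

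I expect the main obstacle to be the saturation claim in the mixed case where $F'$ is prismatic and $F$ is nonprismatic. There the basis of $M_\alpha$ includes the combined vectors $(e^j_{\gamma_j}-e^i_{\gamma_i})+(f^i_{\gamma_i}-f^j_{\delta_j})$ from Lemma~\ref{lem:general-prismatic-nonprismatic-u}, which are not pure differences. The argument above sidesteps this because it only uses that $M_\beta$ is saturated in the ambient integer lattice, not anything about $M_\alpha$. The real check is therefore that the explicit basis of $M_\beta$ from Lemma~\ref{lem:general-prismatic-lattice} or Lemma~\ref{lem:general-nonprismatic-lattice} spans all integer points of its real span. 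I would verify this by showing that the basis matrix contains an identity minor of full rank in suitable coordinates, namely the coordinates $f^j_{\delta'_j}$, $e^i_{\gamma'_i}$ and $e^j_{\gamma'_j}$ for the non-base elements. Each basis vector has exactly one $+1$ among those coordinates.
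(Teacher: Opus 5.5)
Your route is valid and differs from the paper's, but the nonprismatic case needs a correction before it works. The paper argues relatively. Using Lemmas \ref{lem:general-prismatic-lattice} and \ref{lem:general-nonprismatic-lattice}, it chooses a $\bZ$-basis of $M_{F'}$ that extends to a $\bZ$-basis of $M_F$, which forces $M_F\cap\bL(F')=M_{F'}$. Implicitly this requires a compatibility check for each type of pair: prismatic in prismatic, prismatic in nonprismatic, and nonprismatic in nonprismatic. You instead prove the absolute statement $(\bL(F')-\phi(v))\cap\bZ^N=M_{F'}-\phi(v)$ and then use only $M_F-\phi(v)\subseteq\bZ^N$. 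So you never need the structure of $M_F$, the case analysis is only on the type of $F'$, and you get saturation of every Nashface lattice in the ambient lattice as a by-product.

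\textbf{The gap.} The vector $e^j_{\gamma'_j}-e^i_{\gamma_i}$, taken from the second bullet list of Lemma \ref{lem:general-nonprismatic-lattice}, is not in $\bL(F')-\phi(v)$. For $x$ in the ambient space, let $a_k(x)$ and $b_k(x)$ be the sums of its $e^k$- and $f^k$-coordinates. Every column $e^k_t+\sum_{j\neq k}f^j_{s_j}$ of $\ka$ satisfies $a_k+b_k=1$ for all $k$ and $\sum_k a_k=1$. Hence every face direction satisfies $a_k+b_k=0$ for all $k$ and $\sum_k a_k=0$, which $e^j_{\gamma'_j}-e^i_{\gamma_i}$ violates. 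The actual vertex difference is
$\phi(v_{j,\gamma'_j;i,\delta_i})-\phi(v)=(e^j_{\gamma'_j}-e^i_{\gamma_i})+(f^i_{\delta_i}-f^j_{\delta_j})$, whose $f^i$- and $f^j$-block sums are $\pm1$. So your description of $\bL(F')$ by vanishing $f^j$-block sums and vanishing total $e$-sum is false. It cuts out a different subspace of the same dimension, which is why a dimension count does not catch it. The same goes for the claimed integer generating set and the graph-incidence unimodularity argument built on it.

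For nonprismatic $F'$, the correct direction space is: vectors supported on $\{e^k_t: t\in C'_k\}\cup\{f^k_s: s\in D'_k\}$ with $a_k+b_k=0$ for every $k$ and $\sum_k a_k=0$.

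\textbf{The fix.} Fix some $\delta_i\in D'_i$ and use the following generators:
\begin{itemize}
\item $f^j_{\delta'_j}-f^j_{\delta_j}$ for all $j$, including $j=i$;
\item $e^i_{\gamma'_i}-e^i_{\gamma_i}$;
\item $(e^j_{\gamma'_j}-e^i_{\gamma_i})+(f^i_{\delta_i}-f^j_{\delta_j})$ for $j\neq i$, $\gamma'_j\in C'_j$.
\end{itemize}
These lie in $M_{F'}-\phi(v)$. The $f^i$-differences are differences of two vertex differences inside some $k$-prism with $C'_k\neq\emptyset$, which exists because at least two $C'_k$ are nonempty. Their number is $\dim F'$. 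The extra $f$-entries of the third type sit at the base coordinates $f^i_{\delta_i}$ and $f^j_{\delta_j}$. So each generator still has a single $+1$ among the non-base coordinates $f^j_{\delta'_j}$, $e^i_{\gamma'_i}$, $e^j_{\gamma'_j}$, and distinct generators use distinct such coordinates.

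Your identity-minor argument from the last paragraph then applies verbatim. Any integer point of the real span has coefficients equal to its own non-base coordinates, hence integral. This gives saturation and $i(F,F')=1$. You should drop the graph-incidence remark and rely on the identity minor alone.
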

\begin{proof}
Let $M_F$, $M_{F'}$ be the affine $\bZ$-lattices generated by $F$ and $F'$.
We can use Lemmas \ref{lem:general-prismatic-lattice} and \ref{lem:general-nonprismatic-lattice} to find a basis for $M_{F'}$ that extends to a basis for $M_F$. Therefore, intersecting $M_F \cap \bL(F')$ gives $M_{F'}$. 
\end{proof}

\subsection{The Euler obstruction function for Nashfaces}\label{subsec: Euler obstruction}

We have now gathered all the ingredients to compute the Euler obstruction of faces of the Nashtope. Since it is defined recursively over the containment in larger faces, the proofs proceed by induction. We again need to distinguish between prismatic and nonprismatic faces.

\begin{lemma}\label{lem:nonprismatic-eu}
For any Nashface $F$, if $F$ is nonprismatic, then $\eu(F) = 1$.
\end{lemma}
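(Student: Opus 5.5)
We argue by downward induction on $\dim(F)$, using the recursive Definition~\ref{def: eu}. The base case is $F = P$. The whole Nashtope is itself nonprismatic: its face string is $(\boxed{[d_1-1]}\cdots\boxed{[d_n-1]},[d_1]\cdots[d_n])$, and $\eu(P)=1$ by definition. Now let $F_\beta$ be a nonprismatic face with $F_\beta \neq P$, and assume $\eu(F_\alpha)=1$ for every face $F_\alpha$ strictly containing $F_\beta$. By the remark after Lemma~\ref{lem: dim face strings}, every such $F_\alpha$ is nonprismatic. So the induction hypothesis applies to all of them, and Lemmas~\ref{lem:non-prismatic-u} and \ref{lem:general-i-equals-1} give $u(F_\alpha,F_\beta)=i(F_\alpha,F_\beta)=1$. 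The recursion therefore reduces to
\[
    \eu(F_\beta) = \sum_{F_\beta \lneq F_\alpha \le P} (-1)^{\dim(F_\alpha)-\dim(F_\beta)-1}\,,
\]
and it remains to show that this sum equals $1$.

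To evaluate it, we use the description of the order relation from the proof of Lemma~\ref{lem: dim face strings}. Write $F_\beta = (\boxed{C_1}\cdots\boxed{C_n},D_1\cdots D_n)$. The faces $F_\alpha \ge F_\beta$ correspond exactly to the strings $(\boxed{C'_1}\cdots\boxed{C'_n},D'_1\cdots D'_n)$ with $C_\ell\subseteq C'_\ell\subseteq[d_\ell-1]$ and $D_\ell\subseteq D'_\ell\subseteq[d_\ell]$. The nonprismatic condition (at least two $C'_\ell$ nonempty) holds automatically, because it already holds for the $C_\ell$. So the upper interval $[F_\beta,P]$ is isomorphic to a Boolean lattice on the set
\[
    E=\bigsqcup_\ell\big([d_\ell-1]\setminus C_\ell\big)\ \sqcup\ \bigsqcup_\ell\big([d_\ell]\setminus D_\ell\big)\,.
\]
By the dimension formula for nonprismatic strings, $\dim(F_\alpha)-\dim(F_\beta)$ equals the size of the subset $T\subseteq E$ that is added. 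Hence
\[
    \eu(F_\beta)=\sum_{\emptyset\neq T\subseteq E}(-1)^{|T|-1} = 1-\sum_{T\subseteq E}(-1)^{|T|} = 1 - 0 = 1\,.
\]
The last sum vanishes because $E\neq\emptyset$, which follows from $F_\beta\neq P$.

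The main point needing care is the identification of the upper interval as a full Boolean lattice. This requires two facts. First, no prismatic face lies above a nonprismatic one. Second, every enlargement of the sets $C_\ell$ and $D_\ell$ again gives a valid nonprismatic face string, whose dimension increases by exactly the number of added elements. Both follow directly from the order relation and the dimension formula established in Lemma~\ref{lem: dim face strings}. Once these are checked, the rest is the standard alternating-sum identity.
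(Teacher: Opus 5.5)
Your proof is correct and follows essentially the same argument as the paper. Both use induction on codimension and substitute $u=i=\eu=1$ for the nonprismatic faces above $F$. The recursion then becomes an alternating sum over the Boolean upper interval, which vanishes; you also make explicit that $E\neq\emptyset$ when $F\neq P$, which the paper leaves implicit.
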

\begin{proof}
We proceed by induction on $\codim(F)$. If $\codim(F) = 0$, which is equivalent to $F = P$, then by definition, $\eu(F) = 1$. Now, assume that all faces with codimension less than $c>0$ have Euler obstruction 1. We show that $\eu(F) = 1$ for all Nashfaces $F$ with $\codim(F) = c$. Let $F = (\boxed{C_1}\cdots \boxed{C_n}, D_1 \cdots D_n)$. Since $F$ is nonprismatic, any face containing it is also nonprismatic so that we can plug the identities of Lemmas \ref{lem:non-prismatic-u} and \ref{lem:general-i-equals-1} into Definition \ref{def: eu}:
\begin{align*}
    \eu(F) &= \sum_{F' > F} (-1)^{\dim(F') - \dim(F) - 1} \cdot u(F',F) \cdot i(F',F) \cdot \eu(F') \\
    &= \sum_{F' > F} (-1)^{\dim(F') - \dim(F) - 1} \cdot 1 \cdot 1 \cdot 1\\
    &= 1 - \sum_{F' \geq F} (-1)^{\dim(F') - \dim(F)}\,.
\end{align*}
Since
\[
    \sum_{F' \geq F} (-1)^{\dim(F') - \dim(F)} = \sum_{\substack{C'_i \subseteq [d_i - 1] \setminus C_i \\ D'_i \subseteq [d_i] \setminus D_i\\ i\in[n]}} (-1)^{\sum_{i=1}^n (|C'_i| + |D'_i|)} = (1 - 1)^{\sum_{i=1}^n (d_i - 1 + d_i - |C_i| - |D_i|)} = 0\,,
\]
we obtain $\eu(F) = 1$. The inductive step is complete. We conclude that all nonprismatic faces have Euler obstruction 1.
\end{proof}

\begin{lemma}\label{lem:prismatic-eu}
For any Nashface $F$, if $F$ is $i$-prismatic, then $\eu(F) = \min\left(d_i,\sum_{j \neq i}(d_j - 1)\right)$.
\end{lemma}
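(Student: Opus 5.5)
The plan is to induct on the codimension of $F$ inside the poset of $i$-prismatic faces. Along the way I would prove the stronger claim that $\eu(F)$ equals a constant $E_i$ for every $i$-prismatic face, and then compute $E_i$ at the top prismatic face
\[
    T_i=[d_1]\cdots[d_{i-1}]\boxed{[d_i-1]}[d_{i+1}]\cdots[d_n].
\]
By the order description in the proof of Lemma~\ref{lem: dim face strings}, the faces $F'>F$ split into two kinds.
\begin{itemize}
    \item The $i$-prismatic faces $F'$. For these, $u(F',F)=1$ by Lemma~\ref{lem:prismatic-u}, and $\eu(F')=E_i$ by the induction hypothesis.
    \item The nonprismatic faces $F'=(\boxed{C'_1}\cdots\boxed{C'_n},D'_1\cdots D'_n)$ with $C'_i\supseteq C_i$ and $D'_j\supseteq D_j$ for $j\neq i$. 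Here $D'_i$ is arbitrary, and the $C'_j$ for $j\ne i$ are arbitrary provided at least one is nonempty. For these, $\eu(F')=1$ by Lemma~\ref{lem:nonprismatic-eu}, and $u(F',F)=\binom{|D'_i|-2+\sum_{j\ne i}|C'_j|}{|D'_i|-1}$ by Lemma~\ref{lem:general-prismatic-nonprismatic-u}.
\end{itemize}
In both cases $i(F',F)=1$ by Lemma~\ref{lem:general-i-equals-1}. Plugging into Definition~\ref{def: eu} gives
\[
    \eu(F)=E_i\sum_{F<F'\le T_i}(-1)^{\dim F'-\dim F-1}+N(F),
\]
where $N(F)$ is the nonprismatic contribution.

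\textbf{Case $F\ne T_i$.} The prismatic sum is $1$, because the signed sum over all prismatic supersets, including $F$ itself, is a product of factors $(1-1)$. In $N(F)$, the exponent $\dim F'-\dim F$ contains the term $(|C'_i|-|C_i|)+\sum_{j\ne i}(|D'_j|-|D_j|)$. Neither $u(F',F)$ nor the remaining data depend on $C'_i$ or on the $D'_j$ with $j\ne i$. Summing over these free supersets therefore produces a factor $(1-1)^{(d_i-1-|C_i|)+\sum_{j\neq i}(d_j-|D_j|)}$, which is $0$ since $F\ne T_i$. Hence $\eu(F)=E_i$, and the induction goes through. It remains to compute $E_i=\eu(T_i)=N(T_i)$; there are no prismatic faces above $T_i$.

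\textbf{Case $F=T_i$.} Set $a=|D'_i|\in[1,d_i]$, let $b=\sum_{j\ne i}|C'_j|$, and let $m=\sum_{j\ne i}(d_j-1)$. The number of choices of $(C'_j)_{j\ne i}$ with total size $b$ is $\binom{m}{b}$, and we need $b\ge1$. Also $\dim F'-\dim T_i-1=a+b-3$. This gives
\[
    N(T_i)=\sum_{a=1}^{d_i}\sum_{b=1}^{m}(-1)^{a+b+1}\binom{d_i}{a}\binom{m}{b}\binom{a+b-2}{a-1}.
\]

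The main step is evaluating this double sum; I will use finite differences. Put $q_b(a)\coloneqq\binom{a+b-2}{b-1}$. This is a polynomial in $a$ of degree $b-1$, with $q_b(0)=0$ for $b\ge2$ and $q_1(0)=1$. The kernel is symmetric in $a$ and $b$.

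Suppose $d_i\le m$. Fix $a\le d_i\le m$ and sum over $b$ first. Since $\deg_b q_a(b)=a-1<m$, the full alternating sum $\sum_{b=0}^m(-1)^b\binom{m}{b}q_a(b)$ vanishes. Hence the sum over $b\ge1$ equals $-q_a(0)=-[a=1]$. Substituting back leaves only the $a=1$ term, and $N(T_i)=d_i$.

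Suppose instead $m<d_i$. Sum over $a$ first, for fixed $b\le m<d_i$. The same argument gives $-[b=1]$, and then $N(T_i)=m$.

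In both cases $\eu(F)=E_i=\min(d_i,m)$. The only delicate point is the sign bookkeeping in $N(T_i)$, that is, making sure the $(-1)^{a+b+1}$ convention combines correctly with the two $-[\,\cdot=1]$ evaluations. I would double-check it against the case $\bd=(2,2,2)$.
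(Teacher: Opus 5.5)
Your decomposition matches the paper's. You split the recursion into $i$-prismatic and nonprismatic cofaces. You then kill the nonprismatic part for $F\neq T_i$ through the $(1-1)$ factor from the free choices of $C'_i\supseteq C_i$ and $D'_j\supseteq D_j$, and reduce to the maximal prism $T_i$.

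There is one concrete error: the sign in $N(T_i)$. Take a nonprismatic $F'>T_i$ with $|D'_i|=a$ and $\sum_{j\ne i}|C'_j|=b$. The dimension formula for face strings gives $\dim F'=-1+(m+b)+(a-1+d_i-1)=m+d_i+a+b-3$, while $\dim T_i=d_i-2+m$. Hence $\dim F'-\dim T_i-1=a+b-2$, not $a+b-3$. Your value would force $\dim F'=\dim T_i$ when $a=b=1$, which is impossible for a proper coface.

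The correct summand is therefore $(-1)^{a+b}\binom{d_i}{a}\binom{m}{b}\binom{a+b-2}{a-1}$, which is exactly the paper's sum with $r=a-1$, $s=b-1$. With the sign $(-1)^{a+b+1}$ you wrote, your own finite-difference evaluation gives $(-1)^{2}\binom{d_i}{1}\cdot(-1)=-d_i$, and $-m$ in the other case. So the line ``$N(T_i)=d_i$'' does not follow from your displayed double sum; two sign slips cancel. Your proposed check would have caught this: for $\bd=(2,2,2)$ and $i=3$, your displayed sum equals $-(4-2-2+2)=-2$.

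Once the exponent is corrected, the argument is complete. Your evaluation of the double sum is a genuinely different route from the paper's. The paper packages $c_{a,b}=\sum_{r,s}\binom{r+s}{r}(-1)^{r+s}\binom{a}{r+1}\binom{b}{s+1}$ into the generating function $\frac{xy}{(1-x)(1-y)(1-xy)}$ and reads off $\min(a,b)$ uniformly. You instead sum first over the index with the larger range. There $\binom{a+b-2}{a-1}$ is a polynomial of degree smaller than that range, so its full alternating binomial sum vanishes, and only minus its value at $0$ survives, which is $-1$ when the other index equals $1$ and $0$ otherwise.

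Your version is more elementary and shows directly which of $d_i$ and $m$ produces the minimum, at the cost of a case split; the generating function avoids the split. Your explicit observation that the $i$-prismatic cofaces contribute exactly $E_i\cdot\sum_{F<F'\le T_i}(-1)^{\dim F'-\dim F-1}=E_i$ also spells out the induction the paper only sketches.
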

\begin{proof}
First, observe that the polytope $P$ is nonprismatic. Thus, from here on, we assume $F' \neq P$. Using the recursion for Euler obstructions, and breaking the sum into two pieces based on whether or not $F$ is prismatic, we write
\begin{align*}
    \eu(F) &= \sum_{\substack{F' > F \\\text{$F'$ nonprismatic}}} (-1)^{\dim(F') - \dim(F) - 1} \cdot u(F',F) \cdot i(F',F) \cdot \eu(F')\\
    &\quad+\sum_{\substack{F' > F \\\text{$F'$ prismatic}}} (-1)^{\dim(F') - \dim(F) - 1} \cdot u(F',F) \cdot i(F',F) \cdot \eu(F')\,.
\end{align*}
We simplify each summand. In the first summand, we can write $F' = (\boxed{C'_1} \cdots \boxed{C'_n}, D'_1 \cdots D'_n)$, substitute the value for $u$ calculated in Lemma \ref{lem:general-prismatic-nonprismatic-u}, the value for $i$ calculated in Lemma \ref{lem:general-i-equals-1}, and the value for $\eu$ calculated in Lemma \ref{lem:nonprismatic-eu} to get
\begin{align}
    &\sum_{\substack{F' > F \\\text{$F'$ nonprismatic}}} (-1)^{\dim(F') - \dim(F) - 1} \cdot u(F',F) \cdot i(F',F) \cdot \eu(F') \notag \\&= \sum_{\substack{F' > F \\\text{$F'$ nonprismatic}}} (-1)^{\dim(F') - \dim(F) - 1} \cdot \binom{|D'_i| - 2 + \sum_{j \neq i} |C'_j| }{|D'_i| - 1} \cdot 1 \cdot 1 \notag\\
    &= \sum_{r,s \ge 0} \binom{r + s}{r} \sum_{\substack{F' > F \\ |D'_i| - 1 = r \\ \sum_{j\neq i} |C_j'| - 1 = s}} (-1)^{\dim(F') - \dim(F) - 1}.\notag \\
    &= \sum_{r,s \ge 0} \binom{r + s}{r} \sum_{\substack{D_j \subseteq D'_j \subseteq [d_j]\text{ for }j \ne i,\ C_i \subseteq C'_i \subseteq [d_i-1],\\ D'_i \subseteq [d_i],\ C_j' \subseteq [d_j-1]\text{ for }j \ne i \\ |D'_i| - 1 = r,\ \sum_{j\neq i} |C_j'| - 1 = s}} (-1)^{\sum_{i=1}^n (|D'_i| + |C'_i|) - |C_i|-\sum_{j\ne i} |D_j|}\label{eq:eu-nonprismatic-gf}
\end{align}
Observe that if $D_j \neq [d_j]$ for some $j \neq i$, or if $C_i \neq [d_i - 1]$ (equivalently, when $F$ is not a maximal prism $[d_1] \cdots [d_{i-1}] \boxed{[d_i - 1]} [d_{i+1}] \cdots [d_n]$), then expression \ref{eq:eu-nonprismatic-gf} vanishes. In the maximal prism case,
\begin{align*}
    \eu(F) 
    &= \sum_{\substack{F' > F \\\text{$F'$ nonprismatic}}} (-1)^{\dim(F') - \dim(F) - 1} \cdot u(F',F) \cdot i(F',F) \cdot \eu(F') \\
    &= \sum_{r,s \ge 0} \binom{r + s}{r} \sum_{\substack{F' > F \\ |D'_i| - 1 = r \\ \sum_{j \neq i} |C_j'| - 1 = s}} (-1)^{\dim(F') - \dim(F) - 1} \\
    &= \sum_{r,s \ge 0} \binom{r + s}{r} \sum_{\substack{C'_j \subseteq [d_j - 1],\,j\neq i \\ D'_i \subseteq [d_i]\\ |D'_i| - 1 = r,\sum_{j \neq i} |C_j'| - 1 = s}} (-1)^{|D'_i| + \sum_{j\neq i}|C_j'| - 2} \\
    &= \sum_{r,s \ge 0} \binom{r + s}{r} (-1)^{r+s} \binom{d_i}{r+1} \binom{\sum_{j\neq i}(d_j - 1)}{s+1}\,. 
\end{align*}
Define
\[
    c_{a,b} \coloneqq \sum_{r,s \ge 0} \binom{r + s}{r} (-1)^{r+s} \binom{a}{r+1} \binom{b}{s+1}\,.
\]
We claim that $c_{a,b} = \min(a,b)$. Indeed write
\begin{align*}
    \sum_{a,b \ge 0} c_{a,b}\,x^ay^b &= \sum_{r,s \ge 0} \binom{r+s}{r} (-1)^{r+s}\cdot\sum_{a \ge 0} \binom{a}{r+1} x^a\cdot\sum_{b \ge 0} \binom{b}{s + 1} y^b \\
    &= \sum_{r,s \ge 0} \binom{r +s}{r} (-1)^{r+s} \cdot\frac{x^{r+1}}{(1 - x)^{r+2}}\cdot\frac{y^{s+1}}{(1 - y)^{s+2}} \\
    &= \frac{x}{(1 - x)^2}\cdot\frac{y}{(1 - y)^2}\cdot \sum_{r,s \ge 0} \binom{r+s}{r} \left(-\frac{x}{1 - x}\right)^r\left(-\frac{y}{1 - y}\right)^s \\
    &= \frac{x}{(1 - x)^2}\cdot\frac{y}{(1 - y)^2}\cdot\left(1 + \frac{x}{1 -x} + \frac{y}{1 - y}\right)^{-1} \\
    &= \frac{xy}{(1 - x)(1 - y)(1 - xy)} \\
    &= \sum_{j,k \ge 1} x^j y^k\cdot\sum_{i \ge 0}(xy)^i \\
    &= \sum_{a,b \ge 0} x^ay^b \cdot \#\{i \ge 0: a - i \ge 1, b - i \ge 1\} \\
    &= \sum_{a,b \ge 0} \min(a,b)\,x^ay^b\,.
\end{align*}
So 
\[
    \eu(F) = \sum_{r,s \ge 0} \binom{r + s}{r} (-1)^{r+s} \binom{d_i}{r+1} \binom{\sum_{j\neq i}(d_j - 1)}{s+1} = \min\left(d_i,\sum_{j \neq i}(d_j - 1)\right)\,.
\]
In the non-maximal-prism case, the second summand vanishes, so we have the following equality.
\[
    \eu(F) =  \sum_{\substack{F' > F \\\text{$F'$ prismatic}}} (-1)^{\dim(F') - \dim(F) - 1} \cdot u(F',F) \cdot i(F',F) \cdot \eu(F')\,
\]
We can then use an inductive argument similar to that of Lemma \ref{lem:nonprismatic-eu} to conclude that $\eu(F) = \eu(F'')$, where $F''$ is the unique maximal prism containing $F$. 
\end{proof}

\begin{remark}
From Lemma \ref{lem:prismatic-eu}, we see why the computation of the degree of $\nabla_{\!\ka}$ differs between strictly balanced and on-boundary game formats. For a strictly balanced game format, i.e. $d_n - 1 < \sum_{i=1}^{n-1} (d_i - 1)$, the lemma gives $\eu(F) = d_n$ for $n$-prismatic $F$, whereas for an on-boundary game formats, i.e. $d_n - 1 = \sum_{i=1}^{n-1} (d_i - 1)$, the lemma gives $\eu(F) = \sum_{i=1}^{n-1} (d_i - 1)$ for $n$-prismatic $F$.
\end{remark}

\subsection{The normalized volumes of Nashfaces}\label{subsec: normalized volumes}

The remaining indeterminate in the degree formula in Theorem~\ref{thm:polytope} is the normalized volume of the faces of the Nashtope. We construct a function on sequences of boxed and unboxed integers that resembles the actual volume when these sequences coincide with face strings, but provides a generalization of such which will prove useful in computations of the degree in the following sections.

\begin{lemma}\label{lem:face-volume-2}
Let $F$ be a non-vertex face of the Nashtope, and fix a vertex $v$ of $F$. Then
\[
    \Vol_\sbZ(F) = \sum_{\substack{v \not\in F' < F \\ \dim(F)-\dim(F')=1}} \Vol_\sbZ(F')
\]
where the sum ranges over faces $F'$ of $F$ with codimension $1$ in $F$ so that $F'$ does not contain $v$.
\end{lemma}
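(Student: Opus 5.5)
The natural route is the pyramid (cone) decomposition of $F$ from the vertex $v$. Consider the facets $F'$ of $F$ not containing $v$. The sets $\conv(\{v\}\cup F')$ are pyramids with apex $v$ over these facets. They cover $F$ and have pairwise disjoint interiors; this is the standard pulling/cone triangulation from a vertex. Hence $\vol(F)=\sum_{F'}\vol(\conv(\{v\}\cup F'))$, with each volume measured in $(\bL(F),M(\ka\cap F))$. The remaining work is to convert each pyramid volume into $\Vol_\sbZ(F')$, with no extra lattice factor, in the lattice $M(\ka\cap F')$.

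For a pyramid of height $h$ over a base $F'$ of dimension $\dim(F)-1$ we have
\[
    \vol(\conv(\{v\}\cup F'))=\tfrac{1}{\dim(F)}\,h\,\vol'(F')\,,
\]
where $\vol'$ is computed in $\bL(F')$ with respect to $M(\ka\cap F)\cap\bL(F')$, and $h$ is the lattice distance from $v$ to $\bL(F')$ in $M(\ka\cap F)$. Multiplying by $\dim(F)!$ gives $\Vol_\sbZ$ of the pyramid as $h\cdot(\dim F-1)!\,\vol'(F')$. Two facts are therefore needed:
\begin{enumerate}
    \item $M(\ka\cap F)\cap\bL(F')=M(\ka\cap F')$. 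This is exactly $i(F,F')=1$, Lemma~\ref{lem:general-i-equals-1}.
    \item The lattice height of $v$ over $F'$ is $1$.
\end{enumerate}

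For the height, I would use that the Nashtope lies in a hyperplane-type structure. Every column of $\ka$ has a $0/1$ entry pattern: exactly one $e$-coordinate equal to $1$, and for each player either one $f$-coordinate equal to $1$ or the box. Every facet of a face is cut out by a coordinate-type inequality. From the face-string description in Theorem~\ref{thm:general-bijection} and the covering relations in the proof of Lemma~\ref{lem: dim face strings}, a facet $F'$ of $F$ is obtained by deleting one element from some $C_j$ or $D_j$. Alternatively, in the nonprismatic-to-prismatic case, it is obtained by dropping all $C_\ell$ with $\ell\ne j$. In the element-deletion cases, the facet is $\{x_k=0\}\cap F$ for a single coordinate $x_k$ (some $e^j_\gamma$ or $f^j_\delta$), which takes values in $\{0,1\}$ on vertices. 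Since $v\notin F'$, we get $x_k(v)=1$.

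Passing to the affine bases of Lemmas~\ref{lem:general-prismatic-lattice} and~\ref{lem:general-nonprismatic-lattice}, $x_k$ restricted to $M(\ka\cap F)$ is an affine-integral functional, and it is primitive because it equals $1$ at a lattice point. Together with item (1), this gives height exactly $1$. For the facet obtained by dropping the other $C_\ell$'s, I would use the functional $\sum_{\gamma\in C_j}e^j_\gamma$, which is again $0/1$ on the vertices of $F$.

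The main obstacle is verifying that every facet not containing $v$ is of one of these coordinate types, and that each such functional is primitive on $M(\ka\cap F)$. I would handle this by enumerating the covering relations case by case, prismatic versus nonprismatic, and checking the bases of the two lattice lemmas.
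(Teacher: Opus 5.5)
Your proposal is correct and follows essentially the paper's route. You decompose $F$ into the pyramids $\conv(\{v\}\cup F')$ over the facets $F'$ of $F$ with $v\notin F'$, and identify each pyramid's normalized volume with $\Vol_\sbZ(F')$ via $i(F,F')=1$ and lattice height one; the paper packs these two facts into bases with $M_F=\bZ^m$ and $M_{F'}=1\oplus\bZ^{m-1}$ before applying Proposition~\ref{prop:vol-cone-2}.

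Your height-one argument is more explicit than the paper's appeal to the lattice lemmas. The $0/1$ coordinate functional is integer-valued on $M(\ka\cap F)$, vanishes on $\bL(F')$, and equals $1$ at $v$, so it is primitive. One small correction: for the nonprismatic-to-prismatic facet, use $1-\sum_{\gamma\in C_j}e^j_\gamma$, which is the coordinate of the deleted singleton, rather than $\sum_{\gamma\in C_j}e^j_\gamma$. That functional is again of coordinate type, so the argument goes through unchanged.
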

\begin{proof}
Let $\kf$ be the set of $F'$ described in the lemma statement. For each $F' \in \kf$, let $\tilde{F}'$ be the convex hull of $F' \cup \{v\}$. Since $F$ is convex, $F = \cup_{F' \in \kf} \tilde{F}'$. For any distinct $F',F''$ in $\kf$, we have that $\tilde{F}' \cap \tilde{F}''$ is measure zero inside $\bL(F)$. Thus,
\[
    \Vol_\sbZ(F) = \sum_{F' \in \kf} \Vol_\sbZ(\tilde{F}')\,.
\]
Fix $F' \in \kf$ and let $M_F,M_{F'}$ be the lattices generated by $F,F'$ respectively. By Lemmas \ref{lem:general-prismatic-lattice} and \ref{lem:general-nonprismatic-lattice}, we can choose a basis set for $M_F$ so that $M_F = \bZ^m$ and a basis set for $M_{F'}$ so that $M_{F'} = 1 \oplus \bZ^{m-1}$ for some $m$. Finally, apply Proposition \ref{prop:vol-cone-2} to obtain the desired result.
\end{proof}

\begin{definition}
Let $a_1,\dots, a_n, b_1 \dots, b_n$ be nonnegative integers.
\begin{enumerate}
    \item If $|\{i\in[n]\mid a_i\ge 1\}|\ge 2$, define
    \[
        \Vol_\sbZ\left(\boxed{a_1}\dots\boxed{a_n}, b_1\cdots b_n\right)
    \]
    to be the volume of an arbitrary nonprismatic face $\boxed{C_1}\cdots \boxed{C_n}, D_1\cdots D_n$ with $|C_i| = a_i$ and $|D_i| = b_i + 1$ for all $i$.
    \item If $\{i\in[n]\mid a_i\ge 1\}=\{j\}$, define
    \[
        \Vol_\sbZ\left(\boxed{0}\cdots \setlength{\arraycolsep}{2pt}\begin{array}{|c|c|c|}\hline 0 & a_j & 0 \\\hline\end{array} \cdots \boxed{0}, b_1, \dots, b_n\right)
    \]
    to be the volume of an arbitrary prism $D_1 \cdots D_{j-1}C_jD_{j+1}\cdots D_n$ with $|C_j| = a_j$ and $|D_\ell| = b_\ell + 1$, for all $\ell \neq j$, if $b_j = 0$ and $0$ otherwise.
    \item Finally, if $a_i=0$ for all $i\in[n]$, define
    \[
        \Vol_\sbZ\left(\boxed{0} \cdots \boxed{0}, b_1, \dots, b_n\right) =
        \begin{cases}
            1 & \text{if $b_i=0$ for all $i\in[n]$}\\
            0 & \text{otherwise.}
        \end{cases}
    \]
\end{enumerate}
\end{definition}

\begin{corollary}\label{cor:vol-format}
Let $a_1,\dots, a_n, b_1 \dots, b_n$ be nonnegative integers with $a_i \ge 1$ for some $i\in[n]$. Then
\[
\begin{aligned}
    \Vol_\sbZ\left(
    \setlength{\arraycolsep}{2pt}\begin{array}{|c|}\hline a_1\\\hline\end{array}
    \cdots
    \setlength{\arraycolsep}{2pt}\begin{array}{|c|}\hline a_n\\\hline\end{array},
    b_1\cdots b_n\right) = \Vol_\sbZ\left(
    \setlength{\arraycolsep}{2pt}\begin{array}{|c|}\hline a_1\\\hline\end{array}
    \cdots
    \setlength{\arraycolsep}{2pt}\begin{array}{|c|c|c|}\hline a_{i-1} & a_i - 1 & a_{i+1} \\\hline\end{array}
    \cdots
    \setlength{\arraycolsep}{2pt}\begin{array}{|c|}\hline a_n\\\hline\end{array},
    b_1 \cdots b_n\right)\\
    + \sum_{\substack{j\neq i\\ b_j\ge 1}} \Vol_\sbZ\left(
    \setlength{\arraycolsep}{2pt}\begin{array}{|c|}\hline a_1\\\hline\end{array}
    \cdots
    \setlength{\arraycolsep}{2pt}\begin{array}{|c|c|c|}\hline a_{j-1} & a_j & a_{j+1} \\\hline\end{array}
    \cdots
    \setlength{\arraycolsep}{2pt}\begin{array}{|c|}\hline a_n\\\hline\end{array},
    b_1 \cdots b_{j-1}(b_j-1)b_{j+1}\cdots b_n\right)\,.
\end{aligned}
\]
\end{corollary}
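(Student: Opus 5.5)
The plan is to apply Lemma~\ref{lem:face-volume-2} to a face $F$ realizing the left-hand side, with a carefully chosen vertex $v$, and to identify the facets of $F$ not containing $v$ with the face strings on the right-hand side. Fix $F$ with data $|C_\ell| = a_\ell$ and $|D_\ell| = b_\ell+1$ (a nonprismatic string if at least two $a_\ell\ge 1$, otherwise the corresponding prism, using prism representatives as in Remark~\ref{rmk: general-caveat}). Choose $v$ to be a vertex string $\delta_1\cdots\delta_{i-1}\boxed{\gamma_i}\delta_{i+1}\cdots\delta_n$ with $\gamma_i\in C_i$ and $\delta_\ell\in D_\ell$. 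This uses the index $i$ with $a_i\ge1$ from the statement, and also uses the fact that in the nonprismatic case the entry $\delta_i\in D_i$ is irrelevant for $v$.

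Next we list the facets of $F$ via the face lattice description in the proof of Lemma~\ref{lem: dim face strings}. Codimension-one subfaces arise in two ways: by deleting one element from some $C_\ell$ (in the nonprismatic case keeping at least two nonempty, or dropping to a prism string), or by deleting one element from some $D_\ell$. A facet misses $v$ exactly when the deleted element is $\gamma_i$ from $C_i$, or $\delta_j$ from $D_j$ for some $j$. Deleting from $C_\ell$ with $\ell\ne i$, or a non-$\delta$ element, keeps $v$. Deleting $\gamma_i$ gives the first term, with data $a_i-1$. When $a_i-1=0$ and only one other box is nonempty, the result is a prism, and the boxed convention of the definition (including the value $0$ when $b_j\neq 0$ at the boxed position) is designed exactly to match this. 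Deleting $\delta_j$ from $D_j$, which requires $b_j\ge1$, gives the terms with $b_j-1$.

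We must also treat $j=i$ separately. In the nonprismatic case, deleting $\delta_i$ from $D_i$ does not remove $v$, since $v$ does not use $D_i$. This explains why the sum ranges over $j\ne i$. In the prism case the convention that the volume vanishes unless $b_i=0$ handles it automatically. Since volumes depend only on the cardinalities (by the affine bases of Lemmas~\ref{lem:general-prismatic-lattice} and \ref{lem:general-nonprismatic-lattice}, via symmetry permuting indices), the right-hand side then follows.

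The main obstacle is the bookkeeping at the boundary between nonprismatic and prismatic strings. We must check that each degenerate case matches the conventions in the definition of $\Vol_\sbZ$ on boxed integer sequences and is therefore counted with the correct value, possibly $0$ or $1$ for vertices. These cases are: removing $\gamma_i$ when $a_i=1$, facets that are prisms, and the case where $F$ itself is a prism or an edge. The case $a_i=1$ with all other $a_\ell=0$ reduces to a vertex. I would handle these cases one by one, verifying the codimension-one claim from the dimension formulas.
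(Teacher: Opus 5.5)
Your proposal is correct and follows the paper's route. The paper simply says the identity follows directly from Lemma~\ref{lem:face-volume-2}, and your choice of a vertex $v$ with $\gamma_i\in C_i$ supplies the bookkeeping the paper leaves implicit: facets missing $v$ come from deleting $\gamma_i$ from $C_i$ or $\delta_j$ from $D_j$ for $j\neq i$, and they match the right-hand terms. The degenerate cases are settled by the boxed conventions, as you indicate: a vertex, where the lemma does not apply (this needs all $b_\ell=0$, not just $a_\ell=0$ for $\ell\neq i$); an $i$-prism with $b_i\ge 1$; and the prism $\sigma^k$ with $b_k\ge 1$, which is not a facet.
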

\begin{proof}
This follows directly from Lemma \ref{lem:face-volume-2}.
\end{proof}

For a vector $\bv \coloneqq (v_1, \dots, v_n)$, let $v_1, \dots, v_n \ge 0$ be abbreviated as $\bv \ge 0$. For variables $q_1, \dots, q_n$ and $r_1, \dots, r_n$, the expression $\prod_{i=1}^n q_i^{r_i}$ is abbreviated $\bq^\br$. Given $\bx=(x_1,\ldots,x_n)$, we also adopt the notation $\hat{x}_i\coloneqq\sum_{j\neq i}x_j$.

\begin{lemma}\label{lem:vol-gf}
For any $a_1,\dots,a_n \ge 0$, we have
\[
    \sum_{\bb\ge 0} \Vol_\sbZ\left(\boxed{a_1} \cdots \boxed{a_n}, b_1 \cdots b_n\right)\,\bx^\bb = \prod_{i=1}^n\frac{1}{(1-\hat{x}_i)^{a_i}}\,.
\]
\end{lemma}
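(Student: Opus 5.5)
Write $V(\ba,\bb)$ for $\Vol_\sbZ(\boxed{a_1}\cdots\boxed{a_n}, b_1\cdots b_n)$ and $G_\ba(\bx)\coloneqq\sum_{\bb\ge 0}V(\ba,\bb)\,\bx^\bb$. The plan is to prove $G_\ba=\prod_i(1-\hat{x}_i)^{-a_i}$ by induction on $|\ba|=\sum_i a_i$, turning the recursion of Corollary~\ref{cor:vol-format} into a functional equation for the generating functions.

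\emph{Base case.} If $\ba=\bzero$, then $V(\bzero,\bb)$ equals $1$ when $\bb=\bzero$ and $0$ otherwise. Hence $G_\bzero=1$, which agrees with the empty product.

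\emph{Inductive step.} Suppose $a_i\ge 1$ for some $i$. Multiply the identity of Corollary~\ref{cor:vol-format} by $\bx^\bb$ and sum over $\bb\ge 0$. The term with $a_i$ lowered gives $G_{\ba-e_i}(\bx)$. For $j\ne i$, the term with $b_j$ lowered is summed over $b_j\ge 1$; reindexing $b_j\mapsto b_j+1$ gives $x_j\,G_\ba(\bx)$. Summing over $j\ne i$ yields
\[
    G_\ba(\bx)=G_{\ba-e_i}(\bx)+\hat{x}_i\,G_\ba(\bx),\qquad\text{so}\qquad G_\ba=\frac{G_{\ba-e_i}}{1-\hat{x}_i}.
\]
Combined with the induction hypothesis, this gives the claimed product. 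The answer does not depend on which $i$ was chosen, which is a useful consistency check.

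\emph{Main obstacle: validity of the recursion at the boundary cases.} Corollary~\ref{cor:vol-format} was derived from Lemma~\ref{lem:face-volume-2} for genuine faces. It must be checked that it holds for the extended function $V$ in all degenerate cases, including those where a term on the right leaves the nonprismatic regime. There are three such cases.

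\begin{enumerate}
    \item Lowering $a_i$ may leave only one nonzero box, so the corresponding term becomes a prism volume. This is consistent: by Lemma~\ref{lem:face-volume-2}, the facets of a nonprismatic face not containing a chosen vertex $v$ include its prism faces.
    \item In the prismatic case $\{k: a_k\ge 1\}=\{i\}$, the definition forces $V=0$ unless $b_i=0$. One must confirm that the recursion, with $i$ the unique boxed index, reproduces the prism volumes. For a product of simplices $\Delta_{a_i-1}\times\prod_{j\ne i}\Delta_{b_j}$, the normalized volume is the multinomial coefficient, and its generating function is $(1-\hat{x}_i)^{-a_i}$ with no $x_i$ dependence. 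This matches both the zero convention for $b_i>0$ and the product formula, because the factors $(1-\hat{x}_k)^{-0}$ for $k\ne i$ equal $1$.
    \item If every $a_k=0$ and $\bb\neq\bzero$, the definition sets $V=0$. This is compatible with the recursion once a prism has been reduced to a point.
\end{enumerate}

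The choice of $v$ matters. I would take $v$ to be a vertex whose unboxed coordinates are fixed and whose box position is $i$, and then verify that the facets avoiding $v$ are exactly the faces obtained by removing the element $\gamma_i$ from $C_i$, or removing $\delta_j$ from $D_j$ for $j\neq i$. Removing $\delta_i$ from $D_i$ is excluded because $v$ does not involve $f^i$. This check is the step most likely to require care, and it is what justifies the absence of a $j=i$ term in the recursion.
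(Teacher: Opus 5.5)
Your argument is correct and is essentially the paper's proof in generating-function form. The paper unrolls the same recursion of Corollary~\ref{cor:vol-format} into strings of decrements, always lowering the leftmost nonzero $a_i$, and observes that each $\alpha_i$ is preceded by an arbitrary chunk of $\beta_j$'s with $j\neq i$; this is exactly your identity $G_\ba=G_{\ba-e_i}/(1-\hat{x}_i)$ applied repeatedly. One correction to your boundary check (1): when lowering $a_i=1$ leaves a single nonzero box $\ell$, the prism $\sigma^\ell$ is a facet only if $b_\ell=0$ (in general it has codimension $1+b_\ell$), so ``the facets avoiding $v$ include its prism faces'' needs that restriction, and it is the convention $V=0$ for $b_\ell\ge 1$ that keeps the recursion valid.
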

\begin{proof}
    This lemma is best understood combinatorially. Consider all the ways we can repeatedly choose terms on the right side of the recursive formula until all the $a_i$ are zero. Let $\alpha_1,\dots, \alpha_n$ and $\beta_1,\dots,\beta_n$ be the operations of decrementing $a_i$ or $b_i$. Assume that we always choose the leftmost $a_i \ge 1$ to apply the formula. In total, every valid path is a string consisting of $a_i$ $\alpha_i$s and $b_i$ $\beta_i$s for each $i$, with the $\alpha_i$s appearing in order. The last letter must be an $\alpha_i$. Call the strings to the left of each $\alpha_i$ ``chunks.'' Then every $\beta_j$ except for $\beta_i$ can appear in the chunk to the left of each $\alpha_i$.
\end{proof}

\subsection{The degree of the \texorpdfstring{$\ka$}{A}-discriminant for strictly balanced game formats}\label{subsec: degree A-discriminant strictly balanced} Throughout this section, we assume $d_n - 1 < \sum_{i=1}^{n-1}(d_i - 1)$. We compute $\deg(\nabla_{\!\ka})$ and use the following notation throughout this section and in Section~\ref{subsec: degree A-discriminant on-boundary}:
\begin{enumerate}
    \item Given a vector $\bv=(v_1,\ldots,v_n)$, let $e_1(\bv) = \sum_{i=1}^n v_i$, $e_2(\bv) = \sum_{1 \le i < j \le n} v_iv_j$, and $e_3(\bv) =  \sum_{1 \le i < j < k \le n} v_iv_jv_k$ be the first, second, and third elementary symmetric functions in the entries of $\bv$.
    \item The vector $(v_1-1,\ldots,v_n-1)$ is abbreviated as $\bv-\bone$.
    \item $C \coloneqq \sum_{i=1}^n (d_i - 1)$. 
\end{enumerate}

In the upcoming statements, we utilize formal variables $x_1,\ldots,x_n$ and $y_1,\ldots,y_n$. Define $z_i \coloneqq \frac{y_i}{1 + y_i}$ and $\hat{z}_i\coloneqq \sum_{j\neq i}z_j$ for all $i\in[n]$. For use starting in Lemma \ref{lem:delta-2-boundary}, we introduce the quantities
\begin{align*}
    \alpha_i \coloneqq \left(1 - \frac{x_i \hat{z}_i}{1 - \hat{z}_i}\right)^{\!\!-1}\,,\quad\beta_i \coloneqq \frac{x_i}{(1 - \hat{z}_i)(1 - (1 + x_i)\hat{z}_i)}\,,\quad\gamma_i \coloneqq \frac{x_i(x_i + (1 + x_i)\hat{z}_i)}{(1 - \hat{z}_i)(1 - (1 + x_i)\hat{z}_i)^2}
\end{align*}
for all $i\in[n]$, and the vectors $\alpha=(\alpha_1,\ldots,\alpha_n)$, $\beta=(\beta_1,\ldots,\beta_n)$, $\gamma=(\gamma_1,\ldots,\gamma_n)$.
Finally, for ease of readability, we define the quantities
\[
    Y \coloneqq \prod_{i=1}^n\frac{1}{(1 + y_i)^2}\,,\quad Y_i \coloneqq \prod_{j \neq i} \frac{1}{1 - x_jy_i}\,.
\]

\begin{lemma}\label{lem:delta-1-strictly-balanced}
Let $x_1,\dots,x_n$ and $y_1,\dots,y_n$ be formal variables. The expression $\delta_1$ in \eqref{eq: delta1} equals the coefficient of $(x_1y_1)^{d_1 - 1} \cdots (x_ny_n)^{d_n - 1}$ in the formal power series
\begin{equation}\label{eq:delta-1-lemma-statement-strictly-balanced}
    \prod_{i=1}^n \frac{1 - \hat{z}_i}{(1 + y_i)^2(1 - (1 + x_i)\hat{z}_i)}\cdot\sum_{i=1}^n \frac{x_i}{(1 - \hat{z}_i)(1 - (1 + x_i)\hat{z}_i)}\,.
\end{equation}
\end{lemma}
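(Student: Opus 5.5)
The plan is to rewrite $\delta_1$ in \eqref{eq: delta1} as one sum over \emph{generalized} face strings, namely nonprismatic face strings together with prism representatives. The summand $(-1)^{\codim}(\dim+1)\Vol_\sbZ$ will depend only on the sizes $a_i=|C_i|$ and $b_i=|D_i|-1$. Then Lemma~\ref{lem:vol-gf} will be fed into a generating function.

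\emph{Step 1: absorbing the Euler obstruction.} By Lemma~\ref{lem:nonprismatic-eu}, every nonprismatic face has $\eu=1$. By Lemma~\ref{lem:prismatic-eu}, an $i$-prismatic face has $\eu(F)=\min(d_i,\sum_{j\neq i}(d_j-1))$. In the strictly balanced range this minimum equals $d_i$ for every $i$:
\begin{itemize}
\item for $i=n$ this is the hypothesis $d_n-1<\sum_{j<n}(d_j-1)$;
\item for $i<n$ we have $\sum_{j\ne i}(d_j-1)\ge (d_n-1)+1\ge d_i$, using $n\ge 3$ and $d_j\ge 2$.
\end{itemize}
By Remark~\ref{rmk: general-caveat}, each $i$-prism string has exactly $d_i$ prism representatives, obtained by choosing $t_i\in[d_i]$. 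Hence the weight $\eu(F)=d_i$ is exactly accounted for by summing over prism representatives with weight $1$. The prism representatives carry the volumes assigned in the definition of $\Vol_\sbZ(\boxed{a_1}\cdots\boxed{a_n},b_1\cdots b_n)$, including the $b_j=0$ convention. Moreover, $\dim=\sum_i(a_i+b_i)-1$ holds uniformly for both kinds of strings, and $\dim P=2C-1$. Therefore
\[
\delta_1=\sum_{\ba,\bb}(-1)^{2C-\sum_i(a_i+b_i)}\Big(\sum_i(a_i+b_i)\Big)\prod_{i=1}^n\binom{d_i-1}{a_i}\binom{d_i}{b_i+1}\Vol_\sbZ(\boxed{a_1}\cdots\boxed{a_n},b_1\cdots b_n),
\]
where the sum runs over $\ba\neq\bzero$. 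The case $\ba=\bzero$ contributes nothing, since it is not a face.

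\emph{Step 2: coefficient extraction.} Each binomial will be written as a coefficient:
\[
\sum_{a}\binom{d-1}{a}(-x)^{a}\ \leadsto\ [x^{d-1}]\text{-type extraction},\qquad \sum_b\binom{d}{b+1}(-1)^b w^b=\frac{1-(1-w)^{d}}{w}.
\]
The $d$-dependence is moved into extraction of $(x_iy_i)^{d_i-1}$, using a geometric-type kernel $\sum_{m}y^m(\cdots)$. The volume factor is replaced by Lemma~\ref{lem:vol-gf}:
\[
\sum_{\bb}\Vol_\sbZ\,\bx^{\bb}=\prod_i(1-\hat{x}_i)^{-a_i}.
\]
After substituting the $y$-kernel, the variable in the volume generating function becomes $z_i=y_i/(1+y_i)$, and the normalising factor $(1+y_i)^{-2}$ appears. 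Summing the geometric series in $a_i$ then produces the factor
\[
\frac{1-\hat z_i}{1-(1+x_i)\hat z_i}.
\]
The linear weight $\sum_i(a_i+b_i)$ is implemented by an Euler operator, i.e.\ differentiating with respect to a scaling parameter at $1$. Applied to the product, this yields the sum $\sum_i x_i/\big((1-\hat z_i)(1-(1+x_i)\hat z_i)\big)$ in \eqref{eq:delta-1-lemma-statement-strictly-balanced}. The removal of the term $\ba=\bzero$ should correspond to the subtracted constant, which the derivative kills.

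\emph{Main obstacle.} The main obstacle is Step 2. One must choose the extraction kernels so that the binomials $\binom{d_i}{b_i+1}$ with the sign $(-1)^{b_i}$ combine with $(1-\hat{x}_i)^{-a_i}$ into exactly $z_i$ and $(1+y_i)^{-2}$, and check the bookkeeping of signs from $(-1)^{\codim}$. I would first verify the identity on $\bd=(2,2,2)$ and $(2,2,2,2)$, where the degree is $80$, to fix the normalisations, and then carry out the general manipulation.
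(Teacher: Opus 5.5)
Your Step~1 is the paper's own first move. The paper also rewrites $\delta_1$ as the size-indexed sum $\hat\delta_1(\bd-\bone,\bd)$ and lets prism representatives absorb $\eu=d_i$. Two of your checks are more careful than the paper's: that $\min(d_i,\sum_{j\neq i}(d_j-1))=d_i$ also for $i<n$, and that the sign is $(-1)^{2C-\sum_i(a_i+b_i)}=(-1)^{\codim F}$. The paper's exponent $e_1(\bc+\bd'-\ba-\bb)$ is off by $(-1)^n$, although the generating function it then writes corresponds to the correct sign.

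The gap is Step~2, which is the actual content of the lemma. You leave it open, and the two identities you display would not carry it out. They are sums over $a$ and $b$ at \emph{fixed} $d$. At fixed $\bd$, however, the $b_i$-sums cannot be done factor by factor, because
\[
\sum_{\bb}V(\ba,\bb)\,\bx^{\bb}=\prod_i(1-\hat x_i)^{-a_i},\qquad V(\ba,\bb)=\Vol_\sbZ(\boxed{a_1}\cdots\boxed{a_n},b_1\cdots b_n),
\]
couples $b_i$ with $b_j$. So factors like $(1-(1-w)^d)/w$ do not combine with it into an evaluation at $z_i=y_i/(1+y_i)$.

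The missing idea is to interchange the order of summation. Fix $(\ba,\bb)$ and sum over the format, treating $c_i=d_i-1$ and $d_i$ as independent indices $c_i,m_i$. This is legitimate because $V(\ba,\bb)$ depends only on the sizes, not on $\bd$. The kernels are
\[
\sum_{c\ge 0}\binom{c}{a}(-1)^{c-a}x^c=\frac{x^a}{(1+x)^{a+1}},\qquad \sum_{m\ge 0}\binom{m}{b+1}(-1)^{m-b-1}y^m=\frac{y^{b+1}}{(1+y)^{b+2}}.
\]
Since $(-1)^{\codim F}=\prod_i(-1)^{(c_i-a_i)+(m_i-1-b_i)}$, the generating function of the whole family is
\[
\prod_i\frac{y_i}{(1+x_i)(1+y_i)^2}\cdot\sum_{\ba,\bb}\bt^{\ba}\bz^{\bb}e_1(\ba+\bb)V(\ba,\bb),\qquad t_i=x_i/(1+x_i).
\]
Then $\delta_1$ is its $\bx^{\bd-\bone}\by^{\bd}$-coefficient, which is the $\bx^{\bd-\bone}\by^{\bd-\bone}$-coefficient after dropping $\prod_i y_i$.

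From here your plan works as you predicted. By Lemma~\ref{lem:vol-gf},
\[
\sum_{\ba,\bb}\bt^{\ba}\bz^{\bb}s^{e_1(\ba+\bb)}V(\ba,\bb)=\prod_{i=1}^n\Big(1-\frac{s\,t_i}{1-s\hat z_i}\Big)^{-1}.
\]
The $\ba=\bzero$ term is the constant killed by $\frac{d}{ds}$. Differentiating at $s=1$ gives $\prod_i\frac{1-\hat z_i}{1-\hat z_i-t_i}\cdot\sum_i\frac{t_i}{(1-\hat z_i)(1-\hat z_i-t_i)}$. Using $1-\hat z_i-t_i=(1-(1+x_i)\hat z_i)/(1+x_i)$ and multiplying by $\prod_i(1+x_i)^{-1}(1+y_i)^{-2}$ yields exactly the series in the statement.

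The paper does the same computation with the Euler operator $\sum_j x_j\partial_{x_j}$ on the volume generating function, followed by differentiating a geometric series in each $a_j$. Your scaling parameter is equivalent and slightly shorter. No calibration on $(2,2,2)$ or $(2,2,2,2)$ is needed.
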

\begin{proof}
Recall that
\[
    \delta_1 = \sum_{F \le P} (-1)^{\codim(F)}(\dim(F) + 1)\cdot \Vol_\sbZ(F) \cdot \eu(F)\,.
\]
Write $V(\ba,\bb)=\Vol_\sbZ\left(\boxed{a_1} \cdots \boxed{a_n}, b_1 \cdots b_n\right)$ for simplicity. Define
\begin{equation}\label{eq: expansion delta-1}
    \hat{\delta}_1(\bc, \bd') \coloneqq \sum_{\ba,\bb\ge 0} \prod_{i=1}^n\binom{c_i}{a_i}\binom{D'_i}{b_i+1} (-1)^{e_1(\bc+\bd'-\ba-\bb)}e_1(\ba+\bb)\,V(\ba,\bb)
\end{equation}
and observe that by Lemmas \ref{lem:prismatic-eu} and \ref{lem:nonprismatic-eu} for the Euler obstructions, and Lemma \ref{lem:vol-gf} for the face volumes, $\hat{\delta}_1(\bd-\bone,\bd) = \delta_1$.\footnote{Note that, in the on-boundary case, a correction term is needed. Compare with the proof of Lemma \ref{lem:delta-2-boundary}.}
Packaging this in a generating function, we write
\begin{align}\label{eq:delta-1-genfunc-strictly-balanced}
\begin{split}
    \sum_{\bc,\bd'\ge 0}\hat{\delta}_1(\bc, \bd')\,\bx^\bc \by^{\bd'} &= \sum_{\ba,\bb\ge 0} \prod_{i=1}^n \frac{x_i^{a_i}}{(1 + x_i)^{1 + a_i}} \cdot \frac{y_i^{b_i + 1}}{(1 + y_i)^{2 + b_i}}\,e_1(\ba+\bb)\,V(\ba,\bb)\\
    &\hspace{-2cm}= \prod_{i=1}^n\frac{y_i}{(1+x_i)(1+y_i)^2}\cdot \underbrace{\sum_{\ba,\bb\ge 0} \prod_{i=1}^n\left(\frac{x_i}{1 + x_i}\right)^{\!\!a_i}\left(\frac{y_i}{1 + y_i}\right)^{\!\!b_i}e_1(\ba+\bb)\,V(\ba,\bb)}_\Lambda\,.
\end{split}
\end{align}
From the identity $\sum_{\bb\ge 0}V(\ba,\bb)\,\bx^\bb=\prod_{i=1}^n(1-\hat{x}_i)^{-a_i}\eqqcolon F(\bx)$ we derive
\begin{align*}
    \sum_{\bb\ge 0}e_1(\bb)\,V(\ba,\bb)\,\bx^\bb
    &= \sum_{j=1}^n x_j \frac{\partial F}{\partial x_j}(\bx) = F(\bx)\sum_{j=1}^n x_j\sum_{i\neq j}\frac{a_i}{1-\hat{x}_i} = F(\bx)\sum_{i=1}^n\frac{a_i\,\hat{x}_i}{1-\hat{x}_i}\,.
\end{align*}
In particular
\[
    \sum_{\bb\ge 0}e_1(\ba+\bb)\,V(\ba,\bb)\,\bx^\bb = F(\bx)\sum_{i=1}^n \frac{a_i}{1-\hat{x}_i}\,.
\]
Now let $t_i\coloneqq\frac{x_i}{1+x_i}$ and $z_i\coloneqq\frac{y_i}{1+y_i}$ for all $i\in[n]$. Write $\bt=(t_1,\ldots,t_n)$ and $\bz=(z_1,\ldots,z_n)$. The sum $\Lambda$ in \eqref{eq:delta-1-genfunc-strictly-balanced} simplifies to
\begin{align*}
    \Lambda &= \sum_{\ba\ge 0} \bt^\ba\sum_{\bb\ge 0}e_1(\ba+\bb)\,V(\ba,\bb)\,\bz^\bb\\
    &= \sum_{\ba\ge 0} \bt^\ba\,F(\bt)\sum_{i=1}^n \frac{a_i}{1-\hat{z}_i}\\
    &= \sum_{j=1}^n \frac{1}{1-\hat{z}_j}\sum_{\ba \ge 0} a_j\,\left(\frac{t_j}{1-\hat{z}_j}\right)^{\!\!a_j}\prod_{i \neq j} \left(\frac{t_i}{1-\hat{z}_i}\right)^{\!\!a_i} \\
    &= \sum_{j=1}^n \frac{1}{1-\hat{z}_j}\cdot\sum_{a \ge 0}a\,\left(\frac{t_j}{1-\hat{z}_j}\right)^{\!\!a}\cdot\prod_{i \neq j}\sum_{a \ge 0}\left(\frac{t_i}{1-\hat{z}_i}\right)^{\!\!a} \\
    &= \sum_{j=1}^n \frac{1}{1-\hat{z}_j} \cdot \frac{t_j}{1-\hat{z}_j}\left(1 - \frac{t_j}{1-\hat{z}_j}\right)^{\!\!-2}\cdot\prod_{i \neq j}\left(1-\frac{t_i}{1-\hat{z}_i}\right)^{\!\!-1} \\
    &= \prod_{i=1}^n \left(1 - \frac{t_i}{1-\hat{z}_i}\right)^{\!\!-1}\cdot\sum_{i=1}^n \frac{t_i}{(1-\hat{z}_i)^2}\left(1 - \frac{t_i}{1-\hat{z}_i}\right)^{\!\!-1}\,.
\end{align*}
Plugging the last value of $\Lambda$ into \eqref{eq:delta-1-genfunc-strictly-balanced} and simplifying yields the desired statement.
\end{proof}

\noindent We now present the following technical lemma, which we use in the proofs of the next statements.

\begin{lemma}\label{lem:claim-3}
Let $c_1,\dots, c_n$ be nonnegative integers. Let $F(\ba) = F(a_1, \dots, a_n) = \sum_{\bm\in\bN^n} f_\bm\,\ba^\bm$. The coefficient of $\by^\bc$ in 
\begin{equation}\label{eq: expr 1}
    \prod_{j=1}^n \frac{1}{(1 + y_j)^2}\cdot F\bigg(\frac{y_1}{1 + y_1},\dots,\frac{y_n}{1 + y_n}\bigg)
\end{equation}
equals the coefficient of $\bz^\bc$ in
\begin{equation}\label{eq: expr 2}
    \prod_{j=1}^n(1 - z_j)^{c_j + 1}F(z_1,\dots,z_n)
\end{equation}
\end{lemma}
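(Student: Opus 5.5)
Both sides are linear in $F$, so it suffices to treat a single monomial $F(\ba)=\ba^\bm$. For a monomial, \eqref{eq: expr 1} and \eqref{eq: expr 2} factor as products over $j\in[n]$ of one-variable expressions, and extracting the coefficient of $\by^\bc$ (resp.\ $\bz^\bc$) factors accordingly. The statement therefore reduces to the one-variable identity
\[
    [y^c]\,\frac{y^m}{(1+y)^{m+2}} \;=\; [z^c]\,(1-z)^{c+1}z^m \qquad\text{for all integers } c,m\ge 0\,.
\]
One caveat: for a general power series $F$, the coefficient of $\by^\bc$ in \eqref{eq: expr 1} only receives contributions from the finitely many $\bm\le\bc$, since $y_j/(1+y_j)$ has order one in $y_j$. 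The same holds on the other side, so linearity is legitimate term by term.

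I will verify the one-variable identity by computing both sides explicitly. On the left,
\[
    [y^c]\,y^m(1+y)^{-(m+2)}=[y^{c-m}](1+y)^{-(m+2)}=(-1)^{c-m}\binom{c+1}{c-m},
\]
using $\binom{-(m+2)}{k}=(-1)^k\binom{m+1+k}{k}$ with $k=c-m$. On the right,
\[
    [z^{c-m}](1-z)^{c+1}=(-1)^{c-m}\binom{c+1}{c-m}.
\]
The two agree, and both vanish when $m>c$.

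Alternatively, one can argue by a residue change of variables. Substituting $z=y/(1+y)$ gives $y=z/(1-z)$, $1+y=1/(1-z)$ and $dy=dz/(1-z)^2$. The coefficient of $y^c$ in $G(y)=(1+y)^{-2}F(y/(1+y))$ is $\operatorname{Res}\,G(y)\,y^{-c-1}dy$. Under the substitution this becomes $\operatorname{Res}\,(1-z)^2F(z)\,z^{-c-1}(1-z)^{c+1}(1-z)^{-2}dz$, which is exactly the $z^c$-coefficient of $(1-z)^{c+1}F(z)$. This argument works directly in several variables, one variable at a time.

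I expect no real obstacle here; the only point needing care is the bookkeeping of the generalized binomial sign. I would present the explicit binomial computation as the main argument and mention the residue substitution as the conceptual reason the identity holds.
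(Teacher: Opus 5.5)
Your proposal is correct and follows essentially the paper's proof: expand $F$ monomial by monomial (only $\bm\le\bc$ contribute), then compare $[y^{c-m}](1+y)^{-(m+2)}=(-1)^{c-m}\binom{c+1}{c-m}$ with $[z^{c-m}](1-z)^{c+1}$ coordinatewise. Your residue substitution $y=z/(1-z)$ is also valid and explains more conceptually why the identity holds, but the paper does not use it and it is not needed.
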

\begin{proof}
Expanding $F$ in \eqref{eq: expr 1} we get 
\begin{equation}\label{eq: expr 3}
    \sum_{\bm\in\bN^n} f_\bm \prod_{j=1}^n y_j^{m_j} (1 + y_j)^{-m_j-2}\,.
\end{equation}
Note that, for $\alpha>0$, the coefficient of $t^i$ in $(1+t)^{-\alpha}$ is $(-1)^i\binom{\alpha+i-1}{i}$. Hence the coefficient of $\by^\bc$ in \eqref{eq: expr 3} is
\begin{equation}\label{eq: expr 4}
    \sum_{\bm \le c} f_\bm \prod_{j=1}^n(-1)^{c_j-m_j}\binom{c_j+1}{c_j-m_j}\,.
\end{equation}
Comparing with the coefficient of $\bz^\bc$ in \eqref{eq: expr 2}, that is
\[
    \sum_{\bm \le c} f_\bm \prod_{j=1}^n (1-z_j)^{c_j+1}\cdot\left(\text{coeff. of $z_j^{c_j-m_j}$}\right)\,,
\]
we obtain again \eqref{eq: expr 4}. This concludes the proof.
\end{proof}

\begin{theorem}\label{thm:delta-1-strictly-balanced}
Let $z_1,\dots,z_n$ be formal variables. Then $\delta_1$ equals the coefficient of $\prod_{i=1}^nz_i^{d_i - 1}$ in 
\[
    \left(C + \sum_{i=1}^n(C - 2d_i + 1)z_i\right) \prod_{i=1}^n\left(1 + \hat{z}_j\right)^{d_i - 1}\,,\quad \hat{z}_i \coloneqq \sum_{j \neq i} z_j\quad\forall\,i\in[n]\,.
\]
\end{theorem}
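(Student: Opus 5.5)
Starting point: Lemma~\ref{lem:delta-1-strictly-balanced}, which identifies $\delta_1$ with the coefficient of $\prod_i (x_iy_i)^{d_i-1}$ in the series \eqref{eq:delta-1-lemma-statement-strictly-balanced}. The plan has two steps: eliminate the $x$-variables, then eliminate the $y$-variables using Lemma~\ref{lem:claim-3}.

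\textbf{Step 1: extract the $x$-coefficients.} The $x$-dependence enters only through the factors $(1-(1+x_i)\hat z_i)^{-1}$ and the prefactor $x_i$ in the sum. Write
\[
\frac{1-\hat z_i}{1-(1+x_i)\hat z_i}=\Bigl(1-\frac{x_i\hat z_i}{1-\hat z_i}\Bigr)^{-1}=\alpha_i .
\]
In the $i$-th summand, the factor $\frac{x_i}{(1-\hat z_i)(1-(1+x_i)\hat z_i)}$ combines with $\alpha_i$. The expression then factors into a product of one-variable series in each $x_j$, with coefficients rational in $\bz$.

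For $j\neq i$, the coefficient of $x_j^{d_j-1}$ in $\alpha_j$ is $\bigl(\frac{\hat z_j}{1-\hat z_j}\bigr)^{d_j-1}$.

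For the distinguished index $i$, the factor is $\frac{x_i}{(1-\hat z_i)^2}\alpha_i^2$. Its coefficient of $x_i^{d_i-1}$ is
\[
\frac{1}{(1-\hat z_i)^2}(d_i-1)\Bigl(\frac{\hat z_i}{1-\hat z_i}\Bigr)^{d_i-2}.
\]
This uses the expansion $\sum_m (m+1)w^m=(1-w)^{-2}$.

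Hence $\delta_1$ becomes the $\by^{\bd-\bone}$-coefficient of
\[
Y\cdot\prod_j\Bigl(\frac{\hat z_j}{1-\hat z_j}\Bigr)^{d_j-1}\cdot\sum_i \frac{d_i-1}{\hat z_i(1-\hat z_i)} .
\]
There is a subtlety when $d_i=1$. Players with $d_i=1$ should be harmless, and I would either exclude them or note that the factor $(d_i-1)$ kills the corresponding term.

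\textbf{Step 2: pass from $\by$ to $\bz$.} This expression has exactly the form $Y\cdot F(\bz)$ with $z_i=y_i/(1+y_i)$. Lemma~\ref{lem:claim-3} with $c_j=d_j-1$ therefore converts $\delta_1$ into the coefficient of $\bz^{\bd-\bone}$ in
\[
\prod_j (1-z_j)^{d_j}\,\prod_j\Bigl(\frac{\hat z_j}{1-\hat z_j}\Bigr)^{d_j-1}\sum_i\frac{d_i-1}{\hat z_i(1-\hat z_i)} .
\]
Because of the powers $\hat z_j^{d_j-1}$, this is a residue-type coefficient extraction of a rational function.

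\textbf{Step 3: match with the target (the main obstacle).} The target is the coefficient in $\bigl(C+\sum_i(C-2d_i+1)z_i\bigr)\prod_i(1+\hat z_i)^{d_i-1}$. The form of the current expression is quite different, so I would use a multivariate Lagrange-inversion or change-of-variables argument. The natural substitution is the birational map $w_i=z_i/(1-\hat z_i)$ or similar, taking the coefficient of $\prod z_i^{d_i-1}$ to the coefficient of $\prod w_i^{d_i-1}$. This requires multiplying by the Jacobian determinant (the Good--Lagrange formula).

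Concretely, I would rewrite the coefficient as an iterated residue
\[
\operatorname{Res}\,\frac{G(\bz)}{\prod_i z_i^{d_i}}\,d\bz
\]
and look for new variables $u_i$ with $\prod_j \hat z_j^{d_j-1}/\prod z_j^{d_j}$ becoming $\prod_i(1+\hat u_i)^{d_i-1}/\prod u_i^{d_i}$, up to the $(1-z)$ and $(1-\hat z)$ factors. I expect the Jacobian of such a substitution to be a linear polynomial, which is precisely what should produce the prefactor $C+\sum_i(C-2d_i+1)z_i$. The constant term $C=\sum(d_i-1)$ is consistent with the sum $\sum_i (d_i-1)/(\hat z_i(1-\hat z_i))$.

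If a clean substitution is elusive, the fallback is to verify the identity at the level of generating functions in all $\bd$ simultaneously. That means multiplying both sides by $\prod_i s_i^{d_i-1}$ and summing over $\bd$, which turns both into explicit rational functions to compare. Finding the right substitution (or the right summation over $\bd$) is the hard step; the bookkeeping of Steps 1 and 2 is routine.
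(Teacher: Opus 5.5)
Your Steps 1 and 2 are correct and coincide with the paper's proof. After extracting the $x$-coefficients and applying Lemma~\ref{lem:claim-3}, $\delta_1$ is the coefficient of $\bz^{\bd-\bone}$ in $\sum_i Q_iR_i$, where $Q_i\coloneqq(d_i-1)\hat z_i^{d_i-2}\prod_{k\neq i}\hat z_k^{d_k-1}$ and $R_i\coloneqq\prod_j(1-z_j)^{d_j}(1-\hat z_i)^{-d_i}\prod_{k\neq i}(1-\hat z_k)^{-(d_k-1)}$. Step 3, however, is where the theorem is actually proved, and it is not carried out. You leave it as the ``main obstacle'' and only speculate about a Lagrange-inversion substitution whose Jacobian you ``expect'' to be linear, or about a summation over all $\bd$. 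As written, the proposal does not establish the identity.

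The missing idea is a simple degree count; no change of variables or residue calculus is needed. The monomial $\bz^{\bd-\bone}$ has total degree $C$. Each $Q_i$ is homogeneous of degree $C-1$, and each $R_i$ is a power series with constant term $1$. Hence only the linear part of $R_i$ contributes. Using $\sum_k(d_k-1)\hat z_k=\sum_j(C-d_j+1)z_j$, modulo $\langle z_1,\dots,z_n\rangle^2$ one gets
\[
R_i\equiv 1+\hat z_i+\sum_k(d_k-1)\hat z_k-\sum_j d_jz_j = 1+\hat z_i+\sum_j(C-2d_j+1)z_j .
\]
Since $\sum_iQ_i\hat z_i=C\prod_k\hat z_k^{d_k-1}$, it follows that $\delta_1$ is the $\bz^{\bd-\bone}$-coefficient of $C\prod_k\hat z_k^{d_k-1}+\bigl(\sum_j(C-2d_j+1)z_j\bigr)\sum_iQ_i$. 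Finally, $\prod_k\hat z_k^{d_k-1}$ and $\sum_iQ_i$ are exactly the degree-$C$ and degree-$(C-1)$ homogeneous parts of $\prod_i(1+\hat z_i)^{d_i-1}$. By the same degree count, the last expression has the same $\bz^{\bd-\bone}$-coefficient as $\bigl(C+\sum_i(C-2d_i+1)z_i\bigr)\prod_i(1+\hat z_i)^{d_i-1}$. So the target is not a genuinely different form of your rational function. It is just its first-order truncation, packaged so that the relevant homogeneous pieces appear. Your remark that the constant $C$ should come from $\sum_i(d_i-1)/\hat z_i$ was pointing exactly at this.
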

\begin{proof}
We first eliminate the variables $x_1,\dots,x_n$ in Lemma \ref{lem:delta-1-strictly-balanced}. Isolating the $i$-th summand in Lemma \ref{lem:delta-1-strictly-balanced} and cancelling $1-\hat{z}_i$ gives
\[
    \left(\prod_{k \neq i} \frac{1 - \hat{z}_k}{(1 + y_k)^2(1 - (1 + x_k)\hat{z}_k)}\right) \frac{x_i}{(1 + y_i)^2(1 - (1 + x_i)\hat{z}_i)^2}\,.
\]
We get that, for $c_i \ge 1$, the coefficient of $x_i^{c_i}$ in $\frac{x_i}{(1 - (1 + x_i)\hat{z}_i)^2}$ is $\frac{c_i \hat{z}_i^{c_i - 1}}{(1 - \hat{z}_i)^{c_i + 1}}$.
Similarly, for $k \neq i$, the coefficient of $x_k^{c_k}$ in $\frac{1 - \hat{z}_k}{1 - (1 + x_k)\hat{z}_k}$ is $\frac{\hat{z}_k^{c_k}}{(1 - \hat{z}_k)^{c_k}}$.
Hence the desired degree $\delta_1$ is the coefficient of $\by^{\bd-\bone}=y_1^{d_1 - 1} \cdots y_n^{d_n - 1}$ in
\begin{equation}\label{eq:3parts}
    \prod_{j=1}^n \frac{1}{(1 + y_j)^2}\bigg(\sum_{i=1}^n \frac{(d_i - 1) \hat{z}_i^{d_i - 2}}{(1 - \hat{z}_i)^{d_i}}\bigg)\bigg( \prod_{k\neq i} \frac{\hat{z}_k^{d_k -1}}{(1 - \hat{z}_k)^{d_k - 1}}\bigg)\,.
\end{equation}
By Lemma \ref{lem:claim-3}, the coefficient of $\by^{\bd-\bone}$ in 
\[
    \prod_{j=1}^n \frac{1}{(1 + y_j)^2}\cdot F\left(\frac{y_1}{1 + y_1},\dots,\frac{y_n}{1 + y_n}\right)
\]
equals the coefficient of $\bz^{\bd-\bone}$ in
\[
    \prod_{j=1}^n(1 - z_j)^{d_j}F(z_1,\dots,z_n)
\]
where $F(\ba) = F(a_1, \dots, a_n) = \sum_{\bm \in \bN^n} f_\bm\,\ba^\bm$. 
We now rewrite the expression in \eqref{eq:3parts} in terms of the $z_i$'s, but first we need to set up some notation. Define 
\[
    Q_i \coloneqq (d_i - 1) \hat{z}_i^{d_i - 2} \prod_{k \neq i} \hat{z}_k^{d_k - 1}\,,\quad
    R_i \coloneqq \frac{\prod_{j=1}^n(1 - z_j)^{d_j}}{(1 - \hat{z}_i)^{d_i} \prod_{k\neq i} (1 - \hat{z}_k^{d_k - 1})}\,.
\]
Rewriting, we see that $\delta_1$ is the $\bz^{\bd-\bone}$ coefficient in $\sum_{i=1}^n Q_i R_i$. We now give first-order approximations of the numerator and denominator of $R_i$. For the numerator, we get 
\[
    \prod_{j=1}^n (1 - z_j)^{d_j} = 1 - \sum_{j=1}^n (d_j)z_j\mod{\langle z_1,\ldots,z_n\rangle^2}
\]
and for the denominator we get
\[
    \frac{1}{(1 - \hat{z}_i)^{d_j}}\prod_{k \neq i} \frac{1}{(1 - \hat{z}_k)^{d_k - 1}} = 1 + \hat{z}_i + \sum_{k=1}^n (d_k - 1)\hat{z}_k\mod{\langle z_1,\ldots,z_n\rangle^2}\,.
\]
Hence the first-order approximation of $R_i$ is
\[
    R_i\mod{\langle z_1,\ldots,z_n\rangle^2} = 1 + \sum_{k=1}^n (d_k - 1) \hat{z}_k - \sum_{j=1}^n d_jz_j + \hat{z}_i = 1 + \sum_{j=1}^n (C - 2d_j - 3)z_j + \hat{z}_i\,.
\]
Since $\deg(Q_i) = C - 1$, the constant term $1$ in $R_i$ does not contribute to the $\bz^{\bd-\bone}$ term. Thus $\delta_1$ is the coefficient of $\bz^{\bd-\bone}$ in 
\begin{align*}
    \sum_{i=1}^n Q_i\left[\sum_{j=1}^n (C - 2d_j - 3)z_j + \hat{z}_i\right]
    &= \sum_{i=1}^n (d_i - 1) \hat{z}_i^{d_i - 2} \prod_{k \neq i} \hat{z}_k^{d_k - 1}\left[\sum_{j=1}^n (C - 2d_j - 3)z_j + \hat{z}_i\right]\\
    &\hspace{-1.2cm}= \bigg(\sum_{i=1}^n (d_i - 1) \hat{z}_i^{d_i - 2} \prod_{k \neq i}\hat{z}_k^{d_k - 1}\bigg)\bigg( \sum_{j=1}^n (C - 2d_j - 3)z_j \bigg) + C\prod_{i=1}^n \hat{z}_i^{d_i - 1}\,.
\end{align*}
Finally, observe that the degree-$C$ homogeneous part of $\prod_{i=1}^n (1 + \hat{z}_i)^{d_i - 1}$ is $\prod_{i=1}^n \hat{z}_i^{d_i - 1}$ and the degree-$(C-1)$ part is $\sum_{i=1}^n Q_i$. We thus obtain the desired formula.
\end{proof}

\begin{corollary}\label{cor:discriminant-strictly-balanced}
Let $\bd$ be a strictly balanced game format. The $\ka$-discriminant $\nabla_{\!\ka}$ is a hypersurface. Its degree is the coefficient of $\prod_{i=1}^n z_i^{d_i-1}$ in 
\begin{equation}\label{eq:strictly-balanced}
    \left(C + \sum_{i=1}^n(C - 2d_i + 1)z_i\right) \prod_{i=1}^n(1 + \hat{z}_i)^{d_i-1}\,.
\end{equation}
\end{corollary}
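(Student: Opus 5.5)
By Theorem~\ref{thm:polytope}, the codimension of $\nabla_{\!\ka}$ is the least index $r$ with $\delta_r\neq 0$, and then $\deg(\nabla_{\!\ka})=\delta_r$. So it is enough to show that $\delta_1\neq 0$ when $\bd$ is strictly balanced. Once that holds, $\nabla_{\!\ka}$ is a hypersurface and its degree is $\delta_1$. Theorem~\ref{thm:delta-1-strictly-balanced} already identifies $\delta_1$ with the coefficient of $\prod_i z_i^{d_i-1}$ in \eqref{eq:strictly-balanced}, which gives the degree formula. The only remaining task is to prove that this coefficient is nonzero, and in fact positive.

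For positivity I would split \eqref{eq:strictly-balanced} into its two pieces.

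The first piece is $C\prod_i \hat z_i^{d_i-1}$, the top-degree part. Its coefficient of $\bz^{\bd-\bone}$ is $C$ times the number of ways to assign each of the $d_i-1$ factors $\hat z_i$ a variable $z_j$ with $j\neq i$, so that each $z_j$ is used exactly $d_j-1$ times. This count equals the mixed-volume number of totally mixed equilibria of McKelvey--McLennan. For balanced formats it is strictly positive; this is the standard "derangement" count, positive exactly under the balancedness inequality, and one can build an explicit assignment by Hall's theorem.

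The second piece is $\sum_i (C-2d_i+1)z_i\cdot\sum_k Q_k$, the degree-$(C-1)$ contribution. Here strict balancedness is what makes the coefficients $C-2d_i+1$ behave. Writing $C-2d_i+1=\sum_{j\neq i}(d_j-1)-(d_i-1)$, this is $\geq 1$ for $i=n$ when the format is strictly balanced. For $i<n$ we have $d_i\le d_n$, which again gives $\sum_{j\neq i}(d_j-1)\ge d_n-1\ge d_i-1$. So every coefficient $C-2d_i+1$ is nonnegative, and every monomial coefficient in $Q_k$ is nonnegative. Hence the whole coefficient is at least the contribution of the first piece, which is strictly positive. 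This gives $\delta_1>0$.

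The main obstacle is the positivity of the derangement-type coefficient of $\prod_i\hat z_i^{d_i-1}$. I would prove it by exhibiting a bipartite matching between the $C$ slots (the $d_i-1$ copies of "row $i$") and the $C$ targets ($d_j-1$ copies of $z_j$) that avoids $i=j$. Hall's condition reduces to $d_i-1\le\sum_{j\neq i}(d_j-1)$ for every $i$, and this holds because the format is balanced and $d_n$ is the largest entry. Alternatively, I would simply cite \cite{mckelvey1997maximal} for the fact that a generic balanced game has a nonzero number of totally mixed equilibria.
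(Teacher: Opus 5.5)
Your argument is the same as the paper's. You identify $\delta_1$ via Theorem~\ref{thm:delta-1-strictly-balanced}, note that \eqref{eq:strictly-balanced} has nonnegative coefficients, and get strict positivity from the top-degree term $C\prod_i\hat z_i^{d_i-1}$. The only difference is that the paper cites \cite[Theorem 2.8]{abo2026vector} for positivity of the $\bz^{\bd-\bone}$ coefficient of $\prod_i\hat z_i^{d_i-1}$, whereas your Hall's-theorem matching proves it directly. That matching argument is correct: Hall's condition reduces to $d_i-1\le\sum_{j\neq i}(d_j-1)$.

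There is an off-by-one slip in your nonnegativity step. In fact $C-2d_i+1=\sum_{j\neq i}(d_j-1)-d_i$, not $\sum_{j\neq i}(d_j-1)-(d_i-1)$. Two consequences follow.

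\begin{itemize}
\item Your claim $C-2d_n+1\ge 1$ is false: for $\bd=(2,2,2)$ it equals $0$. This is harmless, since only nonnegativity is needed.
\item For $i<n$, your chain $\sum_{j\neq i}(d_j-1)\ge d_n-1\ge d_i-1$ only gives $C-2d_i+1\ge -1$ once the identity is corrected. This does break the step as written.
\end{itemize}

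The fix is one line. Strict balancedness means $\sum_{j<n}(d_j-1)\ge d_n$, so $C\ge 2d_n-1$. Hence $C-2d_i+1\ge 2(d_n-d_i)\ge 0$ for every $i$, which is all the positivity argument requires.
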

\begin{proof}
Since $C - 2d_i + 1\ge 0$ for every $i \in [n]$, the expression \eqref{eq:strictly-balanced} cannot have negative coefficients. It suffices to identify one positive contribution to the coefficient of $\prod_{i=1}^n z_i^{d_i-1}$. Considering the product $\prod_{i=1}^n(1 + \hat{z}_i)^{d_i - 1}$, we can see that we must choose a linear term from each factor, so it suffices to show that there exists a positive contribution to the coefficient of $\prod_{i=1}^n z_i^{d_i-1}$  from $\prod_{i=1}^n\hat{z}_i^{d_i-1}$. This is done in \cite[Theorem 2.8]{abo2026vector}.
\end{proof}

Although Corollary \ref{cor:discriminant-strictly-balanced} does not give a closed-form degree formula for $\nabla_{\!\ka}$, special values of $\bd$ allow a closed form from Corollary \ref{cor:discriminant-strictly-balanced}. 
The following result sharpens the degree bound in \cite[Example~5.6]{pratt2026multigraded}.

\begin{lemma}\label{lem:binary-derangement}
For a binary game, i.e.\, if $d_i = 2$ for all $i \in [n]$,
\begin{equation}\label{eq: deg A-discriminant binary}
    \deg(\nabla_{\!\ka}) = n\,[(n - 2)\,\cdot\,!n + (n - 3)\,\cdot\,!(n - 1)]\,,
\end{equation}
where $!n$ is the number of derangements of $[n]$.
\end{lemma}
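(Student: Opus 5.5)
Since $n\ge 3$, the binary format $\bd=(2,\dots,2)$ is strictly balanced ($d_n-1=1<n-1$). I would therefore specialize Corollary~\ref{cor:discriminant-strictly-balanced} with $d_i=2$ and $C=n$. Then $\deg(\nabla_{\!\ka})$ is the coefficient of $z_1\cdots z_n$ in
\[
    \Big(n+(n-3)\sum_{i=1}^n z_i\Big)\prod_{k=1}^n(1+\hat z_k)\,.
\]
The plan is to split this coefficient into a constant-term contribution and a linear-term contribution, and to identify each with a derangement count.

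\emph{Constant term.} I need the coefficient of $z_1\cdots z_n$ in $\prod_k(1+\hat z_k)$. By homogeneity this equals the coefficient in $\prod_k\hat z_k$. Expanding, each factor $\hat z_k$ contributes some $z_{f(k)}$ with $f(k)\neq k$. The monomial $z_1\cdots z_n$ arises exactly when $f$ is a bijection, that is, a derangement of $[n]$. This gives $!n$ (the permanent of $J-I$), so the contribution is $n\cdot !n$.

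\emph{Linear terms.} For each $i$ I need the coefficient of $\prod_{j\ne i}z_j$ in $\prod_k(1+\hat z_k)$. This is a degree-$(n-1)$ monomial, so exactly one factor $k$ contributes $1$. The remaining factors define a bijection $f\colon[n]\setminus\{k\}\to[n]\setminus\{i\}$ with $f(m)\neq m$. I split into two cases.
\begin{itemize}
\item If $k=i$, these bijections are the derangements of an $(n-1)$-set, giving $!(n-1)$.
\item If $k\neq i$, I would show the count is $!(n-1)+!(n-2)$. To see this, look at $f(i)$: either $f(i)=k$, which leaves a derangement of $[n]\setminus\{i,k\}$, giving $!(n-2)$; or $f(i)\neq k$, where identifying $i$ with $k$ gives a derangement of an $(n-1)$-set, giving $!(n-1)$. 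A sanity check at $n=2$ gives $1$.
\end{itemize}
Summing over $k$ gives, for each $i$, the value $!(n-1)+(n-1)(!(n-1)+!(n-2))$. By the recurrence $!n=(n-1)(!(n-1)+!(n-2))$, this equals $!(n-1)+!n$. Summing over $i$ and multiplying by $n-3$ gives the linear contribution $(n-3)\,n\,(!n+!(n-1))$.

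\emph{Conclusion.} Adding the two contributions,
\[
    n\cdot !n+(n-3)\,n\,(!n+!(n-1))=n\big[(n-2)\cdot !n+(n-3)\cdot !(n-1)\big],
\]
which is \eqref{eq: deg A-discriminant binary}. The only delicate step is the case $k\ne i$: the count of fixed-point-free bijections between two different $(n-1)$-subsets must be handled carefully. I would verify it by the conditioning argument on $f(i)$ above, and cross-check for small $n$ (for instance $n=4$ should give $80$, matching Example~\ref{ex: 2222}).
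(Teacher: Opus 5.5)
Your proposal is correct and follows the paper's route: you specialize Corollary~\ref{cor:discriminant-strictly-balanced} to $d_i=2$, $C=n$, and count the constant and linear contributions as derangements. The only difference is in the $k\neq i$ linear terms: instead of conditioning on $f(i)$ and invoking $!n=(n-1)(!(n-1)+!(n-2))$, the paper notes that each fixed-point-free bijection $[n]\setminus\{k\}\to[n]\setminus\{i\}$ extends uniquely, via $k\mapsto i$, to a derangement of $[n]$, so these terms give $n\cdot !n$ directly and the $k=i$ terms give $n\cdot !(n-1)$.
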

\begin{proof}
We substitute $d_i = 2 $ for all $i \in [n]$ and $C = n$ into the formula in Corollary \ref{cor:discriminant-strictly-balanced} to get
\[
    \left(n + \sum_{i=1}^n (n - 3)z_i\right)\prod_{i=1}^n (1 + \hat{z}_i) = n\prod_{i=1}^n (1 + \hat{z}_i) + (n - 3)\cdot\sum_{i=1}^n z_i\cdot\prod_{i=1}^n (1 + \hat{z}_i).
\]
There is a bijective map $\phi$ between derangements of $n$ and positive contributions to the $\prod_{i=1}^n z_i$ coefficient of $\prod_{i=1}^n (1 + \hat{z}_i)$. Thus the term $n\prod_{i=1}^n (1 + \hat{z}_i)$ contributes $n\,\cdot\,!n$ to $\deg(\nabla_{\!\ka})$. Now consider $\sum_{i=1}^n z_i\cdot\prod_{i=1}^n (1 + \hat{z}_i)$. Each positive contribution to the coefficient of $\prod_{i=1}^n z_i$ selects one linear term from $\sum_{i=1}^n z_i$ and $n - 1$ linear terms from $\prod_{i=1}^n (1 + \hat{z}_i)$.
We can choose $n$ linear terms from $\prod_{i=1}^n (1 + \hat{z}_i)$ and select one such linear term to replace with the corresponding linear term of $\sum_{i=1}^n z_i$, and there are $n\,\cdot\,!n$ ways to do this, or the chosen linear term from $\sum_{i=1}^n z_i$ cannot be replaced with a corresponding linear term from the $\prod_{i=1}^n (1 + \hat{z}_i)$, and there are $n\,\cdot\,!(n - 1)$ such cases.
Accounting for the factor of $n-3$, we see that the total contribution of the term $(n-3)\cdot\sum_{i=1}^n z_i\cdot\prod_{i=1}^n (1 + \hat{z}_i)$ to the degree of $\nabla_{\!\ka}$ is $(n-3)[n\,\cdot\,!n) + n\,\cdot\,!(n - 1)]$. Adding these two contributions to the degree of $\nabla_{\!\ka}$ gives
\[
    n\,\cdot\,!n + (n-3)[n\,\cdot\,!n + n\,\cdot\,!(n-1)] = n\,[!n + (n-3)\,\cdot\,!n + (n-3)\,\cdot\,!(n-1)]\,,
\]
which coincides with \eqref{eq: deg A-discriminant binary}.
\end{proof}

\subsection{The degree of the \texorpdfstring{$\ka$}{A}-discriminant for on-boundary game formats}\label{subsec: degree A-discriminant on-boundary}
Throughout this section, we assume  $d_n - 1 = \sum_{i=1}^{n-1}(d_i - 1)$. 
We now compute $\deg(\nabla_{\!\ka})$.

\begin{lemma}\label{lem:delta-1-boundary}
If $d_n - 1 = \sum_{i=1}^{n-1}(d_i - 1)$, then  $\delta_1 = 0$.
\end{lemma}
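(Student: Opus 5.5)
We rerun the computation of $\delta_1$ from the strictly balanced case and track where the on-boundary hypothesis changes the input. By Lemma~\ref{lem:prismatic-eu}, $\eu(F)=\min(d_i,\sum_{j\neq i}(d_j-1))$ for $i$-prismatic $F$. For $i<n$ this is still $d_i$, since $d_i-1\le d_n-1<\sum_{j\ne i}(d_j-1)$ when $n\ge3$. For $i=n$ it is $d_n-1$ rather than $d_n$. Hence
\[
    \delta_1^{\text{bdry}} = \hat{\delta}_1(\bd-\bone,\bd) - \sum_{F\ n\text{-prismatic}} (-1)^{\codim(F)}(\dim(F)+1)\Vol_\sbZ(F)\,,
\]
where $\hat{\delta}_1(\bd-\bone,\bd)$ is the quantity computed in Lemma~\ref{lem:delta-1-strictly-balanced}. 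That lemma and Theorem~\ref{thm:delta-1-strictly-balanced} use only the face-string combinatorics, the volume generating function (Lemma~\ref{lem:vol-gf}) and the Euler obstructions $d_i$ on prisms, not the strict inequality. So the first term is still the coefficient of $\prod z_i^{d_i-1}$ in $(C+\sum_i(C-2d_i+1)z_i)\prod_i(1+\hat z_i)^{d_i-1}$. The plan is to show that this coefficient equals the correction sum.

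\textbf{Step 1 (correction term).} Every $n$-prismatic face $D_1\cdots D_{n-1}\boxed{C_n}$ is a product of simplices of dimensions $|D_j|-1$ and $|C_n|-1$. Its normalized volume is therefore the multinomial $\binom{\dim F}{|C_n|-1,|D_1|-1,\dots,|D_{n-1}|-1}$. Its codimension in $P$ is $\dim P-\dim F$, where $\dim P=-1+\sum_i(d_i-1+d_i)$. Summing over subsets with signs gives a product of alternating binomial sums. I would evaluate it through the generating function $\sum \Vol\cdot \bx^{\bb}=1/(1-\sum x)$ combined with the $(-1)^{\cdot}$ and $(1+y)^{-1}$ substitutions of Lemma~\ref{lem:delta-1-strictly-balanced}. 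This should produce the coefficient of $\prod_{j<n} y_j^{d_j-1}\cdot x_n^{d_n-1}$ in a simple rational function. The expected outcome is a closed multinomial-type expression, essentially $\pm$ the number of ways of distributing $d_n-1$ among the $d_j-1$, times an affine factor.

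\textbf{Step 2 (main term on the boundary).} In $\prod_i(1+\hat z_i)^{d_i-1}$, the factor $(1+\hat z_n)^{d_n-1}$ only involves $z_1,\dots,z_{n-1}$. Reaching total $z_1,\dots,z_{n-1}$-degree $\sum_{j<n}(d_j-1)=d_n-1$ then forces strong degree constraints. To collect the needed $z_n^{d_n-1}$, the factors $(1+\hat z_i)^{d_i-1}$ for $i<n$ must contribute essentially all of their $d_i-1$ linear terms as $z_n$. This leaves $(1+\hat z_n)^{d_n-1}$ to produce $\prod_{j<n}z_j^{d_j-1}$ exactly, or with a single unit shifted when the linear prefactor is used. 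The coefficient therefore collapses to a short sum of multinomials $\binom{d_n-1}{d_1-1,\dots,d_{n-1}-1}$ times explicit linear factors in the $d_i$. These are extracted by tracking which single factor is replaced by the prefactor term $C$ or $(C-2d_i+1)z_i$.

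\textbf{Step 3 (cancellation).} Compare the two closed forms and check that they agree, so that $\delta_1=0$. I expect the main obstacle to be Step 1: the signs and the exact normalization of the correction sum. In particular one must handle whether prism faces are counted via prism strings or prism representatives (Remark~\ref{rmk: general-caveat}), and the boundary terms with $|D_j|=1$. A fallback I would use as a check is to verify the identity on small on-boundary formats such as $(2,2,3)$ and $(2,3,4)$ before finalizing.
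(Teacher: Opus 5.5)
Your decomposition is the paper's. On the boundary only the $n$-prismatic faces change, since their Euler obstruction drops from $d_n$ to $d_n-1$. So $\delta_1$ is the strictly balanced expression minus $\sum_{F\ n\text{-prismatic}}(-1)^{\codim F}(\dim F+1)\Vol_\sbZ(F)$. Your Step 1 series is also the paper's: the same generating function specialized at $x_j=0$ ($j<n$), $y_n=0$.

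The finish differs. The paper never evaluates either side. It works with the $y$-series from which the $z$-formula is derived, and argues that on the boundary $y_n^{d_n-1}$ can only be reached in the $i=n$ summand, with every $\hat z_j$ ($j<n$) contributing its $y_n$-part. That reproduces the correction series coefficient by coefficient. You instead evaluate both sides in closed form. This route works and yields the common value explicitly, but Step 1 requires an honest evaluation of an alternating multinomial sum, and as written the plan contains a sign error.

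\textbf{The sign error.} You have $\dim P=-1+\sum_i(2d_i-1)$, but the face-string formula gives $\dim P=-1+\sum_i\bigl((d_i-1)+(d_i-1)\bigr)=2C-1$. With your value every $(-1)^{\codim F}$ in the correction picks up a factor $(-1)^n$. The main term you import from Theorem~\ref{thm:delta-1-strictly-balanced} carries the correct sign, so for odd $n$ Step 3 fails. For $(2,2,3)$ both pieces equal $4$, and you would get $4-(-4)=8$.

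\textbf{The shape of the answer.} It is not ``multinomial times an affine factor''. Put $m_j=d_j-1$ ($j<n$), $m=d_n-1$, $M=\binom{m}{m_1,\dots,m_{n-1}}$ and $e_2=\sum_{1\le i<j\le n-1}m_im_j$.

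In Step 2 the relevant coefficients of $\prod_i(1+\hat z_i)^{d_i-1}$ are:
\begin{itemize}
\item $M$, paired with the constant $C$;
\item $m_iM$, paired with $z_i$ for $i<n$;
\item $(m+2e_2)M$, paired with $z_n$.
\end{itemize}
The $2e_2M$ comes from configurations where one slot of some $(1+\hat z_i)^{d_i-1}$, $i<n$, gives $z_k$ ($k<n$, $k\neq i$) instead of $z_n$, and a constant is taken in $(1+\hat z_n)^{d_n-1}$. Your ``essentially all $z_n$'' description would miss these. With $C=2m$ the main term is $M\bigl(2m-(m+2e_2)+\sum_{i<n}(2m-2m_i-1)m_i\bigr)=2e_2M$.

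For Step 1, use $(-1)^s(s+1)!=\int_0^\infty(-t)^s\,te^{-t}\,dt$ and the Laguerre identity $\sum_{p\ge0}\binom{k}{p+1}\frac{(-t)^p}{p!}=L^{(1)}_{k-1}(t)$. Then the correction equals $-\int_0^\infty L^{(1)}_{m-1}(t)\prod_{j<n}L^{(1)}_{m_j}(t)\,te^{-t}\,dt$. The product has degree $m$, so by orthogonality only its $L^{(1)}_{m-1}$-component survives. That component is read off from the top two coefficients of $L^{(1)}_k=\frac{(-1)^k}{k!}\bigl(t^k-k(k+1)t^{k-1}+\cdots\bigr)$, and again gives $2e_2M$.

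Hence, once $\dim P$ is corrected, both sides equal $2\,e_2(\bd-\bone)\binom{d_n-1}{d_1-1,\dots,d_{n-1}-1}$ and $\delta_1=0$.
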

\begin{proof}
In the on-boundary case, the expression for $\delta_1$ is obtained by subtracting
\[
\resizebox{\textwidth}{!}{
    $\begin{aligned}
    \sum_{\substack{a_n \ge 0 \\ b_i\ge 0\\ i\in[n-1]}} \binom{d_n - 1}{a_n} \prod_{i=1}^{n-1}\binom{d_i}{b_i + 1} (-1)^{d_n - 1 - a_n + \sum_{i=1}^{n-1}(d_i - b_i)}\left(a_n + \sum_{i=1}^{n-1} b_i\right)\Vol_\sbZ\left(0\cdots 0\,\boxed{a_n}, b_1\cdots b_{n-1}\,0\right)
    \end{aligned}$
    }
\]
from \eqref{eq:delta-1-genfunc-strictly-balanced}.
An argument very similar to the proof of Lemma \ref{lem:delta-1-strictly-balanced} shows that $\delta_1$ is the coefficient of $\prod_{i=1}^n(x_iy_i)^{d_i - 1}$ in \eqref{eq:delta-1-lemma-statement-strictly-balanced} minus the coefficient of $x_n^{d_n - 1} \prod_{j=1}^{n-1}y_j^{d_j - 1}$ in \eqref{eq:delta-1-lemma-statement-strictly-balanced} evaluated at $x_j = 0$ for $j \in [n-1]$ and $y_n = 0$. This evaluation gives
\[
    \frac{x_n}{(1 - (1 + x_n)\hat{z}_n)^2 } \prod_{j=1}^{n-1} \frac{1}{(1 + y_j)^2}\,.
\]
This is the same as the coefficient of $\prod_{j=1}^{n-1} y_j^{d_j - 1}$ in 
\[
    (d_n - 1) \hat{z}_n^{d_n - 2}(1 - \hat{z}_n)^{-d_n} \prod_{j=1}^{n-1} \frac{1}{(1 + y_j)^2}\,.
\]
By the part of Theorem \ref{thm:delta-1-strictly-balanced} used to derive \eqref{eq:3parts}, it suffices to show that the aforementioned coefficient equals the coefficient of $\prod_{j=1}^ny_j^{d_j - 1}$ in 
\[
    \prod_{j=1}^n \frac{\hat{z}_j^{d_j - 2}}{(1 + y_j)^2(1 - \hat{z}_j)^{d_j - 2}}\cdot\sum_{j=1}^n \frac{d_j - 1}{\hat{z}_j(1 - \hat{z}_j)}\,.
\]
Because of the on-boundary condition, monomials containing $y_n^{d_n-1}$ can only appear in the term corresponding to $j = n$, namely
\[
    (d_n - 1) \hat{z}_n^{d_n - 2}(1 - \hat{z}_n)^{-d_n} \prod_{j=1}^{n-1} \frac{\hat{z}_j^{d_j - 1}}{(1+ y_j)^2(1 - \hat{z}_j)^{d_j - 1}}\,.
\]
Moreover, to reach a sufficiently high power of $y_n$, we must choose only the $y_n$ terms from each $\hat{z}_j^{d_j - 1}$ in the product. So we have shown the equality of coefficients mentioned in the previous paragraph, and can conclude $\delta_1 = 0$.
\end{proof}

\begin{lemma}\label{lem:delta-2-boundary}
Let $x_1,\dots,x_n$ and $y_1,\dots,y_n$ be formal variables. Then $\delta_2$ equals the coefficient of $(x_1y_1)^{d_1 - 1} \cdots (x_ny_n)^{d_n - 1}$ in the formal power series
\begin{equation}\label{eq: expr delta2}
	Y\left[\left(e_2(\beta)+e_1(\gamma)-2e_1(\beta)\right)\prod_{i=1}^n\alpha_i+2\sum_{i=1}^nY_i\alpha_i(2\beta_i-\gamma_i)\right]\,.
\end{equation}
\end{lemma}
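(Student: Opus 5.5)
The proof will follow the template of Lemma~\ref{lem:delta-1-strictly-balanced}, with two new features. The weight $\dim(F)+1$ is replaced by $\binom{\dim(F)-1}{2}-3$. The on-boundary value of the Euler obstruction of $n$-prismatic faces also enters.

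First I write $\delta_2$ as a sum over face-string formats. Set $V(\ba,\bb)=\Vol_\sbZ(\boxed{a_1}\cdots\boxed{a_n},b_1\cdots b_n)$ and $m=e_1(\ba+\bb)=\dim(F)+1$. By Lemmas~\ref{lem:nonprismatic-eu} and~\ref{lem:prismatic-eu}, every face contributes with Euler obstruction $1$, except the $i$-prismatic faces. These I count through prism representatives (Remark~\ref{rmk: general-caveat}). Since prism strings and prism representatives differ by a factor $d_i$, and $\eu=d_i$ in the generic case, the $d_i$-to-one correction cancels exactly as it did in $\delta_1$.

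In the on-boundary case, $\eu(F)=\sum_{j<n}(d_j-1)=d_n-1$ for $n$-prismatic $F$, instead of $d_n$. This produces a correction term supported on $n$-prismatic formats, i.e.\ $a_j=0$ for $j<n$ and $b_n=0$. I keep it as a separate sum, as in the proof of Lemma~\ref{lem:delta-1-boundary}. The resulting formula is
\[
  \delta_2=\hat\delta_2(\bd-\bone,\bd)-(\text{correction}),\qquad
  \hat\delta_2(\bc,\bd')=\sum_{\ba,\bb}\prod_i\binom{c_i}{a_i}\binom{d'_i}{b_i+1}(-1)^{e_1(\bc+\bd'-\ba-\bb)}\Big(\tbinom{m-2}{2}-3\Big)V(\ba,\bb).
\]

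Next I pass to generating functions exactly as in \eqref{eq:delta-1-genfunc-strictly-balanced}. The binomial sums produce the prefactor $Y\prod_i y_i/(1+x_i)$, and the variables become $t_i=x_i/(1+x_i)$ and $z_i=y_i/(1+y_i)$. I then expand $\binom{m-2}{2}-3$ as a quadratic in $m=e_1(\ba)+e_1(\bb)$, so I need the first and second moments
\[
  \sum_{\bb}e_1(\bb)^kV(\ba,\bb)\bz^\bb,\qquad k=1,2.
\]
Both come from Lemma~\ref{lem:vol-gf} by applying the Euler operator $\sum_j z_j\partial_{z_j}$ to $F(\bz)=\prod_i(1-\hat z_i)^{-a_i}$. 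The first application gives $F\sum_i a_i\hat z_i/(1-\hat z_i)$, as before. The second adds terms of the form $a_i^2$, $a_ia_j$ and $a_i$ with denominators $(1-\hat z_i)^2$. Summing over $\ba$ with weight $\prod_i(t_i/(1-\hat z_i))^{a_i}$ gives geometric series. Their zeroth derivative gives $\prod_i\alpha_i$, the first derivative gives the $\beta_i$, and the second derivative gives the $\gamma_i$.

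After simplification I expect the products $a_ia_j$ to give $e_2(\beta)$, the squares $a_i^2$ to give $e_1(\gamma)$, and the linear and constant parts of the quadratic to give $-2e_1(\beta)$. Together these produce the first term of \eqref{eq: expr delta2}.

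The correction term is handled separately. Here I substitute $x_j=0$ for $j<n$ and $y_n=0$, as in Lemma~\ref{lem:delta-1-boundary}. I expect this to produce the terms $2\sum_i Y_i\alpha_i(2\beta_i-\gamma_i)$. The factor $Y_i=\prod_{j\ne i}(1-x_jy_i)^{-1}$ records the re-indexing of the single boxed coordinate, and it may need to be symmetrized over $i$ to match the stated form.

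The main obstacle is this last step. I must reconcile the correction sum with the exact expression $2\sum_iY_i\alpha_i(2\beta_i-\gamma_i)$, and also track the signs and the constant $-3$, which interacts with the vertex terms $\binom{-1}{2}=1$. I would first verify the formula numerically for $\bd=(2,2,3)$ before fixing the algebraic manipulation.
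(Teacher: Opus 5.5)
\textbf{Main sum.} Your treatment of the main sum is sound and differs from the paper only cosmetically.

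\begin{itemize}
\item Since $\binom{m-2}{2}-3=\binom{m}{2}-2m$, the weight vanishes at $m=0$, so the non-face formats with $\ba=\bzero$ drop out. It also equals $-2$ on vertices, so the constant $-3$ causes no trouble.
\item The paper does not compute moments separately. It uses $\sum_{r\ge 0}\bigl[\binom{m-2}{r+1}+(-1)^r(r+2)\bigr]t^r=\frac{(1+t)^m-1}{t(1+t)^2}$ and realizes $(1+t)^m$ by the substitutions $t_i\mapsto(1+t)t_i$, $z_i\mapsto(1+t)z_i$.
\item This turns $\prod_i\alpha_i$ into $\prod_i f_i(t)$ with $f_i=\alpha_i(1+\beta_it+\gamma_it^2+\cdots)$. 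So $e_1(\beta)\prod_i\alpha_i$ is the $m$-moment and $(e_2(\beta)+e_1(\gamma))\prod_i\alpha_i$ is the $\binom{m}{2}$-moment.
\item Your attribution to ``$a_ia_j$-terms'' and ``$a_i^2$-terms'' is not how it splits, because each factor also carries the $\bb$-contributions through $\hat z_i$. An Euler-operator computation can still be regrouped to the same answer.
\end{itemize}

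\textbf{The gap: the correction term.} This is the only genuinely new content of the lemma, and you leave it at the level of expectation, with a wrong description of $Y_i$.
\begin{itemize}
\item In the format sum, each $j$-prism string is counted $d_j$ times, once per prism representative. Its Euler obstruction, by Lemma~\ref{lem:prismatic-eu}, is $\min(d_j,\sum_{i\neq j}c_i)$ at $\bc=\bd-\bone$.
\item The paper subtracts the deficit $\max(0,d_j-\sum_{i\ne j}c_i)$ for every $j$. Summed over $d_j$ and over $c_i$ ($i\neq j$) with the prism-representative sign $(-1)^{d_j-1+\sum_{i\ne j}c_i}$, this deficit has generating function $\frac{y_j}{(1+y_j)^2}\prod_{i\neq j}(1-x_iy_j)^{-1}$. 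That is where $Y_j$ comes from; it is not a ``re-indexing of the boxed coordinate''.
\item This yields $\eta=\frac{Y}{t(1+t)^2}\bigl\{f-\prod_i\alpha_i-\sum_iY_i(f_i-\alpha_i)\bigr\}$.
\item Your substitution route also works, but not via symmetrization. In $Y\,Y_i\alpha_i(\cdots)$ the variables $x_j$ with $j\ne i$ occur only in $Y_i$. Extracting $\prod_{j\ne i}x_j^{d_j-1}$ therefore forces the factor $y_i^{\sum_{j\neq i}(d_j-1)}$.
\item On the boundary (with $n\ge 3$) this exponent exceeds $d_i-1$ for $i<n$ and equals it for $i=n$. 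So only $i=n$ survives, and for it the extraction is exactly your evaluation at $x_j=0$ ($j<n$), $y_n=0$.
\end{itemize}

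\textbf{The factor $2$ cannot be reached.} Either way, the deficit is exactly one copy of the $n$-prism sum, because the Euler obstruction drops by exactly $1$. Its $t$-coefficient is $+\,Y\sum_iY_i\alpha_i(2\beta_i-\gamma_i)$, with coefficient $1$, and no symmetrization can double a sum in which only $i=n$ contributes.
\begin{itemize}
\item This is also what the paper's own last display gives: the $t$-coefficient of $Y(1-2t)\{\cdots-\sum_iY_i\alpha_i(\beta_i+\gamma_it)\}$.
\item It is also what Theorem~\ref{thm:delta-2-boundary} uses as $\Sigma_3$.
\item For $\bd=(2,2,3)$ one computes $\Sigma_{12}=23$ and $\Sigma_3=-4$, giving the value $19$ of Example~\ref{ex: 223}. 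The displayed factor $2$ would give $15$.
\end{itemize}
So the ``$2$'' in the statement is a misprint. You should prove the version with coefficient $1$, rather than try to reconcile your correction with $2\sum_i Y_i\alpha_i(2\beta_i-\gamma_i)$.
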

\begin{proof}
For vectors $\bc,\bd$ each with $n$ nonnegative integer entries, let $\delta_r(\bc, \bd)$ be defined by
\[
\resizebox{\textwidth}{!}{
$\begin{aligned}
    &\sum_{\ba,\bb \ge 0} \left(\prod_{i=1}^n \binom{c_i}{a_i} \binom{d_i}{b_i+ 1} (-1)^{c_i - a_i + d_i - b_i - 1}\right)\left(\binom{e_1(\ba + \bb) - 2}{r} + (-1)^{r - 1}(r + 1)\right) \Vol_\sbZ(\boxed{a_1} \cdots \boxed{a_n}, b_1 \cdots b_n) \\
    & -\sum_{j=1}^n \max\left(0, d_j - \sum_{i \ne j} c_i\right)(-1)^{d_j - 1 + \sum_{i \ne j} c_i} \sum_{\substack{a_j \ge 0 \\ b_\ell\ge 0\\ \ell\neq j}} \binom{c_j}{a_j}(-1)^{c_j - a_j}\left(\prod_{i \ne j} \binom{d_i}{b_i + 1}(-1)^{d_i - b_i  - 1}\right)\cdot \\
    &\cdot\left(\binom{a_j + \sum_{i \ne j} b_i - 2}{r} + (-1)^{r - 1}(r + 1)\right)\Vol_\sbZ(\boxed{0} \cdots \setlength{\arraycolsep}{2pt}\begin{array}{|c|c|c|}
\hline
0 & a_j & 0 \\
\hline
\end{array} \cdots \boxed{0}, b_1 \cdots b_{j-1}\text{ } 0 \text{ }b_{j+1} \cdots b_n)
\end{aligned}$}
\]
Define
\[
	\eta \coloneqq \sum_{r \ge 0} \sum_{\substack{\bc \ge 0 \\ \bd \ge 0}} \delta_{r+1}(\bc,\bd)\,\bx^\bc\by^\bd t^r\,.
\]
By Lemmas \ref{lem:prismatic-eu} and \ref{lem:nonprismatic-eu} for the Euler obstructions, and Lemma \ref{lem:vol-gf} for the face volumes, we know that $\delta_{2}(\bd-\bone,\bd)$ equals $\delta_2$.
First, for every $i\in[n]$ we define the functions
\[
    f_i(t) \coloneqq \alpha_i\frac{\beta_i+(\beta_i^2-\gamma_i)t}{\beta_i-\gamma_it}\,,\quad f\coloneqq\prod_{i=1}^n f_i\,.
\]
An argument similar to the proof of Theorem \ref{lem:delta-1-strictly-balanced} allows us to write
\begin{equation}\label{eq: expr F}
	\eta = \frac{Y}{t(1 + t)^2}\left\{f - \prod_{i=1}^n\alpha_i - \sum_{i=1}^n Y_i(f_i - \alpha_i)\right\}\,.
\end{equation}
Now expand in $t$ to obtain an $O(t^2)$-approximation.
First, one obtains the $O(t^2)$-approximation
\[
	f_i(t) =  \alpha_i\left(1 + \beta_i\,t + \gamma_i\,t^2 + O(t^3)\right)\,.
\]
This yields the $O(t^2)$-approximation of their product
\[
    f = \prod_{i=1}^n \left[\alpha_i\left(1 + \beta_i\,t + \gamma_i\,t^2 + O(t^3)\right)\right] = \prod_{i=1}^n\alpha_i\cdot\left[1+e_1(\beta)\,t+\left(e_2(\beta)+e_1(\gamma)\right)t^2+O(t^3)\right]\,.
\]
Substituting this into \eqref{eq: expr F} we get, up to $O(t^2)$,
\begin{align*}
    \eta &= \frac{Y(1-2t)}{t}\left\{\left[e_1(\beta)\,t + \left(e_2(\beta)+e_1(\gamma)\right) t^2 \right]\prod_{i=1}^n\alpha_i-\sum_{i=1}^n Y_i\alpha_i(\beta_i\,t + \gamma_i\,t^2)\right\}\\
    &= Y(1-2t)\left\{\left[e_1(\beta) + \left(e_2(\beta) + e_1(\gamma)\right)t \right]\prod_{i=1}^n\alpha_i-\sum_{i=1}^n Y_i\alpha_i(\beta_i + \gamma_i\,t)\right\}\,.
\end{align*}
The coefficient of $t$ in the above expression coincides with \eqref{eq: expr delta2}.
\end{proof}

Recall the general notation introduced at the beginning of Section~\ref{subsec: degree A-discriminant strictly balanced}. Throughout the following proof, we also use
\begin{enumerate}
    \item $c \coloneqq \frac{1}{2}C = \sum_{i=1}^{n-1}(d_i - 1)$
    \item $B(z) \coloneqq \prod_{i=1}^n (1 + \hat{z}_i)^{d_i - 1} $
    \item $F_c(z) \coloneqq$ the $x^c$ coefficient in $\frac{x(2 - x - 3(1 + x)z)}{(1 - z - zx)^3}$
    \item $L(z) \coloneqq \sum_{i = 1}^n (C - 2d_j - 3)z_j$
    \item $R(z) = \frac{1}{2}\big(L(z)^2+ \sum_{i=1}^n (d_i - 1) \hat{z}_i^2 - \sum_{j=1}^n d_j z_j^2\big)$
    \item $U_W \coloneqq \sum_{i=1}^{n-1} z_i$
    \item $W \coloneqq \sum_{i=1}^{n-1} (d_i - 1)z_i$
    \item $X \coloneqq \sum_{i=1}^{n-1} d_iz_i$
    \item $\Sigma_{12} \coloneqq$ coefficient of $\prod_{i=1}^n(x_iy_i)^{d_i - 1}$ in $Y(e_1(\gamma)-2\,e_1(\beta) + e_2(\beta))\prod_{i=1}^n\alpha_i$
    \item $\Sigma_3 \coloneqq$ coefficient of $\prod_{i=1}^n(x_iy_i)^{d_i - 1}$ in $Y\sum_{i=1}^n Y_i\alpha_i(2\beta_i-\gamma_i)$
\end{enumerate}

\begin{theorem}\label{thm:delta-2-boundary}
Let $\bd-\bone\coloneqq (d_1 - 1,\dots,d_{n-1} - 1)$.
Let $\binom{d_n - 1}{d_1 - 1,\dots,d_{n-1} - 1}=\frac{(d_n - 1)!}{(d_1 - 1)! \cdots (d_{n-1} - 1)!}$. We have
\[
    \delta_2 = \frac{1}{2} \binom{d_n - 1}{d_1 - 1,\dots,d_{n-1} - 1}\left(d_n(d_n - 1) + (4\,d_n - 1)e_2(\bd-\bone) + 2\,e_2^2(\bd-\bone)\right)\,.
\]
\end{theorem}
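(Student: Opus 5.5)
Starting from Lemma~\ref{lem:delta-2-boundary}, I will compute $\delta_2$ as the sum $\Sigma_{12}+2\Sigma_3$. Each piece is the coefficient of $\prod_{i=1}^n(x_iy_i)^{d_i-1}$ in an explicit rational function. The plan mirrors the proof of Theorem~\ref{thm:delta-1-strictly-balanced}, and has three steps.

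\emph{First step: eliminate the $x_i$.} For fixed $\hat z_i$, each of $\alpha_i$, $\beta_i$, $\gamma_i$ is a rational function of $x_i$ of a simple geometric type. Its $x_i^{c}$-coefficients can be read off in closed form. For example, $\alpha_i=(1-\hat z_i)/(1-(1+x_i)\hat z_i)$ has $x_i^{c}$-coefficient $\hat z_i^{c}/(1-\hat z_i)^{c}$, exactly as in the proof of Theorem~\ref{thm:delta-1-strictly-balanced}. Likewise $\beta_i$ and $\gamma_i$ give coefficients of the form
\[
(\text{polynomial in } c)\cdot \hat z_i^{c-1}(1-\hat z_i)^{-c-1}
\quad\text{or}\quad
\hat z_i^{c-2}(1-\hat z_i)^{-c-2}.
\]
The auxiliary series $F_c$ packages precisely this kind of coefficient.

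For $\Sigma_{12}$, the $x$-dependence factorizes over $i$ into products of one or two "special" factors ($\beta_i$, $\gamma_i$, or $\beta_i\beta_j$) times $\alpha_k$ for the remaining indices. For $\Sigma_3$, the factor $Y_i=\prod_{j\ne i}(1-x_jy_i)^{-1}$ contributes $y_i^{\sum_{j\ne i}(d_j-1)}$ after extracting the $x_j$-coefficients. Because of the on-boundary condition, this forces a very large power of $y_i$, and I expect only $i=n$ to survive, as in Lemma~\ref{lem:delta-1-boundary}.

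\emph{Second step: pass to the $z$-variables.} I will apply Lemma~\ref{lem:claim-3} to trade the factor $Y$ and the substitution $y\mapsto z$ for the multiplier $\prod_j(1-z_j)^{d_j}$. Since the remaining monomial part has degree $C-1$ or $C-2$ in $z$, I then expand that multiplier, together with the factors $(1-\hat z_i)^{-\bullet}$, to \emph{second} order. This is where the quadratic correction $R(z)$ enters, alongside the linear part $L(z)$.

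This yields $\delta_2$ as the $\bz^{\bd-\bone}$-coefficient of an expression built from the homogeneous components of $B(z)$ of degrees $C$, $C-1$, $C-2$, multiplied by $1$, $L$, and $R$. The $\Sigma_3$ term contributes a correction involving $U_W$, $W$, and $X$ coming from the $i=n$ summand.

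\emph{Third step: extract the coefficient using the on-boundary structure.} Since $d_n-1=c$, the exponent of $z_n$ can only be reached by choosing $z_n$ from every factor $\hat z_j$ with $j<n$. Hence $\hat z_j^{d_j-1}\mapsto z_n^{d_j-1}$ up to lower-order corrections. Meanwhile $\hat z_n^{d_n-1}=U_W^{c}$ must supply $\prod_{j<n}z_j^{d_j-1}$, which produces the multinomial $\binom{d_n-1}{d_1-1,\dots,d_{n-1}-1}$.

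The deviations from this extreme choice involve one or two factors where a $z_k$ with $k<n$ is picked instead of $z_n$. Collecting them gives polynomials in $e_1(\bd-\bone)=c$ and $e_2(\bd-\bone)$, where $d_n-1=c$. I expect these to assemble into $\tfrac12\bigl(d_n(d_n-1)+(4d_n-1)e_2+2e_2^2\bigr)$ after simplification.

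\emph{Main obstacle.} I expect the main difficulty to be the bookkeeping in the second and third steps: keeping the second-order terms of $R$ consistent and correctly handling the correction $\Sigma_3$, which must cancel the spurious prismatic contributions. My first move is to sanity-check the final formula on small on-boundary formats such as $(2,2,3)$ and $(2,3,4)$ by direct evaluation of \eqref{eq: expr delta2}, before doing the general symbolic simplification.
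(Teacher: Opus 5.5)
Your route is the same as the paper's. You extract the $x_i$-coefficients in closed form, pass to the $z$-variables with Lemma~\ref{lem:claim-3}, and expand to second order (the $L$ and $R$ corrections). You then observe that only $i=n$ survives in the $Y_i$-part, and use $d_n-1=c$ to peel off $z_n^{c}$ and the multinomial coefficient. The genuine problem is your starting identity $\delta_2=\Sigma_{12}+2\Sigma_3$: with it, the computation cannot end at the stated formula. The factor $2$ in the displayed statement of Lemma~\ref{lem:delta-2-boundary} is a misprint. In that lemma's own proof, the coefficient of $t$ in
\[
    Y(1-2t)\Bigl\{\bigl[e_1(\beta)+(e_2(\beta)+e_1(\gamma))\,t\bigr]\prod_{i=1}^n\alpha_i-\sum_{i=1}^nY_i\alpha_i(\beta_i+\gamma_i t)\Bigr\}
\]
is $Y\bigl[(e_2(\beta)+e_1(\gamma)-2e_1(\beta))\prod_i\alpha_i+\sum_iY_i\alpha_i(2\beta_i-\gamma_i)\bigr]$. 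Hence $\delta_2=\Sigma_{12}+\Sigma_3$, which is exactly the sum the paper forms at the end of its proof.

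The weight is not cosmetic. Write $M=\binom{d_n-1}{d_1-1,\dots,d_{n-1}-1}$, $c=d_n-1$ and $e_k=e_k(\bd-\bone)$. The paper obtains $\Sigma_{12}=\tfrac12M\bigl(2c^2+2c+4e_2^2+7(c-1)e_2-3e_3\bigr)$ and $\Sigma_3=-\tfrac12M\bigl(2e_2^2+(3c-10)e_2-3e_3+c^2+c\bigr)$.

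The $e_3$-terms cancel in $\Sigma_{12}+\Sigma_3$, as they must, since the target has no $e_3$. By contrast, $\Sigma_{12}+2\Sigma_3=\tfrac12M\bigl((c+13)e_2+3e_3\bigr)$.

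Concretely, for $\bd=(2,2,3)$ a direct computation gives $\Sigma_{12}=23$ and $\Sigma_3=-4$. The $x_3^2$-coefficient of $\alpha_3(2\beta_3-\gamma_3)$ is $(-1+4w-6w^2)/(1-w)^4=-1+O(w^3)$ with $w=z_1+z_2$. So $\Sigma_3$ is minus the $z_1z_2$-coefficient of $(1-z_1)^2(1-z_2)^2$. Your combination therefore gives $15$, whereas $\delta_2=19$.

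A face-by-face evaluation of Theorem~\ref{thm:polytope}, using Lemmas~\ref{lem:nonprismatic-eu}, \ref{lem:prismatic-eu} and~\ref{lem:vol-gf}, also returns $19$ (and $\delta_1=0$). So the sanity check you plan, applied to \eqref{eq: expr delta2} as displayed, would contradict the theorem rather than confirm it. $\Sigma_3$ must enter with weight $1$.

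One smaller caution: in $\Sigma_{12}$ the product $\prod_{k=1}^n\alpha_k$ runs over all $k$. A special index therefore carries $\alpha_i\beta_i$ or $\alpha_i\gamma_i$, not $\beta_i$ or $\gamma_i$ alone. For instance, the $x_i^m$-coefficient of $\alpha_i\beta_i$ is $m\,\hat z_i^{m-1}(1-\hat z_i)^{-m-1}$, not $\hat z_i^{m-1}(1-\hat z_i)^{-m-1}$. Do not drop $\alpha_i$, as your phrase ``times $\alpha_k$ for the remaining indices'' suggests.
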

\begin{proof}
As in the corresponding proof for the strictly balanced case (Theorem~\ref{thm:delta-1-strictly-balanced}), we change variables. Applying Lemma \ref{lem:claim-3} and the geometric series formula (specifically used to show the $x_i^{d_i - 1}$ coefficient in $\frac{1 - \hat{z}_i}{1 - (1 + x_i)\hat{z}_i}$ is $\frac{\hat{z}_i^{d_i - 1}}{1 - \hat{z}_i}$ and the $x_i^{d_i - 1}$ coefficient in $\frac{x_i}{(1 - (1 + x_i)\hat{z}_i)^2}$ is $\frac{(d_i - 1) \hat{z}_i^{d_i - 2}}{(1 - \hat{z}_i)^{d_i}}$, and $F_c(z) = \binom{c + 1}{2} \frac{(2 - 3z)z^{c - 1}}{(1 - z)^{c + 2}} - \binom{c}{2} \frac{(1 + 3z)z^{c - 2}}{(1 - z)^{c + 1}}$), and letting $B_i(z)$ denote the degree $i$ part of $B(z)$, we may write that $\Sigma_{12}$ is the $\bz^\bc$ coefficient of
\[
    B_{C - 2} + \left((C - 3)L + \sum_i (d_i - 1) \hat{z}_i\right)B_{C - 1} + \frac{C(C - 5)}{2}B_C
\]
or equivalently of $\bigg(\frac{C(C - 5)}{2} + (C - 3)L(z) + \sum_{i=1}^n (d_i - 1) \hat{z}_i + R(z)\bigg) B(z)$.

Now we try to eliminate the variable $d_n - 1$, since it is the sum of the other $(d_i - 1)$'s. Consider $B$ and write it as $(1 + \hat{z}_n)^{d_n - 1} \prod_{i=1}^{n-1}(1 + \hat{z}_i)^{d_i - 1}$. Using the binomial theorem to calculate the coefficient of the $z_n^c$ term as $1$ and the coefficient of the $z_n^{c-1}$ term as $c + c\,U_W - W$, we find that $\Sigma_{12} $ is the $z_1^{d_1 - 1} \cdots z_{n-1}^{d_{n-1} - 1}$ coefficient of
\[
    (1 + U_W)^c[c(c - 2) + (2\,c^2 - 2\,c + 3)\,U_W - 2(c - 1)\,W + (c\,U_W - W)^2 + e_2(\bd-\bone)]
\]
or equivalently
\[
    \Sigma_{12} = \frac{1}{2}\binom{d_n - 1}{d_1 - 1,\dots,d_{n-1} - 1}(2\,c^2 + 2\,c + 4\,e_2(\bd-\bone)^2 + 7(c - 1)\,e_2(\bd-\bone) - 3\,e_3(\bd-\bone))\,.
\]
Now we consider $\Sigma_3$. Since the $x_j^{d_j - 1}$ coefficient of $\frac{1}{1 - x_jy_i}$ is $y_i^{d_j - 1}$, the $\bx$ coefficient of the $i$-th summand of $\Sigma_3$ is $y_i^{C - (d_i - 1)}F_{d_i - 1}(\hat{z}_i)$. Since we need $d_i - 1 = c$, we must have $i = n$. The other contribution to $\by^{\bd-\bone}$ comes from $\prod_{i=1}^{n-1}(1 - z_i)^{d_i}F_c(U_W)$ and $F_c(U_W)$, whose terms of degrees $(c - 2)$, $(c - 1)$, and $c$ are respectively $-\binom{s}{2}$, $-\frac{1}{2}s(s + 3)(s - 2)$, and $-\frac{1}{4}s(s + 1)(s + 5)(s - 2)$. Similarly, a low-order approximation (specifically, a quadratic approximation) of $\prod_{i=1}^{n-1}(1 - z_i)^{d_i}$ can be obtained; after some standard combinatorial computations, we obtain
\[
    \Sigma_3 = -\frac{1}{2}\binom{d_n - 1}{d_1 - 1,\dots,d_{n-1} - 1}(2\,e_2(\bd-\bone)^2 + (3\,c - 10)\,e_2(\bd-\bone) - 3\,e_3(\bd-\bone) + c^2 + c)\,.
\]
Finally,
\[
    \Sigma_{12} + \Sigma_3 = \frac{1}{2}\binom{d_n - 1}{d_1 - 1,\dots,d_{n-1} - 1}(c^2 + c + (4\,c + 3)\,e_2(\bd-\bone) + 2\,e_2(\bd-\bone)^2)\,,
\]
which exactly matches the $\delta_2$ of the theorem statement.
\end{proof}

\begin{example}\label{ex: 223}
When $(d_1,d_2,d_3) = (2,2,3)$,
\[
    \delta_2 = \frac{1}{2} \binom{2}{1,1} (3 \cdot 2 + (4 \cdot 3 - 1)(1 \cdot 1) + 2(1 \cdot 1)^2) = 19\,.
\]
\end{example}

\begin{corollary}\label{cor:delta-2-nonzero}
Let $e_2(\bd-\bone) = \sum_{1 \le i < j \le n - 1}(d_i - 1)(d_j - 1)$. Then $\codim(\nabla_{\!\ka})=2$ and
\begin{equation}\label{eq:deg-D2}
    \deg(\nabla_{\!\ka}) = \frac{1}{2} \binom{d_n - 1}{d_1 - 1,\dots,d_{n-1} - 1}\left(d_n(d_n - 1) + (4d_n - 1)e_2(\bd-\bone) + 2 e_2(\bd-\bone)^2\right)\,.
\end{equation}
\end{corollary}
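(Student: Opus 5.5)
The plan is to read this off Theorem~\ref{thm:polytope} (the Matsui--Takeuchi formula). It says that $\codim(\nabla_{\!\ka})$ is the least index $i$ with $\delta_i\neq 0$, and that $\deg(\nabla_{\!\ka})=\delta_{\codim}$. For on-boundary formats, Lemma~\ref{lem:delta-1-boundary} gives $\delta_1=0$. Hence $\codim(\nabla_{\!\ka})\ge 2$. If we show $\delta_2\neq 0$, then $\codim(\nabla_{\!\ka})=2$ and $\deg(\nabla_{\!\ka})=\delta_2$. The closed form for $\delta_2$ is exactly Theorem~\ref{thm:delta-2-boundary}, and it agrees with \eqref{eq:deg-D2}.

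So the one remaining step is to check that the expression in Theorem~\ref{thm:delta-2-boundary} is strictly positive. The multinomial coefficient $\binom{d_n-1}{d_1-1,\dots,d_{n-1}-1}$ is a positive integer, since on-boundary means $d_n-1=\sum_{i<n}(d_i-1)$ and all $d_i\ge 1$. In the bracket, $e_2(\bd-\bone)\ge 0$ because every $d_i-1\ge 0$. Also $4d_n-1>0$. Finally $d_n(d_n-1)>0$ as soon as $d_n\ge 2$.

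It remains to rule out $d_n=1$. In that case every $d_i=1$ and the game is trivial, with no indifference equations, so $C=0$ and $\ka$ is empty. We exclude this degenerate format, as the paper implicitly does by working with $n\ge 3$ nontrivial players. With $d_n\ge 2$ we get $\delta_2\ge \tfrac12 d_n(d_n-1)>0$. Example~\ref{ex: 223} gives a sanity check: $\delta_2=19$ for $\bd=(2,2,3)$.

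There is one place where care is needed. Theorem~\ref{thm:polytope} requires $\nabla_{\!\ka}$ to be defined in the standard GKZ sense for the point configuration $\ka$, with the lattice $M(\ka)$ used to measure volumes. That is the same setting in which Lemmas~\ref{lem:prismatic-eu}, \ref{lem:nonprismatic-eu} and \ref{lem:vol-gf} computed $\eu$ and $\Vol_\sbZ$. We also need the on-boundary correction term, with $\eu(F)=\sum_{i<n}(d_i-1)$ for $n$-prismatic faces, to be built into $\delta_2$ as in Lemma~\ref{lem:delta-2-boundary}. Granting these, which is what the preceding lemmas establish, the real content of the corollary is already in Lemma~\ref{lem:delta-1-boundary} and Theorem~\ref{thm:delta-2-boundary}. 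The corollary then follows from the positivity observation above.
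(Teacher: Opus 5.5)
Your proposal is correct and follows the paper's own argument: $\delta_1=0$ from Lemma~\ref{lem:delta-1-boundary}, the closed form of $\delta_2$ from Theorem~\ref{thm:delta-2-boundary}, then Theorem~\ref{thm:polytope}. Your explicit check that $\delta_2>0$, via $\tfrac12\,d_n(d_n-1)>0$ for $d_n\ge 2$, usefully sharpens the paper's remark that the summands are ``manifestly nonnegative,'' which on its own would not rule out $\delta_2=0$.
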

\begin{proof}
The summands in Theorem \ref{thm:delta-2-boundary} are manifestly nonnegative; now apply \cite[Theorem 1.4]{matsui2011geometric}.
\end{proof}

\begin{example}\label{ex: 223 part 2}
Consider the format $\bd=(2,2,3)$ again. The Nashmatrix $\ka$ has format $r\times s=11\times 20$. As a consequence of Example~\ref{ex: 223} and Corollary \ref{cor:delta-2-nonzero}, the $\ka$-discriminant variety is 2-codimensional of degree 19. Its vanishing ideal involves 20 unknowns, namely the coefficients (with $i,j\in[2]$ and $k\in[3]$)
\[
    a_{jk}^{(1)} \coloneqq x_{2jk}^{(1)}-x_{1jk}^{(1)}\,\quad a_{ik}^{(2)} \coloneqq x_{i2k}^{(2)}-x_{i1k}^{(2)}\,\quad a_{ij}^{(3)} \coloneqq x_{ij2}^{(3)}-x_{ij1}^{(3)}\,\quad a_{ij}^{(4)} \coloneqq x_{ij3}^{(4)}-x_{ij1}^{(4)}
\]
of the polynomials defining the multihomogeneous ideal $\langle\Delta f_{1,2}^{(1)},\Delta f_{1,2}^{(2)},\Delta f_{1,2}^{(3)},\Delta f_{1,3}^{(3)}\rangle$. The discriminant of the last two equations is the classical $2\times 2\times 2$ hyperdeterminant
\begin{equation}\label{eq: hyperdet 222}
    \left(a_{11}^{(3)}a_{22}^{(4)}+a_{22}^{(3)}a_{11}^{(4)}-a_{12}^{(3)}a_{21}^{(4)}-a_{21}^{(3)}a_{12}^{(4)}\right)^2
    -4\left(a_{11}^{(3)}a_{22}^{(3)}-a_{12}^{(3)}a_{21}^{(3)}\right)\left(a_{11}^{(4)}a_{22}^{(4)}-a_{12}^{(4)}a_{21}^{(4)}\right)\,,
\end{equation}
and its zero locus forms the unique hypersurface component of the Nash discriminant $\Delta(2,2,3)$; see \cite[Example 3.27]{abo2026vector}.
Using the supplementary code on \cite{Zenodo}, we sampled the point $[a_{11}^{(1)}:\cdots:a_{22}^{(4)}]\in\bP^{19}$ with the 20 coordinates displayed in Table~\ref{table: coefficients format 2x2x3 on A-discriminant}.
This point can be realized by the game $X=(X^{(1)},X^{(2)},X^{(3)})$ with the payoff tensors displayed in Table~\ref{table: payoff tensors 2x2x3 game on A-discriminant}. One verifies that the $2\times 2\times 2$ hyperdeterminant in \eqref{eq: hyperdet 222} does not vanish at this point, namely, the point is not contained in the one-codimensional part of the Nash discriminant variety $\Delta(2,2,3)$.
We verified that the Nash equilibrium scheme has two components:
\[
    \kz_X = \left(\{[5:18]\}\times\{[1:1]\}\times\{[6:28:5]\}\right) \cup \left(\{[1:2]\}\times\{[1:2]\}\times L\right)\subseteq\bP^1\times\bP^1\times\bP^2\,,
\]
namely a point and a line not containing the point, where $L\subseteq\bP^2$ is the line defined by $2\,\pi_1^{(3)}-\pi_2^{(3)}+4\,\pi_3^{(3)}=0$. The first point corresponds to the totally mixed Nash equilibrium
\[
    \left(\left(\frac{5}{23},\frac{18}{23}\right),\left(\frac{1}{2},\frac{1}{2}\right),\left(\frac{2}{13},\frac{28}{39},\frac{5}{39}\right)\right)\in \Delta_1^\circ\times\Delta_1^\circ\times\Delta_2^\circ\,.
\]
Furthermore, one verifies that all points in the open line segment
\[
    \left\{\left(\left(\frac{1}{3},\frac{2}{3}\right),\left(\frac{1}{3},\frac{2}{3}\right),\left(\frac{1-5t}{3},\frac{2(t+1)}{3},t\right)\right)\ \bigg|\ t\in\left(0\,,\frac{1}{5}\right)\right\}\subseteq \Delta_1^\circ\times\Delta_1^\circ\times\Delta_2^\circ
\]
are totally mixed Nash equilibria of $X$. The code used for this example is \texttt{code\_example\_3.51.m2}, available on \cite{Zenodo}.
\end{example}

\begin{table}[htbp]
\centering
\begin{tabular}{crr}
\toprule
 & $a_{jk}^{(1)}$ & $a_{ik}^{(2)}$ \\
\midrule
11 & $ 24$ & $-24$ \\
12 & $ -4$ & $  4$ \\
13 & $  0$ & $ -8$ \\
21 & $-20$ & $  4$ \\
22 & $  6$ & $  2$ \\
23 & $-16$ & $-12$ \\
\bottomrule
\end{tabular}
\hspace{40pt}
\begin{tabular}{crr}
\toprule
 & $a_{ij}^{(3)}$ & $a_{ij}^{(4)}$ \\
\midrule
11 & $-4$ & $20$ \\
12 & $ 4$ & $-2$ \\
21 & $ 2$ & $-2$ \\
22 & $-2$ & $-3$ \\
\bottomrule
\end{tabular}
\caption{A point on the $\ka$-discriminant variety of format $\bd=(2,2,3)$.}
\label{table: coefficients format 2x2x3 on A-discriminant}
\end{table}

\begin{table}[htbp]
\centering
\begin{tabular}{crrr}
\toprule
$(j_1 j_2 j_3)$ & $X^{(1)}$ & $X^{(2)}$ & $X^{(3)}$ \\
\midrule
111 & $  0$ & $  1$ & $ 1$ \\
112 & $  1$ & $  0$ & $-3$\\
113 & $ -1$ & $ -1$ & $21$ \\
121 & $ -1$ & $-23$ & $ 1$ \\
122 & $ -1$ & $  4$ & $ 5$ \\
123 & $  1$ & $ -9$ & $-1$ \\
211 & $ 24$ & $  1$ & $-1$ \\
212 & $ -3$ & $ -1$ & $ 1$ \\
213 & $ -1$ & $  1$ & $-3$ \\
221 & $-21$ & $  5$ & $ 1$ \\
222 & $  5$ & $  1$ & $-1$ \\
223 & $-15$ & $-11$ & $-2$ \\
\bottomrule
\end{tabular}
\caption{Payoff tensors of a three-player game of format $\bd=(2,2,3)$ corresponding to a point on the $\ka$-discriminant.}
\label{table: payoff tensors 2x2x3 game on A-discriminant}
\end{table}

\section{Mixed Segre embeddings are polynodal}\label{sec: polynodality}

Recall the notations in Section \ref{sec: prelim}. Let $X=(X^{(1)},\ldots,X^{(n)})$ be an $n$-player game of format $\bd$. Note that each generator $\Delta f_{1,k}^{(i)}$ of the ideal defining the Nash equilibrium scheme $\kz_X$ is linear in every vector of variables $\pi^{(j)}$ for $j\neq i$, while $\pi^{(i)}$ does not appear. As in \cite[Section 5]{pratt2026multigraded}, in this section we describe the equations of $\kz_X$ as linear equations in a particular embedding of $\Pd$, and we interpret $\nabla_{\!\ka}$ as a {\em multigraded Hurwitz locus}, see \cite[Section 2]{pratt2026multigraded}. Our goal is to revisit the degree formula of Corollary \ref{cor:discriminant-strictly-balanced}. More precisely, we show in Corollary \ref{cor: A-discriminant and multigraded Hurwitz form} that this degree equals the sum of the components on the right-hand side of \cite[Eq. (26)]{pratt2026multigraded}.

The first ingredient is the \emph{mixed Segre embedding}
\begin{equation}\label{eq: mixed Segre embedding}
    \begin{array}{rccc}
    \sigma_\bd \colon & \Pd=\prod_{i=1}^n\bP^{d_i-1} & \lhook\joinrel\xrightarrow{\hspace{.35cm}} & \bP\coloneqq\prod_{i=1}^n \bP\left(\bigotimes_{j\neq i}\bC^{d_j}\right)\\
	& ([x_1],\ldots,[x_n]) & \longmapsto & ([\otimes_{j\neq i} x_j])_{i=1}^n
    \end{array}\,.
\end{equation}
In particular $\sigma_\bd=(\sigma_{\bd,1},\dots,\sigma_{\bd,n})$, where each $\sigma_i$ is the classical Segre embedding
\[
    \sigma_{\bd,i}\colon \bP_{-i}^\bd\coloneqq\prod_{j\neq i}\bP^{d_j-1} \lhook\joinrel\xrightarrow{\hspace{.35cm}} \bP^{r_i}=\bP\left(\bigotimes_{j\neq i}\bC^{d_j}\right)\,.
\]
Let $X\subseteq\bP$ be the image of $\sigma$. In particular, $X$ is nonsingular of dimension $d_1+\cdots+d_n-n$.
For any $\beta\coloneqq(\beta_1,\ldots,\beta_n)\in\bN^n$, we consider $\bG_\beta\coloneqq\prod_{i=1}^n\bG(\beta_i,r_i)$, where $\bG(\beta_i,r_i)$ denotes the Grassmannian of $\beta_i$-dimensional linear subspaces of an $r_i$-dimensional projective space. Fix $\alpha\coloneqq(r_1-d_1+1,\ldots,r_n-d_n+1)\in\bN^n$. For each $i\in[n]$, consider $L=L_1\times\cdots\times L_n\in\bG_{\alpha+e_i}$, where $e_i$ is the $i$-th unit vector in $\bN^n$, and define $C_i = C_i(L) \coloneqq X \cap L.$
For a generic $L\in\bG_{\alpha+e_i}$, $C_i$ is the complete intersection between $X$ and the product of linear spaces $L_1\times\cdots\times L_n$ of codimension $d_1+\cdots+d_n-n-1$ in $\bP$. Since the $L_i$'s are generic, the intersection $C_i = X \cap L$ is a nonsingular curve in $\bP$.

\medskip

In the proof of Theorem \ref{thm: polynodal general}, we utilize the following Bertini-type theorem on degeneracy loci of vector bundle morphisms (see \cite[Theorem 2.8]{ottaviani1995varieta}).

\begin{theorem}\label{thm: Bertini-type}
Given vector bundles $\ke,\kf$ on a variety $X$, and assuming that $\ke^\vee\otimes\kf$ is globally generated, then for a generic morphism $\varphi\colon\ke\to\kf$ the degeneracy locus
\[
    D_k(\varphi)\coloneqq\{p\in X\mid \rank\varphi_p\le k\}
\]
is either empty or has the expected dimension
\[
    \dim(D_k(\varphi)) = \dim(X)-(\rank\ke-k)(\rank\kf-k)\,.
\]
Furthermore $\Sing D_k(\varphi)\subseteq D_{k-1}(\varphi)$. In particular, if $\dim(X)<(\rank\ke-k+1)(\rank\kf-k+1)$, then $D_k(\varphi)$ is nonsingular for a generic $\varphi$. 
\end{theorem}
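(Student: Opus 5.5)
The statement is a Bertini-type result for degeneracy loci, so I would argue via the standard incidence-variety construction, not anything specific to Nashtopes. Set $H \coloneqq H^0(X,\ke^\vee\otimes\kf) = \mathrm{Hom}(\ke,\kf)$. Global generation means the evaluation map $H\otimes\ko_X\to\ke^\vee\otimes\kf$ is surjective. Hence for each $p\in X$ the map $H\to\mathrm{Hom}(\ke_p,\kf_p)$, $\varphi\mapsto\varphi_p$, is a surjective linear map of vector spaces. Let $e=\rank\ke$, $f=\rank\kf$, and let $\mathcal M_k\subseteq\mathrm{Hom}(\ke_p,\kf_p)$ be the determinantal variety of maps of rank $\le k$. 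It is irreducible of codimension $(e-k)(f-k)$, with singular locus $\mathcal M_{k-1}$ (classical).

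Consider the incidence variety
\[
    \Sigma_k \coloneqq \{(p,\varphi)\in X\times H \mid \rank\varphi_p\le k\}.
\]
The first projection $\Sigma_k\to X$ has fibre over $p$ equal to the preimage of $\mathcal M_k$ under a surjective linear map. This fibre is a cone (a vector bundle over $\mathcal M_k$) of codimension $(e-k)(f-k)$ in $H$. So $\Sigma_k$ is locally a product and has dimension $\dim X+\dim H-(e-k)(f-k)$. More precisely, locally on $X$ trivialize the bundles and choose a local splitting of the evaluation map. Then $\Sigma_k\to X$ becomes locally isomorphic to $X\times \mathcal M_k\times\bC^{N}$. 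Consequently $\Sigma_k$ has singular locus exactly $\Sigma_{k-1}$, where we assume $X$ is smooth, as in the application to the smooth Segre image; otherwise we restrict to the smooth locus of $X$.

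Next apply generic smoothness (characteristic zero) to the second projection $q\colon\Sigma_k\to H$, whose fibre over $\varphi$ is $D_k(\varphi)$. If $q$ is not dominant, the generic fibre is empty. Otherwise the generic fibre has dimension $\dim\Sigma_k-\dim H$, which is the expected dimension. Applying generic smoothness to $q$ restricted to the smooth part $\Sigma_k\setminus\Sigma_{k-1}$ shows that $D_k(\varphi)\setminus D_{k-1}(\varphi)$ is smooth for generic $\varphi$. This gives $\Sing D_k(\varphi)\subseteq D_{k-1}(\varphi)$.

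For the final claim, apply the dimension statement with $k-1$. Since $\dim X<(e-k+1)(f-k+1)$, the expected dimension is negative, so $D_{k-1}(\varphi)=\emptyset$ for generic $\varphi$. Therefore $D_k(\varphi)$ is nonsingular. The main obstacle is the local-triviality step: we must show that surjectivity of evaluation makes $\Sigma_k$ locally a product with the determinantal variety, so that both its dimension and its singular locus are inherited fibrewise. I would handle this by choosing, near each point, a subspace of $H$ mapping isomorphically onto $\mathrm{Hom}(\ke_p,\kf_p)$, and using it to trivialize.
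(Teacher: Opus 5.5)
The paper does not prove this statement. It quotes it from \cite[Theorem 2.8]{ottaviani1995varieta} and uses it only on $\bP^{\bd}_{-j}$. Your proposal supplies the standard incidence-variety plus generic-smoothness proof, and it is correct:
\begin{itemize}
\item Surjectivity of the evaluation map makes $\Sigma_k\to X$ locally isomorphic to $U\times\bC^N\times\mathcal M_k$. This gives $\dim\Sigma_k$ and $\Sing\Sigma_k\subseteq\Sigma_{k-1}$.
\item The fibre-dimension theorem for $q\colon\Sigma_k\to H$ gives ``empty or of expected dimension''.
\item Generic smoothness of $q$ on $\Sigma_k\setminus\Sigma_{k-1}$ makes the determinantal scheme $D_k(\varphi)$ smooth off $D_{k-1}(\varphi)$.
\item $\dim\Sigma_{k-1}<\dim H$ forces $D_{k-1}(\varphi)=\emptyset$ for generic $\varphi$.
\end{itemize}

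What your route buys over the bare citation is that it exposes hypotheses the quoted statement leaves implicit. All of them hold in the paper's application.
\begin{itemize}
\item $X$ must be smooth. You rightly flag this, and the claim does fail otherwise. Take $\ke=\ko$, $\kf=\ko(1)$, $k=0$ on a surface singular along a curve: a general hyperplane section is singular, yet $D_{-1}=\emptyset$.
\item The characteristic must be zero.
\item One needs $0\le k\le\min(\rank\ke,\rank\kf)$, so that $(\rank\ke-k)(\rank\kf-k)$ really is the codimension of $\mathcal M_k$.
\item $H$ must be finite-dimensional. This needs $X$ complete, or else you should pass to a finite-dimensional generating subspace of sections.
\end{itemize}

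There are two small slips in your write-up.
\begin{itemize}
\item $\Sing\mathcal M_k=\mathcal M_{k-1}$ holds only for $k<\min(\rank\ke,\rank\kf)$; for $k$ equal to the minimum, $\mathcal M_k$ is the whole space. You only need the inclusion, so this is harmless.
\item The step ``dominant, so the generic fibre has dimension $\dim\Sigma_k-\dim H$'' needs irreducibility. Either run it on each irreducible component of $\Sigma_k$, or note that $\Sigma_k$ is irreducible when $X$ is, by the local product structure.
\end{itemize}
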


The following result shows that any projection of $C_i$ on the $j$-th factor of $\bP$ is nonsingular. This implies immediately that $X$ is \emph{$\alpha$-polynodal}, meaning that for any $i\in[n]$ and for a generic $L\in\bG_{\alpha+e_i}$, any projection of $C_i$ has at most nodal singularities (see \cite[Section 3]{pratt2026multigraded}).

\begin{theorem}\label{thm: polynodal general}
Let $n\ge 3$ and let $\bd$ be a strictly balanced game format. For a generic $L\in\bG_{\alpha+e_i}$ and for every $j\in[n]$, the projection of $C_i$ to the $j$-th component of $\bP$ is a nonsingular curve.
\end{theorem}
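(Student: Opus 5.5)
The plan is to reduce the statement to a degeneracy-locus statement on a product of $n-1$ projective spaces, and then apply Theorem~\ref{thm: Bertini-type} twice: once to get the expected dimension, and once to show that the locus where the rank drops further is empty.

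\emph{Translation.} Pulled back along $\sigma_\bd$, a generic $L\in\bG_{\alpha+e_i}$ amounts to the following choice. For each $k\in[n]$ we take $d_k-1-\delta_{ik}$ generic multilinear forms in the variables $\pi^{(l)}$, $l\neq k$; these play the role of the generators $\Delta f^{(k)}_{1,\ell}$ with generic coefficients. The curve $C_i\subseteq\Pd$ is their common zero locus, cut out by $C-1$ equations. Fix $j\in[n]$. The composite $\sigma_{\bd,j}\circ\pr_{-j}$ maps $\Pd$ to the $j$-th factor of $\bP$, and $\sigma_{\bd,j}$ is a closed embedding. So it suffices to show that $\pr_{-j}\colon C_i\to\bP^\bd_{-j}$ is a closed embedding with nonsingular image.

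\emph{Degeneracy locus.} The $d_j-1-\delta_{ij}$ forms of player $j$ do not involve $\pi^{(j)}$. They cut out a subvariety $Y\subseteq\bP^\bd_{-j}$, which is nonsingular of dimension $C-2(d_j-1)+\delta_{ij}$ by Bertini, since the relevant line bundles are base-point free.

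The remaining $m:=\sum_{k\neq j}(d_k-1)-(1-\delta_{ij})$ forms are linear in $\pi^{(j)}$. Writing them as $M(y)\,\pi^{(j)}=0$ defines a morphism of bundles on $Y$
\[
    \varphi\colon\ko_Y^{\,d_j}\longrightarrow\kf:=\bigoplus_{\text{rows}}\ko_Y(\text{multidegree of the row}),
\]
and the matrix $M$ is a generic element of the space of such morphisms. We then get:
\begin{itemize}
    \item $C_i$ is the incidence variety $\{(y,[\pi^{(j)}])\mid \pi^{(j)}\in\ker M(y)\}$;
    \item its image under $\pr_{-j}$ is the degeneracy locus $D_{d_j-1}(\varphi)$;
    \item over $D_{d_j-1}(\varphi)\setminus D_{d_j-2}(\varphi)$ the fibre is a single reduced point, namely $\bP(\ker M(y))$.
\end{itemize}
Since $\ke^\vee\otimes\kf=\kf^{\oplus d_j}$ is a sum of restrictions of base-point-free line bundles, it is globally generated, and the morphisms we obtain fill a generating space of sections. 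So Theorem~\ref{thm: Bertini-type} applies, in the form that only requires genericity within a generating linear system. A dimension count gives $\dim Y-(m-d_j+1)=1$, so $D_{d_j-1}(\varphi)$ is a curve.

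\emph{Key step: $D_{d_j-2}(\varphi)=\emptyset$.} The expected dimension of $D_{d_j-2}(\varphi)$ is $\dim Y-2(m-d_j+2)=-(m-d_j+2)$. This is negative as soon as $m\ge d_j-1$. The worst case is $j=n$ and $i\neq n$, where $m=\sum_{k<n}(d_k-1)-1$. There $m\ge d_n-1$ holds precisely because the format is strictly balanced; in all other cases the inequality is easier. Hence Theorem~\ref{thm: Bertini-type} gives:
\begin{itemize}
    \item $D_{d_j-2}(\varphi)=\emptyset$;
    \item $D_{d_j-1}(\varphi)$ is nonsingular, because $\Sing D_{d_j-1}\subseteq D_{d_j-2}$;
    \item $\ker M(y)$ has constant rank $1$ along $D_{d_j-1}(\varphi)$.
\end{itemize}
The last point makes $C_i\cong\bP(\ker\varphi|_{D_{d_j-1}})\to D_{d_j-1}(\varphi)$ an isomorphism. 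Thus the projection is injective and immersive, and its image is nonsingular.

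\emph{Expected obstacle.} The delicate point is justifying that $\varphi$ is generic in the sense required by Theorem~\ref{thm: Bertini-type} while $Y$ is itself chosen generically. I plan to choose the player-$j$ forms first, then apply the theorem on the fixed nonsingular $Y$ to a generic $M$. These choices are independent coordinates of $L$, so they can be made in sequence. I also need to check that each row's line bundle is base-point free, including rows of degree $0$ in some factors; this is where $n\ge3$ ensures every row is a nonconstant form. The case $j=i$ is handled by the same computation with $\delta_{ij}=1$.
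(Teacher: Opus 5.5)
Your proof is correct and follows the paper's argument in all essentials. You encode the $\pi^{(j)}$-linear equations as a morphism $\ko^{\oplus d_j}\to\kf$, use Theorem~\ref{thm: Bertini-type} to get $\Sing D_{d_j-1}\subseteq D_{d_j-2}$, and use strict balance to show the rank-$(d_j-2)$ locus misses $Z(s_j)$; your condition $m\ge d_j-1$ is exactly the paper's inequality $d_j-1-\delta_{ij}<\sum_{\ell\neq j}(d_\ell-1)$.

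The only difference is the order of slicing. The paper applies Theorem~\ref{thm: Bertini-type} on all of $\bP^\bd_{-j}$, where $\varphi_1$ is generic in the full space of sections, and cuts by $Z(s_j)$ afterwards via ordinary Bertini. You cut first, so you need the generating-subspace version of the theorem, which you correctly flag; in exchange you also get that $C_i\to\pr_{-j}(C_i)$ is an isomorphism, which the paper only asserts later in its remark on genera.
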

\begin{proof}
Recall that $r_i=-1+\prod_{j\neq i}d_j$ for all $i\in[n]$ and $\alpha=(r_1-d_1+1,\ldots,r_n-d_n+1)$. We study only the case $i=1$ because the other cases are similar. Consider the curve $C_1=X\cap L$, where $L=L_1\times\cdots\times L_n\subseteq\bP$ with $\dim(L_1)=\alpha_1+1=r_1-d_1+2$ and $\dim(L_j)=\alpha_j=r_j-d_j+1$ for all $j\ge 2$.
The mixed Segre embedding $\sigma_\bd$ is an isomorphism when restricted to its image $\sigma_\bd(\Pd)=X$. In particular, instead of $C_1$ we consider its pullback $C'_1\subseteq\Pd$. Define $\bone_j\in\bN^n$ as the vector with all components $1$ except the $j$-th, which is $0$. Then $C'_1$ equals the zero locus of the general section $s\in H^0(\Pd,\ke_1)$, where 
\begin{equation}
	\ke_1\coloneqq \bigoplus_{j=1}^n\ko_{\Pd}(\bone_j)^{\oplus(d_j-1-\delta_{1j})}
\end{equation}
and $\delta_{1j}$ is the Kronecker delta.
In particular we write $s=(s_1,\ldots,s_n)$, where
\begin{equation}\label{eq: sections s_j defining C'_1 2}
    s_j=\left(s_{(j,1)},\ldots,s_{(j,d_j-1-\delta_{1j})}\right)\quad\forall\,j\in[n]\,.
\end{equation}
Since $s$ is generic, the curve $C'_1$ is nonsingular. The Chow ring of $\Pd$ is
\[
    A^\ast(\Pd)\cong\frac{\bZ[h_1,\ldots,h_n]}{\langle h_1^{d_1},\ldots,h_n^{d_n}\rangle}\,,
\]
where $\int_{\Pd}h_1^{d_1-1}\cdots h_n^{d_n-1}=1.$ For each $j\in[n]$ let $\hat{h}_j\coloneqq c_1(\ko_{\Pd}(\bone_j))=\sum_{\ell\neq j} h_\ell.$ Then
\begin{equation}\label{eq: class of C'_1}
    [C'_1] = \prod_{j=1}^n\hat{h}_j^{d_j-1-\delta_{1j}}\in A^\ast(\Pd)\,.
\end{equation}
Recall that $\bP_{-j}^\bd$ is the product of all but the $j$-th factor of $\Pd$ and $\pr_{-j}\colon\Pd\to\bP_{-j}^\bd$ is the corresponding projection. Our goal is to show that $\pr_{-j}(C'_1)$ is a nonsingular curve. 

\medskip

For all $\ell\neq j$ and $t\in[d_\ell-1-\delta_{1\ell}]$, write each component $s_{(\ell,t)}$ of $s_\ell$ in \eqref{eq: sections s_j defining C'_1 2} as a linear form $s_{(\ell,t)} = \sum_{p=0}^{d_j-1}a_p^{(\ell,t)}\,x_{jp}$ in $x_j=(x_{j,0},\ldots,x_{j,d_j-1})$, where the coefficients $a_p^{(\ell,t)}$ are global sections of $\ko_{\bP_{-j}^\bd}(\bone_\ell)$ and $\bone_\ell$ is defined similarly as before, with the difference that now $\bone_\ell\in\bN^{n-1}$. In other words, the coefficients $a_p^{(\ell,t)}$ are multilinear forms in $(x_1,\ldots,x_n)$ which exclude $x_j$ and $x_\ell$. The global sections $s_\ell$ written in this form define a vector bundle morphism on $\bP_{-j}^\bd$
\[
	\varphi_1\colon\ko_{\bP_{-j}^\bd}^{\oplus d_j} \longrightarrow \bigoplus_{\ell\neq j}\ko_{\bP_{-j}^\bd}(\bone_\ell)^{\oplus(d_\ell-1-\delta_{1\ell})}\,.
\]
For all $\ell\neq j$, define the $d_j\times(d_\ell-1-\delta_{1\ell})$ matrix of polynomials
\[
    A_\ell \coloneqq
    \begin{pmatrix}
    a_0^{(\ell,1)} & \cdots & a_0^{(\ell,d_\ell-1-\delta_{1\ell})}\\
    \vdots & & \vdots\\
    a_{d_j-1}^{(\ell,1)} & \cdots & a_{d_j-1}^{(\ell,d_\ell-1-\delta_{1\ell})}
    \end{pmatrix}\,.
\]
Then for any $[x_{-j}]\coloneqq([x_1],\ldots,[x_{j-1}],[x_{j+1}],\ldots,[x_\ell])\in\bP_{-j}^\bd$ the fiber of $\varphi_1$ at $[x_{-j}]$ is the linear map defined by the transpose of the $d_j\times\sum_{\ell\neq j}(d_\ell-1-\delta_{1\ell})$ matrix of polynomials
\[
    \left(
	\begin{array}{c|c|c|c|c|c}
    A_1 & \cdots & A_{j-1} & A_{j+1} & \cdots & A_n
    \end{array}
    \right)
\]
obtained by concatenating the blocks $A_\ell$ horizontally.
In particular $\pr_{-j}(C'_1)$ is the intersection between the zero locus of the section $s_j$, which we can see as a section of $\ko_{\bP_{-j}^\bd}(\bone_j)^{\oplus(d_j-1-\delta_{1j})}$, and the degeneracy locus
\[
	D_{d_j-1}(\varphi_1) = \{[x_{-j}]\in\bP_{-j}^\bd\mid \rank\varphi_1(x_{-j})\le d_j-1\}\,.
\]
We can apply Theorem~\ref{thm: Bertini-type} with $\ke=\ko_{\bP_{-j}^\bd}^{\oplus d_j}$, $\kf=\bigoplus_{\ell\neq j}\ko_{\bP_{-j}^\bd}(\bone_\ell)^{\oplus(d_\ell-1-\delta_{1\ell})}$ and $\varphi_1\colon\ke\to\kf$ because $\ke^\vee\otimes\kf$ is globally generated. First, a consequence of \cite[Theorem 4.1]{mckelvey1997maximal} (see also \cite[Lemma 3.5]{abo2026vector}) is that $D_{d_j-1}(\varphi_1)$ is nonempty for a generic $\varphi_1$. Therefore, a generic $\varphi_1$ is such that
\begin{align*}
    \dim(D_{d_j-1}(\varphi_1)) &= \dim(\bP_{-j}^\bd)-(\rank\ke-(d_j-1))(\rank\kf-(d_j-1))\\
    &= \sum_{\ell\neq j}(d_\ell-1)-\left(\sum_{\ell\neq j}(d_\ell-1-\delta_{1\ell})-(d_j-1)\right) = d_j-1+\sum_{\ell\neq j}\delta_{1\ell} = d_j-\delta_{1j}\,.
\end{align*}
Furthermore, we have that $\Sing D_{d_j-1}(\varphi_1)\subseteq D_{d_j-2}(\varphi_1)$, where
\begin{align*}
    \dim(D_{d_j-2}(\varphi_1)) &= \dim(\bP_{-j}^\bd)-(\rank\ke-(d_j-2))(\rank\kf-(d_j-2))\\
    &= \sum_{\ell\neq j}(d_\ell-1)-2\left(\sum_{\ell\neq j}(d_\ell-1-\delta_{1\ell})-(d_j-2)\right)\\
    &= 2(d_j-1-\delta_{1j})-\sum_{\ell\neq j}(d_\ell-1)\,.
\end{align*}
Consider also the zero locus of $s_j$, which is a generic complete intersection of codimension $d_j-1-\delta_{1j}$ in $\bP_{-j}^\bd$.
On the one hand, by Bertini's theorem, the intersection of the zero locus of $s_j$ with $D_{d_j-1} \setminus D_{d_j-2}$ is nonsingular. On the other hand, the intersection between $D_{d_j-2}(\varphi_1)$ and the zero locus of $s_j$ is empty since
\[
    \dim(D_{d_j-2}(\varphi_1))-(d_j-1-\delta_{1j}) = d_j-1-\delta_{1j}-\sum_{\ell\neq j}(d_\ell-1) < 0
\]
whenever $d_j-1<\sum_{\ell\neq j}(d_\ell-1)$, which is satisfied by our assuptions. Hence, the intersection of the zero locus of $s_j$ with $D_{d_j-1} \setminus D_{d_j-2}$ equals the intersection of the zero locus of $s_j$ with $D_{d_j-1}$. Equivalently $\pr_{-j}(C'_1)$ is a nonsingular curve. 
This also implies that the projection of $C_1$ onto the $j$-th component of $\bP$ is a nonsingular curve.
\end{proof}

The following corollary follows immediately from Theorem \ref{thm: polynodal general} after applying \cite[Theorem 3.4 and Proposition 5.5]{pratt2026multigraded}. Observe that in \cite{pratt2026multigraded} the integers $d_i$ are denoted by $k_i+1$, while our $n$ corresponds to $\ell$.

\begin{corollary}\label{cor: A-discriminant and multigraded Hurwitz form}
Consider the Nashmatrix $\ka$ of a strictly balanced game format $\bd$. The degree of $\nabla_{\!\ka}$ of Corollary \ref{cor:discriminant-strictly-balanced} equals $u_1+\cdots+u_n$, where $u_i$ is the coefficient of $\prod_{i=1}^n z_i^{d_i-1}$ in
\[
    2\prod_{i=1}^n\hat{z}_i+\prod_{j\neq i}\hat{z}_j\left(-\hat{z}_i+\sum_{j=1}^n \left[(d_j-1)\hat{z}_j-d_j\hat{z}_j\right]\right)\,.
\]
\end{corollary}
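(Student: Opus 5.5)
The plan is to combine the polynodality result with the degree formula of \cite{pratt2026multigraded}, and then check that it agrees with Corollary~\ref{cor:discriminant-strictly-balanced}.

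\emph{Step 1: identify $\nabla_{\!\ka}$ with the multigraded Hurwitz locus.} After composing with the mixed Segre embedding $\sigma_\bd$, each generator $\Delta f^{(i)}_{1,k}$ becomes a linear form on the $i$-th factor $\bP^{r_i}$. A game with nonreduced Nash equilibrium scheme therefore corresponds to a product of linear spaces in $\bG_\alpha$ meeting $X=\sigma_\bd(\Pd)$ non-transversally. Hence $\nabla_{\!\ka}$, being the closure of the locus of singular solutions in the torus, is identified with the multigraded Hurwitz locus of $X$ in the sense of \cite[Section 2]{pratt2026multigraded}. Both loci are irreducible, so this identification only requires matching their generic points, and they have the same dimension and degree.

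\emph{Step 2: exactness of the bound.} By Theorem~\ref{thm: polynodal general}, $X$ is $\alpha$-polynodal. \cite[Theorem 3.4]{pratt2026multigraded} states that for $\alpha$-polynodal embeddings the upper bound on the multidegree of the Hurwitz form is attained. \cite[Proposition 5.5]{pratt2026multigraded} then evaluates that bound for the mixed Segre embedding as $\sum_i u_i$. Each $u_i$ is obtained from the class $[C'_i]=\prod_j \hat h_j^{d_j-1-\delta_{ij}}$ paired with a first-order correction: the canonical class of the curve plus twice the hyperplane contributions. This gives exactly the displayed integrand, read off as a coefficient of $\prod z_i^{d_i-1}$ via $h_i\mapsto z_i$. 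Along the way one translates the notation ($d_i=k_i+1$, $n=\ell$).

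\emph{Step 3: consistency with Corollary~\ref{cor:discriminant-strictly-balanced}.} Finally, verify algebraically that $\sum_i u_i$ equals the coefficient of $\prod z_i^{d_i-1}$ in $\bigl(C+\sum_i(C-2d_i+1)z_i\bigr)\prod_i(1+\hat z_i)^{d_i-1}$. Since this target is homogeneous of degree $C$, extract the degree $C$ and $C-1$ parts of $\prod(1+\hat z_i)^{d_i-1}$: these are $\prod\hat z_i^{d_i-1}$ and $\sum_i(d_i-1)\hat z_i^{d_i-2}\prod_{k\ne i}\hat z_k^{d_k-1}$. Then compare term by term with the adjunction-type expression of each $u_i$, using $\sum_j \hat z_j=(n-1)\sum_j z_j$ and $\sum_j(d_j-1)\hat z_j=\sum_j(C-d_j+1)z_j$.

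I expect this bookkeeping in Step 3 to be the main obstacle. In particular, the linear coefficients $C-2d_i+1$ must emerge correctly from combining $-\hat z_i$, $\sum_j(d_j-1)\hat z_j$ and $-\sum_j d_j z_j$. The exponents in the displayed formula look like they need to be read as $\hat z_j^{d_j-1-\delta_{ij}}$, and I would fix that interpretation first.
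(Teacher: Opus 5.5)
Your Steps~1--2 are exactly the paper's argument: its proof only cites Theorem~\ref{thm: polynodal general} and \cite[Theorem~3.4 and Proposition~5.5]{pratt2026multigraded}. Your reading of the integrand is also the right one. Write $[\bz^{\bd-\bone}]$ for the coefficient of $\prod_i z_i^{d_i-1}$, and set $P\coloneqq\prod_j\hat z_j^{d_j-1}$, $T_i\coloneqq\prod_j\hat z_j^{d_j-1-\delta_{ij}}$ and $\Lambda\coloneqq\sum_j(d_j-1)\hat z_j-\sum_j d_jz_j$. This uses $d_jz_j$ rather than $d_j\hat z_j$, and $2P$ rather than $2\prod_i\hat z_i$. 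Then $u_i=[\bz^{\bd-\bone}]\bigl(2P+T_i(\Lambda-\hat z_i)\bigr)$. This is the Riemann--Hurwitz count $2\deg+2g-2$ for a generic pencil on $C'_i$, i.e.\ the degree of the Hurwitz form in the Pl\"ucker coordinates of the $i$-th Grassmannian.

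The gap is the assertion in Step~1 that the Hurwitz locus and $\nabla_{\!\ka}$ ``have the same dimension and degree.'' They live in different spaces. $\nabla_{\!\ka}$ is the pullback of the Hurwitz hypersurface of $\bG_\alpha$ under $(M_1,\dots,M_n)\mapsto(\ker M_1,\dots,\ker M_n)$, where $M_i$ is the $(d_i-1)\times(r_i+1)$ matrix of coefficients of $\Delta f^{(i)}_{1,2},\dots,\Delta f^{(i)}_{1,d_i}$. The Pl\"ucker coordinates of $\bG(\alpha_i,r_i)$ pull back to maximal minors of $M_i$, which have degree $d_i-1$. Equivalently, $u_i$ is the degree of $\nabla_{\!\ka}$ in the coefficients of a \emph{single} equation of player $i$, and player $i$ contributes $d_i-1$ equations. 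Hence $\deg\nabla_{\!\ka}=\sum_i(d_i-1)u_i$.

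This is why Step~3 cannot be completed as you formulated it. Since $\hat z_iT_i=P$, we have $u_i=[\bz^{\bd-\bone}](P+\Lambda T_i)$. Moreover $\Lambda=\sum_j(C-2d_j+1)z_j$, and the degree-$C$ and degree-$(C-1)$ parts of $\prod_i(1+\hat z_i)^{d_i-1}$ are $P$ and $\sum_i(d_i-1)T_i$. So the coefficient in Corollary~\ref{cor:discriminant-strictly-balanced} is
\[
    [\bz^{\bd-\bone}]\Bigl(CP+\Lambda\sum_i(d_i-1)T_i\Bigr)=\sum_i(d_i-1)\,u_i\,,\qquad\text{while}\qquad\sum_i u_i=[\bz^{\bd-\bone}]\Bigl(nP+\Lambda\sum_i T_i\Bigr)\,.
\]
In the strictly balanced case all coefficients involved are nonnegative, so the difference is at least $(C-n)\,[\bz^{\bd-\bone}]P>0$ as soon as some $d_i\ge 3$.

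Concretely, for $\bd=(2,3,3)$ one finds $[\bz^{\bd-\bone}]P=4$, $\Lambda=2z_1$ and $[\bz^{\bd-\bone}](\Lambda T_i)=2$. Hence $u_1=u_2=u_3=6$ and $\sum_i u_i=18$, whereas Corollary~\ref{cor:discriminant-strictly-balanced} gives $30=1\cdot 6+2\cdot 6+2\cdot 6$.

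So the missing ingredient is the comparison between Pl\"ucker multidegree and degree in coefficient space. With it, your three steps prove $\deg\nabla_{\!\ka}=\sum_i(d_i-1)u_i$, which also independently checks Theorem~\ref{thm:delta-1-strictly-balanced}. The unweighted identity you set out to verify, and hence the statement as printed, holds only for binary formats. That is the only case the paper examines (Lemma~\ref{lem:binary-derangement}), and the paper's one-line proof passes over the same point.
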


For example, when $d_1=\cdots=d_n=2$, we revisit the formula of Lemma \ref{lem:binary-derangement}: the components of the multidegree of $\nabla_{\!\ka}$, seen as a multigraded Hurwitz locus, are
\[
    u_1 = \cdots = u_n = [(n - 2)\,\cdot\,!n + (n - 3)\,\cdot\,!(n - 1)]\,.
\]

\begin{remark}
This remark concerns the genus of $C'_i$ and its projections $\pr_{-j}(C'_i)$, which we discuss in the binary case $d_1=\cdots=d_n=2$ for simplicity.
Consider the class of $C'_i$, which is similar to \eqref{eq: class of C'_1}.
Since $K_{\Pd}=-2\sum_{i=1}^nh_i$, adjunction gives
\[
	K_{C'_i} = \left.\left(K_{\Pd}+\sum_{j\neq i}\hat{h}_j\right)\right|_{C'_i} = \left.\left((n-3)h_i+(n-4)\hat{h}_i\right)\right|_{C'_i}\,.
\]
Hence
\[
	2\,g(C'_i)-2 = \int_{\Pd}\left((n-3)h_i+(n-4)\hat{h}_i\right)\prod_{j\neq i}\hat{h}_j\,.
\]
Let $!m$ be the number of derangements of $m$ elements. Then the coefficient computation gives
\[
	2\,g(C'_i)-2 = (n-3)\,\cdot\,!(n-1)+(n-4)\,\cdot\,!n\,.
\]
Equivalently,
\[
	g(C'_i) = 1+\frac{(n-3)\,\cdot\,!(n-1)+(n-4)\,\cdot\,!n}{2}\,.
\]
For example, $g(C'_i)=0,0,2,32,332$ for $n=2,3,4,5,6$ respectively.

As a consequence of Theorem~\ref{thm: polynodal general}, for a generic $L$, the morphisms $f_j'\colon C'_i \to \pr_{-j}(C'_i)\subseteq\bP_{-j}$ for all $j\in[n]$ are nonconstant, unless $n=2$ and $(i,j)\in\{(1,2),(2,1)\}$.
Since $C'_i$ is a curve, the nonconstant morphism $f_j'$ from $C'_i$ is generically finite. Finally, since $C'_i$ is projective, then $f_j'$ is proper (see \cite[Theorem II.4.9]{hartshorne1977algebraic}).
Since the fibers of $f'_j$ are points and $f'_j$ is proper, we conclude that $f_j'$ and hence $f_j$ are finite (see for example \cite[Lemma 30.21.1]{stacks-project}). Since $f_j$ and $f_j'$ are isomorphisms for generic $L$ (unless $n=2$ and $(i,j)\in\{(1,2),(2,1)\}$), we conclude that $g(f_j'(C'_i))=g(C'_i)$ and $g(f_j(C_i))=g(C_i)$.
\end{remark}

\section{The irreducible components of the Nash discriminant variety}\label{sec:aps-3.8}

For balanced game formats with at least three players, we answer \cite[Problem 3.8]{abo2026vector} by describing the irreducible components of the Nash discriminant variety $\Delta(\bd)$ (see \cite[Definition 3.2]{abo2026vector}) and their degrees. Our main tools are \cite[Corollary 3.6 and Theorem 1.5]{pokidkin}. 

\medskip

For convenience, we reproduce some definitions from \cite{pokidkin}. Given a tuple of finite sets $\ma$ of a common ambient lattice and a subtuple $\mb$, the \emph{defect} of $\mb$ is the number
\[
    \delta(\mb) \coloneqq \dim\left(\text{the affine span of the Minkowski sum $\sum_{i \in I} A_i$}\right) - |I|\,,
\]
where $I$ is the index set of $\mb$. A tuple of finite sets is \emph{irreducible} if the defects of all proper subtuples are positive.
A tuple of finite sets is \emph{independent} if the defects of all subtuples are nonnegative. 
Independent tuples of zero defect are called \emph{BK-tuples}. We direct readers to \cite{pokidkin} for remaining definitions, including the definition of the $\ma$-discriminant. 

From here on, we assume that all systems $\ma$ are square in the sense of \cite{pokidkin}.

\begin{corollary}[{\cite[adapted from Corollary 3.6]{pokidkin}}]\label{cor:pokidkin}
For a nonlinear irreducible BK-tuple $\ma$, the $\ma$-discriminant, the Cayley discriminant, and the mixed discriminant coincide.
\end{corollary}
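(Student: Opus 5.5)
The plan is to deduce the statement directly from \cite[Corollary 3.6]{pokidkin}. The only real work is to check that the three discriminants named here agree with the objects with those names in \cite{pokidkin}, under our standing assumption that $\ma=(A_1,\dots,A_m)$ is square. Concretely, I would first fix the three objects for a square tuple $\ma$ in a lattice of rank $\ell=m$:
\begin{itemize}
\item the $\ma$-discriminant in the sense of \cite{pokidkin}, namely the closure of the set of coefficient tuples whose system $f_1=\cdots=f_m=0$ has a degenerate root in the torus;
\item the Cayley discriminant, which I take to be $\nabla_{\operatorname{Cay}(A_1,\ldots,A_m)}$ in the sense of \cite[Chapter 9]{GKZ94};
\item the mixed discriminant of \cite{cattani2013mixed}, namely the hypersurface component of the former when it has codimension one.
\end{itemize}

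The first step is the Cayley trick. A point $\theta=(\lambda,x)\in(\bC^*)^{m}\times(\bC^*)^{\ell}$ is singular on $\{\sum_i\lambda_i f_i(x)=0\}$ if and only if two conditions hold. First, the derivatives in $\lambda$ give $f_1(x)=\cdots=f_m(x)=0$. Second, the derivatives in $x$ give $\sum_i\lambda_i\nabla f_i(x)=0$ with all $\lambda_i\neq0$. This is exactly a degenerate root $x$ with a nondegenerate multiplier. Hence the Cayley discriminant contains the $\ma$-discriminant, and the two coincide up to the locus where every multiplier vector has a vanishing coordinate. This matches how \cite{pokidkin} defines the Cayley discriminant, so no further translation should be needed.

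The second step invokes \cite[Corollary 3.6]{pokidkin}. Under the hypotheses that $\ma$ is an irreducible BK-tuple and nonlinear, that result gives the two facts we need. First, the $\ma$-discriminant is irreducible of codimension one. Second, the exceptional loci described above, where some $\lambda_i=0$ is forced, only contribute subtuples of positive defect and therefore lie in lower dimension. Consequently the Cayley discriminant is a hypersurface equal to the closure of the $\ma$-discriminant, and by the definition of the mixed discriminant as the codimension-one part, all three coincide.

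The main obstacle is purely bookkeeping: ensuring that our conventions agree with those of \cite{pokidkin}. This concerns two points. One is that the torus is the affine torus rather than the projective one. The other is that closures are taken in the same coefficient space, which is $\bP^{s-1}$ for us versus a product of projective spaces there. Since these spaces differ only by a torus action that preserves all three loci, I would just note this identification and cite the result, rather than reprove it.
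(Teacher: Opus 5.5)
Your plan of importing \cite[Corollary 3.6]{pokidkin} after matching conventions is what the paper does: it quotes the statement as an adapted citation and gives no argument of its own. The Cayley-trick paragraph, however, has the containment backwards, and you should fix it.

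A singular point $(\lambda,x)$ of $\{\sum_i\lambda_if_i=0\}$ with all $\lambda_i\neq0$ gives a root $x$ at which $df_1,\dots,df_m$ are dependent. So the Cayley discriminant is contained in the $\ma$-discriminant, not the other way around. A degenerate root all of whose dependencies $\sum_i\lambda_i\,df_i(x)=0$ have some $\lambda_i=0$ is a point of the $\ma$-discriminant that need not lie in the Cayley discriminant. The exceptional locus you describe therefore sits on the $\ma$-discriminant side, and your sentence ``the Cayley discriminant contains the $\ma$-discriminant'' is false for general square tuples. This paper gives a counterexample. For the on-boundary format $\bd=(2,2,3)$ of Example~\ref{ex: 223 part 2}, the Nash $\ma$-discriminant contains the hypersurface cut out by the $2\times2\times2$ hyperdeterminant of the subtuple $\ma'$. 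Meanwhile $\nabla_{\!\ka}$, which is identified with the Cayley discriminant of $\ma$, has codimension $2$. The paper also notes that $\nabla_{\!\ka}\subseteq\Delta(\bd)$ by definition.

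Once the direction is corrected, your second step needs more than ``the exceptional locus lies in lower dimension.'' A lower-dimensional piece not contained in $\nabla_{\operatorname{Cay}(\ma)}$ would simply be an extra irreducible component of the $\ma$-discriminant. Two facts are actually needed:
\begin{enumerate}
\item For an irreducible tuple, that locus is contained in the Cayley discriminant, so the $\ma$-discriminant is irreducible and equals it.
\item Nonlinearity forces codimension one. This matters because the mixed discriminant is trivial when the Cayley discriminant is not a hypersurface; here it is nontrivial and agrees with the Cayley discriminant.
\end{enumerate}
These are exactly the content of Pokidkin's results: see \cite[Theorem 3.4]{pokidkin}, and the component description in \cite[Theorem 1.5]{pokidkin}, noting that an irreducible BK-tuple has no proper BK-subtuples. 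Either repair your heuristic along these lines, or drop it and let the citation carry the proof, as the paper does.
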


\begin{theorem}[{\cite[Theorem 1.5]{pokidkin}}]\label{thm:pokidkin}
For a BK-tuple $\ma$, the $\ma$-discriminant has the following distinct components:
\begin{enumerate}
    \item the Cayley discriminants of the nonprelinear simple BK-subtuples, which have codimension one;
    \item the Cayley discriminants of the prelinear simple BK-subtuples that are not contained in nonprelinear simple BK-subtuples, which have codimension two.
    \end{enumerate}
\end{theorem}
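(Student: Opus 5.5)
The proof would run in the Cayley-trick framework of \cite{pokidkin}. The plan is to reduce the $\ma$-discriminant of a BK-tuple to the discriminants of its simple BK-subtuples, and then sort those by codimension. Write $\ma=(A_1,\dots,A_k)$, a square BK-tuple, and let $f=(f_1,\dots,f_k)$ be a generic system with these supports. By the Cayley trick, a point of the $\ma$-discriminant is a coefficient vector for which $f$ has a degenerate root in the torus, possibly after passing to the closure of the generic locus.

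First, I would use the combinatorics of defects. Independence and zero defect imply that the set of zero-defect subtuples is closed under union and intersection. This should give a filtration $\emptyset=\mb_0\subset\mb_1\subset\cdots\subset\mb_m=\ma$ whose successive quotients are \emph{simple} BK-tuples, each after projecting along the lattice spanned by the earlier subtuple. Geometrically, the system is block triangular. The equations indexed by $\mb_1$ involve only the coordinates of a subtorus quotient and have finitely many roots there. Substituting such a root, the remaining equations become a BK-system on the complementary torus, and so on.

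Second, I would show that a degenerate root of $f$ forces a degenerate root of some simple block, with the earlier blocks evaluated at a nondegenerate root. This uses the fact that the Jacobian is block triangular, so it is singular iff some diagonal block is singular. Conversely, each Cayley discriminant of a simple BK-subtuple pulls back to a subset of the $\ma$-discriminant. Hence the $\ma$-discriminant is the union, over simple BK-subtuples $\mb$, of the (cylinders over the) Cayley discriminants of $\mb$.

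Third, I would compute codimensions. For a nonprelinear simple BK-tuple, irreducibility plus nonlinearity lets Corollary~\ref{cor:pokidkin} identify the Cayley discriminant with the mixed discriminant. By the Esterov/Cattani et al.\ criterion the latter is a hypersurface, so it has codimension one. For a prelinear simple tuple, the system is a square linear system up to a lattice change. Its degenerate locus is a determinantal condition: a rank drop of a square matrix whose generic fibre is then positive dimensional. In the coefficient space this gives codimension two.

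Finally, I would remove redundancies and prove distinctness. A prelinear simple subtuple contained in a nonprelinear simple one has its discriminant inside the hypersurface, so it is dropped. The remaining loci are distinct because they are cut out by conditions on disjoint coefficient blocks. They are irreducible as images of irreducible incidence varieties.

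I expect the main obstacle to be the second step, namely the closure issues: degenerate roots escaping to the boundary of the torus, and nonreduced roots of earlier blocks. Controlling these requires the face structure of the Minkowski sums, i.e.\ a Bernstein-type argument that for generic coefficients in each block no roots lie at infinity.
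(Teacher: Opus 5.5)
The paper does not prove this theorem: it is quoted from \cite{pokidkin} and used as a black box in Section~\ref{sec:aps-3.8}. Your outline therefore has to stand on its own, and it has a genuine gap exactly where the content of the theorem lies. The block-triangular Jacobian is fine, and so is the closure of zero-defect subtuples under $\cup,\cap$. But your filtration produces \emph{quotient} blocks $\mb_k/\mb_{k-1}$: irreducible tuples in quotient lattices whose coefficients depend on a root of the lower subsystem. The theorem, by contrast, is about Cayley discriminants of \emph{subtuples} of $\ma$. In Step~3 you then treat a simple BK-subtuple as if it were irreducible (to invoke Corollary~\ref{cor:pokidkin}), and a prelinear one as if it were linear up to a lattice change. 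Both fail, as the paper's own on-boundary case shows. There $\ma$ is the prelinear simple BK-subtuple whose Cayley discriminant is $\nabla_{\!\ka}$. Yet it contains the proper BK-subtuple $\ma'$, so it is not irreducible, and its equations are multilinear. Only its quotient by $\ma'$, namely $d_n-1$ copies of the simplex $\Delta_{d_n-1}$, is linear.

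The missing argument is fiberwise. For a simple $\mb$ with its maximal proper BK-subtuple $\mb'$ (e.g.\ $\ma'\subset\ma$), work over the incidence variety $\{(f_{\mb'},t)\mid f_{\mb'}(t)=0\}$. This is a vector bundle over a torus, hence irreducible. Over it, the condition on the coefficients of $\mb\setminus\mb'$ is the pullback of the discriminant of the irreducible quotient $\mb/\mb'$, under a surjective linear map twisted by characters of $t$. This is what yields irreducibility, and codimension one or two according as $\mb/\mb'$ is nonlinear or linear. You must also show two further things. First, this locus depends only on $\mb$ and not on the chosen chain. Second, its closure is $\nabla_{\operatorname{Cay}(\mb)}$, which is defined by a relation $\sum_i\lambda_i\,df_i=0$ with \emph{all} $\lambda_i\neq0$; block-triangularity only gives you some relation.

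Your final step asserts, rather than proves, the other substantive part of the theorem. Containment of a prelinear locus in a nonprelinear one is not formal. For $f_1=a+bx$, $f_2=c+dx+ey$, both blocks are linear, and the degenerate loci $\{a=b=0\}$ and $\{e=0,\ bc=ad\}$ are incomparable codimension-two sets. For $f_2=c+dx+ey+gy^2$, by contrast, the locus $\{a=b=0\}$ does lie in the hypersurface $be^2-4g(bc-ad)=0$. The mechanism is that a degenerate linear lower block leaves a one-parameter family of roots (here $x$ becomes free). One free parameter suffices, in the limit, to meet the codimension-one discriminant of a nonlinear upper block, but not the codimension-two locus of a linear one. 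You need such an argument, including a check that moving along the family genuinely changes the coefficients of the upper block. Likewise, ``disjoint coefficient blocks'' is inaccurate, since $\{e=0,\ bc=ad\}$ involves the coefficients $a,b$ of the lower block. Distinctness has to be argued through the top blocks $\mb\setminus\mb'$, on which each locus genuinely depends.
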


\begin{remark}
    Terms such as ``simple'' and ``prelinear'' appear in Theorem \ref{thm:pokidkin} but their definitions do not appear in this paper; instead, we direct readers to \cite[Definition 1.4]{pokidkin}. We leave these terms undefined because it is faster to verify that the $\ma$-discriminant has two irreducible components using previous work than to directly identify which BK-subtuples of $\ma$ are nonprelinear simple and prelinear simple. 
\end{remark}

\subsection{Comparing \texorpdfstring{$D_\ma$}{ma-discriminant} with \texorpdfstring{$\nabla_{\!\ka}$}{nabla ka}}
In this section, we formally define the Nash $\ma$-discriminant $D_\ma$ and compare it with the $\ka$-discriminant $\nabla_{\!\ka}$ for Nashmatrix $\!\ka$. 
\begin{remark}
In general, the $\ka$-discriminant is irreducible, but the $\ma$-discriminant is not necessarily irreducible.
\end{remark}
In an $\ma$-discriminant, $\ma$ is a tuple of $\ell$ finite sets $(A_1, \dots, A_\ell)$, as given in \cite[Introduction]{pokidkin}. In the \emph{Nash $\ma$-discriminant}, these finite sets represent point configurations given by the vertices of Newton polytopes of the dehomogenized multihomogeneous equations
\[
    \left\{\frac{\Delta f_{1,k}^{(i)}}{\prod_{j \in [n] \setminus \{i\}} \pi_1^{(j)}}\right\}_{1 \le i \le n,\,2 \le k \le d_i}
\]
which are products of simplices of the form
\[
    \Delta^{(i)} \coloneqq \Delta_{d_1 - 1} \times \cdots \times \Delta_{d_{i-1} - 1} \times \{0\} \times \Delta_{d_{i+1} - 1} \times \cdots \times \Delta_{d_n - 1}\,,
\]
each occurring $d_i - 1$ times for every $i \in [n]$.
Consequently, we obtain a $(d_1 + \cdots + d_n - n)$-tuple
\begin{equation}\label{eq: Newton polytopes for multihomog}
    \Delta[d_1, \ldots, d_n] \coloneqq (\underbrace{\Delta^{(1)}, \ldots, \Delta^{(1)}}_{d_1 - 1\ \text{times}},\underbrace{\Delta^{(2)}, \ldots, \Delta^{(2)}}_{d_2 - 1\ \text{times}},\ldots,\underbrace{\Delta^{(n)}, \ldots, \Delta^{(n)}}_{d_n - 1\ \text{times}}).
\end{equation}

Here it turns out that $\ell = \sum_{i=1}^n(d_i-1)$. Each simplex $\Delta^{(i)}$ lies in a $(d_i - 1)$-dimensional space, and each product of simplices lies in a $\sum_{i = 1}^n (d_i -1)$-dimensional space.

\begin{example}
For a game of format $\bd = (2,2,3)$,
\[
\resizebox{\textwidth}{!}{
    $\begin{aligned}
    A_1 = \conv
    \begin{pmatrix}
    0&0&0&0&0&0 \\
    0&0&0&1&1&1 \\
    0&1&0&0&1&0 \\
    0&0&1&0&0&1
    \end{pmatrix},\,
    A_2 = \conv
    \begin{pmatrix}
    0&0&0&1&1&1 \\
    0&0&0&0&0&0 \\
    0&1&0&0&1&0 \\
    0&0&1&0&0&1 
    \end{pmatrix},\,
    A_3 = A_4 = \conv
    \begin{pmatrix}
    0&0&1&1 \\
    0&1&0&1 \\
    0&0&0&0 \\
    0&0&0&0 
    \end{pmatrix}
    \end{aligned}\,.$}
\]
\end{example}

\begin{remark}
    One may compare this construction with that of the Nashmatrix. The equations $\Delta f^{(i)}_{1,k}$ in Definition \ref{def: Nashmatrix} are multihomogeneous, but here we dehomogenize. 
    The point of dehomogenizing is to produce a square system so that we can apply the results of \cite{pokidkin}.
    In particular, it is clear that (de)homogenization fixes the count of irreducible components as well as their degrees. 
\end{remark}

Let $\Delta(E)$ be the discriminant of the vector bundle $E$ over $\Pd$ defined in \cite[Eq. (2.4)]{abo2026vector}. As explained in \cite[Remark 3.4]{abo2026vector}, $\Delta(\bd)$ is a projective cone over $\Delta(E)$, so their invariants, such as codimension and degree of irreducible components, agree.

\begin{lemma}
     The Nash $\ma$-discriminant $D_\ma$ equals the affine cone over $\Delta(E)$, defined as
     \[
        \cone(\Delta(E)) \coloneqq \{f \in H^0(\Pd, E) \mid \text{$f = 0$  or $[f] \in \Delta(E)$}\}\,.
     \]
\end{lemma}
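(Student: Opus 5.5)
The plan is to unwind both definitions into the same condition on a tuple of coefficient vectors and then compare them set-theoretically. By \cite[Eq. (2.4)]{abo2026vector}, $E=\bigoplus_{i=1}^n\ko_{\Pd}(\bone_i)^{\oplus(d_i-1)}$. A global section $f\in H^0(\Pd,E)$ is therefore a tuple of multilinear forms $(f^{(i)}_k)$, where $f^{(i)}_k$ does not involve $\pi^{(i)}$. The coefficients of $f$ are exactly the payoff differences appearing in $\Delta f^{(i)}_{1,k}$, so $H^0(\Pd,E)$ is identified with the coefficient space of the system attached to the tuple $\Delta[d_1,\dots,d_n]$ in \eqref{eq: Newton polytopes for multihomog}. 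The $\ma$-discriminant of \cite{pokidkin} is the closure of the set of coefficient tuples for which the dehomogenized square system has a degenerate root, i.e.\ a root in the torus at which the Jacobian is singular. The task is to show that this closure equals the closure of $\{f\neq 0: [f]\in\Delta(E)\}$, with $0$ added.

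The first inclusion is $D_\ma\subseteq\cone(\Delta(E))$. Take $f$ in the defining open set of $D_\ma$, with a degenerate torus root $p$. Rehomogenize $p$ by setting $\pi^{(j)}_1=1$; this gives a point $\bar p\in\Pd$ where the section $f$ vanishes. The Jacobian of the dehomogenized equations at $p$ agrees, up to invertible factors coming from the dehomogenizing monomials, with the differential of $f$ in an affine chart at $\bar p$. Hence $\bar p$ is a singular point of $Z(f)$, so $Z(f)$ is non-reduced or positive-dimensional there, and $[f]\in\Delta(E)$. Since $\Delta(E)$ is closed by \cite{abo2026vector}, taking closures finishes this direction.

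The reverse inclusion is the main obstacle, because $\Delta(E)$ also records degenerate zeros on the coordinate boundary $\{\pi^{(j)}_1=0\}$ or outside the torus, while $D_\ma$ only sees torus roots before taking closure. I would handle this with the torus action: $T=\prod_j(\bC^*)^{d_j}$ acts on $\Pd$ and on $H^0(\Pd,E)$ compatibly, and it preserves $\Delta(E)$. Take a generic point $[f]$ of an irreducible component of $\Delta(E)$ whose scheme $Z(f)$ is singular at some $q$. A generic change of coordinates $g\in GL_{d_1}\times\cdots\times GL_{d_n}$ acting on each factor moves $q$ into the open torus and preserves the space $H^0(\Pd,E)$. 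This shows the incidence variety $\{(f,q): Z(f)\text{ singular at }q\}$ is irreducible over each stratum, and that its torus part is dense in it. Here I would use that the incidence variety is a vector bundle over $\Pd$, since the conditions $f(q)=0$ and $\rank df_q<\dim\Pd$ are fibrewise determinantal of constant codimension, and is therefore irreducible with the torus part open and dense. Projecting, every $[f]\in\Delta(E)$ is then a limit of sections with a degenerate torus root, which lie in $D_\ma$.

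Finally, $D_\ma$ is by definition a closed cone containing $0$. Combining the two inclusions with the cone structure gives $D_\ma=\cone(\Delta(E))$.
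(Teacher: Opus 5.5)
Your first inclusion is the same as the paper's. So is your key idea for the reverse inclusion: a generic $g\in G=\prod_i\mathrm{GL}_{d_i}$ moves a singular point $q$ of $Z(f)$ into the open torus $T$. (The torus $\prod_j(\bC^*)^{d_j}$ alone cannot do this, since it preserves the coordinate strata of $\Pd$.)

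The gap is the step you use to turn this into the conclusion. The incidence variety $\Sigma=\{(f,q)\mid f(q)=0,\ \rank df_q<C\}$ is neither a vector bundle over $\Pd$ nor irreducible in general.

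Its fibre over $q$ is not a linear subspace. It is the preimage of the hypersurface $\{\det=0\}$ under the surjective linear map $f\mapsto df_q$, which sends $\{f\mid f(q)=0\}$ onto Jacobians whose $(i,i)$ blocks vanish, because $f^{(i)}_k$ does not involve $\pi^{(i)}$.

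In the on-boundary case, order rows and columns as (players $<n$, player $n$). The Jacobian then has the shape
\[
\begin{pmatrix} A & B\\ D & 0\end{pmatrix},
\]
with $B$ and $D$ generic square matrices of size $d_n-1=\sum_{i<n}(d_i-1)$. Hence $\det=\pm\det(B)\det(D)$, and $\Sigma$ splits into two components. This has to happen: $\Sigma$ projects onto $\cone(\Delta(E))$, so irreducibility of $\Sigma$ would force $\Delta(\bd)$ to be irreducible, contradicting \cite[Theorem 3.26]{abo2026vector}. So the justification you give for density of the torus part is false as written.

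What you actually need is weaker, and it follows from the $G$-action alone, exactly as in the paper. Fix $f\in\cone(\Delta(E))$ and a point $q$ where $Z(f)$ is singular.
\begin{enumerate}
\item The set $U_q=\{g\in G\mid g\cdot q\in T\}$ is the preimage of $T$ under the surjective orbit map $g\mapsto g\cdot q$. It is therefore nonempty and open, hence dense, in the irreducible group $G$.
\item For $g\in U_q$, the section $g\cdot f$ has a degenerate root at $g\cdot q\in T$, so $g\cdot f\in D_\ma$.
\item The set $\{g\in G\mid g\cdot f\in D_\ma\}$ is closed and contains $U_q$. Hence it contains the identity, and $f\in D_\ma$.
\end{enumerate}
This needs no genericity of $f$ and no structure of $\Sigma$.

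If you prefer to keep the incidence-variety formulation, the correct justification is different. Each irreducible component of $\Sigma$ is stable under the connected group $G$, so it surjects onto the homogeneous space $\Pd$. Its part over $T$ is then nonempty and open in that component, hence dense, and projecting gives $\cone(\Delta(E))\subseteq D_\ma$.
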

\begin{proof}
    To see that $D_\ma \subseteq \cone(\Delta(E))$, consider the set $D_\ma^\circ$ of global sections $f \in H^0(\Pd, E)$ such that there exists point $p$ in the torus $T\coloneqq (\bC^*)^{C+n} \subset \Pd$ at which $f(p) = 0$ and the Jacobian of $f$ at $p$ drops rank, or equivalently, if the rank is less than $C = \sum_{i=1}^n (d_i - 1)$. (Note that the $\ma$-discriminant is the Zariski closure of $D_\ma^\circ$.) By the Jacobian criterion, since $\Pd$ is smooth of dimension $C$ and $E$ has rank $C$, the Jacobian drops rank exactly when $Z(f)$ is nonreduced or has a positive-dimensional component. Thus $D_\ma^\circ \subseteq \cone(\Delta(E))$, and taking closures of both sides gives the desired inclusion.
        
    To show that $D_\ma \supseteq \cone(\Delta(E))$, we first rule out the possibility that a global section $f$ exists with $Z(f) = \varnothing$. We can apply \cite[Theorem 2.8(2)]{abo2026vector} to see that, for balanced games, the degree of the top Chern class is positive, which implies that each global section of $E$ has at least one zero by \cite[Lemma 42.40.1]{stacks-project}. 
    
    Fix a global section $f \in \cone(\Delta(E))$. By the argument in the previous paragraph, as well as the Jacobian criterion, we know that $f$ has a multiple zero at some point $p$, not necessarily in $T$. Let $\mathrm{GL}_a$ denote the group of invertible $a \times a$ matrices with complex entries. Observe that the group $G \coloneqq  \prod_{i=1}^n \mathrm{GL}_{d_i}$ acts linearly and transitively on the space $\Pd$, and the torus is an open dense subset of $\Pd$. Define $U_p\coloneqq\{g \in G \mid g \cdot p \in T\}$. Since $G$ is an irreducible variety, it follows that $U_p$ is dense in $G$. Now define the morphism $\phi_f\colon G \to H^0(\Pd, E)$ by $g \mapsto g \cdot f$. Observe that for any element $g \in U_p$, by the chain rule, $g \cdot f$ has a multiple zero at $g\cdot p \in T$. Thus, $\phi_f(U_p) \subseteq D_\ma$. Consider now $\phi_f^{-1}(D_\ma)$, which contains $U_p$ and is closed, so must be $G$. Thus, 
    \[f = e \cdot f = \phi_f(e) \in D_\ma.\qedhere\]
    \end{proof}
\noindent Therefore, the dimensions and degrees of irreducible components of $D_\ma$ agree with those of $\Delta(\bd)$.

\subsection{Applying Corollary \ref{cor:pokidkin} to the strictly balanced case}

\begin{lemma}\label{lem:strictly-balanced-BK-tuple}
    In the strictly balanced case, namely when $d_n - 1 < \sum_{i=1}^{n-1} (d_i - 1)$, we have that $\ma$ is an irreducible BK-tuple.
\end{lemma}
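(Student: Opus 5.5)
We need to compute, for every subtuple $\mb$ of $\ma=\Delta[d_1,\ldots,d_n]$, the defect $\delta(\mb)$. We show it vanishes for $\mb=\ma$ and is positive for every proper nonempty $\mb$; this gives independence, zero defect (so $\ma$ is a BK-tuple) and irreducibility at once. A subtuple is determined by a vector $\bm=(m_1,\ldots,m_n)$ with $0\le m_i\le d_i-1$. Here $m_i$ is the number of copies of $\Delta^{(i)}$ it contains, $I$ is its index set, and $|I|=\sum_i m_i$.

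\textbf{Step 1: the dimension of the Minkowski sum.} Since $\Delta^{(i)}+\Delta^{(i)}=2\Delta^{(i)}$ has the same affine span as $\Delta^{(i)}$, the affine span of $\sum_{k\in I}A_k$ equals that of $\sum_{i\,:\,m_i>0}\Delta^{(i)}$. Let $S=\{i\mid m_i>0\}$. Each $\Delta^{(i)}$ is a product of the simplices $\Delta_{d_j-1}$ over $j\neq i$, sitting in the coordinate block of player $j$. Summing over $i\in S$, the block of player $j$ is filled by a full simplex whenever some $i\in S$ differs from $j$. Hence:
\begin{itemize}
\item if $|S|\ge 2$, every block is covered and the span has dimension $C=\sum_{i=1}^n(d_i-1)$;
\item if $S=\{i\}$, the span has dimension $C-(d_i-1)$.
\end{itemize}

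\textbf{Step 2: the defect in each case.}
\begin{itemize}
\item If $|S|\ge 2$, then $\delta(\mb)=C-\sum_i m_i\ge 0$, with equality exactly when $m_i=d_i-1$ for all $i$, that is, when $\mb=\ma$. So every proper subtuple in this case has positive defect, and $\delta(\ma)=0$.
\item If $S=\{i\}$, then $\delta(\mb)=\sum_{j\neq i}(d_j-1)-m_i\ge\sum_{j\neq i}(d_j-1)-(d_i-1)$. This lower bound is strictly positive for every $i$. For $i=n$ this is exactly the strict balance hypothesis $d_n-1<\sum_{j<n}(d_j-1)$. For $i<n$ it follows from $d_i\le d_n$ and $n\ge 3$: $\sum_{j\neq i}(d_j-1)\ge d_n-1+(d_k-1)>d_i-1$ for some third index $k$, since $d_k\ge 2$.
\end{itemize}
The empty subtuple has defect $0$ by convention and is excluded from irreducibility, as is standard.

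\textbf{Step 3: conclude.} All subtuples have nonnegative defect, $\ma$ itself has defect $0$, and every proper nonempty subtuple has positive defect. Hence $\ma$ is an irreducible BK-tuple.

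The main obstacle is Step 1: justifying that the affine span of the Minkowski sum is the product of the block spans. This needs care because the sets are dehomogenized simplices containing the origin, so affine and linear spans agree and the sum of spans is direct across the coordinate blocks. It also clarifies why the single-player subtuples $S=\{n\}$ are the only place where the hypothesis is needed. On the boundary $d_n-1=\sum_{j<n}(d_j-1)$, the subtuple of all $d_n-1$ copies of $\Delta^{(n)}$ has defect $0$, so irreducibility fails there.
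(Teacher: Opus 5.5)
Your proof is correct and follows the paper's argument. Both split into subtuples involving at least two distinct $\Delta^{(i)}$, where the Minkowski sum is $C$-dimensional and the defect $C-|I|$ vanishes only for $\ma$, and subtuples made of copies of a single $\Delta^{(i)}$, where the defect is at least $\sum_{j\neq i}(d_j-1)-(d_i-1)$. The one small difference is that the paper handles every $i$ at once via $C-2(d_i-1) > 2(d_n-1)-2(d_i-1)\ge 0$, which avoids your appeal to a third index with $d_k\ge 2$ when $i<n$.
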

\begin{proof}
    We can perform casework on the possible subtuples $\mb$ of $\ma$ as follows:

    Case 1: $\mb$ contains some $\Delta^{(i)}$ and $\Delta^{(j)}$ with $i \ne j$. In this case, the Minkowski sum is $\sum_{i = 1}^n (d_i -1)$-dimensional, so any proper subtuple $\mb$ of $\ma$ has positive defect and $\ma$ itself has defect 0. 

    Case 2: $\mb$ consists of some number of copies of $\Delta^{(i)}$. In this case, the defect is minimized when all the copies of $\Delta^{(i)}$ are included. I claim that, for any $i$, when $\mb$ consists of all copies of $\Delta^{(i)}$, the defect is still positive. The dimension of the Minkowski sum of any positive number of copies of $\Delta^{(i)}$ equals the dimension of a single copy of $\Delta^{(i)}$, which is a product of simplices and has dimension $\sum_{j \ne i} (d_j - 1)$. Then $|I| = d_i - 1$ and
    \[
        \delta(\mb) = \sum_{j \ne i} (d_j - 1) - (d_i - 1) = \sum_{j=1}^n (d_j- 1) - 2(d_i - 1) > 2(d_n - 1) - 2(d_i - 1) > 0\,,
    \]
    where the second-to-last inequality holds because $d_n - 1 < \sum_{i=1}^{n-1}(d_i - 1)$.
    
    Thus, we have verified that $\ma$ is independent and irreducible. Since it is defect 0, we conclude that $\ma$ is a BK-tuple.
\end{proof}

We now verify that $\ma$ is nonlinear using Theorem 3.4 of Pokidkin.

\begin{theorem}[{\cite[Theorem 3.4]{pokidkin}}]
The $\ma$-discriminant is a variety of codimension one if the tuple $\ma$ is
a nonlinear irreducible BK-tuple, and of codimension two if the tuple $\ma$ is a linear irreducible BK-tuple.
\end{theorem}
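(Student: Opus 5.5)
The plan is to reduce the statement to the dual defect of a single toric variety through the Cayley trick, and then to treat the linear and nonlinear cases separately. Write $\ma=(A_1,\dots,A_\ell)$ for the square tuple and $\operatorname{Cay}(\ma)$ for its Cayley configuration as in \eqref{eq: cayley config}.

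\textbf{Reduction.} By the Cayley trick (\cite[Chapter 9]{GKZ94}, \cite{cattani2013mixed}), the $\ma$-discriminant of an irreducible BK-tuple agrees with the Cayley discriminant, as in Corollary~\ref{cor:pokidkin}. The Cayley discriminant is the projective dual of the toric variety $X_{\operatorname{Cay}(\ma)}$. Hence its codimension equals $1+\operatorname{def}(X_{\operatorname{Cay}(\ma)})$, where $\operatorname{def}$ denotes the dual defect. The claim therefore splits into two parts:
\begin{enumerate}
    \item the dual defect is $0$ in the nonlinear case;
    \item the dual defect is exactly $1$ in the linear case.
\end{enumerate}

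\textbf{Linear case.} Here every $A_i$ is, up to translation, the same unimodular simplex. The system is then a square linear system $Mx=b$ restricted to the torus. A degenerate solution exists exactly when two conditions hold: $\det M=0$, and the system is consistent, i.e.\ $b$ lies in the image of $M$. For a generic corank-one $M$ these are two independent conditions. So the discriminant closure is the codimension-two locus cut out by the vanishing of all maximal minors of $(M\mid b)$; this is an irreducible determinantal variety. I would also check this against Theorem~\ref{thm:polytope} by verifying directly that $\delta_1=0$ and $\delta_2\neq 0$.

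\textbf{Nonlinear case.} I would use the known classification of dual defective toric varieties (Furukawa--Ito, or Esterov's version for tuples). It says that a configuration is dual defective exactly when it is, up to lattice equivalence, a Cayley configuration of more than $\dim+1$ configurations living in a space of smaller dimension. In that situation some subtuple $\mb$ would have dimension of the Minkowski sum smaller than $|I|$, i.e.\ negative defect, which contradicts independence. The key step is to show that the only way to have such a Cayley structure \emph{without} a negative-defect subtuple, given irreducibility (all proper subtuples have positive defect), is the degenerate one in which all $A_i$ are translates of a common unimodular simplex, which is the linear case.

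I expect the main obstacle to be this combinatorial step: matching the Cayley decomposition supplied by the classification with the given decomposition of $\ma$. The decomposition from the classification need not a priori respect the indexing of $\ma$. The first thing I would try is to project along the Cayley directions and compute the defect of the induced subtuple. That should force the decomposition to be the whole tuple, with all pieces simplices, and hence force linearity.
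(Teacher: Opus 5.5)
\textbf{There is a genuine gap: the nonlinear case is only sketched.}

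The paper does not prove this statement; it imports it from Pokidkin, so your argument has to stand on its own. Your linear case is fine, and it does not need the Cayley reduction. After passing to the lattice generated by the tuple, the system is $Mx+b=0$. Every root is degenerate once $\det M=0$. The solution line of a generic consistent corank-one system meets the torus. The closure is therefore the rank-$\le\ell-1$ locus of $(M\mid b)$, which has codimension two.

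The nonlinear case is the real content of the theorem, and there you only announce the decisive step. Your sentence ``some subtuple $\mb$ would have negative defect, which contradicts independence'' applies only to a defective Cayley structure on $\operatorname{Cay}(\ma)$ whose pieces come from the given $A_i$. Your own linear example shows this need not happen. $\operatorname{Cay}$ of $\ell$ copies of the $\ell$-simplex is $\Delta_{\ell-1}\times\Delta_\ell$. Its defective Cayley structure is the transposed one, namely $\ell+1$ copies of $\Delta_{\ell-1}$, and that corresponds to no subtuple of $\ma$ at all. Proving that every such transverse structure forces all $A_i$ to be one common simplex \emph{is} the theorem. ``Project along the Cayley directions and compute the defect'' is not yet an argument for it.

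The classification you plan to feed into this step is also misquoted. Dual defectiveness is not equivalent to being a Cayley configuration of more than $\dim+1$ lower-dimensional pieces. For instance, $\{(0,0),(1,0),(1,1),(1,2)\}$ is a pyramid. Its toric variety is a quadric cone in $\bP^3$ whose dual is a plane conic, so it is dual defective. Yet it admits no such Cayley structure: one would need three pieces in dimension $0$, i.e.\ only three points. The correct characterizations (Esterov, Furukawa--Ito) are more involved, so the combinatorics you would have to control is larger than you describe.

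Two further remarks.

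First, Corollary~\ref{cor:pokidkin}, as quoted, assumes nonlinearity. It cannot be used to set up the dichotomy, and invoking a later corollary of the same paper risks circularity. You do not need it: for the codimension-one claim it suffices that the Cayley discriminant is a hypersurface. That discriminant consists of degenerate roots with all multipliers $\lambda_i\neq 0$, so it is contained in the $\ma$-discriminant, which is a proper closed subset.

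Second, there is a route that avoids the classification altogether. By torus equivariance, the $\ma$-discriminant is the closure of $T\cdot L$, where $L$ is the set of systems with a degenerate root at $\bone$. Independence, via Rado's theorem applied to the spans of the $A_i-A_i$, makes $L$ a hypersurface in the codimension-$\ell$ linear space $\{f_i(\bone)=0\}$. Hence the $\ma$-discriminant has codimension one exactly when a generic member of (a component of) $L$ has $\bone$ as an isolated degenerate root. In the linear case isolatedness fails, because degenerate roots come in lines. In the nonlinear case isolatedness is precisely what has to be proved, and that is the missing ingredient in your proposal.
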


If $\ma$ were linear, the Nash $\ma$-discriminant would have codimension 2, but \cite[Theorem 3.26]{abo2026vector} shows that the $\ma$-discriminant is actually one-codimensional. Thus, $\ma$ is nonlinear. By \cite[Corollary 3.6]{pokidkin}, $\nabla_{\!\ka}$ coincides with $\Delta(\bd)$. By Corollary \ref{cor:discriminant-strictly-balanced}, we obtain the degree of $\Delta(\bd)$ for strictly balanced game formats $\bd$.

\begin{corollary}\label{cor:final-discriminant-strictly-balanced}
For strictly balanced game formats $\bd$, the Nash discriminant variety $\Delta(\bd)$ is an irreducible hypersurface. Its degree is the coefficient of $\prod_{i=1}^n z_i^{d_i-1}$ in 
\begin{equation}
    \left(C + \sum_{i=1}^n(C - 2d_i + 1)z_i\right) \prod_{i=1}^n(1 + \hat{z}_i)^{d_i-1}\,.
\end{equation}
\end{corollary}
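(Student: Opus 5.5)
The plan is to assemble the pieces already established in this section rather than redo any computation. First, by the lemma identifying the Nash $\ma$-discriminant $D_\ma$ with the affine cone over $\Delta(E)$, together with \cite[Remark 3.4]{abo2026vector} saying $\Delta(\bd)$ is a projective cone over $\Delta(E)$, the codimensions, degrees and number of irreducible components of $\Delta(\bd)$ coincide with those of $D_\ma$. So it suffices to show that $D_\ma$ is an irreducible hypersurface whose degree is given by the stated coefficient.

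Second, Lemma~\ref{lem:strictly-balanced-BK-tuple} shows that in the strictly balanced case the tuple $\ma=\Delta[d_1,\ldots,d_n]$ is an irreducible BK-tuple. I then establish nonlinearity. By \cite[Theorem 3.4]{pokidkin}, a linear irreducible BK-tuple has a codimension-two $\ma$-discriminant. However, $\Delta(\bd)$ contains a hypersurface component for balanced formats by \cite[Theorem 3.7]{abo2026vector}, so $\ma$ must be nonlinear. Corollary~\ref{cor:pokidkin} then says the $\ma$-discriminant equals the Cayley discriminant, that is, the $\ka$-discriminant of the Cayley matrix $\ka$ of Definition~\ref{def: Nashmatrix}. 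Up to the harmless (de)homogenization noted in the remark, this is $\nabla_{\!\ka}$. Since an $\ka$-discriminant is irreducible by definition (it is the closure of the image of an irreducible conormal variety), $\Delta(\bd)$ is irreducible and equals $\nabla_{\!\ka}$ up to the cone identifications.

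Finally, Corollary~\ref{cor:discriminant-strictly-balanced} gives that $\nabla_{\!\ka}$ is a hypersurface. Its degree is the coefficient of $\prod_i z_i^{d_i-1}$ in $\bigl(C+\sum_i(C-2d_i+1)z_i\bigr)\prod_i(1+\hat z_i)^{d_i-1}$, and transferring this through the identifications yields the statement.

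The main obstacle I expect is matching the $\ma$-discriminant of \cite{pokidkin}, which lives on the torus of the dehomogenized square system, with the homogeneous Cayley matrix $\ka$. The identification must carry the same degree and not merely the same zero set. I would handle this by observing that dehomogenizing by $\prod_{j\neq i}\pi^{(j)}_1$ is a monomial change of torus coordinates. Such a change induces an affine lattice isomorphism of the Cayley configurations, so it preserves both the discriminant polynomial and its degree.
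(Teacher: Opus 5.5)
Your proposal is correct and follows essentially the same route as the paper. You identify $\Delta(\bd)$ with $D_\ma$ via the cone lemma, combine Lemma~\ref{lem:strictly-balanced-BK-tuple} with Pokidkin's Theorem 3.4 to force nonlinearity, conclude via Corollary~\ref{cor:pokidkin} that $D_\ma$ is $\nabla_{\!\ka}$, and read off the degree from Corollary~\ref{cor:discriminant-strictly-balanced}. The differences are minor: you rule out linearity with \cite[Theorem 3.7]{abo2026vector} instead of \cite[Theorem 3.26]{abo2026vector}, which suits the strictly balanced setting better, and you justify why (de)homogenization preserves the discriminant, a step the paper simply calls clear.
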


\subsection{Applying Theorem \ref{thm:pokidkin} to the on-boundary case}

Define $\ma' \coloneqq (\Delta^{(n)},\dots,\Delta^{(n)})$, where $\Delta^{(n)}$ is repeated $d_n-1$ times.
From an analysis quite similar to that in Lemma \ref{lem:strictly-balanced-BK-tuple}, it is apparent that the only BK-subtuples of $\ma$ are $\ma$ and $\ma'$. The tuple $\ma'$ appears in Case 2, when $i = n$ and the number of copies of $\Delta^{(i)}$ is maximized.
By Theorem \ref{thm:pokidkin}, the number of BK-subtuples of $\ma$ gives an upper bound for the number of irreducible components of the $\ma$-discriminant. In this case, the upper bound is 2. By \cite[Theorem 3.26]{abo2026vector}, the lower bound is 2. Thus the $\ma$-discriminant has exactly two components. As established in \cite[Theorem 3.26]{abo2026vector}, one component has codimension 1, while the other has codimension at least 2; by Theorem \ref{thm:pokidkin}, the codimension must be exactly 2. 

To summarize, the Nash $\ma$-discriminant is the union of two irreducible components, one of codimension 1 and one of codimension 2.

Now, we may identify the affine cone over the codimension-two Cayley discriminant of prelinear simple BK-subtuple $\ma$ with $\nabla_{\!\ka}$. Thus, using the degree formulas in \cite[Theorem 3.26]{abo2026vector} and in Corollary \ref{cor:delta-2-nonzero}, we obtain a full description of the irreducible components and degrees of the Nash discriminant variety $\Delta(\bd)$.

\begin{corollary}\label{cor:discriminant-on-boundary}
For on-boundary game formats $\bd$, the Nash discriminant variety $\Delta(\bd)$ has two irreducible components, one of codimension 1 and one of codimension 2. The first has degree
\[
    \frac{(d_n - 1)!}{(d_1 - 1)! \cdots (d_{n-1} - 1)!} \left(d_n - 1 + \sum_{i=1}^{n-1} (d_i - 1)(d_n - d_i - 1)\right)
\]
and the second has degree
\[
    \frac{1}{2} \binom{d_n - 1}{d_1 - 1,\dots,d_{n-1} - 1}\left(d_n(d_n - 1) + (4d_n - 1)e_2(\bd-\bone) + 2 e_2(\bd-\bone)^2\right)
\]
where $e_2(\bd-\bone) = \sum_{1 \le i < j \le n - 1}(d_i - 1)(d_j - 1)$.
\end{corollary}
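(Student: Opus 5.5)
The plan is to assemble Corollary~\ref{cor:discriminant-on-boundary} from three ingredients already in place: the component count from Theorem~\ref{thm:pokidkin}, the hypersurface component and its degree from \cite[Theorem 3.26]{abo2026vector}, and the degree of $\nabla_{\!\ka}$ from Corollary~\ref{cor:delta-2-nonzero}. First, by the lemma identifying the Nash $\ma$-discriminant $D_\ma$ with the affine cone over $\Delta(E)$, together with \cite[Remark 3.4]{abo2026vector}, I may transport codimensions and degrees of irreducible components between $D_\ma$, $\Delta(E)$ and $\Delta(\bd)$. So it suffices to describe the components of $D_\ma$.

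Second, I would list the BK-subtuples of $\ma=\Delta[d_1,\dots,d_n]$, redoing the casework of Lemma~\ref{lem:strictly-balanced-BK-tuple} under the equality $d_n-1=\sum_{i<n}(d_i-1)$.
\begin{itemize}
\item A subtuple meeting two distinct $\Delta^{(i)},\Delta^{(j)}$ has full-dimensional Minkowski sum, so its defect is zero only for $\ma$ itself.
\item A subtuple made of $k$ copies of $\Delta^{(i)}$ has defect $\sum_{j\ne i}(d_j-1)-k$. This is nonnegative always, and vanishes exactly for $i=n$, $k=d_n-1$, which is $\ma'$.
\end{itemize}
Hence $\ma$ is a BK-tuple whose only BK-subtuples are $\ma$ and $\ma'$. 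By Theorem~\ref{thm:pokidkin}, $D_\ma$ has at most two components, of codimension one or two. \cite[Theorem 3.26]{abo2026vector} says $\Delta(\bd)$ is reducible, with a codimension-one component of the stated degree $\frac{(d_n-1)!}{\prod_{i<n}(d_i-1)!}\bigl(d_n-1+\sum_{i<n}(d_i-1)(d_n-d_i-1)\bigr)$ and another component of codimension at least two. Combining these forces exactly two components, of codimensions $1$ and $2$.

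Third, and this is the main obstacle, I must identify the codimension-two component with $\nabla_{\!\ka}$. By definition $\nabla_{\!\ka}\subseteq\Delta(\bd)$, and it is irreducible. By Corollary~\ref{cor:delta-2-nonzero} it has codimension exactly $2$, since $\delta_1=0$ by Lemma~\ref{lem:delta-1-boundary} and $\delta_2>0$. An irreducible codimension-two subvariety of $\Delta(\bd)$ is either the codimension-two component or a subvariety of the hypersurface component. I would rule out the second possibility with the explicit point of Example~\ref{ex: 223 part 2}. There, a generic point of $\nabla_{\!\ka}$ (sampled via Horn--Kapranov uniformization) does not lie on the hyperdeterminant hypersurface.

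For general on-boundary formats, I would argue the same way structurally. The hypersurface component is the Cayley discriminant of $\ma'$, which involves only the coefficients of player $n$'s equations. A generic point of the Horn--Kapranov parametrization of $\nabla_{\!\ka}$ has a singular zero in the torus where the full Jacobian drops rank. The subsystem $\ma'$ alone, however, need not be degenerate there, since its defining condition uses only the $\Delta f^{(n)}$ block. Concretely, I would exhibit one such point, for instance by generalizing the $(2,2,3)$ construction, to show $\nabla_{\!\ka}\not\subseteq$ the hypersurface component. Alternatively, I would match $\nabla_{\!\ka}$ with the Cayley discriminant of the full tuple $\ma$ via Theorem~\ref{thm:pokidkin}(2) and Corollary~\ref{cor:pokidkin}.

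Finally, the degree of the codimension-two component equals $\deg\nabla_{\!\ka}=\delta_2$. Theorem~\ref{thm:delta-2-boundary} gives exactly the stated formula.
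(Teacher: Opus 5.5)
Steps 1 and 2 and the final degree bookkeeping follow the paper's argument. The paper also finds that the only BK-subtuples are $\ma$ and $\ma'$. It gets at most two components from Theorem~\ref{thm:pokidkin}, and at least two from \cite[Theorem 3.26]{abo2026vector}, which also gives codimensions $1$ and $\ge 2$ and the degree of the hypersurface. The remaining degree comes from Corollary~\ref{cor:delta-2-nonzero}.

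Your main route for identifying the codimension-two component with $\nabla_{\!\ka}$ has a genuine gap. The sampled point of Example~\ref{ex: 223 part 2} only handles $\bd=(2,2,3)$. For general on-boundary formats, the remark that the $\ma'$-subsystem ``need not be degenerate'' at a singular zero is not a proof. Producing a point of $\nabla_{\!\ka}$ off the hypersurface component $H$ is exactly what is missing.

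To rescue this route, you would use that $\Delta f^{(n)}_{1,k}$ does not involve $\pi^{(n)}$. So the Jacobian is block-triangular, and its determinant factors into player $n$'s block in $\pi^{(1)},\dots,\pi^{(n-1)}$ times the other players' block in $\pi^{(n)}$. You would then have to show that $\nabla_{\!\ka}$ projects dominantly onto player $n$'s coefficients. The points you construct must genuinely lie in $\nabla_{\!\ka}$: they need a torus singular point of the Cayley polynomial with all Cayley multipliers nonzero, not merely a Jacobian rank drop.

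The paper closes this step with what you list as the alternative, so make it your main argument and spell it out.
\begin{itemize}
\item By Theorem~\ref{thm:pokidkin}, every component is the Cayley discriminant of a simple BK-subtuple, hence of $\ma$ or $\ma'$.
\item Since there are exactly two components, each of $\ma$ and $\ma'$ contributes one.
\item The Cayley discriminant of the full tuple $\ma$ is, by definition, the $\ka$-discriminant of the Cayley matrix $\operatorname{Cay}(A_1,\dots,A_C)=\ka$, i.e.\ it is $\nabla_{\!\ka}$. Dehomogenizing does not change this, since it only deletes coordinates that are affine-linear functions of the remaining ones and the Cayley rows.
\item By Corollary~\ref{cor:delta-2-nonzero}, $\nabla_{\!\ka}$ has codimension two. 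So it is the codimension-two component, and its degree is $\delta_2$ from Theorem~\ref{thm:delta-2-boundary}.
\end{itemize}
Also, drop the appeal to Corollary~\ref{cor:pokidkin}: it does not apply here. In the on-boundary case $\ma$ is not irreducible, because its proper subtuple $\ma'$ has defect $0$.
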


\section*{Acknowledgements}
We are grateful to Hirotachi Abo and Bernd Sturmfels for suggesting that we study this problem from the perspective of $\ka$-discriminants and for offering helpful suggestions during the research process. K.T. thanks the Max Planck Institute for Mathematics in the Sciences for their hospitality and access to the computing server, and Jonas Iskander for helpful conversations.

\bibliographystyle{alphaurl}
\bibliography{biblio}
\end{document}